\newtheorem{thm}{Theorem}[section]
\newtheorem{prp}[thm]{Proposition}
\newtheorem{lmm}[thm]{Lemma}
\newtheorem{lmmdfn}[thm]{Lemma-Definition}
\newtheorem{crl}[thm]{Corollary}
\theoremstyle{definition}
\newtheorem{dfn}[thm]{Definition}
\newtheorem{eg}[thm]{Example}
\theoremstyle{remark}
\newtheorem{rmk}[thm]{Remark}
\numberwithin{equation}{section}
\def\lra{\longrightarrow}
\def\Lra{\Longrightarrow}
\def\BE#1{\begin{equation}\label{#1}}
\def\EE{\end{equation}}
\def\lr#1{\langle#1\rangle}
\def\flr#1{\lfloor{#1}\rfloor}
\def\blr#1{\big\langle#1\big\rangle}
\def\ti#1{\tilde{#1}}
\def\wt#1{\widetilde{#1}}
\def\ov#1{\overline{#1}}
\def\eref#1{(\ref{#1})}
\def\tn#1{\textnormal{#1}}
\def\sf#1{\textsf{#1}}
\def\br#1{\breve{#1}}
\def\De{\Delta}
\def\Ga{\Gamma}
\def\La{\Lambda}
\def\Om{\Omega}
\def\Si{\Sigma}
\def\al{\alpha}
\def\be{\beta}
\def\eps{\epsilon}
\def\gm{\gamma}
\def\ka{\kappa}
\def\la{\lambda}
\def\na{\nabla}
\def\om{\omega}
\def\si{\sigma}
\def\th{\theta}
\def\vph{\varphi}
\def\vp{\varpi}
\def\vt{\vartheta}
\def\fB{\mathfrak B}
\def\bB{\mathbb B}
\def\C{\mathbb C}
\def\cC{\mathcal C}  
\def\cD{\mathcal{D}}
\def\bE{\mathbb E}
\def\bH{\mathbb H}
\def\cH{\mathcal H}
\def\bI{\mathbb I}
\def\fI{\mathfrak i}
\def\cJ{\mathcal J}
\def\fJ{\mathfrak j}
\def\bK{\mathbb K}
\def\cM{\mathcal M}
\def\fM{\mathfrak M}
\def\mf{\mathfrak M}
\def\cO{\mathcal O}
\def\cP{\mathcal P}
\def\P{\mathbb P}
\def\R{\mathbb{R}}
\def\bT{\mathbb{T}}
\def\cT{\mathcal T}
\def\Z{\mathbb{Z}}
\def\a{\mathbf a}
\def\fa{\mathfrak a}
\def\b{\mathbf b}
\def\k{\mathbf k}
\def\x{\mathbf x}
\def\fc{\mathfrak c}
\def\fd{\mathfrak d}
\def\ft{\mathfrak t}
\def\cl{\tn{cl}}
\def\cok{\textnormal{cok}}
\def\tnd{\textnormal{d}}
\def\DM{\tn{DM}}
\def\Ext{\tn{Ext}}
\def\ev{\tn{ev}}
\def\GL{\tn{GL}}
\def\Hom{\tn{Hom}}
\def\id{\textnormal{id}}
\def\ind{\textnormal{ind}}
\def\pt{\tn{pt}}
\def\PGL{\tn{PGL}}
\def\rk{\textnormal{rk}}
\def\sgn{\textnormal{sgn}}
\def\SO{\tn{SO}}
\def\top{\textnormal{top}}
\def\0{\mathbf 0}
\def\1{\mathbf 1}
\def\oI{\mathring{\mathbb{I}}}
\def\dbar{\bar\partial}
\def\prt{\partial}
\def\eset{\emptyset}
\def\i{\infty}
\def\bp{\bar\partial}
\begin{document}

\title{The moduli space of maps with crosscaps:\\
the relative signs of the natural automorphisms}
\author{Penka Georgieva and 
Aleksey Zinger\thanks{$^*$Partially supported by the IAS Fund for Math and NSF grants 
DMS 0635607 and 0846978}}
\date{\today}
\maketitle

\begin{abstract}
\noindent
Just as a symmetric surface with separating fixed locus halves into two 
oriented bordered surfaces, 
an arbitrary symmetric surface halves into two oriented symmetric half-surfaces,
i.e.~surfaces with crosscaps.
Motivated in part by the string theory view of real Gromov-Witten invariants,
we previously introduced moduli spaces of maps from surfaces with crosscaps, 
developed the relevant Fredholm theory, and resolved the orientability problem
in this setting.
In this paper, we determine the relative signs of the automorphisms
of these moduli spaces induced by interchanges of boundary components of the domain 
and by the anti-symplectic involution on the target manifold, 
without any global assumptions on the latter.
As immediate applications, we describe sufficient conditions for 
the moduli spaces of real genus~1 maps and for real maps 
with separating fixed
locus to be orientable; we treat the general genus~2+ case in a separate paper.
Our sign computations also lead to an extension of recent Floer-theoretic applications 
of anti-symplectic involutions 
and to a related reformulation of these results in a more natural~way.
\end{abstract}

\tableofcontents

\section{Introduction}
\label{intro_sec}

\noindent
The theory of $J$-holomorphic maps plays a prominent role in symplectic topology,
algebraic geometry, and string theory.
The foundational work of~\cite{Gr,Witten,McSa94,RT,FO,LT} has 
established the theory of (closed) Gromov-Witten invariants,
i.e.~counts of $J$-holomorphic maps from closed Riemann surfaces to symplectic manifolds.
In contrast, the theory of open and real Gromov-Witten invariants, i.e.~counts of
$J$-holomorphic maps from bordered Riemann surfaces with boundary mapping 
to a Lagrangian submanifold and of $J$-holomorphic maps from symmetric
Riemann surfaces commuting with the involutions on the domain and the target, 
has been under development over the past 10-15 years and still is today.\\

\noindent
The two main obstacles to defining the open invariants are the potential non-orientability 
of the moduli space and the existence of real codimension-one boundary strata.
The orientability problem in open Gromov-Witten theory is studied in
\cite{FOOOold, Sol} and is fully addressed %by the first author 
in~\cite{Ge}, by specifying a procedure for locally orienting moduli spaces 
of open maps and conditions sufficient for the existence of a global orientation.
Some approaches \cite{Melissa, PSW, Teh} to dealing with the codimension-one boundary
have raised the issue of orientability in real Gromov-Witten theory.
Symmetric Riemann surfaces, however, have convoluted degenerations, making
the orientability of their moduli spaces difficult to study.
Physical considerations \cite{SV,AAHV,Wal}
suggest that oriented surfaces with crosscaps provide 
a suitable replacement for symmetric Riemann surfaces in real Gromov-Witten theory.
In~\cite{GZ}, we introduced moduli spaces of $J$-holomorphic maps from 
oriented surfaces with crosscaps, developed the necessary Fredholm theory,
and  fully addressed the orientability problem for these moduli spaces,
by specifying a procedure for locally orienting them
and conditions sufficient for the existence of a global orientation.
The last problem is related to the orientability problem in real Gromov-Witten theory
via the automorphisms of these moduli spaces induced by interchanges of boundary components 
of the domain and by the anti-symplectic involution on the target manifold.
The effect of the only such automorphism on the local system of orientations of 
the disk moduli space is computed in~\cite{Ge2}, without assuming the existence of 
any global orienting structure on~$X$,
as a step in defining a count of disk maps in a much wider collection of cases
than in \cite{Cho, Sol};
these counts correspond to real invariants of the target manifold 
for the standard anti-holomorphic involution on the sphere as defined in \cite{Wel1,Wel2}.\\

\noindent
In this paper, we apply the relative sign idea of~\cite{Ge2} to compute the change in 
local orientations of the moduli space of real $J$-holomorphic maps from 
an oriented symmetric half-surface~$(\Si,c)$  
under an interchange of boundary components of~$\Si$ and 
the action of an anti-symplectic involution~$\phi$ on the target manifold~$X$,
without any global assumptions on the latter; see Propositions~\ref{relsign_prp} 
and~\ref{transfsign_prp}.\footnote{Proposition~\ref{transfsign_prp}
also applies to the action of orientation-preserving diffeomorphisms of~$\Si$
on moduli spaces of 
open maps to a pair $(X,L)$ consisting of a symplectic manifold and a Lagrangian submanifold;
an anti-symplectic involution~$\phi$ on $X$ with $L\!=\!X^{\phi}$ 
is needed only for the action of orientation-reversing diffeomorphisms of~$\Si$.}
The special case of these propositions with  % $c\!=\!\id_{\Si}$ and 
the diffeomorphism of~$\Si$ fixing the components of~$\prt\Si$ 
includes orientability results of \cite{FOOO9, Melissa, Sol, Teh};
see Remark~\ref{Sol_rmk}.
Applying Propositions~\ref{relsign_prp} and~\ref{transfsign_prp},
we obtain sufficient conditions for the moduli spaces of 
real $J$-holomorphic maps from a symmetric surface $(\Si,\si)$ of genus~0 and~1, 
with any given involution on the domain, 
or from a symmetric surface of arbitrary genus with an involution with
a separating fixed locus to be orientable; 
see Theorems~\ref{g0orient_thm}, \ref{g1orient_thm}, and~\ref{seorient_thm}.
In \cite{GZ3}, we treat the orientability question for arbitrary symmetric surfaces.
In a future paper, we will study compactifications of the moduli spaces of maps with crosscaps
and use them to define real Gromov-Witten invariants in the style of~\cite{Wal}.
The disk case of Propositions~\ref{relsign_prp} and~\ref{transfsign_prp} immediately extends
and reformulates, in a  more natural way,  
the arguments of~\cite{FOOO9} utilizing anti-symplectic involutions on 
the target for Floer-theoretic applications; see Section~\ref{FOOO_subs}.
%including under assumptions weaker
%than those used in~\cite{FOOO} to construct $A_{\i}$-algebras;
In addition to leading to orientability results in more general settings, 
the relative sign approach of~\cite{Ge2} does not involve a global orienting structure
coming from the ambient manifold and thus reduces the likelihood of computational mistakes
of the kind that did appear in earlier approaches to the orientability problem in real settings;
see the beginning of Section~\ref{main_sec} for details.\\

\noindent
An \textsf{involution} on a topological space (resp.~smooth manifold) $M$ 
is a homeomorphism (resp.~diffeomorphism) 
$c\!:M\!\lra\!M$ such that $c\!\circ\!c\!=\!\id_M$;
in particular, the identity map on~$M$ is an involution.
Let
$$M^c=\big\{x\!\in\!M\!:~c(x)\!=\!x\big\}$$
denote the fixed locus.
An involution~$c$ determines an action of~$\Z_2$ on~$M$;
we denote~by $H^*_c(M)$ the $\Z_2$-equivariant cohomology 
with $\Z_2$-coefficients; see Section~\ref{equivdfn_e}.
A \sf{conjugation} on a complex vector bundle $V\!\lra\!M$ 
\sf{lifting} an involution~$c$ is a vector bundle homomorphism 
$\ti{c}\!:V\!\lra\!V$ covering~$c$ (or equivalently 
a vector bundle homomorphism  $\ti{c}\!:V\!\lra\!c^*V$ covering~$\id_M$)
such that the restriction of~$\ti{c}$ to each fiber is anti-complex linear
and $\ti{c}\!\circ\!\ti{c}\!=\!\id_V$.
We denote~by
$$\La_{\C}^{\top}(V,\ti{c})=(\La_{\C}^{\top}V,\La_{\C}^{\top}\ti{c})$$
the top exterior power of $V$ over $\C$ with the induced conjugation and~by
$$w_i^{\ti{c}}(V)\in H^i_c(M)$$
the $i$-th $\Z_2$-equivariant Stiefel-Whitney class of~$V$.\\

\noindent
A \sf{symmetric surface} $(\hat\Si,\si)$ consists of a closed connected oriented smooth 
surface~$\hat\Si$ 
(manifold of real dimension~2) and an orientation-reversing involution 
$\si\!:\hat\Si\!\lra\!\hat\Si$.
There are two equivalence classes of orientation-reversing involutions on $S^2\!=\!\P^1$,
\BE{tauetadfn_e}\tau,\eta\!:\P^1\lra\P^1, \qquad 
\tau\big([u,v]\big)=[\bar{v},\bar{u}],\quad \eta\big([u,v]\big)=[-\bar{v},\bar{u}],\EE
and three equivalence classes of such involutions on 
$\bT\!=\!S^1\!\times\!S^1\subset\C\!\times\!\C$,
\BE{T2symm_e}\ft_0,\ft_1,\ft_2\!:\bT\lra \bT, \qquad
\ft_0(u,v)=(u,\bar{v}),\quad \ft_1(u,v)=(v,u),\quad \ft_2(u,v)=(-u,\bar{v});\EE
see \cite[Section~9]{AG}, for example.
The fixed loci of $\tau$ and $\ft_0$ are a circle separating $\P^1$ into two disks
interchanged by~$\tau$ and a pair of disjoint circles separating $\bT$ into two
annuli interchanged by~$\ft_0$, respectively.
The fixed loci of~$\eta$ and~$\ft_2$ are empty, with the quotients $\hat\Si/\si$ being $\R\P^2$
and the Klein bottle~$\bK$, respectively, while the fixed locus of~$\ft_1$ is one circle.\\

\noindent
Let $(X,\phi)$ be a smooth manifold with an involution.
If $(\hat\Si,\si)$ is a symmetric surface, 
a \sf{real map} 
$$F\!:(\hat\Si,\si)\lra(X,\phi)$$ 
is a map $F\!:\hat\Si\!\lra\!X$ such that $F\!\circ\!\si=\phi\!\circ\!F$.
We denote the space of smooth real maps from~$(\hat\Si,\si)$ to~$(X,\phi)$  by~$\fB(X)^{\phi,\si}$.
Since $\si$ is orientation-reversing,
$$\phi_*\big(F_*[\hat\Si]_{\Z}\big)=-F_*[\hat\Si]_{\Z}\in H_2(X;\Z)
\qquad\forall~F\in \fB(X)^{\phi,\si}\,,$$
where $[\hat\Si]_{\Z}\!\in\!H_2(\hat\Si;\Z)$ is the fundamental homology class of~$\hat\Si$.
For each $B\!\in\!H_2(X;\Z)$, let 
$$\fB(X,B)^{\phi,\si}=\big\{F\!\in\!\fB(X)^{\phi,\si}\!:\,F_*[\Si]_{\Z}\!=\!B\big\};$$
this space is empty unless $\phi_*[\hat\Si]_{\Z}\!=\!-[\hat\Si]_{\Z}$.\\

\noindent
For a symplectic manifold $(X,\om)$, we denote~by $\cJ_{\om}$
the space of $\om$-compatible almost complex structures on~$X$.
If $\phi$ is  an anti-symplectic involution on~$(X,\om)$,
let 
$$\cJ_{\phi}=\big\{J\!\in\!\cJ_{\om}\!:\,\phi^*J\!=\!-J\big\}.$$
For a genus~$\hat{g}$ symmetric surface~$(\hat\Si,\si)$, we similarly denote by $\cJ_{\si}$
the space of complex structures on~$\hat\Si$ compatible with the orientation such that 
$\si^*\fJ\!=\!-\fJ$.
For $J\!\in\!\cJ_{\phi}$, $\fJ\!\in\!\cJ_{\si}$, and
$F\!\in\!\fB(X)^{\phi,\si}$, let 
$$\dbar_{J,\fJ}F=\frac{1}{2}\big(\tnd F+J\circ\tnd F\!\circ\!\fJ\big).$$
If $J\!\in\!\cJ_{\phi}$ and $B\!\in\!H_2(X;\Z)$, let
\begin{equation*}\begin{split}
\fM(X,B;J)^{\phi,\si}= \big\{(F,\fJ)\!\in\!\fB(X,B)^{\phi,\si}\!\times\!\cJ_{\si}\,:
~\dbar_{J,\fJ}F\!=\!0\big\}\big/\!\!\sim
\end{split}\end{equation*}
be the moduli space of equivalence classes of degree~$B$ real $J$-holomorphic maps
from $(\hat\Si,\si)$ to~$(X,\phi)$; two $J$-holomorphic maps are equivalent in this space 
if they differ by a diffeomorphism of~$\hat\Si$ commuting with~$\si$.
By \cite{FO,LT} and \cite[Section~7]{Sol},
this moduli space comes with a natural Kuranishi structure;
we call the former \sf{orientable} if the latter~is.\\

\noindent
Mathematical considerations, such as \cite[Section~4]{Klein}, 
\cite[Section~5]{Se91}, \cite[Sections~3,4]{Melissa}, \cite[Section~1.5]{PSW},
and \cite[Section~3]{Teh}, suggest that the map counts arising from involutions~$\si$
 of different topological types should be combined to get well-defined invariants,
as there is a path through one-nodal degenerations between any two 
involutions of different topological types on the same closed oriented surface~$\hat\Si$.
For this reason, for each $\hat{g}\!\in\!\Z^{\ge0}$
we define
\BE{totalspace_e}\fM_{\hat{g}}(X,B;J)^{\phi}
=\bigsqcup_{\si}\fM(X,B;J)^{\phi,\si}\,,\EE
where the disjoint union is taken over representatives for the equivalence classes
of orientation-reversing involutions on a genus~$\hat{g}$ closed connected oriented 
smooth surface~$\hat\Si$.
There are $\left\lfloor\frac{3\hat{g}+4}{2}\right\rfloor$ such classes;
see \cite[Corollary~1.4]{Nat}.
In order to define real invariants, it is essential to study the orientability
of these moduli spaces.
The next two theorems, which are specializations of 
Corollaries~\ref{seorient_crl} and~\ref{etaorient_crl}
of Propositions~\ref{relsign_prp} and~\ref{transfsign_prp},  
describe topological conditions on $(X,\om,\phi)$ insuring that 
the moduli spaces~\eref{totalspace_e} with $\hat{g}\!=\!0,1$ are orientable. 
The $\hat{g}\!\ge\!2$ case is treated in~\cite{GZ3}.

\begin{thm}\label{g0orient_thm}
Let $(X,\om)$ be a symplectic manifold with an anti-symplectic involution~$\phi$,
$J\!\in\!\cJ_{\phi}$, and $B\!\in\!H_2(X;\Z)$.
If $X^{\phi}\!\subset\!X$ is orientable, there exists $\vp\!\in\!H^2(X;\Z_2)$ such~that 
\BE{flipeven0_e} w_2(TX^{\phi})=\vp|_{X^{\phi}} 
\qquad\hbox{and}\qquad 
\frac12\lr{c_1(TX),B}+\lr{\vp,B}\in 2\Z\,,\EE
and $w_2^{\La_{\C}^{\top}\tnd\phi}(\La_{\C}^{\top}TX)\!=\!\ka_{\phi}^2$ for some $\ka_{\phi}\!\in\!H_{\phi}^1(X)$,
then the moduli space $\fM_0(X,B;J)^{\phi}$ is orientable.
\end{thm}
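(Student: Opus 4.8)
The plan is to deduce Theorem~\ref{g0orient_thm} from the relative-sign formulas of Propositions~\ref{relsign_prp} and~\ref{transfsign_prp} and their orientability consequences, Corollaries~\ref{seorient_crl} and~\ref{etaorient_crl}, specialized to genus~$0$. Since $\tau$ and $\eta$ of~\eref{tauetadfn_e} represent the only two equivalence classes of orientation-reversing involutions on $\P^1$, definition~\eref{totalspace_e} gives
$$\fM_0(X,B;J)^{\phi}=\fM(X,B;J)^{\phi,\tau}\sqcup\fM(X,B;J)^{\phi,\eta},$$
and a disjoint union is orientable if and only if each piece is; so it is enough to orient the two summands separately, exploiting that the fixed circle of~$\tau$ separates $\P^1$ into two disks while~$\eta$ acts freely with quotient~$\R\P^2$.

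For the $\tau$-summand I would first cut $\P^1$ along the fixed circle of~$\tau$ to present a real map as a map from the half-surface, a disk with no crosscaps, carrying a boundary condition along~$X^{\phi}$; this is the separating case, and Schwarz reflection identifies $\fM(X,B;J)^{\phi,\tau}$ with the corresponding moduli space of disk maps up to the natural automorphisms (reflection of the domain composed with~$\phi$ on the target). Using the local orientation procedure of~\cite{GZ} for the half-surface moduli space, orientability of $\fM(X,B;J)^{\phi,\tau}$ becomes the vanishing of the first Stiefel-Whitney class of the determinant line bundle of the linearized $\dbar$-operator---equivalently, the triviality of the holonomy of the associated local system of orientations around the generating loops of the moduli space, together with its invariance under the natural automorphisms. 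Propositions~\ref{relsign_prp} and~\ref{transfsign_prp} express each of these $\Z_2$-valued quantities as an explicit characteristic number built from $w_2(TX^{\phi})$, $c_1(TX)$ and~$B$; under the hypotheses that $X^{\phi}$ is orientable and that $w_2(TX^{\phi})=\vp|_{X^{\phi}}$ for some $\vp\!\in\!H^2(X;\Z_2)$ with $\frac12\lr{c_1(TX),B}+\lr{\vp,B}\in2\Z$, all of them vanish, which is the content of Corollary~\ref{seorient_crl} in genus~$0$. This in particular recovers the disk case of \cite{FOOO9,Melissa,Sol,Teh}; see Remark~\ref{Sol_rmk}.

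For the $\eta$-summand the half-surface is a disk with one crosscap, i.e.~$\R\P^2$, so the linearized operator is a real Cauchy-Riemann operator twisted by $(TX,\tnd\phi)$ over the crosscap. Starting again from the \cite{GZ} local orientations, I would reduce orientability of $\fM(X,B;J)^{\phi,\eta}$ to the vanishing of the relevant holonomies and of the relative sign of the crosscap automorphism, and then apply Propositions~\ref{relsign_prp} and~\ref{transfsign_prp}: in this case the obstruction is governed by the $\Z_2$-equivariant class $w_2^{\La_{\C}^{\top}\tnd\phi}(\La_{\C}^{\top}TX)\!\in\!H_{\phi}^2(X)$, and the hypothesis that it equals $\ka_{\phi}^2$ for some $\ka_{\phi}\!\in\!H_{\phi}^1(X)$ forces its restriction to the moduli space to vanish; this is Corollary~\ref{etaorient_crl} in genus~$0$. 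Combining the orientations of the two summands yields the theorem.

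\textbf{Main obstacle.} The genuine work is not in the bookkeeping above but in Propositions~\ref{relsign_prp} and~\ref{transfsign_prp}: computing the relative signs of interchanges of boundary components of the domain and of the $\phi$-action on the determinant line of the linearized operator, without any global orienting structure on~$X$. The expected difficulties are to set up the Fredholm theory and a coherent family of local orientations for maps with crosscaps, to isolate the generating loops and the natural automorphisms of the moduli space, and to evaluate the induced $\Z_2$-monodromies by degenerating to standard and one-nodal configurations while tracking orientations through the gluing---with the crosscap (and hence $\eta$) case, where $\Z_2$-equivariant Stiefel-Whitney classes enter, being the most delicate point, together with matching the resulting $\Z_2$-classes to the stated topological conditions on $(X,\om,\phi)$ and~$B$.
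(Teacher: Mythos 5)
Your proposal follows the paper's own route: decompose $\fM_0(X,B;J)^{\phi}$ into the $\tau$- and $\eta$-summands, reduce to the disk (resp.\ disk-with-crosscap) sh-surface moduli spaces, orient those via the spin-sub-structure machinery of \cite{GZ} and Section~\ref{MSopen_subs}, and then check that the natural automorphisms preserve these orientations using Corollaries~\ref{seorient_crl} and~\ref{etaorient_crl}, which are exactly what Propositions~\ref{relsign_prp} and~\ref{transfsign_prp} (together with Proposition~\ref{DMsgn_prp}) deliver. You correctly identify that the two hypotheses in~\eref{flipeven0_e} drive the $\tau$ case and the square-class condition on $w_2^{\La_{\C}^{\top}\tnd\phi}(\La_{\C}^{\top}TX)$ drives the $\eta$ case, with the genuine work residing in the relative-sign propositions.
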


\begin{thm}\label{g1orient_thm}
Let $(X,\om)$ be a symplectic $2n$-manifold with an anti-symplectic involution~$\phi$,
$J\!\in\!\cJ_{\phi}$, and $B\!\in\!H_2(X;\Z)$.
If $n$ is odd,  $X^{\phi}\!\subset\!X$ is orientable,
there exists a real bundle pair 
$(E,\ti\phi)\!\lra\!(X,\phi)$ such~that 
\BE{flipeven1_e} w_2(TX^{\phi})=w_1(E^{\ti\phi})^2
\qquad\hbox{and}\qquad 
\frac12\lr{c_1(TX),B}+\lr{c_1(E),B}\in 2\Z\,,\EE
and $w_2^{\La_{\C}^{\top}\tnd\phi}(\La_{\C}^{\top}TX)\!=\!0$,  
then the moduli space $\fM_1(X,B;J)^{\phi}$ is orientable.
\end{thm}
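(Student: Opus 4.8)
The plan is to obtain Theorem~\ref{g1orient_thm} as the genus~$1$ instance of the relative-sign machinery of this paper, i.e.~to deduce it from Corollaries~\ref{seorient_crl} and~\ref{etaorient_crl}, which combine the sign formulas of Propositions~\ref{relsign_prp} and~\ref{transfsign_prp} with the local orientation procedure for moduli of maps with crosscaps of~\cite{GZ}. First I would unwind~\eref{totalspace_e} with $\hat{g}\!=\!1$: it is the disjoint union of the $\fM(X,B;J)^{\phi,\ft_i}$ over the three classes $\ft_0,\ft_1,\ft_2$ of~\eref{T2symm_e}, so it suffices to orient each piece separately. The fixed locus of~$\ft_0$ is separating (two circles splitting $\bT$ into two annuli interchanged by~$\ft_0$), so $\fM(X,B;J)^{\phi,\ft_0}$ falls under Corollary~\ref{seorient_crl}; by the footnoted remark it is a moduli space of open $J$-holomorphic maps from an annulus with boundary on $L\!=\!X^\phi$, taken modulo reparametrizations that include the interchange of the two boundary circles. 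The fixed loci of~$\ft_1$ and~$\ft_2$ are non-separating (one circle) and empty, so the associated oriented symmetric half-surfaces carry one and two crosscaps respectively, and $\fM(X,B;J)^{\phi,\ft_1}$, $\fM(X,B;J)^{\phi,\ft_2}$ fall under Corollary~\ref{etaorient_crl}.

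By~\cite{GZ}, each $\fM(X,B;J)^{\phi,\ft_i}$ carries a canonical local system of orientations, and orientability amounts to the triviality of its monodromy; I would organize the latter into three parts and match each to a hypothesis of the theorem. The first part is the monodromy within the moduli space of maps from a domain of fixed combinatorial type, governed by the orientability criterion of~\cite{GZ}: orientability of~$X^\phi$ kills the $w_1(TX^\phi)$-terms; the relative-spin-type identity $w_2(TX^\phi)\!=\!w_1(E^{\ti\phi})^2$, equivalently $w_2(E)|_{X^\phi}$ (using $E^{\ti\phi}\!\otimes_{\R}\!\C\!\cong\!E|_{X^\phi}$), supplies the twisted structure needed along the boundary circles; $w_2^{\La_\C^{\top}\tnd\phi}(\La_\C^{\top}TX)\!=\!0$ supplies it along the crosscaps; and $\frac12\lr{c_1(TX),B}\!+\!\lr{c_1(E),B}\!\in\!2\Z$ absorbs the dependence on~$B$. (For the $\ft_0$-annulus this part recovers the relative-spin orientability of open-map moduli in the style of~\cite{Sol,FOOO9}.) The second part is the sign of the interchange of the two boundary circles of the $\ft_0$-annulus, given by Proposition~\ref{relsign_prp}. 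The third part is the signs of the action of~$\tnd\phi$ and of the crosscap-permuting diffeomorphisms of the $\ft_1$- and $\ft_2$-half-surfaces, given by Propositions~\ref{relsign_prp} and~\ref{transfsign_prp}. The hypothesis ``$n$ odd'' is what kills the residual parity contributions to the second and third parts: for a genus~$1$ half-surface the Euler-characteristic terms in those sign formulas vanish, leaving a purely $n$-dependent sign that is $+1$ precisely in the odd case. It is also the reason Theorem~\ref{g1orient_thm} imposes the full vanishing of $w_2^{\La_\C^{\top}\tnd\phi}(\La_\C^{\top}TX)$ rather than only its being a square as in Theorem~\ref{g0orient_thm}: the $\ft_2$-half-surface has two crosscaps, and a single crosscap-squared correction no longer cancels their combined contribution.

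The main obstacle I anticipate is the second step, and within it the fact that the bookkeeping must be done for the three components simultaneously: one must produce a \emph{single} real bundle pair $(E,\ti\phi)$ that trivializes all three parts of the monodromy for $\ft_0$, $\ft_1$, and $\ft_2$ at once, so that the conjunction of the three displayed hypotheses with ``$n$ odd'' is exactly what collapses every sign produced by Propositions~\ref{relsign_prp} and~\ref{transfsign_prp} --- specialized to the two-boundary-circle annulus and to the one- and two-crosscap half-surfaces --- to $+1$. A subsidiary technical point is to confirm the identity $w_2(E)|_{X^\phi}\!=\!w_1(E^{\ti\phi})^2$ for a real bundle pair and, more broadly, to track how the $\Z_2$-reductions of $c_1(E)$ and $w_1(E^{\ti\phi})$ interact with the integral constraint on $c_1(TX)$ and~$B$, so that the relative-spin hypothesis as stated in Theorem~\ref{g1orient_thm} matches the form in which it feeds Corollaries~\ref{seorient_crl} and~\ref{etaorient_crl}.
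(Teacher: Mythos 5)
Your proposal takes essentially the same route as the paper: decompose $\fM_1(X,B;J)^{\phi}$ into the $\ft_0$-, $\ft_1$-, $\ft_2$-components, orient the annulus-with-two-Lagrangian-boundaries piece via Corollary~\ref{seorient_crl} (with $w\!=\!0$, $\vp\!=\!w_2(E)$, using $w_2(E)|_{X^{\phi}}\!=\!w_1(E^{\ti\phi})^2$ and $w_2(E)\!\equiv\!c_1(E)\pmod 2$), and orient the one- and two-crosscap pieces via Corollary~\ref{etaorient_crl}, with $n$ odd absorbing the residual boundary-permutation sign. One small mislabeling to note: the sign of interchanging boundary circles comes from Proposition~\ref{transfsign_prp}, not Proposition~\ref{relsign_prp} (which controls the conjugation); and for the annulus ($2g\!+\!m\!<\!3$) the Deligne-Mumford sign of Proposition~\ref{DMsgn_prp} is replaced by tracking the $S^1$-reparametrization action, as in part~(2) of the proof of Corollary~\ref{seorient_crl}.
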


\noindent
The first two conditions in Theorem~\ref{g0orient_thm} insure that 
the moduli space $\fM(X,B;J)^{\phi,\tau}$ is orientable.
A special case of this result, which is also captured by \cite[Proposition~5.1]{Sol}, 
can be seen as corresponding to \cite[Theorem~1.3]{FOOO9}, 
which involves a less readily checkable condition.
In fact, this part of Theorem~\ref{g0orient_thm} immediately establishes 
\cite[Proposition~3.14]{FOOO9}; see Corollary~\ref{FOOO9prp314_crl}.
The $\Si\!=\!D^2$ case of Corollary~\ref{seorient_crl}, which implies the $\tau$-orientability
part of Theorem~\ref{g0orient_thm}, relaxes the second condition 
beyond what the assumptions of \cite[Proposition~5.1]{Sol} allow.
The last condition in Theorem~\ref{g0orient_thm} insures that 
the moduli space $\fM(X,B;J)^{\phi,\eta}$ is orientable.
By Corollary~\ref{SQrt_crl}, it is satisfied if either $\pi_1(X)\!=\!0$
and $w_2(TX)\!=\!0$ or $\La_{\C}^{\top}(TX,\tnd\phi)$ admits a \sf{real square root}, 
i.e.~there exist a rank~1 real bundle pair $(L,\ti\phi)\!\lra\!(X,\phi)$
and  an isomorphism of real bundle pairs
$$ \La_{\C}^{\top}(TX,\tnd\phi)\approx (L,\ti\phi)^{\otimes2}\,.$$
As explained in \cite[Section~1]{GZ}, this result on the orientability 
of the $\eta$-moduli space is readily implied by \cite[Theorem~1.1]{GZ}
and contains \cite[Theorem~1.3]{Teh}.\\

\noindent
The first three conditions in Theorem~\ref{g1orient_thm},
which are relaxed in Corollary~\ref{seorient_crl},
insure that the moduli space $\fM(X,B;J)^{\phi,\ft_0}$ is orientable.
The last three conditions insure that the moduli space $\fM(X,B;J)^{\phi,\ft_1}$ is orientable;
they are relaxed in Corollary~\ref{etaorient_crl}.
The first and last conditions in Theorem~\ref{g1orient_thm},
which are also relaxed in Corollary~\ref{etaorient_crl}, imply that 
the moduli space $\fM(X,B;J)^{\phi,\ft_2}$ is orientable.
By Corollary~\ref{SQrt_crl}, the condition on the equivariant
Stiefel-Whitney class in Theorem~\ref{g1orient_thm} is satisfied
if $\La_{\C}^{\top}(TX,\tnd\phi)$ admits a real square root.
Special cases of Theorem~\ref{g1orient_thm} have been obtained independently 
in \cite{Remi,Teh3} using different methods.\\

\noindent
The paradigmatic example of a symplectic manifold with an anti-symplectic involution
is the complex projective space~$\P^{n-1}$ with the standard 
Fubini-Study symplectic forms~$\om_n$ and the standard conjugation
$$\tau_n\!:\P^{n-1}\lra\P^{n-1}, \qquad [Z_1,\ldots,Z_n]\lra[\bar{Z}_1,\ldots,\bar{Z}_n];$$
the fixed point locus of this involution is~$\R\P^{n-1}$.
An odd-dimensional projective space also admits an involution without fixed points,
$$\eta_{2m}\!: \P^{2m-1}\lra\P^{2m-1},\qquad
[Z_1,Z_2,\ldots,Z_{2m-1},Z_{2m}]\lra 
\big[-\bar{Z}_2,\bar{Z}_1,\ldots,-\bar{Z}_{2m},\bar{Z}_{2m-1}\big].$$
If $k\!\ge\!0$, $\a\equiv(a_1,\ldots,a_k)\in(\Z^+)^k$,
and $X_{n;\a}\!\subset\!\P^{n-1}$ is a complete intersection of multi-degree~$\a$
preserved by~$\tau_n$,  $\tau_{n;\a}\!\equiv\!\tau_n|_{X_{n;\a}}$
is an anti-symplectic involution on $X_{n;\a}$ with respect to the symplectic form
$\om_{n;\a}\!\equiv\!\om_n|_{X_{n;\a}}$. 
Similarly, if $X_{2m;\a}\!\subset\!\P^{2m-1}$ is preserved by~$\eta_{2m}$,
$\eta_{2m;\a}\!\equiv\!\eta_{2m}|_{X_{2m;\a}}$
is an anti-symplectic involution on $X_{2m;\a}$ with respect to the symplectic form
$\om_{2m;\a}\!\equiv\!\om_{2m}|_{X_{2m;\a}}$.

\begin{crl}\label{CIorient_crl}
Let $n\!\in\!\Z^+$, $k\!\in\!\Z^{\ge0}$, $\a\!\equiv\!(a_1,\ldots,a_k)\!\in\!(\Z^+)^k$, 
and $B\!\in\!H_2(X_{n;\a};\Z)$. 
\begin{enumerate}[label=(\arabic*),leftmargin=*]
\item If $X_{n;\a}\!\subset\!\P^{n-1}$ is a complete intersection of multi-degree~$\a$
preserved by~$\tau_n$,
$$\sum_{i=1}^ka_i\equiv n \mod2, \qquad\hbox{and}\qquad
\sum_{i=1}^ka_i^2\equiv \sum_{i=1}^ka_i \mod4,$$
then the moduli space  $\fM_0(X_{n;\a},B;J)^{\tau_{n;\a}}$
is orientable for every $J\!\in\cJ_{\tau_{n;\a}}$.
If in addition $n\!-\!k$ is even, then the moduli space $\fM_1(X_{n;\a},B;J)^{\tau_{n;\a}}$
is also orientable.
\item If $n\!=\!2m$, $X_{n;\a}$ is a complete intersection of multi-degree~$\a$
preserved by $\eta_{2m}$, and 
$$a_1\!+\!\ldots\!+\!a_k\equiv n \mod2,$$
then the moduli space  $\fM_0(X_{n;\a},B;J)^{\eta_{n;\a}}$
is orientable for every $J\!\in\cJ_{\eta_{n;\a}}$.
If in addition $k$ is even and $a_1\!+\!\ldots\!+\!a_k\equiv n \mod4$,
the moduli space $\fM_1(X_{n;\a},B;J)^{\eta_{n;\a}}$ is also orientable.
\end{enumerate}
\end{crl}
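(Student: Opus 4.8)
The plan is to deduce Corollary~\ref{CIorient_crl} by checking, in terms of the multi-degree~$\a$, the hypotheses of Theorems~\ref{g0orient_thm} and~\ref{g1orient_thm} for $(X,\om,\phi)=(X_{n;\a},\om_{n;\a},\tau_{n;\a})$ in part~(1) and for $(X,\om,\phi)=(X_{2m;\a},\om_{2m;\a},\eta_{2m;\a})$ in part~(2); by~\eqref{totalspace_e}, the first two conditions of Theorem~\ref{g0orient_thm} control the $\tau$-summand of $\fM_0(X,B;J)^{\phi}$ and its last condition controls the $\eta$-summand, while the hypotheses of Theorem~\ref{g1orient_thm} simultaneously control the summands $\fM(X,B;J)^{\phi,\ft_0}$, $\fM(X,B;J)^{\phi,\ft_1}$, $\fM(X,B;J)^{\phi,\ft_2}$ of $\fM_1(X,B;J)^{\phi}$. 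The input for these checks is the $\phi$-equivariant normal bundle sequence
$$0\lra TX_{n;\a}\lra T\P^{n-1}|_{X_{n;\a}}\lra\bigoplus_{i=1}^k\cO(a_i)|_{X_{n;\a}}\lra0$$
(the defining ideal of~$X_{n;\a}$ being $\phi$-invariant) together with the Euler sequence of~$T\P^{n-1}$. Writing $H=c_1(\cO(1))$ and $\ov H$ for its mod~$2$ reduction, these give $c_1(TX_{n;\a})=(n-\sum a_i)H|_{X_{n;\a}}$, hence $w_2(TX_{n;\a})=(n-\sum a_i)\ov H|_{X_{n;\a}}$, and an isomorphism of real bundle pairs $\La_{\C}^{\top}(TX_{n;\a},\tnd\phi)\cong(\cO(1),\ti\phi)^{\otimes(n-\sum a_i)}|_{X_{n;\a}}$, where $\ti\phi$ is the natural conjugate-linear lift of~$\phi$ to~$\cO(1)$; here $(\ti\tau_n)^2=\id$ but $(\ti\eta_{2m})^2=-\id$, so for~$\eta_{2m}$ it is cleaner to work with the genuine rank~$1$ real bundle pair $(\cO(2),\ti\eta_{2m}^{\otimes2})$.

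I would first treat the target involution~$\tau_{n;\a}$, with fixed locus $Y=X_{n;\a}^{\tau_{n;\a}}\subset\R\P^{n-1}$, the real complete intersection of multi-degree~$\a$. With $a\in H^1(\R\P^{n-1};\Z_2)$ the generator, so that $\ov H|_{\R\P^{n-1}}=a^2$, the normal bundle sequence over~$Y$ gives $w(TY)=(1+a)^{n-\ell}|_Y$ with $\ell=\#\{i:a_i\text{ odd}\}$. Thus $n\equiv\sum a_i\pmod2$ makes $w_1(TY)=0$, so $Y$ is orientable, and forces $w_2(TX_{n;\a})=0$; in that case $w_2(TY)=\binom{n-\ell}{2}a^2|_Y$. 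Taking $\vp=\binom{n-\ell}{2}\ov H|_{X_{n;\a}}$ in Theorem~\ref{g0orient_thm} and $(E,\ti\phi)=(\cO(1),\ti\tau_n)^{\otimes\binom{n-\ell}{2}}|_{X_{n;\a}}$ in Theorem~\ref{g1orient_thm} then satisfies the first identity of~\eqref{flipeven0_e}, resp.~\eqref{flipeven1_e}, outright, and the second reduces to the congruence $\binom{n-\ell}{2}\equiv\tfrac12(n-\sum a_i)\pmod2$, i.e.\ to $n-\ell\equiv n-\sum a_i\pmod4$, i.e.\ to $\sum a_i\equiv\ell\pmod4$; since $a_i^2\equiv1\pmod4$ for $a_i$ odd and $\equiv0\pmod4$ for $a_i$ even, $\ell\equiv\sum a_i^2\pmod4$, so this is precisely the second numerical hypothesis. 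The genus~$1$ statement needs in addition that $\dim_{\C}X_{n;\a}=n-1-k$ be odd, i.e.\ that $n-k$ be even.

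For the target involution~$\eta_{2m;\a}$ the fixed locus $X_{2m;\a}^{\eta_{2m;\a}}\subset(\P^{2m-1})^{\eta_{2m}}=\eset$ is empty, so the conditions in~\eqref{flipeven0_e} and~\eqref{flipeven1_e} involving $w_2(TX^{\phi})$ and $w_1(E^{\ti\phi})$ are vacuous. With $n=2m$ and $\sum a_i\equiv n\pmod2$, the integer $n-\sum a_i$ is even, so $\tfrac12\lr{c_1(TX_{2m;\a}),B}=\tfrac{n-\sum a_i}{2}\lr{H|_{X_{2m;\a}},B}$; choosing $\vp=\tfrac{n-\sum a_i}{2}\ov H|_{X_{2m;\a}}$ makes the remaining congruence of~\eqref{flipeven0_e} automatic, and for~\eqref{flipeven1_e} taking~$E$ to be the trivial real bundle pair works once $\tfrac{n-\sum a_i}{2}$ is even, i.e.\ once $\sum a_i\equiv n\pmod4$; the genus~$1$ statement again requires $\dim_{\C}X_{2m;\a}=2m-1-k$ odd, i.e.\ $k$ even.

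The hypothesis that remains, and the one I expect to require the most care, is the equivariant Stiefel-Whitney condition --- $w_2^{\La_{\C}^{\top}\tnd\phi}(\La_{\C}^{\top}TX)=\ka_{\phi}^2$ for Theorem~\ref{g0orient_thm} and $=0$ for Theorem~\ref{g1orient_thm} --- which I would verify through Corollary~\ref{SQrt_crl}. For~$\tau_{n;\a}$ one has $\La_{\C}^{\top}(TX_{n;\a},\tnd\tau_n)\cong\big((\cO(1),\ti\tau_n)^{\otimes s}|_{X_{n;\a}}\big)^{\otimes2}$ with $s=\tfrac12(n-\sum a_i)\in\Z$, so this bundle pair admits a real square root and the condition holds in both genera, with no restriction on the dimension. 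For~$\eta_{2m;\a}$ in genus~$1$ the hypothesis $\sum a_i\equiv n\pmod4$ makes $q:=\tfrac12(n-\sum a_i)$ even, so $\La_{\C}^{\top}(TX_{2m;\a},\tnd\eta_{2m})\cong(\cO(2),\ti\eta_{2m}^{\otimes2})^{\otimes q}|_{X_{2m;\a}}$ is again a real square. For~$\eta_{2m;\a}$ in genus~$0$, where only $\sum a_i\equiv n\pmod2$ is available, I would instead use the second clause of Corollary~\ref{SQrt_crl}: $w_2(TX_{2m;\a})=0$ was observed above, and $\pi_1(X_{2m;\a})\cong\pi_1(\P^{2m-1})=0$ by the Lefschetz hyperplane theorem provided $\dim_{\C}X_{2m;\a}\ge2$. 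The one genuine loose end is thus the finitely many $X_{n;\a}$ with $\dim_{\C}X_{n;\a}\le1$, which lie outside the Lefschetz range; these are points and complete-intersection curves, and there the relevant moduli spaces are either empty or reduce to orientable spaces of constant maps, so the statements hold directly. Collecting the verifications over the two domain involutions in genus~$0$ and the three in genus~$1$ then gives Corollary~\ref{CIorient_crl}.
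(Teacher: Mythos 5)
Your proposal is correct and takes essentially the same route as the paper: deduce the corollary from Theorems~\ref{g0orient_thm} and~\ref{g1orient_thm} by choosing $\vp$ and $(E,\ti\phi)$ appropriately, with the underlying cohomological input coming from the equivariant Euler and conormal sequences. Your explicit choice $\vp=\binom{n-\ell}{2}\ov{H}|_{X_{n;\a}}$ (and the corresponding power of $\cO(1)$ for~$E$) coincides, under the hypothesis $\sum a_i^2\equiv\sum a_i\pmod4$, with the paper's choice of $\vp=0$, $E$ trivial when $\sum a_i\equiv n\pmod4$ and $\vp=c_1(\cO(1))|_{X_{n;\a}}$, $E=\cO(1)|_{X_{n;\a}}$ otherwise; you merely spell out the normal-bundle computation, the application of Corollary~\ref{SQrt_crl} to the equivariant $w_2$ condition, the care required for $\ti\eta_{2m}$ squaring to $-\id$, and the low-dimensional exceptions to Lefschetz, all of which the paper subsumes in the phrase that the corollary "follows immediately."
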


\noindent
This corollary follows immediately from Theorems~\ref{g0orient_thm} and~\ref{g1orient_thm}.
If $a_1\!+\!\ldots\!+\!a_k\!\equiv\!n\mod4$, we take $\vp\!=\!0$ and
$E$ to be the trivial rank~0 vector bundle for the purposes of applying these
two theorems.
Otherwise, we take 
$$\vp=c_1(\cO_{\P^{n-1}}(1))|_{X_{n;\a}}
\qquad\hbox{and}\qquad
(E,\ti\phi)=\big(\cO_{\P^{n-1}}(1),\ti\tau_{n-1}\big)\big|_{X_{n;\a}}\,,$$
where $\cO_{\P^{n-1}}(1)\!\lra\!\P^{n-1}$ is the hyperplane line bundle
with the involution~$\ti\tau_n$ canonically induced by~$\tau_n$.
The idea of introducing an additional bundle~$(E,\ti\phi)$ in Theorem~\ref{g1orient_thm}
is directly motivated by~\cite{Ge2} and provides a way of bypassing the requirements
that $X^{\phi}$ be spin and $\lr{c_1(TX),B}$ be divisible by~4, 
which appear in the genus~0 orientability results in~\cite{FOOO9} and 
the genus~1 orientability results of \cite{Remi,Teh3}.
It can also be applied in Theorem~\ref{g0orient_thm}, instead of using~$\vp$,
but the latter is the more customary relative spin condition on~$TX^{\phi}$.
In \cite{GZ3}, we extend Corollary~\ref{CIorient_crl} to arbitrary genus.\\

\noindent
Theorems~\ref{g0orient_thm} and~\ref{g1orient_thm} are obtained by reducing 
the orientability problem for the moduli spaces 
$\fM(X,B;J)^{\phi,\si}$ of maps from $(\hat\Si,\si)$ to orientability questions 
about the moduli spaces  $\fM(X,B;J)^{\phi,c}$ of maps from a corresponding 
\sf{oriented sh-surface} $(\Si,c)$, where $\Si$ is a bordered oriented smooth
surface and $c\!:\prt\Si\!\lra\!\prt\Si$ is an orientation-preserving involution;
see Section~\ref{analys_subs}.
The former moduli space is the  quotient of 
the latter moduli space by automorphisms, which include the involution
\BE{conjaut_e} [\fJ,f]\lra [-\fc_{\Si}^*\fJ,\phi\circ f\circ \fc_{\Si}],\EE
where $\fc_{\Si}\!:\Si\!\lra\!\Si$ is an orientation-reversing involution.
The remaining automorphisms involve diffeomorphisms of $\Si$ that interchange 
its boundary components. 
The second restrictions in~\eref{flipeven0_e} and in~\eref{flipeven1_e}
and the dimensional condition in Theorem~\ref{g1orient_thm} imply that the relevant 
automorphisms on the moduli spaces $\fM(X,B;J)^{\phi,c}$ are orientation-preserving;
the remaining conditions insure that these moduli spaces of maps with crosscaps
are orientable.\\

\noindent
The orientability of the moduli spaces $\fM(X,B;J)^{\phi,c}$
of maps with crosscaps is studied in~\cite{GZ}.
In this paper, we compute the effect of the actions of the natural
automorphisms of these spaces on local orientations;
see Propositions~\ref{relsign_prp} and~\ref{transfsign_prp}. 
If $\Si\!=\!\P^1$, \eref{conjaut_e} is the only relevant automorphism and 
is equivalent to the automorphism 
\BE{conjaut_e2} [\fJ_0,f]\lra [\fJ_0,\phi\circ f\circ \fc_{D^2}],\EE
where $\fJ_0$ is the standard complex structure on $D^2$ and 
$\fc_{D^2}$ is the standard anti-holomorphic involution (conjugation) on~$D^2$.
By \cite[Section~2.1]{Teh} and the proof of Corollary~\ref{etaorient_crl}, 
this automorphism is orientation-preserving 
in the case $\si\!=\!\eta$ if the appropriate moduli space $\fM(X,B;J)^{\phi,c}$ is orientable
and has no effect on the relative sign of the automorphism~\eref{conjaut_e2}
in general.
In the case $\si\!=\!\tau$, the purely cohomological condition~\eref{flipeven0_e}
in Theorem~\ref{g0orient_thm} replaces the rather artificial 
case-by-case conditions in~\cite{FOOO9}; see Section~\ref{FOOO_subs} for more details.\\

\noindent
Theorems~\ref{g0orient_thm} and~\ref{g1orient_thm} can be extended to 
the corresponding moduli spaces $\fM_{k,l}(X,B;J)^{\phi,\si}$ of 
real maps with $k$~boundary and $l$~interior marked points,
as the effect of adding marked points on the sign of the relevant automorphisms
can be easily determined.
The introduction of decorated marked points on oriented sh-surfaces in~\cite{Ge2} 
in fact removes 
the need to consider the effect of conjugate pairs of marked points,
while making the resulting invariants precisely agree with counts of real curves
in sufficiently positive symplectic manifolds.\footnote{In contrast,
the invariants of~\cite{Sol}, for example, differ by~$\pm2^{l-1}$.}\\

\noindent
The moduli spaces $\fM_{k,l}(X,B;J)^{\phi,\si}$ typically have codimension-one boundary
and often of more than one type.
The codimension-one boundary stratum consisting of maps from $\hat\Si$ with
a bubble attached at a real point of the domain can be eliminated by the gluing 
of procedure of \cite{Cho,Sol}, which is adapted to maps with decorated 
marked points in \cite[Section~3]{Ge2}.
By \cite[Theorems~1.3]{Ge2}, the proof of \cite[Corollary~6.1]{Ge2}, 
Propositions~\ref{relsign_prp} and~\ref{transfsign_prp}, and Corollary~\ref{transfsign_crl},
the glued  moduli spaces $\wt\fM_{\hat{g},0,l}(X,B;J)^{\phi}$,
with $\hat{g}\!=\!0,1$,
are still orientable under the conditions of Theorems~\ref{g0orient_thm} and~\ref{g1orient_thm}.
For $k\!>\!0$,  $\wt\fM_{\hat{g},k,l}(X,B;J)^{\phi}$ is typically not orientable,
but it may still be possible to define invariants in some cases, 
as done in \cite[Section~7.1]{Ge2}.\\

\noindent
The remaining types of codimension-one boundary strata of $\fM_{\hat{g},0,l}(X,B;J)^{\phi}$
correspond to one-nodal degenerations of~$\hat\Si$ passing between
involutions on~$\hat\Si$ of different topological types, 
as described in detail in  \cite[Section~4]{Klein}, 
\cite[Section~5]{Se91}, and~\cite[Sections~3,4]{Melissa}.
As suggested in \cite[Section~1.5]{PSW} and carried out in \cite[Section~3]{Teh} 
in the case $\hat\Si\!=\!\P^1$, the moduli spaces $\fM_{0,l}(X,B;J)^{\phi,\si}$
with different types of involutions~$\si$ on~$\hat\Si$ should in general be combined
to get well-defined invariants by gluing along codimension-one boundaries.
However, in positive genus, it is simpler to consider such transitions
in the setting of oriented sh-surfaces.
We intend to pursue this approach to real Gromov-Witten invariants in a future paper,
making use of the preliminaries developed in~\cite{GZ} and
of the signs of the natural automorphisms provided by 
Propositions~\ref{relsign_prp} and~\ref{transfsign_prp}.\\

\noindent
We conclude this section by describing topological conditions on $(X,\om,\phi)$
insuring that
the actions of the natural automorphisms on topological components of the moduli space
$$\fM_{\Si}(X,X^{\phi},B;J)\equiv\fM(X,B;J)^{\phi,\id_{\Si}}$$
of $J$-holomorphic maps $(\Si,\prt\Si)\!\lra\!(X,X^{\phi})$ are  
orientation-preserving.
On one hand, these automorphisms being orientation-preserving is equivalent
to the moduli space $\fM(X,B;J)^{\phi,\si}$,
where $\si$ is the involution on~$\hat\Si$ with $\hat\Si^{\si}\!=\!\prt\Si$,
being orientable.
On the other hand, notable Floer-theoretic applications of this property 
in the $\Si\!=\!D^2$ case are demonstrated in~\cite{FOOO9};
in light of recent work on higher-genus generalizations of Floer homology,
it is reasonable to expect similar applications of this property
in other cases as well.\\

\noindent
Let $(\prt\Si)_1,\ldots,(\prt\Si)_m$ be the boundary components of~$\Si$.
For each 
\BE{btupldfn_e}\b\equiv (\be,b_1,\ldots,b_m)\in 
H_2(X,X^{\phi};\Z)\oplus H_1(X^{\phi};\Z)^{\oplus m}\,,\EE
we denote by $\fM(X,X^{\phi},\b;J)$ 
the space of equivalence classes of pairs $(f,\fJ)$, where $\fJ$ is 
a complex structure on~$\Si$ compatible with the orientation and 
$$f\!:(\Si,\prt\Si)\lra(X,X^{\phi}) \qquad\hbox{s.t.}\quad
\dbar_{J,\fJ}f=0,\quad
f_*[\Si,\prt\Si]_{\Z}=\be, \quad f_*[(\prt\Si)_i]=b_i~~\forall\,i.$$
Every diffeomorphism of $\Si$, orientation-preserving or orientation-reversing,
induces an automorphism of this moduli space.
For $\be$ as in~\eref{btupldfn_e}, let $\fd(\be)\!\in\!H_2(X;\Z)$ denote
the natural $\phi$-double of~$\be$; see \cite[Section~3]{Ge2}.

\begin{thm}\label{seorient_thm}
Let $(X,\om)$ be a symplectic $2n$-manifold with an anti-symplectic involution~$\phi$,
$J\!\in\!\cJ_{\phi}$, $\Si$ be an oriented bordered surface,
and $\b$ be as in~\eref{btupldfn_e}.
If $b_i\!=\!b_j$ for some $i\!\neq\!j$, assume also that $n$ is odd.
If $X^{\phi}\!\subset\!X$ is orientable and there exists $\vp\!\in\!H^2(X;\Z_2)$ 
satisfying~\eref{flipeven0_e} with $B\!=\!\fd(\be)$,  then
$\fM(X,X^{\phi},\b;J)$ can be oriented by a relative spin structure 
so that the natural automorphisms of this moduli space
induced by diffeomorphisms of~$\Si$ are orientation-preserving.
\end{thm}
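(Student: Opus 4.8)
The plan is to deduce Theorem~\ref{seorient_thm} from Propositions~\ref{relsign_prp} and~\ref{transfsign_prp} (and Corollary~\ref{transfsign_crl}), together with the orientability results for moduli spaces of maps with crosscaps established in~\cite{GZ}. First I would fix a topological component of $\fM(X,X^{\phi},\b;J)$, i.e.\ fix $\b$ as in~\eref{btupldfn_e}, and view this space as $\fM(X,B;J)^{\phi,\id_{\Si}}$ with the $\phi$-double $B\!=\!\fd(\be)$ playing the role of the degree on the symmetric double $\hat\Si$; the relevant involution $\si$ on $\hat\Si$ is the one with $\hat\Si^{\si}\!=\!\prt\Si$, which has separating fixed locus. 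The group of automorphisms of $\fM(X,X^{\phi},\b;J)$ that must be shown to be orientation-preserving is generated by (i) diffeomorphisms of $\Si$ fixing each boundary component, (ii) diffeomorphisms permuting boundary components, and (iii) the combination of an orientation-reversing diffeomorphism of $\Si$ with the action of $\phi$ (the automorphism~\eref{conjaut_e}). I would treat these three families separately, since the sign contributions are computed by different inputs.

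The main step is to invoke the sign formulas. A relative spin structure on $(X,X^{\phi})$ — which exists precisely because $X^{\phi}$ is orientable and there is $\vp\!\in\!H^2(X;\Z_2)$ with $w_2(TX^{\phi})\!=\!\vp|_{X^{\phi}}$, the first half of~\eref{flipeven0_e} with $B\!=\!\fd(\be)$ — determines a local (in fact global, on each component) orientation of $\fM(X,X^{\phi},\b;J)$; this is the classical construction of~\cite{FOOOold,Sol}, also recovered in~\cite{GZ}. For diffeomorphisms in family (i), Proposition~\ref{relsign_prp} expresses the relative sign of the induced automorphism in terms of characteristic-class data; one checks that with the chosen relative spin structure this sign vanishes (this is the part already contained in \cite{FOOO9,Melissa,Sol,Teh}, cf.\ Remark~\ref{Sol_rmk}). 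For family (ii), a transposition of two boundary components $(\prt\Si)_i$ and $(\prt\Si)_j$ is only an automorphism when $b_i\!=\!b_j$; Proposition~\ref{transfsign_prp} (in its open-map form, see the footnote to the introduction) computes the relative sign of such an interchange, and the assumption that $n$ is odd in exactly this case is what forces that sign to be trivial — this is the dimensional parity input, analogous to the role of "$n$ odd" in Theorem~\ref{g1orient_thm}. For family (iii), the conjugation automorphism~\eref{conjaut_e}, the relevant sign is governed by the second half of~\eref{flipeven0_e}, namely $\tfrac12\lr{c_1(TX),\fd(\be)}+\lr{\vp,\fd(\be)}\in 2\Z$; Proposition~\ref{transfsign_prp} together with Corollary~\ref{transfsign_crl} identifies this integer (mod~$2$) with the relative sign, so the hypothesis makes it even. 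Combining, every generator acts orientation-preservingly, hence so does the whole group.

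I would then assemble these facts: the relative spin structure orients $\fM(X,X^{\phi},\b;J)$, and the three computations above show each type of natural automorphism preserves that orientation, which is the assertion of the theorem. A small bookkeeping point to handle carefully is that a general diffeomorphism of $\Si$ is a composite of elements of the three families up to isotopy, and that isotopic diffeomorphisms induce the same map on the (discrete) set of orientations, so it suffices to check the generators; I would cite the relevant mapping-class-group generation statement. The part I expect to be the genuine obstacle is matching the cohomological quantity $\tfrac12\lr{c_1(TX),\fd(\be)}+\lr{\vp,\fd(\be)}$ to the output of Proposition~\ref{transfsign_prp}: one must pass correctly between data on $(\Si,\prt\Si)$ — where $\vp$ and the relative $\lr{c_1,\be}$ naturally live — and data on the double $\hat\Si$ in degree $\fd(\be)$, keeping track of how the doubling operation affects $c_1(TX)$-pairings and how $\vp$ restricts. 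This is precisely the translation between the open and real pictures set up in~\cite[Section~3]{Ge2} and Section~\ref{analys_subs}, and the careful identification of the relative sign with the stated parity condition is where the real content of the argument sits; the rest is an orchestration of already-established propositions.
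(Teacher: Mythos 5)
Your overall strategy is right — orient by a relative spin structure, then show each generator of the automorphism group preserves that orientation by comparing signs — and you correctly identify the parity condition \eref{flipeven0_e} as the input that kills the conjugation sign and $n$ odd as the input controlling boundary interchanges. But there is a genuine gap: you compute the sign only on the determinant line of the Cauchy--Riemann operator (via Propositions~\ref{relsign_prp} and~\ref{transfsign_prp}, and Corollary~\ref{transfsign_crl}), and nowhere account for the contribution of the Deligne--Mumford factor (or, for the disk and cylinder, the residual automorphism groups $\PGL_2^0\R$ and $S^1$). The tangent space of $\fM_\Si(X,X^\phi,\b;J)$ is $\det(D_{TX,\tnd\phi})\otimes T\cM_\Si$, and the total sign of a natural automorphism is the product of the two contributions. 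Concretely, for an orientation-preserving $h$ inducing an odd boundary permutation, Corollary~\ref{transfsign_crl}(1) with Remark~\ref{Vorient_rmk} gives sign $(-1)^{n\sgn_h}=-1$ on $\det(D_{TX,\tnd\phi})$ when $n$ is odd — the opposite of what you assert. The $+1$ you need only appears after multiplying by the Deligne--Mumford sign $(-1)^{\sgn_h}=-1$ from Proposition~\ref{DMsgn_prp}(1). Without invoking Proposition~\ref{DMsgn_prp} (which is one of the three ``key statements'' the introduction flags and which the paper's proof of Corollary~\ref{seorient_crl} uses explicitly), your argument for family (ii) would reach the wrong conclusion.

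Two smaller misattributions: Proposition~\ref{relsign_prp} computes the relative sign of the \emph{conjugation} (your family (iii), not (i)); for orientation-preserving diffeomorphisms preserving each boundary component, the action on the moduli space is trivial since such diffeomorphisms are already quotiented out (and Corollary~\ref{transfsign_crl}(1) gives a trivial sign anyway). The paper's proof also does not proceed by a mapping-class-group generation argument; instead, it splits into three cases by the topology of $\Si$ ($D^2$, cylinder, general), handling the residual automorphism group or $\cM_\Si$ in each case and combining with Corollary~\ref{transfsign_crl}. Your generator-based organization would be fine, but the Deligne--Mumford sign must enter in every case.
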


\noindent
This theorem is implied by Corollary~\ref{seorient_crl} of 
Propositions~\ref{relsign_prp} and~\ref{transfsign_prp}, which establishes the same
conclusion under weaker assumptions.
In the case of the natural automorphisms induced by diffeomorphisms preserving
the boundary components, Theorem~\ref{seorient_thm} recovers 
the orientable case of \cite[Proposition~5.1]{Sol};
the full statement of the latter is recovered in Remark~\ref{Sol_rmk}.
In the case of the natural automorphisms induced by diffeomorphisms preserving
the orientation, Theorem~\ref{seorient_thm}  applies to arbitrary Lagrangians,
as the involution~$\phi$ plays no role then.\\

\noindent
This paper is organized as follows.
Section~\ref{analys_subs} reviews the analytic setup and Fredholm theory 
for oriented sh-surfaces.
In Section~\ref{DMsign_subs}, we determine the signs of natural automorphisms 
on the Deligne-Mumford moduli spaces of bordered Riemann surfaces,
establishing one of the three key statements in this paper.
In Section~\ref{topol_subs}, we make a number of simple, but useful, topological observations
that  naturally fit  with the orientability problem in Gromov-Witten theory.
Section~\ref{equivcoh_subs} reviews basic notions in equivariant cohomology
and investigates in detail properties of the equivariant~$w_2$ of real bundle pairs
over surfaces.
Section~\ref{main_sec} establishes the two remaining key statements of this paper,
Propositions~\ref{relsign_prp} and~\ref{transfsign_prp}, 
which determine the effects of the conjugation and interchanges of boundary
components on local orientations of index bundles of real Cauchy-Riemann operators.
We introduce a weaker version of spin structures in Section~\ref{SpinSubStr_subs}
and show in Section~\ref{MSopen_subs} that relative spin sub-structures induce
orientations on moduli spaces of open maps. 
In Section~\ref{MSreal_subs}, we combine Proposition~\ref{DMsgn_prp} with
Propositions~\ref{relsign_prp} and~\ref{transfsign_prp} 
to show that relative spin sub-structures compatible with an involution~$\phi$
induce orientations preserved by the natural automorphisms of the moduli spaces
of maps from surfaces with crosscaps
and establish more general versions of Theorems~\ref{g0orient_thm}, 
\ref{g1orient_thm}, and~\ref{seorient_thm}.
In Section~\ref{FOOO_subs}, we apply Propositions~\ref{relsign_prp} and~\ref{transfsign_prp} 
to Floer homology, following the principles laid out in~\cite{FOOO9}.\\

\noindent
We would like to thank A.~Bahri, E.~Brugall\'e, R.~Cr\'etois, 
K.~Fukaya, M.~Goresky, N.~Hingston, 
M.~Liu, J.~Solomon, M.~Tehrani, G.~Tian, and J.~Welschinger for related discussions
and the referee for a very thorough reading of our paper and pointing out mistakes
in some of the original arguments.
The second author is also grateful to the IAS School of Mathematics for its hospitality 
during the period when the results in this paper were obtained.

\section{Preliminaries}
\label{prelim_sec}

\noindent
We begin with some background material.
We first review the analytic setup for maps from oriented sh-surfaces
introduced in~\cite{GZ} and recall some of the related results obtained in~\cite{GZ}.
We then determine the signs of natural automorphisms on the Deligne-Mumford moduli spaces
of bordered marked Riemann surfaces; this is one of the key ingredients 
in the proof of Theorem~\ref{seorient_thm}.
We conclude with a number of purely topological observations that naturally fit 
with the orientability conditions discovered in~\cite{Ge} in open Gromov-Witten
theory and adapted in~\cite{GZ} to real Gromov-Witten theory.

\subsection{Review of analytic setup}
\label{analys_subs}

\noindent
An \sf{oriented symmetric half-surface} (or simply \sf{oriented sh-surface}) 
is a pair $(\Si,c)$ consisting of an oriented bordered smooth surface~$\Si$ 
and an involution $c\!:\prt\Si\!\lra\!\prt\Si$ preserving each component
and the orientation of~$\prt\Si$.
The restriction of such an involution~$c$ to a boundary component~$(\prt\Si)_i$
is either the identity or the antipodal map
\BE{antip_e}\fa\!:S^1\lra S^1, \qquad z\lra -z,\EE
for a suitable identification of $(\prt\Si)_i$ with $S^1\!\subset\!\C$;
the latter type of boundary structure is called \sf{crosscap} 
in the string theory literature.
We define
$$c_i=c|_{(\prt\Si)_i}, \qquad 
|c_i|= \begin{cases} 0,&\hbox{if}~c_i=\id;\\ 1,&\hbox{otherwise};\end{cases}
\qquad
|c|_k=\big|\{(\prt\Si)_i\!\subset\!\Si\!:\,|c_i|\!=\!k\}\big|\quad k=0,1.$$
Thus, $|c|_0$ is the number of standard boundary components of $(\Si,\prt\Si)$  
and $|c|_1$ is the number of crosscaps.
We order the boundary components~$(\prt\Si)_i$ of~$\Si$ so that $|c_i|\!=\!0$
for $i\!=\!1,\ldots,|c|_0$.\\
%Up to isomorphism, each oriented sh-surface $(\Si,c)$ is determined by the genus~$g$ of~$\Si$,
%the number~$|c|_0$ of ordinary boundary components, and the number~$|c|_1$ of crosscaps.
%We denote by $(\Si_{g,m_0,m_1},c_{g,m_0,m_1})$ the genus~$g$ oriented sh-surface
%with $|c_{g,m_0,m_1}|_0\!=\!m_0$ and $|c_{g,m_0,m_1}|_1\!=\!m_1$.

\noindent
An oriented symmetric half-surface $(\Si,c)$
\sf{doubles} to the topological symmetric surface
\begin{alignat*}{2}
&\hat\Si\equiv\big(\Si^+\sqcup\Si^-\big)\big/\!\sim~
\!\!\equiv\{+,-\}\!\times\!\Si\big/\!\sim,&\qquad
&(+,z)\sim\big(-,c(z)\big)~~~\forall\,z\!\in\!\prt\Si,\\
&\hat{c}\!:\hat\Si\lra\hat\Si,&\qquad &\hat{c}\big([\pm,z]\big)=[\mp,z]~~~\forall\,z\!\in\!\Si,
\end{alignat*}
with $\Si^+$ having the same orientation as~$\Si$
(and $\Si^-$ having the opposite orientation to~$\Si$).
By \cite[Theorems~1.1,~1.2]{Nat}, every symmetric surface~$(\hat\Si,\si)$ can be obtained in this way.
If $(X,\phi)$ is a manifold with an involution,  
a \sf{real map} $f\!:(\Si,c)\!\lra\!(X,\phi)$ 
is a map $f\!:\Si\!\lra\!X$ such that $f\!\circ\!c=\phi\!\circ\!f$ on~$\prt\Si$.
Such a map \sf{doubles} to a real map $\hat{u}\!:(\hat\Si,\hat{c})\!\lra\!(X,\phi)$ such that
$\hat{u}|_{\Si^+}\!=\!u$.\\

\noindent
Given an oriented sh-surface $(\Si,c)$, let $\cD_{\Si}$ be the group of diffeomorphisms
of $\Si$ preserving the orientation and the boundary components and 
$\cD_c\!\subset\!\cD_{\Si}$ be the subgroup of diffeomorphisms that commute 
with the involution~$c$ on~$\prt\Si$. 
Let $\cJ_{\Si}$ be the space of all complex structures on~$\Si$ compatible
with the orientation.
For each $\fJ\!\in\!\cJ_{\Si}$, $\Si$ can be covered by coordinate charts  
$$\psi\!:(U,U\!\cap\!\prt\Si)\lra (\bH,\R)
\qquad\hbox{s.t.}\quad \fJ=\psi^*\fJ_0,$$
where $\bH$ is the closed upper-half plane and $\fJ_0$ is the standard complex
structure on~$\C$,
and the overlap maps between such charts are holomorphic;
see \cite[Corollary~A.2]{GZ}.
We call such charts $\fJ$-holomorphic.
Let $\cJ_c\!\subset\!\cJ_{\Si}$ denote the subspace of complex structures~$\fJ$
such that  $c$ is real-analytic with respect to~$\fJ$, i.e.~for every $z\!\in\!\prt\Si$
there exist $\fJ$-holomorphic charts
$$\psi_z\!:U_z\lra U_z' \qquad\hbox{and}\qquad \psi_{c(z)}\!:U_{c(z)}\lra U_{c(z)}',$$
where $U_z$ and $U_{c(z)}$ are open subsets of $\Si$ containing $z$ and $c(z)$,
respectively, and  $U_z'$ and $U_{c(z)}'$ are open subsets of~$\bH$,
such~that 
$$\psi_{c(z)}\circ c\circ\psi_z^{-1}\!:~
  \psi_z\big(U_z\cap c(U_{c(z)}\!\cap\!\prt\Si)\big)\lra\R$$
is a real-analytic function on an open subset of $\R\!\subset\!\C$.
By \cite[Lemma~3.1]{GZ}, the subspace~$\cJ_c$ of~$\cJ_{\Si}$ is preserved by~$\cD_c$. 
By \cite[Corollary~3.3]{GZ}, each $\fJ\!\in\!\cJ_c$ doubles to a complex
structure~$\hat\fJ$ on $\hat\Si$ so that $\hat\fJ|_{\Si}\!=\!\fJ$, 
$\hat{c}^*\hat\fJ\!=\!-\hat\fJ$, and $\prt\Si$ is a real-analytic curve in~$\hat\Si$;
in particular, $\hat\fJ$ determines a smooth structure on~$\hat\Si$.
If $(X,\phi)$ is a manifold with an involution and  $u\!:(\Si,c)\!\lra\!(X,\phi)$
is a real $(J,\fJ)$-holomorphic map,
then $\hat{u}$ is a $(J,\hat\fJ)$-holomorphic map.\\

\noindent
For applications to moduli problems, it is convenient to introduce subspaces $\cJ_c^*$ and
$\cD_c^*$ of  $\cJ_c$ and $\cD_c$ so that the natural map
\BE{DMisom_e}\cJ_c^*/\cD_c^*\lra \cJ_{\Si}/\cD_{\Si}\EE
induced by the inclusions $\cJ_c^*\!\lra\!\cJ_{\Si}$ and $\cD_c^*\!\lra\!\cD_{\Si}$
is an isomorphism (as Artin stacks)
whenever $(\Si,c)$ is not a disk with an involution different from the identity.
We take $\cJ_c^*\!=\!\{\fJ_0\}$ if $\Si\!=\!D^2$,
$\cD_c^*\!=\!\PGL_2^0\R$ to be the group of holomorphic automorphisms of~$D^2$
if $(\Si,c)\!=\!(D^2,\id_{S^1})$, and 
$\cD_c^*\!=\!S^1$ to be the group of standard rotations of~$D^2$
if $(\Si,c)\!=\!(D^2,\fa)$.\\

\noindent
Suppose next that $\Si$ is a cylinder with ordered boundary components $(\prt\Si)_1$
and~$(\prt\Si)_2$.
Let $\oI\!=\!(0,1)$ and 
$$h_{\C^*}\!:\C^*\lra\C^*, \qquad z\lra 1/z\,.$$
For each $r\!\in\!\oI$, we define 
$$A_r=\big\{z\!\in\!\C\!:\,(|z|\!-\!r)(r|z|\!-\!1)\le0\big\},\quad
(\prt A_r)_1=\big\{z\!\in\!\C\!:\,|z|\!=\!r\big\}, \quad
(\prt A_r)_2=\big\{z\!\in\!\C\!:\,r|z|\!=\!1\big\}.$$
Choose a smooth map
$$\Psi\!: \oI\!\times\!\Si\lra\C^*, \qquad
\Psi(r,z)\lra \Psi_r(z),$$
such that each map
$$\Psi_r\!: \big(\Si,(\prt\Si)_1,(\prt\Si)_2\big)
\lra  \big(A_r,(\prt A_r)_1,(\prt A_r)_2\big), \qquad r\in\oI,$$
is a diffeomorphism so that $\fa\!\circ\!\Psi_r\!=\!\Psi_r\!\circ\!c_i$
on $(\prt\Si)_i$ if $|c_i|\!=\!1$, $i\!=\!1,2$, and the diffeomorphisms
$$\Psi_r\!\circ\!\Psi_{r'}^{-1}\!: A_{r'}\lra A_r, \qquad r,r'\in\oI,$$
commute with $h_{\C^*}$, the standard conjugation~$\fc_{\C}$ on~$\C$, and
the standard action of $S^1\!\subset\!\C^*$ on~$\C$.
The last condition implies~that the diffeomorphisms
\begin{alignat*}{2}
h_{\Si}\!: \Si&\lra\Si, &\qquad 
h_{\Si}(z)&=\Psi_r^{-1}\big(h_{\C^*}(\Psi_r(z))\big), \qquad\hbox{and}\\
\fc_{\Si}\!: \Si&\lra\Si, &\qquad 
\fc_{\Si}(z)&\lra\Psi_r^{-1}\big(\fc_{\C}(\Psi_r(z))\big),
\end{alignat*}
and the $S^1$-action on~$\Si$ given~by
\BE{S1act_e}S^1\times\Si\lra\Si, \qquad \th\cdot z=\Psi_r^{-1}\big(\th\Psi_r(z)\big)
\quad\forall\,z\!\in\!\Si,\,\th\!\in\!S^1\!\subset\!\C,\EE
are independent of $r\!\in\!\oI$.
In this case, we take 
$$\cJ_c^*=\big\{\Psi_r^*\fJ_0\!:\,r\!\in\!\oI\big\}$$
and $\cD_c^*\!\subset\!\cD_c$ to be the subgroup corresponding to the action~\eref{S1act_e}.
The latter is the group of automorphisms of each complex structure in $\cJ_c^*$
that preserve each boundary component of~$\Si$.
By the classification of complex structures on the cylinder \cite[Section~9]{AG}, 
for every $\fJ\!\in\!\cJ_{\Si}$, there exist a unique 
$r\!\in\!\oI$ and a diffeomorphism~$h$ of~$\Si$ preserving the orientation
and the boundary components such that $\fJ\!=\!h^*\Psi_r^*\fJ_0$.
It follows that  the map~\eref{DMisom_e} is an isomorphism.\\

\noindent
If $\Si$ is not a disk or a cylinder, i.e.~the genus of its double is at least~2,
we identify each boundary component $(\prt\Si)_i$ of $\prt\Si$ with $S^1$ in such 
a way that $c_i\!\equiv\!c|_{(\prt\Si)_i}$ corresponds to either the identity or
the antipodal map on~$S^1$
and denote by $\cD_i$ the subgroup of diffeomorphisms of $(\prt\Si)_i$ corresponding
to the rotations of~$S^1$ under this identification.
For each $\fJ\!\in\!\cJ_{\Si}$, there exists a unique metric $\hat{g}_{\fJ}$
on the double $(\hat\Si',\hat\fJ')$ of $(\Si,\fJ)$ with respect to the involution
$\id_{\prt\Si}$ so that $\hat{g}_{\fJ}$ has constant scalar curvature~-1
and is compatible with~$\hat\fJ'$.
Each boundary component $(\prt\Si)_i$ is a geodesic with respect to~$\hat{g}_{\fJ}$,
and each isometry of $(\prt\Si)_i$ with respect to $\hat{g}_{\fJ}$ is real-analytic
with respect to~$\fJ$.
We denote by $\cJ_c^*\!\subset\!\cJ_{\Si}$ the subspace of complex structures~$\fJ$
so that each $\cD_i$ is the group of isometries of~$(\prt\Si)_i$ with respect to~$\hat{g}_{\fJ}$ 
and by $\cD_c^*$ the subgroup of diffeomorphisms of $\Si$ that preserve the orientation
and the boundary components and restrict to elements of~$\cD_i$ on each boundary component
$(\prt\Si)_i$ of~$\Si$. 
Since $c_i\!\in\!\cD_i$ for each~$i$, 
$\cJ_c^*\!\subset\!\cJ_c$ and $\cD_c^*\!\subset\!\cD_c$.
By \cite[Lemma~6.1]{GZ}, the map~\eref{DMisom_e} is an isomorphism
in this case as well.\\

\noindent
Let $(\Si,c)$ be an oriented sh-surface with orderings 
$$(\prt\Si)_1,\ldots,(\prt\Si)_{|c|_0} \qquad\hbox{and}\qquad
(\prt\Si)_{|c|_0+1},\ldots,(\prt\Si)_{|c|_0+|c|_1}$$
of the boundary components with $|c_i|\!=\!0$ and with $|c_i|\!=\!1$
and  $(X,\phi)$ be a smooth manifold with an involution.
Given
\BE{btuple_eq}
\b=(B,b_1,\ldots,b_{|c|_0+|c|_1})
\in H_2(X;\Z)\oplus H_1(X^{\phi};\Z)^{\oplus |c|_0}
\oplus H_1^{\phi}(X;\Z)^{\oplus |c|_1},
\EE 
let $\fB(X,\b)^{\phi,c}$ denote the space of smooth maps 
$u\!:\Si\!\lra\!X$ such that 
\begin{enumerate}[label=$\bullet$,leftmargin=*]
\item $u\!\circ\!c\!=\!\phi\!\circ\!u$ on $\prt\Si$, 
\item $\hat{u}_*[\hat\Si]=B$, $u_*[(\prt\Si)_i]=b_i$ for $i=1,\ldots,|c|_0$, and
\item $[u|_{(\prt\Si)_i}]_{\Z_2}^{c_i}=b_i$ for $i=|c|_0\!+\!1,\ldots,|c|_0\!+\!|c|_1$,
where $[u|_{(\prt\Si)_i}]_{\Z_2}^{c_i}$ is the equivariant pushforward 
of $[(\prt\Si)_i]_{\Z_2}^{c_i}$  by~$u|_{(\prt\Si)_i}$,  as in Section~\ref{equivdfn_e}.
\end{enumerate}
We denote by $\fB(X)^{\phi,c}$ the disjoint union of the spaces $\fB(X,\b)^{\phi,c}$
over all tuples~$\b$ as in~\eref{btuple_eq}.
If in addition $\k\!=\!(k_1,\ldots,k_{|c|_0+|c|_1})$ is a tuple of nonnegative integers, 
let
$$\fB_{\k}(X,\b)^{\phi,c}=\fB(X,\b)^{\phi,c}\times
\prod_{i=1}^{|c|_0+|c|_1}\!\!\!\!\!\big((\prt\Si)_i^{k_i}-\De_{i,k_i}\big),$$
where 
$$\De_{i,k_i}=\big\{(x_{i,1},\ldots,x_{i,k_i})\!\in\!(\prt\Si)_i^{k_i}\!:~
x_{i,j'}\!\in\!\{x_{i,j},c(x_{i,j})\}~\tn{for some}~j,j'\!=\!1\ldots,k_i,~j\!\neq\!j'\big\}$$
is the big $c$-symmetrized diagonal.
Let
\BE{cHcMdfn_e}\begin{split}
\cH_{\k}^*(X,\b)^{\phi,c}&=
\big(\fB_{\k}(X,\b)^{\phi,c}\!\times\!\cJ_c^*\big)\big/\cD_c^*,\\
\fM_{\k}(X,J,\b)^{\phi,c}&=
\big\{[u,\x_1,\ldots,\x_{|c|_0+|c|_1},\fJ]\!\in\!\cH_{\k}^*(X,\b)^{\phi,c}\!:~
\dbar_{J,\fJ}u\!=\!0\big\},
\end{split}\EE
where $\dbar_{J,\fJ}$ is the usual Cauchy-Riemann operator with respect to
the complex structures~$J$ on~$X$ and $\fJ$ on~$\Si$.
If~$X$ is a point and $\b$ is the zero tuple, we denote
$\cH_{{\k}}^*(X,\b)^{\phi,c}$ by $\cM_{\Si,\k}^c$;
by \cite[Lemma~6.1]{GZ}, 
this is the usual Deligne-Mumford moduli space  
$$\cM_{\Si,\k}=\cM_{\Si,\k}^{\id_{\Si}}$$
of stable bordered Riemann surfaces  with ordered boundary components 
with boundary 
if $\Si$ is not a disk with $k_1\!<\!3$ or a cylinder with $k_1,k_2\!=\!0$ 
(for stability reasons).\\

\noindent
If $M$ is a manifold, possibly with boundary, or a (possibly nodal) surface, 
and $c$ is an involution on a submanifold $M'\!\subset\!M$,
a \sf{real bundle pair} $(V,\ti{c})\!\lra\!(M,c)$   
consists of a complex vector bundle $V\!\lra\!M$ and a conjugation~$\ti{c}$ on $V|_{M'}$
lifting~$c$.
A real bundle pair $(V,\ti{c})\!\lra\!(\Si,c)$, where $(\Si,c)$ is an oriented sh-surface,
\sf{doubles} to a real  bundle pair over $(\hat\Si,\hat{c})$,
\begin{alignat*}{2}
&\hat{V}\equiv\big(\{+\}\!\times\!V\sqcup \{-\}\!\times\!\bar{V}\big)\big/\!\sim,
&\qquad &(+,v)\sim\big(-,\ti{c}(v)\big)~~~\forall\,v\!\in\!V|_{\prt\Si},\\
&\br{c}\!:\hat{V}\lra\hat{V},
&\qquad & \br{c}\big([\pm,v]\big)=[\mp,v]~~~\forall\,v\!\in\!V,
\end{alignat*} 
where $\bar{V}$ denotes the same real vector bundle over~$\Si$ as~$V$, but
with the opposite complex structure on the fibers.
We define the \sf{Maslov index of~$(V,\ti{c})$}~by
\BE{Maslovdfn_e}\mu(V,\ti{c})=\lr{c_1(\hat{V}),[\hat\Si]}.\EE
By \cite[Theorem~C.3.5 and~(C.3.4)]{MS}, this agrees with the usual definition of
the Maslov index of $(V,V^{\ti{c}})$ if $c\!=\!\id_{\prt\Si}$.
By \cite[Propositions~4.1,~4.2]{BHH}, real bundle pairs $(V,\ti{c})\!\lra\!(\Si,c)$
are classified by their rank, the Maslov index, and the orientability of $V^{\ti{c}}$
over each boundary component $(\prt\Si)_i$ with $|c_i|\!=\!0$.\\

\noindent
A \sf{real Cauchy-Riemann operator on a real bundle pair $(V,\ti{c})\!\lra\!(\Si,c)$},  
where $(\Si,c)$ is an oriented sh-surface, is a linear map of the~form
\BE{CRdfn_e}\begin{split}
D=\bp\!+\!A\!: \Ga(\Si;V)^{\ti{c}}
\equiv&\big\{\xi\!\in\!\Ga(\Si;V)\!:\,\xi\!\circ\!c\!=\!\ti{c}\!\circ\!\xi|_{\prt\Si}\big\}\\
&\hspace{.5in}\lra
\Ga_{\fJ}^{0,1}(\Si;V)\equiv\Ga\big(\Si;(T^*\Si,\fJ)^{0,1}\!\otimes_{\C}\!V\big),
\end{split}\EE
where $\bp$ is the holomorphic $\bp$-operator for some $\fJ\!\in\!\cJ_{\Si}$
and a holomorphic structure in~$V$ and  
$$A\in\Ga\big(\Si;\Hom_{\R}(V,(T^*\Si,\fJ)^{0,1}\!\otimes_{\C}\!V) \big)$$ 
is a zeroth-order deformation term. 
By \cite[Proposition~3.6]{GZ},
a real Cauchy-Riemann operator on a real bundle pair
is Fredholm in the appropriate completions if $\fJ\!\in\!\cJ_c$;
by \cite[Remark~3.7]{GZ}, it need not be Fredholm if $\fJ\!\not\in\!\cJ_c$.
A continuous family of such Fredholm operators $D_t$ over a topological space~$\cH$  
determines a line bundle over~$\cH$, called \sf{the determinant line bundle of~$\{D_t\}$}
and denoted $\det D$;
see \cite[Section~A.2]{MS} and \cite{detLB} for a construction.\\

\noindent
Families of real Cauchy-Riemann operators often arise by pulling back data from
a target manifold by smooth maps as follows. 
Suppose $(X,J)$ is an almost complex manifold with an anti-complex  involution
$\phi\!:X\!\lra\!X$ and $(V,\ti\phi)\!\lra\!(X,\phi)$ is a real bundle pair.
Let $\na$ be a connection on~$V$ and 
$$A\in\Ga\big(X;\Hom_{\R}(V,(T^*X,J)^{0,1}\otimes_{\C}\!V)\big).$$ 
For any map $u:\Si\!\lra\!X$ and $\fJ\!\in\!\cJ_{\Si}$, 
let $\na^u$ denote the induced connection in $u^*V$ and
$$ A_{\fJ;u}=A\circ \prt_{\fJ} u\in\Ga(\Si;
\Hom_{\R}(u^*V,(T^*\Si,\fJ)^{0,1}\otimes_{\C}u^*V)\big).$$
If $c$ is a boundary involution on $\Si$ and $u\!\circ\!c\!=\!\phi\!\circ\!u$ on $\prt\Si$, 
the homomorphisms
$$\bp_u^\na =\frac{1}{2}(\na^u+\fI\circ\na^u\circ\fJ), \,\,
D_u\equiv \bp_u^\na\!+\!A_{\fJ;u}\!: \Ga(\Si;u^*V)^{u^*\ti\phi}\lra
\Ga^{0,1}_{\fJ}(\Si;u^*V)$$
are real Cauchy-Riemann operators on $(u^*V,u^*\ti\phi)\!\lra\!(\Si,c)$
that form families of real Cauchy-Riemann operators over families of maps.
We denote the determinant line bundle of such a family by $\det(D_{V,\ti\phi})$.

\subsection{The signs of automorphisms of the Deligne-Mumford spaces}
\label{DMsign_subs}

\noindent
Let $\Si$ be an oriented genus~$g$ surface with $m\!>\!0$ boundary components
so that $2g\!+\!m\!\ge\!3$. 
For any diffeomorphism $h\!:\Si\!\lra\!\Si$, let 
\BE{habsdfn_e}
|h|=\begin{cases}0,&\hbox{if}~h~\hbox{is orientation-preserving};\\
1,&\hbox{if}~h~\hbox{is orientation-reversing};\end{cases}\EE
and denote by $\sgn_h\!\in\!\{0,1\}$ the sign of the permutation 
induced by~$h$ on the~set of boundary components of~$\Si$. 
Such a diffeomorphism induces an automorphism
\BE{DMhdfn_e}\DM_h\!: \cM_{\Si}\equiv\cM_{\Si,\0}\lra \cM_{\Si},\qquad 
[\fJ]\lra \big[(-1)^{|h|}h^*\fJ\big],\EE
with the notation as in~\eref{cHcMdfn_e}.
%By \cite[Theorem]{IS} and the proof of \cite[Corollary~1.8]{Ge},
The Deligne-Mumford moduli space $\cM_{\Si}$ of bordered Riemann surfaces 
is orientable; see Lemma~\ref{DMorient_lmm} below.
Proposition~\ref{DMsgn_prp} determines  the sign of the automorphism~$\DM_h$;
the sign of its analogue on $\cM_{\Si,\k}$ can then be easily obtained 
by proceeding as in the proof of \cite[Corollary~1.8]{Ge}.

\begin{lmm}\label{DMorient_lmm}
Let $\Si$ be an oriented genus~$g$ surface with $m$ boundary components
so that $2g\!+\!m\!\ge\!3$.
An ordering of the boundary components of~$\Si$ canonically determines an
orientation of the Deligne-Mumford moduli space~$\cM_{\Si}$.
%carries a natural orientation. for every ${\bf k}\!\in\!(\Z^{\ge0})^m$.
\end{lmm}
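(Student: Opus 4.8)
The plan is to exhibit a concrete geometric description of $\cM_\Si$ and orient it via that description, so that a choice of ordering of the boundary components is the only extra datum needed. The natural candidate is the Fenchel–Nielsen type parametrization coming from the hyperbolic uniformization already recalled in Section~\ref{analys_subs}: for each $\fJ\!\in\!\cJ_\Si$ one has the double $(\hat\Si',\hat\fJ')$ with respect to $\id_{\prt\Si}$ carrying a unique hyperbolic metric of curvature $-1$ with geodesic boundary, and $\prt\Si$ is a disjoint union of geodesics. Thus $\cM_\Si$ is identified with the moduli space of hyperbolic structures on $\Si$ with geodesic boundary, equivalently with a quotient of a Teichm\"uller space by a mapping class group that, crucially, acts \emph{without affecting orientation} once we are on the locus relevant here. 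First I would make this identification precise, citing \cite[Section~9]{AG} and \cite[Lemma~6.1]{GZ} for the disk/cylinder bookkeeping and the isomorphism \eqref{DMisom_e}, and reduce to orienting the corresponding Teichm\"uller space $\cT_\Si$.

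Next I would orient $\cT_\Si$ using a pants decomposition adapted to the ordered boundary: choose a maximal collection of disjoint simple closed geodesics cutting $\Si$ into pairs of pants, and use the associated length–twist (Fenchel–Nielsen) coordinates. Each interior curve contributes a length coordinate $\ell>0$ and a twist coordinate $\theta\in\R$, giving an oriented $\R_{>0}\times\R$ factor with its standard orientation; each boundary component contributes only a length coordinate $\ell_i>0$, an oriented $\R_{>0}$ factor. Wedging these in the order: first the interior $(\ell,\theta)$ pairs (in any fixed order — a swap of two such pairs is an even permutation of the coordinates, hence orientation-preserving), then the boundary lengths $\ell_1,\ldots,\ell_m$ in the chosen boundary ordering, produces an orientation of $\cT_\Si$. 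The content of the lemma is that this orientation is independent of all the auxiliary choices except the boundary ordering; I would verify this by the standard change-of-pants-decomposition argument (elementary moves on pants decompositions), checking that the Jacobian of the transition between two Fenchel–Nielsen charts is positive — this is where one invokes Wolpert's formula / the Weil–Petersson symplectic form $\omega_{WP}=\sum d\ell_k\wedge d\theta_k$ on the interior coordinates, which is canonical and gives a canonical volume form $\omega_{WP}^{3g-3+\ldots}\wedge d\ell_1\wedge\cdots\wedge d\ell_m$ up to positive scalar, hence a canonical orientation of $\cT_\Si$; reordering the boundary lengths by a permutation $\sigma$ multiplies this by $\sgn(\sigma)$, which is exactly the claimed dependence.

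Then I would descend the orientation from $\cT_\Si$ to $\cM_\Si=\cT_\Si/\Mod_\Si$, where $\Mod_\Si$ here is the mapping class group of orientation-preserving diffeomorphisms preserving each boundary component (matching $\cD_\Si$ of Section~\ref{DMsign_subs}); since every such mapping class is isotopic to one fixing a pants decomposition setwise and acts on Fenchel–Nielsen coordinates by maps whose differential preserves $\omega_{WP}$ and the boundary-length coordinates, the action is orientation-preserving, so the orientation of $\cT_\Si$ pushes down. For the excluded low-complexity cases — $\Si$ a disk with fewer than three special points or a cylinder with no marked points on either boundary — the condition $2g+m\ge 3$ forces $m\ge 3$ (disk) or $(g,m)=(0,2)$ is excluded, so in the stated range $\cM_\Si$ is genuinely positive-dimensional except when it is a point, in which case an orientation is the trivial sign datum and there is nothing to check; I would dispose of these boundary cases explicitly at the start.

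The main obstacle I anticipate is \emph{not} the existence of some orientation — that follows softly from $\omega_{WP}$ and the boundary lengths — but pinning down that the dependence on auxiliary data is \emph{precisely} trivial and that the boundary ordering enters \emph{precisely} through the sign of the permutation, with no hidden extra sign coming from, say, the orientation convention on the pants-decomposition curves or from the identification of $\cM_\Si$ with the hyperbolic moduli space via doubling. Concretely, I must check that reversing the chosen transverse orientation of an interior pants curve flips \emph{both} the sign of $\theta$ and leaves $\ell$ alone (hence $d\ell\wedge d\theta$ is unchanged), so there is no ambiguity there; and that the doubling identification used to define the hyperbolic metric does not introduce a global sign depending on an ordering of the two halves. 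These are the bookkeeping points where earlier sign errors in the literature (alluded to in the introduction) tend to creep in, so I would carry them out carefully, but each is a finite, mechanical verification rather than a conceptual difficulty.
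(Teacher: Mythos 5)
Your proposal is correct in substance and rests on the same underlying idea as the paper's proof — the Weil--Petersson form plus boundary lengths — but packages it differently. The paper fibers $\cM_{\Si}$ directly over $(\R^+)^m$ via the boundary-length map, notes that each fiber $\cM_{\Si}(L_1,\ldots,L_m)$ is canonically oriented by its Weil--Petersson volume form and that the base is canonically oriented by the ordering, and then uses the homotopy exact sequence (the base is contractible, so $\pi_1$ of fiber and total space agree) to conclude that the fiberwise orientations piece together globally. This sidesteps any explicit descent from Teichm\"uller space and never needs to discuss the mapping class group action at all. You instead work upstairs on $\cT_{\Si}$ in Fenchel--Nielsen coordinates, orient $\cT_{\Si}$ by $\omega_{WP}^{3g-3+m}\wedge d\ell_1\wedge\cdots\wedge d\ell_m$, and then descend to $\cM_{\Si}$. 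That route also works, but it makes the mapping-class-group invariance an explicit obligation, and the justification you give for it is not quite right: a general mapping class is \emph{not} isotopic to a diffeomorphism preserving a pants decomposition (pseudo-Anosov elements, for instance, preserve no such system up to isotopy). The correct argument is the one you already have in hand a sentence earlier — Wolpert's theorem that $\omega_{WP}=\sum d\ell_k\wedge d\theta_k$ is independent of the pants decomposition and hence intrinsic, together with the obvious invariance of the boundary lengths; these make the volume form $\omega_{WP}^{3g-3+m}\wedge d\ell_1\wedge\cdots\wedge d\ell_m$ a canonical tensor on $\cT_{\Si}$, automatically MCG-invariant, so it descends. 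With that one repair the proof is complete. Your attention to the sign bookkeeping (transverse orientation of interior curves, even permutations of $(\ell,\theta)$ pairs) is well placed and consistent with what the paper does in the subsequent Lemmas of Section~2.2, which also use Fenchel--Nielsen coordinates and cite the same Mirzakhani result.
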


\begin{proof}
For each complex structure $\fJ$ on $\Si$, there is a unique metric~$g_{\fJ}$ 
on~$\Si$ of constant scalar curvature~$-1$ such that each boundary component $(\prt\Si)_i$
of~$\Si$ is a $g_{\fJ}$-geodesic.
We denote by~$L_i(\fJ)$ the length of $(\prt\Si)_i$ with respect to~$g_{\fJ}$.
The boundary length~map 
\BE{DMfibr_e}\cM_{\Si}\lra (\R^+)^m\,, \qquad [\fJ]\lra \big(L_1(\fJ),\ldots,L_m(\fJ)\big),\EE
is a fibration; the fiber over a point $(L_1,\dots, L_m)$ is the moduli space 
$\cM_{\Si}(L_1,\ldots,L_m)$ of bordered Riemann surfaces with fixed lengths of 
the boundary components equal to~$(L_1,\dots, L_m)$. 
The latter moduli space carries a Weil-Petersson volume form; see \cite[Section~2]{Mir}.
Thus, it is canonically oriented. 
By the homotopy exact sequence for this fibration,
the fundamental groups of the fiber and the total space are isomorphic 
by the inclusion homomorphism and so every loop homotopes to a fiber.
Since each fiber and its normal bundle are canonically oriented, 
the total space is also canonically oriented. 
\end{proof}

\begin{lmm}\label{DMorient_lmm2}
Let $\Si$ be an oriented genus~$g$ surface with $m$ boundary components
so that $2g\!+\!m\!\ge\!3$ and $\ov\Si$ be the same surface with the opposite orientation.
The sign of the diffeomorphism 
$$\cC_{\Si}\!:\cM_{\Si}\lra \cM_{\bar\Si}, \qquad  [\fJ]\lra[-\fJ],$$
with respect to the orientations of Lemma~\ref{DMorient_lmm} is $(-1)^{3g-3+m}$.
\end{lmm}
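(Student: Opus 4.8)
The plan is to compute the sign of $\cC_\Si$ fiberwise over the boundary length fibration \eref{DMfibr_e} and then account for the base. First I would observe that $\cC_\Si$ intertwines the boundary length maps: since the constant-curvature-$(-1)$ metric $g_\fJ$ depends only on $\fJ$ through the underlying conformal class, which is unchanged by $\fJ\!\mapsto\!-\fJ$, we have $L_i(-\fJ)\!=\!L_i(\fJ)$ for all $i$. Hence $\cC_\Si$ descends to the identity on the base $(\R^+)^m$ and restricts, for each $(L_1,\dots,L_m)$, to a diffeomorphism
$$\cC_{\Si,L}\!:\cM_{\Si}(L_1,\ldots,L_m)\lra \cM_{\bar\Si}(L_1,\ldots,L_m).$$
By the argument in the proof of Lemma~\ref{DMorient_lmm}, the sign of $\cC_\Si$ with respect to the orientations there is the product of the sign of $\cC_{\Si,L}$ on a fiber and the sign of the induced map on the base; the latter is $+1$ since it is the identity. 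So the whole computation reduces to the fixed-boundary-length moduli space.

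Next I would identify $\cM_\Si(L_1,\dots,L_m)$ with a quotient of Teichm\"uller space $\cT_\Si(L_1,\dots,L_m)$, which is a real-analytic manifold of dimension $6g-6+3m$ carrying the Weil-Petersson symplectic form $\om_{\mathrm{WP}}$ of \cite{Mir}; the chosen orientation is $\om_{\mathrm{WP}}^{\wedge(3g-3+m)}/(3g-3+m)!$ up to positive scalar, where $\dim_{\C}$ of the complex-structure deformation space is $3g-3+m$ (counting the relevant real dimension $6g-6+3m$ minus the $m$ boundary-length directions gives the fiber dimension $6g-6+2m$; the precise bookkeeping is standard and I would cite \cite{Mir} and the Fenchel-Nielsen coordinates). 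The key point is how $\fJ\!\mapsto\!-\fJ$ acts on $\om_{\mathrm{WP}}$: complex conjugation on the deformation space $H^1(\Si;T\Si)$, equivalently on holomorphic quadratic differentials, is a $\C$-antilinear involution, hence reverses the complex orientation of a complex vector space of $\C$-dimension $d=3g-3+m$ by $(-1)^d$. Since $\om_{\mathrm{WP}}$ is $(1,1)$ and compatible with this complex structure, its top power transforms by $(-1)^d$ under conjugation, giving the sign $(-1)^{3g-3+m}$ for $\cC_{\Si,L}$.

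The main obstacle I anticipate is making the statement "$\fJ\!\mapsto\!-\fJ$ acts as complex conjugation on the Weil-Petersson orientation of the fixed-length moduli space" fully rigorous in the bordered setting: one must check that the Weil-Petersson form on $\cM_\Si(L_1,\dots,L_m)$ is genuinely a K\"ahler form for the natural complex structure coming from the doubled surface, that $\cC_\Si$ corresponds under doubling to the antiholomorphic involution $\hat\fJ\!\mapsto\!-\hat\fJ$ on the Teichm\"uller space of the double (equivariantly with respect to the hyperelliptic-type involution $\hat c$), and that the induced map on the tangent space $H^{0}(\hat\Si;K_{\hat\Si}^{\otimes 2})^{\hat c}$ is indeed $\C$-antilinear. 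An alternative, more elementary route that sidesteps the K\"ahler bookkeeping is to use Fenchel-Nielsen coordinates: fix a pants decomposition of $\Si$ compatible with its orientation; then $-\fJ$ is realized by an orientation-reversing self-map that fixes all length parameters $\ell_k$ and negates all twist parameters $\th_k$. There are $3g-3+m$ interior pants curves (the boundary curves contribute only lengths, which are the base coordinates and are fixed), so on the fiber we negate exactly $3g-3+m$ of the coordinates while fixing the remaining $3g-3+m$ length coordinates, yielding the sign $(-1)^{3g-3+m}$; here one must verify that the Weil-Petersson orientation agrees with the orientation $\bigwedge_k (d\ell_k\wedge d\th_k)$ up to a positive factor, which is Wolpert's formula as used in \cite{Mir}. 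I would present the Fenchel-Nielsen version as the main line of argument, since it keeps the count of negated coordinates completely transparent, and remark that it matches the complex-conjugation heuristic.
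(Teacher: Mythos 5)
Your Fenchel--Nielsen line of argument is correct and coincides with the paper's proof: both pass to the fixed-boundary-length Teichm\"uller space, observe that $\fJ\mapsto-\fJ$ preserves the hyperbolic metric while reversing the orientations of the cutting circles so that the length coordinates $\ell_k$ are fixed and the $3g-3+m$ twist coordinates $\vt_k$ are negated, and then invoke \cite[Theorem~2.1]{Mir} (Wolpert's formula) to conclude that the Weil--Petersson orientation is reversed by $(-1)^{3g-3+m}$. The extra discussion of complex conjugation on quadratic differentials and of the fibration structure is consistent but not needed, and you were right to present the Fenchel--Nielsen version as the main argument.
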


\begin{proof}
The diffeomorphism~$\cC_{\Si}$ induces a diffeomorphism~$\wt\cC_{\Si;\1}$ 
between the Teichm\"uller spaces~$\cT_{\Si}(\1)$ and~$\cT_{\ov\Si}(\1)$
of bordered surfaces with unit length boundary components. 
It is sufficient to determine the sign of~$\wt\cC_{\Si;\1}$
in the Fenchel-Nielsen coordinates 
$$(\ell_1,\vt_1,\dots,\ell_{3g-3+m},\vt_{3g-3+m})
\qquad\hbox{and}\qquad 
(\bar\ell_1,\bar\vt_1,\dots,\bar\ell_{3g-3+m},\bar\vt_{3g-3+m})$$
on these spaces obtained from the same pair-of-pants decomposition of $\Si\!=\!\bar\Si$;
see \cite[(II.3.2)]{Abikoff}, for example.
Since $g_{-\fJ}\!=\!g_{\fJ}$, but the orientations of the cutting circles are reversed,
$$\wt\cC_{\Si;\1}(\ell_1,\vt_1,\dots,\ell_{3g-3+m},\vt_{3g-3+m})
=(\ell_1,-\vt_1,\dots,\ell_{3g-3+m},-\vt_{3g-3+m})\,.$$
The claim now follows from \cite[Theorem~2.1]{Mir}.
\end{proof}

\begin{crl}\label{DMorient_crl}
Let $\Si$ be an oriented genus~$g$ surface with $m$ boundary components
so that $2g\!+\!m\!\ge\!3$.
If $h\!:\Si\!\lra\!\Si$ is an orientation-reversing diffeomorphism  
preserving the boundary components of~$\Si$,
the sign of $\DM_h$ is $(-1)^{g+m-1}$.
\end{crl}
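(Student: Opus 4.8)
The plan is to factor the orientation-reversing diffeomorphism $h$ through the composite
$$
\cM_{\Si} \xrightarrow{\ \cC_{\Si}\ } \cM_{\bar\Si} \xrightarrow{\ \DM_{h}^{\tn{or}}\ } \cM_{\Si},
$$
where $\cC_{\Si}$ is the diffeomorphism of Lemma~\ref{DMorient_lmm2} and the second map is the orientation-preserving diffeomorphism induced by $h$ viewed as a map $\bar\Si\!\lra\!\Si$ (which is orientation-preserving precisely because $h$ reverses the orientation of $\Si$). Concretely, for $[\fJ]\!\in\!\cM_{\Si}$ we have $\DM_h[\fJ]=[h^*(-\fJ)]$, which is exactly $\DM_h^{\tn{or}}$ applied to $\cC_{\Si}[\fJ]=[-\fJ]$. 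Thus $\sgn(\DM_h)=\sgn(\cC_{\Si})\cdot\sgn(\DM_h^{\tn{or}})$ as signs of diffeomorphisms between the oriented manifolds in question, once we fix compatible orientations via Lemma~\ref{DMorient_lmm}.

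The first factor is handed to us: by Lemma~\ref{DMorient_lmm2}, the sign of $\cC_{\Si}$ with respect to the orientations of Lemma~\ref{DMorient_lmm} is $(-1)^{3g-3+m}$. For the second factor, I would argue that $\DM_h^{\tn{or}}\!:\cM_{\bar\Si}\!\lra\!\cM_{\Si}$ is orientation-preserving. The orientation of Lemma~\ref{DMorient_lmm} on $\cM_{\Si}$ is built from the Weil--Petersson volume form on the fibers of the boundary-length fibration $\eref{DMfibr_e}$ together with the standard orientation of the base $(\R^+)^m$ coming from the chosen ordering of the boundary components. Since $h$ preserves each boundary component, it is compatible with the fibration, acts as the identity on the base $(\R^+)^m$ (constant-curvature metrics and geodesic boundary lengths are diffeomorphism-invariant), and on each fiber $\cM_{\bar\Si}(L_1,\dots,L_m)\!\lra\!\cM_{\Si}(L_1,\dots,L_m)$ it pulls back the Weil--Petersson symplectic/volume form to itself: this is the standard $\tn{Mod}$-invariance of the Weil--Petersson form together with the fact that precomposition by an orientation-preserving diffeomorphism and the flip $\fJ\!\mapsto\!-\fJ$ on the source $\bar\Si$ together land in the moduli space of $\Si$ without further sign. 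Hence $\sgn(\DM_h^{\tn{or}})=0$, i.e.\ it is orientation-preserving. Multiplying, $\sgn(\DM_h)=(-1)^{3g-3+m}=(-1)^{g+m-1}$, using $3g-3+m\equiv g+m-1\pmod 2$.

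An alternative, perhaps cleaner, route avoids naming $\DM_h^{\tn{or}}$ separately: pick any fixed orientation-reversing diffeomorphism $h_0$ of $\Si$ preserving the boundary components (one exists since $\Si$ is orientable with nonempty boundary), and observe that any two such $h$ differ by an orientation-preserving, boundary-component-preserving diffeomorphism, i.e.\ an element of the mapping class group acting on $\cM_{\Si}$ by an orientation-preserving map (again by Weil--Petersson invariance along fibers and triviality on the base). So $\sgn(\DM_h)$ is independent of the choice of $h$, and one may compute it for the convenient model $h=\fc_{\Si}\circ(\text{reflection})$ or simply note $\DM_{h_0}=\DM_{h_0}^{\tn{or}}\circ\cC_{\Si}$ and reduce to Lemma~\ref{DMorient_lmm2} as above. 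The main obstacle, and the point I would write out with care, is the claim that conjugation-plus-pullback by an orientation-reversing diffeomorphism induces an orientation-preserving map on each Weil--Petersson fiber: one has to be precise that the complex-conjugate Riemann surface structure $-\fJ$ on $\bar\Si$, pulled back by $h\!:\Si\!\lra\!\bar\Si$ with $|h|\!=\!1$, yields a holomorphic (not anti-holomorphic) identification, so the induced map on Teichm\"uller space is a biholomorphism and hence preserves the Weil--Petersson K\"ahler volume form; all the orientation-reversal is then isolated in the base factor's sign, which is trivial here since boundary components are fixed, and in $\cC_{\Si}$, whose sign is already recorded in Lemma~\ref{DMorient_lmm2}. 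A Fenchel--Nielsen-coordinate check, exactly parallel to the one in the proof of Lemma~\ref{DMorient_lmm2}, can be used to make this fully rigorous if desired.
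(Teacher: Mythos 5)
Your proof is correct and follows essentially the same route as the paper's: factor $\DM_h$ into the conjugation of Lemma~\ref{DMorient_lmm2} and a pullback map between $\cM_{\Si}$ and $\cM_{\ov\Si}$, reduce to the fibers of the boundary-length fibration since $h$ preserves boundary components, and argue that the pullback piece is orientation-preserving for the Weil--Petersson orientation; the paper makes this last point precise by observing that pullback sends Fenchel--Nielsen coordinates for $\cP$ to those for $h^{-1}(\cP)$ and invoking Mirzakhani's theorem, which is exactly the coordinate check you defer to at the end.
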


\begin{proof}
Since $h$ preserves the boundary components and $g_{-h^*\fJ}\!=\!h^*g_{\fJ}$,
$\DM_h$ acts on the fibers of~\eref{DMfibr_e}.
By Lemma~\ref{DMorient_lmm}, it is thus sufficient to show that the diffeomorphism
$$\cT_{\Si}(\1)\lra \cT_{\ov\Si}(\1), \qquad [\fJ]\lra [h^*\fJ],$$
is orientation-preserving with respect to the Weil-Petersson orientations.
This diffeomorphism takes the Fenchel-Nielsen coordinates 
with respect to the pair of pants decomposition~$\cP$ to 
the Fenchel-Nielsen coordinates with respect to the pair of pants decomposition~$h^{-1}(\cP)$.
The claim now follows from \cite[Theorem~2.1]{Mir}.
\end{proof}

\begin{lmm}\label{DMsgn_lmm}
Let $\Si$ be an oriented genus~$g$ surface with $m$ boundary components
so that $2g\!+\!m\!\ge\!3$.
For every pair of distinct boundary components $(\prt\Si)_i$ and $(\prt\Si)_j$
of~$\Si$, there exists an orientation-preserving diffeomorphism~$h_{ij}$ of~$\Si$
interchanging $(\prt\Si)_i$ and $(\prt\Si)_j$ and 
preserving the remaining boundary components of~$\Si$ such that 
$\DM_{h_{ij}}$ is an orientation-reversing diffeomorphism.
\end{lmm}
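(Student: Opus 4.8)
The plan is to produce the diffeomorphism $h_{ij}$ explicitly and then reduce the sign computation to the already-established facts about Fenchel--Nielsen coordinates and the Weil--Petersson form, exactly as in Corollary~\ref{DMorient_crl} and Lemma~\ref{DMorient_lmm2}. First I would reduce to the case of two boundary components interchanged by a diffeomorphism that is ``as simple as possible'' near the two relevant boundary circles. Concretely, choose a pair-of-pants decomposition $\cP$ of $\Si$ that contains a pair of pants $P$ having $(\prt\Si)_i$ and $(\prt\Si)_j$ among its three boundary circles (this exists since $(\prt\Si)_i$ and $(\prt\Si)_j$ can always be joined by an embedded arc whose boundary neighborhood, together with the two circles, is a pair of pants; the complement is then decomposed arbitrarily). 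On $P$ there is an orientation-preserving involution $h_P$ that interchanges the two boundary components $(\prt\Si)_i,(\prt\Si)_j$ and fixes the third boundary circle setwise: a pair of pants carries an orientation-preserving hyperelliptic-type involution permuting any two chosen boundary components. Extend $h_P$ by the identity on $\Si\setminus P$ to get $h_{ij}\!:\Si\!\lra\!\Si$; this is smooth because $h_P$ fixes the third boundary circle of $P$ pointwise up to isotopy (and I would arrange it to be the identity in a collar of that circle). By construction $h_{ij}$ is orientation-preserving, interchanges $(\prt\Si)_i$ and $(\prt\Si)_j$, and preserves the other boundary components.

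Next, I would compute the sign of $\DM_{h_{ij}}$. Since $h_{ij}$ interchanges $(\prt\Si)_i$ and $(\prt\Si)_j$ and fixes the remaining boundary components, it acts on the base $(\R^+)^m$ of the fibration~\eref{DMfibr_e} by the transposition of the $i$-th and $j$-th coordinates, which is orientation-reversing on $(\R^+)^m$. It therefore suffices to show that the induced map on a fiber $\cM_{\Si}(L_1,\ldots,L_m)$ --- equivalently, on the Teichm\"uller space $\cT_\Si(\mathbf L)$ with the chosen boundary lengths, after setting $L_i\!=\!L_j$ --- is orientation-preserving for the Weil--Petersson volume form. Exactly as in the proof of Corollary~\ref{DMorient_crl}, the map $[\fJ]\!\lra\![h_{ij}^*\fJ]$ carries Fenchel--Nielsen coordinates associated to $\cP$ to Fenchel--Nielsen coordinates associated to $h_{ij}^{-1}(\cP)$. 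Since $h_{ij}$ is orientation-preserving, it sends each pair $(\ell_a,\vt_a)$ to a pair $(\ell_{a'},\vt_{a'})$ (for a permutation of the pants curves, with lengths and twists possibly relabeled but not negated), so the Jacobian in these coordinates is a permutation matrix of $(\ell,\vt)$-pairs. By \cite[Theorem~2.1]{Mir}, the Weil--Petersson form in Fenchel--Nielsen coordinates is $\sum_a \tnd\ell_a\!\w\!\tnd\vt_a$, and any permutation of the index set of pairs $(\ell_a,\vt_a)$ preserves $\big(\sum_a \tnd\ell_a\!\w\!\tnd\vt_a\big)^{\w(3g-3+m)}$; hence the map on the fiber is orientation-preserving.

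Combining these two observations via Lemma~\ref{DMorient_lmm}: the sign of $\DM_{h_{ij}}$ is the product of its sign on the base of~\eref{DMfibr_e} (which is $-1$, from the transposition of the two equal-length boundary coordinates) and its sign on the fiber (which is $+1$). Therefore $\DM_{h_{ij}}$ is orientation-reversing, as claimed. One subtlety to address carefully is the stability/low-complexity cases: when $\Si$ is a disk or a cylinder, $\cM_\Si$ is either a point or one-dimensional, and there is no pair of distinct boundary components to interchange (disk) or the statement is vacuous or must be checked by hand (cylinder with $m\!=\!2$); but the hypothesis $2g+m\ge 3$ together with the presence of two \emph{distinct} boundary components forces $m\ge 2$, and the only borderline case is $(g,m)\!=\!(0,2)$, i.e.\ the cylinder, where $\cM_\Si$ is the interval of moduli $r\!\in\!\oI$ from Section~\ref{analys_subs} and the interchange of the two boundary components is realized by $h_\Si$ (inversion), whose effect $r\!\lra\!r$ composed with the orientation behavior one reads off directly; I would check this case separately and confirm $\DM_{h_{12}}$ is orientation-reversing there as well, consistently with the general formula. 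The main obstacle, then, is not the sign bookkeeping --- which is mechanical given \cite[Theorem~2.1]{Mir} --- but constructing $h_{ij}$ cleanly enough (smooth, orientation-preserving, identity near the uninvolved boundary) that the Fenchel--Nielsen comparison argument of Corollary~\ref{DMorient_crl} applies verbatim; I would handle this by taking $h_{ij}$ to be supported in a single pair of pants and the identity elsewhere, reducing everything to the model involution on a pair of pants.
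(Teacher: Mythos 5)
Your proof is essentially correct and follows the same route as the paper's: construct $h_{ij}$ supported in a single pair of pants $P$ containing $(\prt\Si)_i$ and $(\prt\Si)_j$, then compute the Jacobian of $\wt\DM_{h_{ij}}$ in Fenchel--Nielsen coordinates. The paper works directly with the full coordinate system $(\ell_1,\vt_1,\dots,\ell_{3g-3+m},\vt_{3g-3+m},L_1,\dots,L_m)$ on $\cT_{\Si}$, while you split the computation along the fibration~\eref{DMfibr_e} (base sign $-1$ from the transposition of $L_i$ and $L_j$, fiber sign $+1$); that is a cosmetic reorganization of the same argument, and the factorization makes the source of the $-1$ transparent.

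One substantive imprecision is worth flagging: your claim that the fiber map has a ``permutation matrix'' Jacobian is not quite what happens. With $h_{ij}$ supported in $P$ and the identity near $\prt P\setminus\prt\Si$, the permutation of pants curves is \emph{trivial} --- $h_{ij}(\cP)=\cP$ --- so the permutation argument says nothing. What actually occurs, and what the paper's proof addresses via \cite[(II.3.2)]{Abikoff}, is that the twist parameter $\vt_k$ for the interior pants curve $\prt\Si_{ij}\!\setminus\!\prt\Si$ is measured against a reference point determined by the foot of the perpendicular geodesic in $\Si_{ij}$ to one of $(\prt\Si)_i,(\prt\Si)_j$; swapping $i$ and $j$ moves that reference point by half the curve's length, giving $\vt_k\mapsto\vt_k+\ell_k/2$ (a shear, not a permutation). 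This preserves orientation, so your conclusion is right, but your stated mechanism is wrong, and had the twist been \emph{negated} rather than shifted the fiber sign would have been $-1$ and the lemma would fail --- so this is a point that genuinely needs the justification (via the pair-of-pants symmetry and the fact that $h_{ij}$ fixes the curve $\prt\Si_{ij}\!\setminus\!\prt\Si$ pointwise, hence preserves its orientation) rather than a blanket ``relabeled but not negated.'' Also, your worry about $(g,m)=(0,2)$ is moot: the cylinder has $2g+m=2<3$ and is excluded by the hypothesis.
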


\begin{proof}
Choose a pair of pants decomposition of~$\Si$ such that $(\prt\Si)_i$ and $(\prt\Si)_j$
are contained in the same pair of paints; we denote the latter by~$\Si_{ij}$.
Let $h_{ij}\!:\Si\!\lra\!\Si$ be an orientation-preserving diffeomorphism 
which restricts to the identity on an open neighborhood of $\ov{\Si\!-\!\Si_{ij}}$
in~$\Si$ and interchanges $(\prt\Si)_i$ and $(\prt\Si)_j$.
We show below that the induced diffeomorphism $\DM_{h_{ij}}$ is orientation-reversing.\\

\noindent
The diffeomorphism $\DM_{h_{ij}}$ naturally lifts to a diffeomorphism $\wt\DM_{h_{ij}}$
on the Teichm\"uller space~$\cT_{\Si}$.
It is sufficient to show that the action of~$\wt\DM_{h_{ij}}$ on 
the Fenchel-Nielsen coordinates
\BE{FNcoord_e}(\ell_1,\vt_1,\dots,\ell_{3g-3+m},\vt_{3g-3+m},L_1,\dots,L_m)\EE
on $\cT_{\Si}$  is orientation-reversing.
Since $g_{h^*\fJ}\!=\!h^*g_{\fJ}$ in the notation of the proof of Lemma~\ref{DMorient_lmm},
$\wt\DM_{h_{ij}}$ preserves the lengths~$\ell_k$ of the cutting circles for 
the pair of pants decomposition
and the lengths~$L_k$ of the boundary circles with $k\!\neq\!i,j$;
the lengths~$L_i$ and~$L_j$ get interchanged by~$\wt\DM_{h_{ij}}$.
This establishes the claim in the case $\Si\!=\!\Si_{ij}$, i.e.~$g\!=\!0$ and~$m\!=\!3$.
In the other cases, $\wt\DM_{h_{ij}}$ also preserves the twisting parameters~$\vt_k$
associated with the cutting circles other than $\prt\Si_{ij}\!-\!\prt\Si$. 
In the description of \cite[(II.3.2)]{Abikoff}, $\wt\DM_{h_{ij}}$ interchanges 
the origins of the unique shortest geodesics in~$\Si_{ij}$ from this boundary component 
to $(\prt\Si)_i$ and~$(\prt\Si)_j$ and thus changes the associated twisting 
parameter~$\vt_k$ by~$\pi$.
Thus, the action of~$\wt\DM_{h_{ij}}$ on the coordinates~\eref{FNcoord_e} 
is orientation-reversing.
\end{proof}

\begin{prp}\label{DMsgn_prp} 
Let $\Si$ be an oriented genus~$g$ surface with $m$ boundary components
so that $2g\!+\!m\!\ge\!3$.
\begin{enumerate}[label=(\arabic*),leftmargin=*]
\item\label{DMsignP_it2} 
If $h\!:\Si\!\lra\!\Si$ is an orientation-preserving diffeomorphism, 
the sign of the automorphism~$\DM_h$ on $\cM_{\Si}$ is $(-1)^{\sgn_h}$.
\item\label{DMsignR_it2} 
If $h\!:\Si\!\lra\!\Si$ is an orientation-reversing diffeomorphism, 
the sign of the automorphism~$\DM_h$ on $\cM_{\Si}$ 
is $(-1)^{g+m-1+\sgn_h}$.
\end{enumerate}
\end{prp}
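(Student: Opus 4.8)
The plan is to reduce the general statement to the elementary building blocks already isolated in Lemmas~\ref{DMorient_lmm}--\ref{DMsgn_lmm} and Corollary~\ref{DMorient_crl}, using the fact that the sign of $\DM_h$ depends only on the isotopy class of~$h$ within the relevant mapping class group (orientation-preserving/boundary-permuting diffeomorphisms, or its orientation-reversing coset) and that $\DM$ is multiplicative: $\DM_{h_1\circ h_2}=\DM_{h_1}\circ\DM_{h_2}$ up to sign, hence $\sgn(\DM_{h_1\circ h_2})=\sgn(\DM_{h_1})\sgn(\DM_{h_2})$. First I would record that if $h$ preserves the orientation and fixes every boundary component setwise, then $\DM_h$ is orientation-preserving: this follows because such an~$h$ acts on the fibers of the boundary-length fibration~\eref{DMfibr_e} and, on Teichm\"uller space with fixed boundary lengths, sends the Fenchel-Nielsen coordinates attached to one pair-of-pants decomposition to those attached to another, which is orientation-preserving by \cite[Theorem~2.1]{Mir} (exactly the computation in the proof of Corollary~\ref{DMorient_crl}, minus the orientation-reversal contribution). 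This handles the identity component.

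Next I would treat part~\ref{DMsignP_it2}. Given an orientation-preserving~$h$, its induced permutation~$\pi_h$ on the $m$ boundary components is a product of transpositions, and each transposition of $(\prt\Si)_i$ with $(\prt\Si)_j$ is realized by a diffeomorphism~$h_{ij}$ as in Lemma~\ref{DMsgn_lmm}, for which $\DM_{h_{ij}}$ is orientation-reversing. Writing $\pi_h$ as a product of $\sgn_h$ transpositions (mod~2) and composing with the appropriate $h_{ij}$'s, we obtain a diffeomorphism $h'=h\circ h_{i_1j_1}\circ\cdots\circ h_{i_rj_r}$ that is orientation-preserving and induces the trivial permutation on boundary components. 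By the previous paragraph $\DM_{h'}$ is orientation-preserving, and by multiplicativity $\sgn(\DM_h)=\sgn(\DM_{h'})\cdot\prod_k\sgn(\DM_{h_{i_kj_k}})=(-1)^r=(-1)^{\sgn_h}$, since $r\equiv\sgn_h\pmod 2$.

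For part~\ref{DMsignR_it2}, I would fix one reference orientation-reversing diffeomorphism, the simplest being an $h_0$ that preserves each boundary component (for instance a reflection in a pair-of-pants-adapted model, which exists for any $(g,m)$ with $2g+m\ge3$); Corollary~\ref{DMorient_crl} gives $\sgn(\DM_{h_0})=(-1)^{g+m-1}$ with $\sgn_{h_0}=0$. An arbitrary orientation-reversing~$h$ then factors as $h=\bar h\circ h_0$ with $\bar h=h\circ h_0^{-1}$ orientation-preserving, so $\sgn_{\bar h}=\sgn_h+\sgn_{h_0}=\sgn_h\pmod2$, and part~\ref{DMsignP_it2} applied to~$\bar h$ plus multiplicativity yield $\sgn(\DM_h)=(-1)^{\sgn_h}(-1)^{g+m-1}=(-1)^{g+m-1+\sgn_h}$, as claimed.

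The main obstacle — really the only point needing care — is justifying that $\sgn(\DM_h)$ is an isotopy invariant and that the $\DM$-construction is multiplicative up to sign on the level of oriented diffeomorphisms of~$\cM_\Si$; once that is in hand everything reduces to bookkeeping with the three lemmas. Isotopy invariance is clear because an isotopy $h_t$ induces a continuous path of self-maps $\DM_{h_t}$ of the connected oriented manifold $\cM_\Si$, whose sign cannot jump; multiplicativity holds because $(h_1\circ h_2)^*=h_2^*\circ h_1^*$ on complex structures and the orientation signs $(-1)^{|h|}$ in~\eref{DMhdfn_e} multiply correctly. A secondary, purely notational care-point is that for the very small cases ($g=0$, $m=3$) some of the pair-of-pants constructions in Lemma~\ref{DMsgn_lmm} and Corollary~\ref{DMorient_crl} degenerate, but those lemmas are already stated with $2g+m\ge3$ and handle those cases directly, so no separate argument is needed here. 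Finally, the extension of the sign formula to $\cM_{\Si,\k}$ is relegated, as in the statement, to the argument of \cite[Corollary~1.8]{Ge}, and I would not reproduce it.
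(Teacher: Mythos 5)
Your proof is correct and follows essentially the same route as the paper's: reduce to the boundary-preserving case using the transposition diffeomorphisms of Lemma~\ref{DMsgn_lmm}, invoke Corollary~\ref{DMorient_crl} for the orientation-reversing reference sign $(-1)^{g+m-1}$, and multiply signs. The only place you work harder than necessary is the ``identity component'' step: for $h$ orientation-preserving and fixing each boundary component, you argue via Fenchel--Nielsen coordinates and \cite[Theorem~2.1]{Mir} that $\DM_h$ is orientation-preserving, whereas the paper observes the sharper and much simpler fact that such an $h$ lies in $\cD_{\Si}$, so that $\DM_h=\id_{\cM_{\Si}}$ outright on the quotient $\cM_{\Si}=\cJ_{\Si}/\cD_{\Si}$. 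This is the same observation you'd ultimately need anyway to know that $\sgn(\DM_h)$ depends only on the coset of $h$ in $\cD_{\Si}$, so it is worth stating it in this form; it also makes the appeal to isotopy invariance and continuity of sign superfluous.
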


\begin{proof}
By Lemma~\ref{DMsgn_lmm}, there exists an orientation-preserving diffeomorphism 
$\hat{h}\!:\!\Si\!\lra\!\Si$ such that the diffeomorphism $\hat{h}^{-1}\!\circ\!h$
preserves the boundary components of~$\Si$ and 
$$\sgn\,\DM_{\hat{h}}=(-1)^{\sgn_h}\,.$$
If $h$ is orientation-preserving, $\hat{h}^{-1}\!\circ\!h$ is then an element of $\cD_{\Si}$
and so
$$\DM_h\circ\DM_{\hat{h}}^{-1}= \DM_h\circ\DM_{\hat{h}^{-1}}
= \DM_{\hat{h}^{-1}\circ h}=\id_{\cM_{\Si}}\,.$$
This establishes the first claim.\\

\noindent
By Corollary~\ref{DMorient_crl} and Lemma~\ref{DMsgn_lmm}, 
 there exists an orientation-reversing diffeomorphism 
$\hat{h}\!:\!\Si\!\lra\!\Si$ such that the diffeomorphism $\hat{h}^{-1}\!\circ\!h$
preserves the boundary components of~$\Si$ and 
$$\sgn\,\DM_{\hat{h}}=(-1)^{g+m-1+\sgn_h}\,.$$
If $h$ is orientation-reversing, $\hat{h}^{-1}\!\circ\!h$ is then an element of $\cD_{\Si}$
and so $\sgn\,\DM_h\!=\!\sgn\,\DM_{\hat{h}}$ as in the previous case.
This establishes the second claim.
\end{proof}

\noindent
By \cite[Lemma~6.1]{GZ}, the conclusion of Proposition~\ref{DMsgn_prp}
also applies to the moduli space of domains with crosscaps,~$\cM_{\Si,\0}^c$.

\subsection{Topological observations}
\label{topol_subs}

\noindent
In light of \cite[Theorem~8.1.1]{FOOO}, \cite[Theorem~6.36]{Melissa}, and \cite[Theorem~1.1]{Sol},
the vanishing of  $w_2(L)$ or $w_2(L)\!+\!w_1(L)^2$ modulo the image of 
$H^2(X;\Z_2)$ in $H^2(L;\Z_2)$ plays an important role in the orientability 
question for moduli spaces of $J$-holomorphic maps from bordered Riemann surfaces
to a symplectic manifold~$X$ with boundary mapping to a Lagrangian submanifold~$L$;
see Section~\ref{MSopen_subs} for more details.
However, as can be seen immediately from \cite[Theorem~1.1]{Ge},
it is in fact the vanishing of  $w_2(L)$ modulo the image of 
$H^2(X;\Z_2)$ in $H^2(L;\Z_2)$ {\it and} the elements of $H^2(L;\Z_2)$
vanishing on all tori which is relevant to the orientability question.
By \cite[Theorem~1.1]{GZ}, the situation for moduli spaces of $J$-holomorphic maps
from Riemann sh-surfaces is similar.
In this section, we study some topological aspects of classes in $H^2(M;\Z_2)$,
for a topological space~$M$, vanishing on all maps from tori to~$M$
and classes vanishing on all maps from closed oriented surfaces to~$M$.
By Lemma~\ref{sqvan_lmm}, the latter are often squares of classes from $H^1(M;\Z_2)$.\\

\noindent
Throughout this paper, we take $\bI\!=\![0,1]$.
We recall that every compact connected unorientable surface~$\Si$ is 
the connected sum of $m$ copies
of~$\R\P^2$ and
$$H_1(\Si;\Z)\approx\Z^{m-1}\oplus\Z_2$$
for some $m\!\in\!\Z$; see \cite[Theorem~77.5]{Mu}.
We begin with two observations made in~\cite{GZ}.

\begin{lmm}[{\cite[Lemma~2.2]{GZ}}]\label{KleinSurface_lem}
Let $\Si$ be a compact connected unorientable surface and 
\hbox{$b_{\Si}\!\in\!H_1(\Si;\Z)$} be 
the nontrivial torsion class.
If $\ka\!\in\!H^1(\Si;\Z_2)$,
\BE{unorpair_e}\blr{\ka^2,[\Si]_{\Z_2}}=\lr{\ka,b_{\Si}}\,,\EE
where $[\Si]_{\Z_2}\!\in\!H_2(\Si;\Z_2)$ is the fundamental class with $\Z_2$-coefficients.
\end{lmm}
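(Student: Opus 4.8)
The plan is to reduce the identity to a direct computation on the standard model of the unorientable surface, namely the connected sum of $m$ copies of $\R\P^2$, since both sides of \eref{unorpair_e} are natural under homeomorphisms and the statement only depends on the topological type of $\Si$. The surface $\Si$ has $H_1(\Si;\Z_2)\approx\Z_2^m$, with a basis $\al_1,\ldots,\al_m$ represented by the core circles $C_1,\ldots,C_m$ of the $m$ copies of $\R\P^2$, and the torsion class $b_{\Si}\!\in\!H_1(\Si;\Z)$ maps to $\al_1\!+\!\ldots\!+\!\al_m$ under reduction mod~$2$. Dually, let $\ka_1,\ldots,\ka_m\!\in\!H^1(\Si;\Z_2)$ be the basis with $\lr{\ka_i,\al_j}\!=\!\de_{ij}$.

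First I would record the cup-product structure on $H^*(\Si;\Z_2)$: since $\Si$ is a closed surface, Poincar\'e duality with $\Z_2$-coefficients makes the pairing $H^1(\Si;\Z_2)\!\otimes\!H^1(\Si;\Z_2)\!\lra\!H^2(\Si;\Z_2)\!\approx\!\Z_2$ nondegenerate, and one checks on the standard model that $\lr{\ka_i\ka_j,[\Si]_{\Z_2}}\!=\!\de_{ij}$; equivalently, each core circle $C_i$ has a self-intersection that is odd (its regular neighborhood is a M\"obius band) while distinct $C_i,C_j$ can be made disjoint. Hence for $\ka\!=\!\sum_i \eps_i\ka_i$ with $\eps_i\!\in\!\Z_2$ we get $\lr{\ka^2,[\Si]_{\Z_2}}\!=\!\sum_{i,j}\eps_i\eps_j\lr{\ka_i\ka_j,[\Si]_{\Z_2}}\!=\!\sum_i \eps_i^2\!=\!\sum_i\eps_i$. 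On the other hand $\lr{\ka,b_{\Si}}\!=\!\sum_i\eps_i\lr{\ka_i,\al_1\!+\!\ldots\!+\!\al_m}\!=\!\sum_i\eps_i$, so the two sides agree.

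An alternative, perhaps cleaner, route uses Wu's formula: for a closed surface the Wu class $v_1$ satisfies $\lr{\ka^2,[\Si]_{\Z_2}}\!=\!\lr{v_1\cup\ka,[\Si]_{\Z_2}}$ for all $\ka\!\in\!H^1(\Si;\Z_2)$, and $v_1\!=\!w_1(T\Si)$. So the identity \eref{unorpair_e} is equivalent to the assertion that $w_1(T\Si)\!\in\!H^1(\Si;\Z_2)$ is Poincar\'e dual to the mod~$2$ reduction of $b_{\Si}$, i.e.\ $\lr{w_1(T\Si),\al}\!=\!\lr{\mathrm{PD}(b_{\Si}),\al}$ for all $\al$; this in turn amounts to the classical fact that an embedded circle in $\Si$ has an orientation-reversing (one-sided) neighborhood if and only if it represents the torsion class mod~$2$. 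I would verify this on the standard model as above.

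The main obstacle is simply pinning down the cup-product (equivalently, the self-intersection) computation on the standard model carefully enough, together with the identification of $b_{\Si}$ mod~$2$ with $\al_1\!+\!\ldots\!+\!\al_m$; the rest is formal. Since this is \cite[Lemma~2.2]{GZ}, I would expect the authors to either quote it or give exactly such a short model computation, possibly via the Wu-formula reformulation.
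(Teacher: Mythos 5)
The paper does not prove this lemma; it is imported verbatim as \cite[Lemma~2.2]{GZ}, so there is no in-paper proof to compare against. Your argument is correct and is the standard one: on $\Si\!=\!\#^m\R\P^2$, the core circles $C_1,\dots,C_m$ give a $\Z_2$-basis with $\lr{\ka_i\ka_j,[\Si]_{\Z_2}}\!=\!\de_{ij}$ (one-sided neighborhoods, disjoint perturbations), $b_\Si$ reduces mod~$2$ to $\al_1\!+\!\dots\!+\!\al_m$, and both sides evaluate to $\sum_i\eps_i$ over $\Z_2$. The Wu-formula reformulation $\ka^2=v_1\cup\ka$ with $v_1=w_1(T\Si)$ Poincar\'e dual to the mod-$2$ reduction of $b_\Si$ is equally valid and somewhat cleaner, since it explains invariantly why the identity holds rather than checking it coordinate by coordinate.
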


\begin{lmm}[{\cite[Corollary~2.3]{GZ}}]\label{sqvan_lmm}
For any topological space~$M$,
$$\big\{w\!\in\!H^2(M;\Z_2)\!:\,w(B)\!=\!0~\forall\,B\!\in\!H_2(M;\Z)\big\}
\supset\big\{\ka^2\!:\,\ka\!\in\!H^1(M;\Z_2)\big\}.$$
If $H_1(M;\Z)$ is finitely generated, the reverse inclusion holds
if and only if $H_1(M;\Z)$ has no \hbox{4-torsion}.
\end{lmm}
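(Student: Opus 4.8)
The plan is to prove the two inclusions separately. The forward inclusion $\{\ka^2:\ka\!\in\!H^1(M;\Z_2)\}\subset\{w:w(B)\!=\!0~\forall B\}$ is the easy direction: for any class $B\!\in\!H_2(M;\Z)$, represent it (after reducing mod~$2$) by a map $g\!:\Sigma_h\!\to\!M$ from a closed oriented surface, so that $\lr{\ka^2,B}=\lr{g^*(\ka^2),[\Sigma_h]_{\Z_2}}=\lr{(g^*\ka)^2,[\Sigma_h]_{\Z_2}}$. On an orientable surface, the cup square of a degree-one class vanishes (the self-intersection form is alternating, equivalently $w_1(\Sigma_h)^2=0$ and more generally $\alpha\cup\alpha=\alpha\cup w_1=0$ for $\alpha\in H^1$), so $\lr{\ka^2,B}=0$. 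Actually the cleanest phrasing: every class in $H_2(M;\Z_2)$ in the image of $H_2(M;\Z)$ comes from an oriented surface, and on such a surface $\alpha\cup\alpha=0$ for $\alpha\in H^1(\Sigma_h;\Z_2)$ since the intersection pairing on $H^1$ of a closed oriented surface is symplectic; hence $\ka^2$ pairs trivially with $B$. This is where Lemma~\ref{KleinSurface_lem} contrasts: on unorientable surfaces $\ka^2$ need not vanish, which is exactly why only $H_2(M;\Z)$ (oriented classes) enters.

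For the reverse direction under the hypothesis that $H_1(M;\Z)$ is finitely generated, the plan is to reduce everything to the surjectivity/injectivity of the squaring map $H^1(M;\Z_2)\to H^2(M;\Z_2)$ composed with the pairing against $H_2(M;\Z)$, and to use the structure theorem for finitely generated abelian groups. Write $H_1(M;\Z)\cong\Z^r\oplus\bigoplus_k\Z/2^{e_k}\oplus(\text{odd torsion})$. By the universal coefficient theorem, $H^1(M;\Z_2)\cong\Hom(H_1(M;\Z),\Z_2)$ and $H^2(M;\Z_2)\cong\Hom(H_2(M;\Z),\Z_2)\oplus\Ext(H_1(M;\Z),\Z_2)$. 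The subgroup $\{w:w(B)=0~\forall B\in H_2(M;\Z)\}$ is precisely the $\Ext(H_1(M;\Z),\Z_2)$ summand, which is $\bigoplus_k\Z_2$, one factor per even-torsion summand $\Z/2^{e_k}$ of $H_1$. So I must show: the image of the squaring map $\ka\mapsto\ka^2$ lands in this $\Ext$-summand (that is the forward inclusion, already done) and is \emph{onto} it if and only if there is no $4$-torsion, i.e.\ all $e_k=1$.

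The crux is therefore a Bockstein computation identifying $\ka\mapsto\ka^2$ with a known operation on $H^1(M;\Z_2)$. The key fact is that for $\ka\in H^1(M;\Z_2)$ one has $\ka^2=\mathrm{Sq}^1\ka=\beta(\ka)$, the mod-$2$ Bockstein associated to $0\to\Z_2\to\Z_4\to\Z_2\to0$. Under the UCT identifications, $\beta\colon H^1(M;\Z_2)\to H^2(M;\Z_2)$ annihilates the free part and the $\Z/2^{e}$-part with $e\ge2$ (a homomorphism $\Z/2^e\to\Z_2$ lifts to $\Z/2^e\to\Z_4$ when $e\ge2$), while on the $\Z/2$-part it is exactly the connecting isomorphism onto the corresponding $\Ext(\Z/2,\Z_2)=\Z_2$ factor. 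Hence the image of squaring is $\bigoplus_{k:e_k=1}\Z_2\subset\bigoplus_k\Z_2=\{w:w(B)=0~\forall B\}$, and this is all of the target iff every $e_k=1$, i.e.\ iff $H_1(M;\Z)$ has no $4$-torsion. I expect the main obstacle to be the bookkeeping in matching the Bockstein with the UCT splitting of $H^2(M;\Z_2)$ — one has to be careful that the $\Ext$-summand is canonical enough for "$w$ vanishes on all integral $2$-cycles $\iff$ $w\in\Ext(H_1;\Z_2)$" to hold, and that $\mathrm{Sq}^1$ restricted to a $\Z/2^e$ summand behaves as claimed; a clean way to see the latter is to test against the universal examples, using maps from lens spaces (or from $\R\P^2$ for the $\Z/2$ case, via Lemma~\ref{KleinSurface_lem}) to detect the relevant classes. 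Since both Lemma~\ref{KleinSurface_lem} and Lemma~\ref{sqvan_lmm} are quoted from~\cite{GZ}, the intended proof may simply cite~\cite[Corollary~2.3]{GZ}; I would include the Bockstein argument as the self-contained justification.
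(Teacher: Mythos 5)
The paper itself gives no proof of this statement; it is imported verbatim from \cite[Corollary~2.3]{GZ}, so there is no in-paper argument to compare against. Assessed on its own terms, your proof is correct and complete modulo the one bookkeeping point you yourself flag, and that point has a clean fix.

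The forward inclusion is fine: every class in the image of $H_2(M;\Z)\to H_2(M;\Z_2)$ is representable by a map $g\colon\Sigma_h\to M$ from a closed \emph{oriented} surface (a fact the paper itself invokes just after Lemma-Definition~\ref{spinpair_dfn}), and on such a surface $\alpha\cup\alpha=\mathrm{Sq}^1\alpha=w_1(\Sigma_h)\cup\alpha=0$ by Wu's formula, so $\lr{\ka^2,B}=0$. For the reverse direction, your identification of $\{w:w(B)=0\ \forall B\}$ with the (canonical) $\Ext(H_1(M;\Z),\Z_2)$ subgroup of $H^2(M;\Z_2)$ is correct, as is the identity $\ka^2=\mathrm{Sq}^1\ka=\beta(\ka)$ and the computation that the Ext connecting map $\delta\colon\Hom(\Z/2^e,\Z_2)\to\Ext(\Z/2^e,\Z_2)$ is an isomorphism for $e=1$ and zero for $e\ge2$.

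The subtlety you flag — matching the topological Bockstein $\beta$ with the algebraic connecting map $\delta$ under the UCT splitting — can be sidestepped entirely by comparing kernels rather than images. Since $H_0(M;\Z)$ is free, the UCT gives natural isomorphisms $H^1(M;\Z_m)\cong\Hom(H_1(M;\Z),\Z_m)$ for $m=2,4$, under which the reduction $H^1(M;\Z_4)\to H^1(M;\Z_2)$ becomes $\Hom(H_1,\Z_4)\to\Hom(H_1,\Z_2)$. By exactness of the two long sequences, $\ker\beta=\mathrm{image}\big(H^1(M;\Z_4)\to H^1(M;\Z_2)\big)$ and $\ker\delta=\mathrm{image}\big(\Hom(H_1,\Z_4)\to\Hom(H_1,\Z_2)\big)$, so $\ker\beta=\ker\delta$. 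Since everything is a finite-dimensional $\Z_2$-vector space under the finite-generation hypothesis, the images of $\beta$ and $\delta$ inside $\Ext(H_1,\Z_2)$ have the same dimension; hence $\beta$ is onto $\Ext(H_1,\Z_2)$ if and only if $\delta$ is, and the latter you have computed to hold precisely when there is no $4$-torsion. This makes the proof self-contained without needing to check that $\beta$ and $\delta$ agree as maps (which they do, but verifying it directly is the fiddly part you were wary of). Your alternative verification via Moore/lens spaces also works and is in the same spirit as the geometric Lemma~\ref{KleinSurface_lem}, which is plausibly closer to the proof in \cite{GZ} given that the statement there is labeled a corollary of it.
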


\begin{dfn}\label{Klein_dfn}
Let $M$ be a topological space.
\begin{enumerate}[label=(\arabic*),leftmargin=*]
\item\label{KlBd_it} A free homotopy class $b\!\in\!\pi_1(M)$ is a \sf{Klein boundary} 
if there exists a continuous map\linebreak $F\!:\bI\!\times\!S^1\!\lra\!M$ 
% and an orientation-reversing diffeomorphism  $\vph\!:S^1\!\lra\!S^1$ 
such~that 
$$\big[F|_{0\times S^1}\big]=b\in \pi_1(M)
\qquad\hbox{and}\qquad F|_{1\times S^1}=F|_{0\times S^1}\!\circ\!\fc_{S^1}\,,$$
where $\fc_{S^1}\!:S^1\!\lra\!S^1$ is the restriction of the standard conjugation on~$\C$.
\item\label{ator_it} A class $w\!\in\!H^2(M;\Z_2)$ is \sf{atorical} (resp.~\sf{spin})
if $f^*w\!=\!0$ for every continuous map \hbox{$f\!:\bT\!\lra\!M$}
(resp.~for every closed oriented surface $\Si$ and continuous map $f\!:\Si\!\lra\!M$).\\
\end{enumerate}
\end{dfn}

\noindent
If $b$ is a Klein boundary, $2[b]\!=\!0\in H_1(M;\Z)$.
Conversely, if $[b]\!\in\!H_1(M;\Z)$ and $2[b]\!=\!0$, there exist
\begin{enumerate}[label=$\bullet$,leftmargin=*]
\item a compact oriented surface $\Si$ with two boundary components $(\prt\Si)_1$ and 
$(\prt\Si)_2$,
\item orientation-preserving diffeomorphisms $\vph_1\!:S^1\!\lra\!(\prt\Si)_1$ and
$\vph_2\!:S^1\!\lra\!(\prt\Si)_2$, and
\item a continuous map $F\!:\Si\!\lra\!M$ such that 
$\big[F\circ\vph_1\big]\!=\![b]\in H_1(M;\Z)$ and 
$F\!\circ\!\vph_1=F\!\circ\!\vph_2$.
\end{enumerate}
Such a map $F$ descends to a continuous map $\hat{F}$ from the unorientable surface
$$\hat\Si\equiv \Si\big/\!\!\sim, \qquad
z\sim\vph_1(\vph_2^{-1}(z))~~~\forall\,z\!\in\!(\prt\Si)_2\,.$$
The image of each boundary component $(\prt\Si)_i$ in $\hat\Si$
represents the nonzero two-torsion element~$b_{\hat\Si}$ of $H_1(\hat\Si;\Z)$
and $\hat{F}_*b_{\hat\Si}\!=\![b]$.\\

\noindent
If $b\!\in\!\pi_1(M)$ is a Klein boundary, a map $F$ as 
in  Definition~\ref{Klein_dfn}\ref{KlBd_it} descends to a continuous map
from the Klein bottle, 
$$\hat{F}\!: (\bI\!\times\!S^1)/\!\!\sim \lra M,
\qquad\hbox{where}\quad (1,z)\sim(0,\bar{z})~\forall\,z\!\in\!S^1\subset\C,
\quad \hat{F}\big([s,z]\big)=F(s,z),$$
such that the loop $z\!\lra\!F(0,z)$ represents $b\!\in\!\pi_1(M)$.

\begin{lmmdfn}\label{spinpair_dfn}
Suppose $M$ is a topological space.
\begin{enumerate}[label=(\arabic*),leftmargin=*]
\item\label{atorpar_it} Let $b\!\in\!\pi_1(M)$ be a Klein boundary and 
$w\!\in\!H^2(M;\Z_2)$ be atorical.
The number 
\BE{Klatordfn_e}\flr{w,b}\equiv\blr{w,\hat{F}_*[\bK]_{\Z_2}}\in\Z_2,\EE
where $\hat{F}\!:\bK\!\lra\!M$ is induced by 
a map $F$ as  in Definition~\ref{Klein_dfn}\ref{KlBd_it},
is independent of the choice of~$F$. 
\item\label{spinpar_it} Let $b\!\in\!H_1(M;\Z)$ be a two-torsion class and 
$w\!\in\!H^2(M;\Z_2)$ be spin.
The number 
\BE{Torspindfn_e}\flr{w,b}\equiv\blr{w,\hat{F}_*[\hat\Si]_{\Z_2}}\in\Z_2,\EE
where $\hat{F}\!:\hat\Si\!\lra\!M$ is a continuous map from 
an unorientable surface~$\hat\Si$ such that $\hat{F}_*b_{\hat\Si}\!=\!b$
for the unique nonzero two-torsion element $b_{\hat\Si}$  of~$H_1(\hat\Si;\Z)$,
is independent of the choice of~$\hat{F}$. 
\item In either case,
\BE{flrka_e}\flr{\ka^2,b}=\lr{\ka,b} \qquad\forall~ \ka\in H^1(M;\Z).\EE
\end{enumerate}
\end{lmmdfn}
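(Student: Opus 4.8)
The plan is to prove the three parts together, since they share the same mechanism: independence of the chosen representative surface (or map) will follow from the fact that any two such representatives can be glued along their common boundary-image loop to produce a \emph{closed} oriented surface mapping to $M$, on which the cohomology class in question is forced to vanish by hypothesis. Concretely, for part~\ref{atorpar_it}, suppose $F_0$ and $F_1$ are two maps as in Definition~\ref{Klein_dfn}\ref{KlBd_it} representing the same free homotopy class $b$, inducing $\hat F_0,\hat F_1\!:\bK\!\lra\!M$. Since $[F_0|_{0\times S^1}]\!=\![F_1|_{0\times S^1}]\!=\!b$ in $\pi_1(M)$, there is a cylinder $G\!:\bI\!\times\!S^1\!\lra\!M$ interpolating between these two loops; gluing $\hat F_0$, $G$, and $\hat F_1$ along the appropriate circles (orienting one copy of $\bK$ in reverse) produces a continuous map from the closed non-orientable surface $\bK\#\bK$, or rather — after choosing orientations — I should arrange the gluing so that the twisting conjugations on the two ends cancel, yielding a map from a closed \emph{oriented} surface $\Si$ of some genus to $M$. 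Then $\langle w,\hat F_{0*}[\bK]_{\Z_2}\rangle - \langle w,\hat F_{1*}[\bK]_{\Z_2}\rangle = \langle w, (\text{that closed oriented surface's class})\rangle$, which is $0$ because $w$ is atorical — wait, atorical only kills tori, not all closed oriented surfaces; so here I must instead express $\bK\#\bK$ (or the relevant closed surface) as a union of tori glued along circles, using that an atorical class, by the Mayer--Vietoris / additivity of the fundamental-class pairing over such a decomposition, kills any closed oriented surface built from tori. Actually the cleaner route: $\bK\#\bK$ is itself obtained by gluing two Klein bottles along a circle, and $\langle w, \cdot\rangle$ on $\bK$ is controlled by the torus double cover — I would instead lift to the orientation double cover (a torus) and use that $w$ pulls back to $0$ there, hence $\langle w,\hat F_*[\bK]_{\Z_2}\rangle$ is already determined by how $F$ restricts over the generating loop $b$, independently of the rest. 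Let me adopt this as the main argument.

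\textbf{The torus-double-cover argument for part~\ref{atorpar_it}.} The orientation double cover of $\bK$ is $\bT$, with deck transformation $\iota$, and the covering map $\pi\!:\bT\!\lra\!\bK$ satisfies $\pi^*[\bK]_{\Z_2}\!=\![\bT]_{\Z_2}$ (mod-$2$ transfer). Thus $\langle w,\hat F_*[\bK]_{\Z_2}\rangle = \langle w, \hat F_*\pi_*[\bT]_{\Z_2}\rangle$ would give $0$ outright if the composite $\hat F\circ\pi$ were null on $w$ — and it is, because $\hat F\circ\pi\!:\bT\!\lra\!M$ is a map from a torus and $w$ is atorical. But that would make $\flr{w,b}$ always $0$, contradicting~\eref{flrka_e}. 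The resolution is that $\pi_*[\bT]_{\Z_2}\!=\!0$ in $H_2(\bK;\Z_2)$ (the transfer in mod-$2$ homology of the non-trivial double cover of a surface vanishes on the top class, since $\chi$-parity issues make $[\bK]_{\Z_2}$ not in the image — indeed $[\bT]_{\Z_2}\mapsto 0$ because $\iota_*$ is the identity on $H_2(\bT;\Z_2)\cong\Z_2$ so the transfer is $1+1=0$). So that approach is vacuous. I therefore go back to the gluing approach but do it carefully: given $F_0, F_1$ representing $b$ and a connecting cylinder $G$, I form the closed oriented surface $\Si = \bK_0 \cup_{S^1} (\bI\times S^1) \cup_{S^1} \bar\bK_1$ where the two Klein-bottle pieces are cut open along their defining circles $\{0\}\times S^1$ — cutting $\bK$ along that circle yields a cylinder $\bI\times S^1$ with the two ends identified via $\fc_{S^1}$, so cutting it \emph{open} along $\{0\}\times S^1$ recovers just a cylinder, i.e. $\bK$ minus an open collar of $\{0\}\times S^1$ is an annulus. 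Hence $\hat F_i$ restricted to $\bK_i$ minus that collar is a map from an annulus; gluing the two annuli (from $\hat F_0$ and $\hat F_1$) along \emph{both} pairs of their boundary circles — one pair by the given homotopy-equality and the other pair via $G$ and $\fc_{S^1}$-compatibility — produces a map from the \emph{torus} $\bT$ (two annuli glued along both ends) to $M$. On this torus, $w$ vanishes, and unwinding the fundamental classes shows $\langle w,\hat F_{0*}[\bK]_{\Z_2}\rangle = \langle w,\hat F_{1*}[\bK]_{\Z_2}\rangle$. This is the heart of the matter and the step I expect to be most delicate: getting the orientation/twisting bookkeeping right so that the glued object is genuinely a torus (not a Klein bottle), and verifying the fundamental-class identity $[\bK_0]_{\Z_2} - [\bK_1]_{\Z_2} \leftrightarrow [\bT]_{\Z_2}$ under this surgery, which is where the hypotheses ``$b$ is a Klein boundary'' and ``$F|_{1\times S^1}=F|_{0\times S^1}\circ\fc_{S^1}$'' are used crucially.

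\textbf{Part~\ref{spinpar_it} and part~(3).} Part~\ref{spinpar_it} is entirely analogous but easier, and this is where I would use Lemma~\ref{sqvan_lmm}, Lemma~\ref{KleinSurface_lem}, and the discussion preceding Lemma-Definition~\ref{spinpair_dfn}: given two maps $\hat F_0\!:\hat\Si_0\!\lra\!M$ and $\hat F_1\!:\hat\Si_1\!\lra\!M$ from unorientable surfaces with $\hat F_{i*}b_{\hat\Si_i}\!=\!b$, I cut each along a loop representing $b_{\hat\Si_i}$ — which, being the $2$-torsion class, is represented by an embedded circle whose complement may be orientable or not; in any case one can arrange (after stabilizing by connect-summing with extra $\R\P^2$'s, which changes nothing mod the spin hypothesis since a map factoring through $\R\P^2$ with $w$ spin contributes $0$ by $w|_{\R\P^2}=0$) that cutting along $b_{\hat\Si_i}$ gives an orientable surface with one or two boundary components mapping the boundary loop(s) to the $b$-loop in $M$. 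Gluing the two resulting oriented pieces along their boundaries produces a \emph{closed oriented} surface $\Si$ mapping to $M$, on which the spin class $w$ vanishes, giving $\langle w,\hat F_{0*}[\hat\Si_0]_{\Z_2}\rangle = \langle w,\hat F_{1*}[\hat\Si_1]_{\Z_2}\rangle$. Finally, part~(3): for $\ka\in H^1(M;\Z)$ we have $\ka^2$ (mod $2$) is atorical (it vanishes on $\bT$ since $H^1(\bT;\Z_2)$ has trivial cup-square, or directly $\langle f^*\ka^2,[\bT]_{\Z_2}\rangle = \langle \ka,f_*(\text{something})\rangle$ and the torus has no $2$-torsion in $H_1$, so $\langle \ka, \cdot\rangle$ on the image of $b_{\bT}$... more simply, $\ka^2|_\bT = (\ka|_\bT)^2 = 0$ because the cup product $H^1(\bT;\Z_2)^{\otimes 2}\to H^2(\bT;\Z_2)$ is alternating on a basis) and likewise spin; then by definition $\flr{\ka^2,b} = \langle \ka^2, \hat F_*[\bK]_{\Z_2}\rangle$ (resp.\ $[\hat\Si]_{\Z_2}$), and Lemma~\ref{KleinSurface_lem} (for $\bK$, noting $\bK$ is a connected sum of two $\R\P^2$'s with $b_\bK$ the image of either boundary loop) gives $\langle \ka^2,[\bK]_{\Z_2}\rangle = \langle \ka, b_\bK\rangle$, hence $\langle\ka^2,\hat F_*[\bK]_{\Z_2}\rangle = \langle\ka,\hat F_*b_\bK\rangle = \langle\ka,b\rangle$; the unorientable-surface case is identical via Lemma~\ref{KleinSurface_lem} applied to $\hat\Si$. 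The main obstacle throughout is the orientation bookkeeping in the cut-and-reglue step of parts~\ref{atorpar_it} and~\ref{spinpar_it} — making precise that the glued surface is closed and oriented and that the mod-$2$ fundamental classes add as claimed.
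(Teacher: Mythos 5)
Your construction for part~(1) matches the paper's: cut both Klein bottles open along their core circles, reconnect by two cylinders (using that the two loops are freely homotopic, both representing $b\in\pi_1(M)$), obtain a map from a torus whose $\Z_2$-class is $\hat F_{0*}[\bK]_{\Z_2}+\hat F_{1*}[\bK]_{\Z_2}$, and pair with the atorical class~$w$. Your derivation of part~(3) from Lemma~\ref{KleinSurface_lem} is also correct.

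Part~(2) as written has genuine gaps. The stated justification for stabilizing by $\#\,\R\P^2$ is wrong: $w$ being spin forces $\lr{w,\cdot}$ to vanish only on classes from closed \emph{oriented} surfaces, and $\R\P^2$ is not one---the whole reason $\flr{w,\cdot}$ is worth defining is that a spin class need not vanish on maps from unorientable surfaces. The contribution of the added $\R\P^2$ summand vanishes only because you may extend the map by a constant there; this has nothing to do with $w$ being spin. Beyond that: even after stabilizing so that the cuts produce oriented pieces $\Si_0'$, $\Si_1'$ with two boundary circles each, those boundary circles map to loops that merely represent the same class $b\in H_1(M;\Z)$---they are in general neither equal nor freely homotopic, so you cannot glue the pieces directly, and you cannot use cylinders as in part~(1). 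This is precisely where the weaker hypothesis on~$b$ (a $2$-torsion homology class rather than a Klein boundary) and the stronger hypothesis on~$w$ (spin rather than merely atorical) interact: you must interpose oriented cobordisms between the boundary loops, and the resulting closed oriented surface need not be a torus. A cleaner way to finish avoids the cut-and-glue entirely: the Bockstein $\beta\colon H_2(M;\Z_2)\to H_1(M;\Z)$ of $0\to\Z\xrightarrow{\times 2}\Z\to\Z_2\to 0$ carries $\hat F_{i*}[\hat\Si_i]_{\Z_2}$ to $\hat F_{i*}b_{\hat\Si_i}=b$, so $\beta$ of the sum $\hat F_{0*}[\hat\Si_0]_{\Z_2}+\hat F_{1*}[\hat\Si_1]_{\Z_2}$ is $2b=0$; the sum therefore lies in the image of $H_2(M;\Z)\to H_2(M;\Z_2)$, is represented by a map from a closed oriented surface, and pairs trivially with the spin class~$w$.
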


\begin{proof}
(1) Let $F,F'\!:\bI\!\times\!S^1\!\lra\!M$
be continuous maps such~that 
$$\big[F|_{0\times S^1}\big]=\big[F'|_{0\times S^1}\big]=b\in \pi_1(M),
\qquad F|_{1\times S^1}=F|_{0\times S^1}\!\circ\!\fc_{S^1},
\qquad F'|_{1\times S^1}=F'|_{0\times S^1}\!\circ\!\fc_{S^1}\,.$$
The class $\hat{F}_*[\bK]_{\Z_2}+\hat{F}_*'[\bK]_{\Z_2}$
is represented by a continuous map $f\!:\bT\!\lra\!M$, obtained by connecting
the images of $F|_{0\times S^1}$ and $F'|_{0\times S^1}$ by cylinders.
Since $w$ is atorical, it follows that 
$$\blr{w,\hat{F}_*[\bK]_{\Z_2}}+\blr{w,\hat{F}'_*[\bK]_{\Z_2}}
=\blr{w,f_*[\bT]_{\Z_2}}=0,$$
and so $\flr{w,b}$ is well-defined.\\

\noindent
(2) The proof is similar.\\
(3) This follows from \eref{unorpair_e}.
\end{proof}

\noindent
If $f\!:\R\P^2\!\lra\!M$ is a continuous map and $\al\!:S^1\!\lra\!\R\P^2$
represents the nonzero element of $H_1(\R\P^2;\Z)$, then $f\!\circ\!\al$
represents a Klein boundary~$b$ in~$M$.
A map~$F$ as in Definition~\ref{Klein_dfn}\ref{KlBd_it} can be obtained by 
precomposing~$f$ with a map $g\!:\bI\!\times\!S^1\!\lra\!\R\P^2$ such~that
$$g|_{1\times S^1}=g|_{0\times S^1}\circ\fc_{S^1} \qquad\hbox{and}\qquad
g_*[0\times S^1]=[\al]\in H_1(\R\P^2;\Z).$$
In this case,
$$\flr{w,b}=\blr{w,f_*[\R\P^2]_{\Z_2}}\in\Z_2\,$$
for any $w\!\in\!H^2(M;\Z_2)$ atorical.
The map on the $\Z_2$-quotients induced by the map~$f_{\P^1,\bK}$ in the proof 
of Lemma~\ref{P1T2_lmm} is an example of such a map~$g$.\\

\noindent
The two-torsion classes in $H_1(M;\Z)$ that are not Klein boundaries play 
no special role in the orientability problem in real Gromov-Witten theory, but unfortunately 
there appears to be no simple algebraic characterization of Klein boundaries.
Similarly, there appears to be no simple algebraic characterization of atorical classes
in $H^2(M;\Z_2)$, which are the most relevant to the orientability problem
in open and real Gromov-Witten theory, but Lemma~\ref{sqvan_lmm}
provides such a characterization for the smaller collection of spin classes
in many cases.
Since every element of $H_2(M;\Z)$ can be represented by a continuous map
$F\!:\Si\!\lra\!M$ for a closed oriented surface, a class $w\!\in\!H^2(M;\Z_2)$
is spin if and only if $w$ vanishes on the image of $H_2(M;\Z)$ in $H_2(M;\Z_2)$,
under the homomorphism induced by the surjective homomorphism $\Z\!\lra\!\Z_2$.
By the Universal Coefficient Theorem for Cohomology \cite[Theorem~53.1]{Mu2},
there is a split exact sequence
$$0\lra \Ext\big(H_1(M;\Z),\Z_2\big)\lra H^2(M;\Z_2)\lra
\Hom\big(H_2(M;\Z),\Z_2\big)\lra0\,.$$
Thus, the spin classes $w\!\in\!H^2(M;\Z)$ are the elements of the group
\BE{Extdfn_e}\Ext\big(H_1(M;\Z),\Z_2\big)\equiv \Hom\big(B_1(M),\Z_2\big)\big/\big\{\eta|_{B_1(M)}\!:\,
\eta\!\in\!\Hom(Z_1(M),\Z_2)\big\},\EE
where $B_1(M)$ and $Z_1(M)$ are the group of boundaries of 2-chains 
and the group of 1-cycles, respectively. 
If a one-cycle~$b$ represents a two-torsion element of~$H_1(M;\Z)$,
$2b\!\in\!B_1(M)$ and
$$\flr{w,[b]}=\ti{w}(2b)$$ for any $\ti{w}\!\in\!\Hom(B_1(M),\Z_2)$
representing~$w$.

\section{Equivariant cohomology}
\label{equivcoh_subs}

\noindent
In this section, we recall basic notions in equivariant cohomology,
in the case the group is~$\Z_2$, and apply them to real bundle pairs.
%In particular, we show that the equivariant second Stiefel-Whitney
%class~$w_2^{\phi}$ is a homomorphism from the group of the rank~1 real bundle pairs 
%with the tensor product; see Proposition~\ref{tensor_prp}.
We classify the real bundle pairs over the torus and the Klein bottle,
with certain fixed-point-free involutions, and compute~$w_2^{\phi}$ 
of all rank~1 real bundle pairs over symmetric surfaces with fixed-point-free involutions;
see Lemma~\ref{T2KB_lmm} and Corollary~\ref{gbund_crl}, respectively.
We conclude with two examples illustrating the intriguing nature of~$w_2^{\phi}$ 
of real bundle pairs.

\subsection{Basic notions}
\label{equivdfn_e}

\noindent
The group $\Z_2$ acts freely on the contractible space $\bE\Z_2\!\equiv\!S^{\i}$
with the quotient $\bB\Z_2\!\equiv\!\R\P^{\i}$.
An involution $\phi\!:M\!\lra\!M$, where $M$ is a topological space,
corresponds to a $\Z_2$-action on~$M$.
We denote by
$$\bB_{\phi}M=\bE\Z_2\!\times_{\Z_2}\!M$$
the corresponding Borel construction and by
$$H^*_{\phi}(M)\equiv H^*(\bB_{\phi}M;\Z_2), \quad
 H_*^{\phi}(M)\equiv H_*(\bB_{\phi}M;\Z_2), \quad 
 H_*^{\phi}(M;\Z)\equiv H_*(\bB_{\phi}M;\Z)$$
the corresponding \textsf{$\Z_2$-equivariant cohomology} and \textsf{homology} of~$M$. 
Let 
\BE{BXfib_e} M\lra \bB_{\phi}M\lra \bB\Z_2=\R\P^{\i}\EE
be the fibration induced by the projection $p_1\!:\bE\Z_2\!\times\!M\!\lra\!\bE\Z_2$.\\

\noindent
If $(V,\ti\phi)\!\lra\!(M,\phi)$ is a real bundle pair,
$$\bB_{\ti\phi} V\equiv \bE\Z_2\!\times_{\Z_2}\!V\lra \bB_{\phi}M$$
is a real vector bundle;
this is the quotient of the vector bundle $p_2^*V\!\lra\!\bE\Z_2\!\times\!M$
by the natural lift of the free $\Z_2$-action on the base.
Let 
$$w_i^{\ti\phi}(V)\equiv w_i(\bB_{\ti\phi}V)\in H_{\phi}^i(M)$$
be the \textsf{$\Z_2$-equivariant Stiefel-Whitney classes of $V\!\lra\!M$}. 
%For example, 
If $M$ is a point and $V\!=\!\C\!=\!\R\!\oplus\!\fI\R$, 
$$\bB_{\ti\phi}V=\R\P^{\i}\!\times\!\R \oplus\cO_{\R\P^{\i}}(-1)\lra\R\P^{\i}\,,$$
where $\cO_{\R\P^{\i}}(-1)$ is the tautological line bundle;
thus, $w_1^{\ti\phi}(V)$ is the generator of $H^1_{\phi}(\pt)$
in this case.
The non-equivariant Stiefel-Whitney classes of~$V$ are recovered from 
the equivariant Stiefel-Whitney classes of~$V$ by restricting to the fiber
of the fibration~\eref{BXfib_e}.
If $f\!:\Si\!\lra\!M$ is a continuous map commuting with involutions~$c$ on~$\Si$
and~$\phi$ on~$M$, 
the involution~$\ti\phi$ on~$V$ induces an involution $f^*\ti\phi$ on~$f^*V$ 
lifting~$c$ and 
\BE{equivback_e} w_i^{f^*\ti\phi}(f^*V)=\{\bB_{\phi,c}f\}^*w_i^{\ti\phi}(V)\in H^i_c(\Si),\EE
where 
\BE{equivback_e2}
\bB_{\phi,c}f\!:\bB_c\Si\!\lra\!\bB_{\phi}M, \qquad
\{\bB_{\phi,c}f\}\big([e,z]\big)=\big[e,f(z)\big],\EE
is the map induced by~$f$.\\

\noindent
If an involution $c\!:\Si\!\lra\!\Si$ has no fixed points,
the projection $p_2\!:\bE\Z_2\!\times\!\Si\!\lra\!\Si$ 
descends to a fibration
\BE{Z2fib_e2}\bE\Z_2\lra \bB_c\Si\stackrel{q}{\lra} \Si/\Z_2\,.\EE
Since $\bE\Z_2$ is contractible, this fibration is a homotopy equivalence,
with a homotopy inverse provided by any section of~$q$;
in the case of the antipodal map~\eref{antip_e}, 
such a section is explicitly described in \cite[Section~2.2]{GZ}.
In particular, $q$~induces isomorphisms
\BE{Z2isom_e}q^*\!:H^*(\Si/\Z_2)\lra H^*_c(\Si), \qquad
q_*\!: H_*^c(\Si;\Z)\lra H_*(\Si/\Z_2;\Z).\EE
Any section of~$q$ embeds $\Si/\Z_2$ as a homotopy retract, and 
every two such sections are homotopic.
Thus, if $f\!:\Si\!\lra\!M$ is a continuous map commuting  
with the involutions~$c$ on~$\Si$ and~$\phi$ on~$M$, we also denote~by
$$\bB_{\phi,c}f\!:\Si/\Z_2\lra\bB_{\phi}M$$
the composition of $\bB_{\phi,c}f\!:\bB_c\Si\!\lra\!\bB_{\phi}M$ with
any section of~$q$; this is well-defined and unambiguous up~to homotopy.
If $\Si$ is a compact manifold and $c\!:\!\Si\!\lra\!\Si$ is smooth, let
$$[\Si]_{\Z_2}^c=[\Si/\Z_2]_{\Z_2}\in H_*^c(\Si) \qquad\hbox{and}\qquad
[f]_{\Z_2}^c= \{\bB_{\phi,c}f\}_*[\Si]_{\Z_2}^c \in H_*^{\phi}(M)$$
denote the \sf{$\Z_2$-fundamental class of~$\Si$} and its
\sf{equivariant push-forward}, respectively;
if $\Si/\Z_2$ is oriented, we similarly define 
$$[\Si]_{\Z}^c=[\Si/\Z_2]_{\Z} \in H_*^c(\Si;\Z) \qquad\hbox{and}\qquad
[f]_{\Z}^c= \{\bB_{\phi,c}f\}_*[\Si]_{\Z}^c \in H_*^{\phi}(M;\Z).$$
If $p\!:(V,\ti{c})\!\lra\!(\Si,c)$ is a real bundle pair, 
$V/\Z_2\!\lra\!\Si/\Z_2$ is a real vector bundle and
\begin{gather*}
\bB_{\ti\phi}V\lra q^*(V/\Z_2)\equiv
\big\{\big([e,x],[v]\big)\!\in\!\bB_c\Si\!\times\!(V/\Z_2)\!:\,
[x]\!=\![p(v)]\big\}, \\ 
[e,v]\lra \big([e,p(v)],[v]),
\end{gather*}
is a vector bundle isomorphism covering the identity on $\bB_c\Si$.
Thus,
\BE{Vquot_e} w_i^{\ti{c}}(V)=w_i\big(q^*(V/\Z_2)\big)=q^*w_i(V/\Z_2)
\in H^i_c(\Si;\Z_2).\EE 
We next recall two statements from~\cite{GZ}.

\begin{prp}[{\cite[Proposition~2.1]{GZ}}]\label{tensor_prp}
Let $(L_1,\ti\phi_1),(L_2,\ti\phi_2)\!\lra\!(M,\phi)$ be rank~1 real bundle pairs
over a topological space with an involution.
If $M$ is paracompact,
\BE{tensor_e}w_2^{\ti\phi_1\otimes_{\C}\ti\phi_2}(L_1\!\otimes_{\C}\!L_2)
 =w_2^{\ti\phi_1}(L_1) +w_2^{\ti\phi_2}(L_2).\EE
\end{prp}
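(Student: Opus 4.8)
The plan is to reduce the claimed identity to the standard fact that the first Stiefel--Whitney class is additive under tensor product of real line bundles, applied to the real vector bundles $\bB_{\ti\phi_i}L_i$ over $\bB_\phi M$. First I would record the general-position observation that for complex line bundles $L_1,L_2$ the underlying real bundle of $L_1\otimes_\C L_2$ satisfies $w_2^{\R}(L_1\otimes_\C L_2)=w_2^\R(L_1)+w_2^\R(L_2)+w_1^\R(L_1)w_1^\R(L_2)$, but more to the point, I want to work with the rank-2 real bundles $\bB_{\ti\phi_i}L_i\to\bB_\phi M$ directly. The key structural point is that the Borel construction is compatible with the tensor product of real bundle pairs over $\C$: there is a canonical isomorphism of real vector bundles
\[
\bB_{\ti\phi_1\otimes_\C\ti\phi_2}(L_1\otimes_\C L_2)\;\cong\;\bB_{\ti\phi_1}L_1\otimes_\C\bB_{\ti\phi_2}L_2
\]
over $\bB_\phi M$, where the right-hand side carries the fiberwise complex structure coming from $\ti\phi_i$-invariance of the $\C$-structures on the $L_i$. (Here paracompactness of $M$ is used to ensure $\bB_\phi M$ is paracompact, so that the bundles are classified by maps to Grassmannians and Stiefel--Whitney theory applies; it also guarantees the partition-of-unity constructions behind the Borel bundle go through.)

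Granting that isomorphism, the proposition becomes the assertion that $w_2$ of a complex line bundle $\mathcal L\to B$, viewed as a rank-2 real bundle, is additive in $\mathcal L$ under $\otimes_\C$ — which is exactly the statement that $w_2^\R(\mathcal L)$ equals the mod-2 reduction of $c_1(\mathcal L)$, combined with additivity of $c_1$. More carefully: for a complex line bundle $\mathcal L$ one has $w_2(\mathcal L_\R)=c_1(\mathcal L)\bmod 2$ and $w_1(\mathcal L_\R)=0$; since $c_1(\mathcal L_1\otimes_\C\mathcal L_2)=c_1(\mathcal L_1)+c_1(\mathcal L_2)$, reducing mod $2$ gives $w_2((\mathcal L_1\otimes_\C\mathcal L_2)_\R)=w_2((\mathcal L_1)_\R)+w_2((\mathcal L_2)_\R)$. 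Unwinding the definition $w_2^{\ti\phi_i}(L_i)=w_2(\bB_{\ti\phi_i}L_i)$ and the isomorphism above then yields \eqref{tensor_e}. I would also note that one must check $\bB_{\ti\phi_i}L_i$ genuinely carries a complex structure: this is where the hypothesis that $\ti\phi_i$ is an antilinear lift matters, since $\bB_{\ti\phi_i}L_i$ is the quotient of $p_2^*L_i$ by the $\Z_2$-action generated by $(\mathrm{antipodal})\times\ti\phi_i$, and although $\ti\phi_i$ is antilinear the product of two antilinear maps is linear — so $\bB_{\ti\phi_i}L_i$ is not complex, but $\bB_{\ti\phi_1}L_1\otimes_\C\bB_{\ti\phi_2}L_2$ is, because the tensor product of the two antilinear lifts is $\C$-linear on $L_1\otimes_\C L_2$. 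Thus the correct formulation is: $\bB_{\ti\phi_1\otimes\ti\phi_2}(L_1\otimes_\C L_2)$ is a \emph{complex} line bundle on $\bB_\phi M$, and it equals the tensor product (over $\C$, using the induced complex structures on the $\bB$'s only after tensoring) of the two Borel bundles.

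The main obstacle I anticipate is making the last point precise — i.e.\ correctly identifying which objects are complex and which are only real, and verifying the displayed bundle isomorphism is an isomorphism of the appropriate kind so that $w_2$ (equivalently $c_1\bmod 2$) can be transported across it. Concretely, one should exhibit the isomorphism at the level of the universal covers $p_2^*L_i\to\bE\Z_2\times M$: the fiberwise map $v_1\otimes_\C v_2\mapsto v_1\otimes_\C v_2$ from $p_2^*(L_1\otimes_\C L_2)$ to $p_2^*L_1\otimes_\C p_2^*L_2$ is tautological and $\C$-linear, and it intertwines the lift of the $\Z_2$-action by $\ti\phi_1\otimes_\C\ti\phi_2$ with the lift by $\ti\phi_1\otimes\ti\phi_2$; descending to the quotient gives the claimed isomorphism over $\bB_\phi M$. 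Once that is in hand, the rest is the standard $w_2=c_1\bmod 2$ / additivity argument and carries no further difficulty; alternatively, if one prefers to avoid complex structures on the Borel bundles entirely, one can argue purely with the splitting principle applied to $\bB_{\ti\phi_1}L_1\oplus\bB_{\ti\phi_2}L_2$ and the multiplicativity of $w=1+w_1+w_2$, using that each $\bB_{\ti\phi_i}L_i$ has a well-defined real Euler-type class whose mod-$2$ reduction behaves additively under the complex tensor — but the $c_1\bmod 2$ route is cleaner.
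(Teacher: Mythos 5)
There is a genuine error at the heart of your argument. You assert that ``the tensor product of the two antilinear lifts is $\C$-linear on $L_1\otimes_{\C} L_2$,'' and hence that $\bB_{\ti\phi_1\otimes\ti\phi_2}(L_1\otimes_{\C} L_2)$ is a \emph{complex} line bundle on $\bB_\phi M$, to which you then apply $w_2=c_1\bmod 2$ and additivity of $c_1$. This is false: for $v_i\in L_i$ and $\la\in\C$ one has
\begin{equation*}
(\ti\phi_1\otimes_{\C}\ti\phi_2)\big(\la(v_1\otimes v_2)\big)
=\ti\phi_1(\la v_1)\otimes\ti\phi_2(v_2)
=\bar\la\,\big(\ti\phi_1(v_1)\otimes\ti\phi_2(v_2)\big),
\end{equation*}
so $\ti\phi_1\otimes_{\C}\ti\phi_2$ is again anti-complex linear (you may have been misled by the fact that the \emph{composition} of two antilinear maps is linear, but the tensor product over $\C$ is not). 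This is in fact forced by the definitions in the paper: $(L_1\otimes_{\C}L_2,\ti\phi_1\otimes_{\C}\ti\phi_2)$ is treated as a rank-1 \emph{real bundle pair}, and the conjugation of a real bundle pair is required to be antilinear. Consequently $\bB_{\ti\phi_1\otimes\ti\phi_2}(L_1\otimes_{\C}L_2)$ is, like each $\bB_{\ti\phi_i}L_i$, just a rank-$2$ real vector bundle over $\bB_\phi M$ with no canonical complex structure (indeed the paper's point-target example shows $w_1^{\ti\phi}$ is generically nonzero, which already rules out a complex structure). The expression $\bB_{\ti\phi_1}L_1\otimes_{\C}\bB_{\ti\phi_2}L_2$ is therefore also undefined, so the ``canonical isomorphism'' you propose has no right-hand side to map to, and the $c_1\bmod 2$ computation does not apply.

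The alternative you sketch at the end — the splitting principle and Whitney multiplicativity applied to $\bB_{\ti\phi_1}L_1\oplus\bB_{\ti\phi_2}L_2$ — is not a repair as stated: Whitney gives $w_2(W_1\oplus W_2)=w_2(W_1)+w_1(W_1)w_1(W_2)+w_2(W_2)$ for the \emph{direct sum}, which is not the bundle $\bB_{\ti\phi_1\otimes\ti\phi_2}(L_1\otimes_{\C}L_2)$ you need to compare it to, and you have supplied no relation between the two. A correct proof has to engage directly with the structure of the rank-$2$ real bundles $\bB_{\ti\phi}L$ (e.g.\ via the classification/obstruction theory for such bundles, or by reduction to the quotient bundles $L/\Z_2\to M/\Z_2$ as in (\ref{Vquot_e}) after handling the fixed locus), rather than by pretending they are complex.
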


\begin{crl}[{\cite[Corollary~2.4]{GZ}}]\label{SQrt_crl}
Let $(M,\phi)$ be a topological space with an involution and
$(L,\ti\phi)\!\lra\!(M,\phi)$ be a rank~1 real bundle pair.
\begin{enumerate}[label=(\arabic*),leftmargin=*]
\item\label{SQrtSC_it} If $M$ is simply connected and $w_2(L)\!=\!0$,  
$w_2^{\ti\phi}(L)$ is a square class.
\item\label{SQrt_it} If $M$ is paracompact and $(L,\ti\phi)$ admits a real square root,
$w_2^{\ti\phi}(L)=0$.
\end{enumerate}
\end{crl}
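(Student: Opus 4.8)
Part~\ref{SQrt_it} is immediate from Proposition~\ref{tensor_prp}. If $(L,\ti\phi)\!\cong\!(L',\ti\phi')^{\otimes2}$ for some rank~1 real bundle pair $(L',\ti\phi')\!\lra\!(M,\phi)$, then the invariance of equivariant Stiefel-Whitney classes under isomorphisms of real bundle pairs gives $w_2^{\ti\phi}(L)\!=\!w_2^{\ti\phi'\otimes_{\C}\ti\phi'}(L'\!\otimes_{\C}\!L')$, and \eref{tensor_e}, applicable since $M$ is paracompact, rewrites the right-hand side as $w_2^{\ti\phi'}(L')\!+\!w_2^{\ti\phi'}(L')\!=\!0$, the last equality because the coefficient ring is~$\Z_2$. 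There is nothing further to do in this case.

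For part~\ref{SQrtSC_it} the plan is to pin down $w_2^{\ti\phi}(L)$ inside the fibration~\eref{BXfib_e}, $M\!\lra\!\bB_{\phi}M\!\stackrel{\pi}{\lra}\!\R\P^{\i}$. Let $\io\!:M\!\hookrightarrow\!\bB_{\phi}M$ be the inclusion of a fiber and set $\eps\!=\!\pi^*x\!\in\!H^1_{\phi}(M)$, where $x\!\in\!H^1(\R\P^{\i};\Z_2)$ is the generator. Since $\pi\!\circ\!\io$ is constant, $\io^*\eps\!=\!0$, hence $\io^*(\eps^2)\!=\!0$; and by the restriction property of equivariant Stiefel-Whitney classes noted in Section~\ref{equivdfn_e}, $\io^*w_2^{\ti\phi}(L)\!=\!w_2(L)\!=\!0$ by hypothesis. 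The corollary therefore reduces to the claim that, when $M$ is simply connected,
$$\ker\big(\io^*\!:H^2_{\phi}(M)\lra H^2(M;\Z_2)\big)=\big\{0,\eps^2\big\}\,,$$
since then $w_2^{\ti\phi}(L)$ is either $0\!=\!0^2$ or $\eps^2$, a square of a class in $H^1_{\phi}(M)$ either way.

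To establish this claim I would run the Serre spectral sequence of~\eref{BXfib_e} with $\Z_2$-coefficients, whose $E_2$-page is $H^p(\R\P^{\i};H^q(M;\Z_2))$ with the coefficients twisted by the $\phi^*$-action. Simple-connectivity of~$M$ forces $H^1(M;\Z_2)\!=\!0$, so $E_2^{0,1}\!=\!E_2^{1,1}\!=\!0$, and the only nonzero entries with $p\!+\!q\!\le\!2$ and $p\!\le\!2$ are $E_2^{0,0}\!=\!\Z_2$, $E_2^{1,0}\!=\!\Z_2\lr{x}$, $E_2^{2,0}\!=\!\Z_2\lr{x^2}$, and $E_2^{0,2}\!=\!H^2(M;\Z_2)^{\phi^*}$. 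No differential can enter or leave the $(2,0)$-spot, since its only possible source $E_2^{0,1}$ and target $E_2^{4,-1}$ vanish, so $E_\infty^{2,0}\!=\!\Z_2$; combined with $E_\infty^{1,1}\!=\!0$, this identifies the kernel $\ker\io^*\!=\!F^1H^2_{\phi}(M)$ with $\pi^*H^2(\R\P^{\i};\Z_2)\!=\!\Z_2\lr{\eps^2}$. The same count in total degree~$1$ gives $H^1_{\phi}(M)\!=\!\Z_2\lr{\eps}$, so $\eps^2$ really is the square of an element of $H^1_{\phi}(M)$.

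The hypotheses enter only through the vanishing of $w_2(L)$ and the vanishing of $E_2^{0,1}$ and $E_2^{1,1}$ forced by $\pi_1(M)\!=\!1$; the remainder is bookkeeping with the spectral sequence together with the standard identification of the edge homomorphism $H^2_{\phi}(M)\!\lra\!H^2(M;\Z_2)$ with $\io^*$. I therefore do not anticipate a genuine obstacle in this argument — the difficulty is organizational rather than technical, and the only care needed is to keep track of the local coefficient system on $H^2(M;\Z_2)$, which in any case plays no role in the entries that matter.
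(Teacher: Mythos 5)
Your proof is correct in both parts, and since the present paper only cites this corollary from~\cite{GZ} without reproving it, there is no in-text proof to compare against; your argument is a self-contained and valid one. Part~(2) is exactly right: an isomorphism $(L,\ti\phi)\approx(L',\ti\phi')^{\otimes2}$ gives $w_2^{\ti\phi}(L)=2\,w_2^{\ti\phi'}(L')=0$ by Proposition~\ref{tensor_prp}. For part~(1), the Serre spectral sequence of $M\lra\bB_{\phi}M\lra\R\P^{\i}$ does the job: $\pi_1(M)=1$ gives $H^1(M;\Z_2)=0$, so $E_2^{0,1}=E_2^{1,1}=0$; no differential hits or leaves the $(2,0)$-spot; hence $F^1H^2_{\phi}(M)=F^2H^2_{\phi}(M)=\pi^*H^2(\R\P^{\i};\Z_2)=\{0,\eps^2\}$ and this filtration stage is $\ker\io^*$ by the edge homomorphism. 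Since $\io^*w_2^{\ti\phi}(L)=w_2(L)=0$, the class is $0$ or $\eps^2$, a square of an element of $H^1_{\phi}(M)$ either way (you also correctly identify $H^1_{\phi}(M)=\Z_2\lr{\eps}$). The only minor thing worth saying out loud is that the local coefficient system genuinely plays no role here: $\cH^0$ is trivial and $\cH^1=0$, so all relevant entries are honest $H^p(\R\P^{\i};\Z_2)$, as you note. The argument correctly isolates where each hypothesis enters --- simple connectivity for part~(1), paracompactness (via Proposition~\ref{tensor_prp}) for part~(2).
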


\subsection{Real bundle pairs over surfaces}
\label{classify_subs}

\noindent
The involution $\fa_{\bK}$ on the Klein bottle~$\bK$ given~by
$$\fa_{\bK}\!:\bK\!=\!\bI\!\times\!S^1/\!\!\sim \lra\bK,\quad [s,z]\lra[s,-z],
\qquad\hbox{where}\quad (1,z)\sim(0,\bar{z})~~\forall\,z\!\in\!S^1\subset\C,$$
has no fixed points.
The next lemma classifies real bundle pairs over $(\bK,\fa_{\bK})$
and $(\bT,\fa_{\bT})$, where $\fa_{\bT}\!=\!\id_{S^1}\!\times\!\fa$.

\begin{lmm}\label{T2KB_lmm}
Let $(\Si,c)\!=\!(\bT,\fa_{\bT}),(\bK,\fa_{\bK})$ and $n\!\in\!\Z^+$.
A rank~$n$ real bundle pair $(V,\ti{c})\!\lra\!(\Si,c)$ is isomorphic to the trivial 
real bundle pair $(\Si\!\times\!\C^n,c\!\times\!\fc_{\C^n})$ 
if and only~if 
$$\blr{w_2^{\ti{c}}(V),[\Si]^c}=
\begin{cases}0,&\hbox{if}~(\Si,c)\!=\!(\bT,\fa_{\bT}),\\
\binom{n}{2}\!+\!2\Z,&\hbox{if}~(\Si,c)\!=\!(\bK,\fa_{\bK}).
\end{cases}$$
\end{lmm}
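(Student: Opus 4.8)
The plan is to prove this by classifying rank-$n$ real bundle pairs over each of $(\bT,\fa_{\bT})$ and $(\bK,\fa_{\bK})$ directly, and then matching the resulting invariant against $\blr{w_2^{\ti c}(V),[\Si]^c}$. The key structural fact to exploit is that both $\fa_{\bT}$ and $\fa_{\bK}$ are fixed-point-free, so by~\eref{Z2fib_e2} the classifying space $\bB_c\Si$ is homotopy equivalent to the quotient $\Si/\Z_2$, and by~\eref{Vquot_e} the equivariant characteristic classes of $(V,\ti c)$ are simply pulled back from the ordinary characteristic classes of the quotient bundle $V/\Z_2\!\lra\!\Si/\Z_2$. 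Now $\bT/\langle\fa_{\bT}\rangle$ is again a torus and $\bK/\langle\fa_{\bK}\rangle$ is again a Klein bottle (the antipodal quotient of $S^1$ is $S^1$), so in each case I am reduced to classifying ordinary real rank-$n$ bundles over $\bT$, respectively over $\bK$, together with keeping track of the extra data of the conjugation — equivalently, the complex vector bundle $V\!\lra\!\Si$ itself, whose isomorphism type over a surface is determined by $c_1(V)$ (and rank).

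First I would fix the topological inputs. Over $\bT$ every complex vector bundle is trivial since $H^2(\bT;\Z)\!=\!\Z$ but complex bundles over a surface are classified by $c_1$, and a genuine constraint will come only from the real part / the quotient bundle; over $\bK$, $H^2(\bK;\Z)\!\cong\!\Z_2$ and $H^2(\bK;\Z_2)\!\cong\!\Z_2$, and ordinary real bundles over $\bK$ are classified by $w_1\!\in\!H^1(\bK;\Z_2)$ and $w_2\!\in\!H^2(\bK;\Z_2)$ subject to a relation. The idea is then: given $(V,\ti c)$, form $V/\Z_2$ over $\Si/\Z_2$; the bundle pair is trivial iff $V/\Z_2$ is trivial, and for $n\!\ge\!1$ (stable range on a surface, so $n\!=\!1$ must be handled with a touch of care but the statement forces rank $\ge 1$ already and line bundles over surfaces are still detected by $w_1,w_2$-type data) the triviality of $V/\Z_2$ is detected by its Stiefel–Whitney classes. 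Using~\eref{Vquot_e}, $q^*w_i(V/\Z_2)\!=\!w_i^{\ti c}(V)$ and $q^*$ is an isomorphism by~\eref{Z2isom_e}, so the vanishing of $w_1(V/\Z_2)$ and $w_2(V/\Z_2)$ is equivalent to the vanishing of $w_1^{\ti c}(V)$ and $w_2^{\ti c}(V)$. One then checks (this is where the binomial coefficient enters) that $w_1^{\ti c}(V)$ is automatically determined — or forced to vanish — by the real-bundle-pair structure: the point is that the class $w_1^{\ti c}$ of $(\Si\!\times\!\C^n, c\!\times\!\fc_{\C^n})$ is a fixed universal class (coming from $n$ copies of the tautological situation described after~\eref{BXfib_e}, namely $w_1^{\ti\phi}(\C)$ being the generator of $H^1_\phi(\pt)$), and subtracting off the trivial pair reduces everything to $w_2$; the correction term $\binom{n}{2}$ for the Klein bottle is exactly $w_2^{\ti c}(\Si\!\times\!\C^n)$ evaluated on $[\bK]^c$, which I would compute from the Whitney sum formula $w_2^{\ti\phi}(\C^n)\!=\!\binom n2 w_1^{\ti\phi}(\C)^2$ applied fiberwise plus~\eref{unorpair_e} for the Klein bottle (where $b_{\bK}$ is the nontrivial torsion class and $\langle w_1^{\ti c}(\C)^2,[\bK]^c\rangle\!=\!1$), while for the torus the analogous evaluation vanishes since $\bT/\Z_2\!=\!\bT$ is orientable and $\kappa^2$ vanishes on it.

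Concretely the steps are: (1) identify $\bT/\langle\fa_{\bT}\rangle\!\cong\!\bT$ and $\bK/\langle\fa_{\bK}\rangle\!\cong\!\bK$, and set up the isomorphisms~\eref{Z2isom_e}, \eref{Vquot_e}; (2) recall the classification of real rank-$n$ bundles over $\bT$ and over $\bK$ by low-degree Stiefel–Whitney data, noting that a bundle with vanishing $w_1$ and $w_2$ over either surface is trivial (for $\bT$ this is classical; for $\bK$ one uses $[\bK,BO(n)]$ and that $\pi_1(BO(n))\!=\!\Z_2$, $\pi_2(BO(n))\!=\!\Z_2$ or $0$ for $n\!\ge\!2$, with the $n\!=\!1$ case being $[\bK,\R\P^\infty]\!=\!H^1(\bK;\Z_2)$); (3) compute $w_1^{\ti c}$ and $w_2^{\ti c}$ of the trivial pair $(\Si\!\times\!\C^n,c\!\times\!\fc_{\C^n})$ using the total equivariant SW class of $\C^n$, getting $0$ for $\bT$ and $\binom n2\pmod 2$ on $[\bK]^c$ via Lemma~\ref{KleinSurface_lem}; (4) show that for an arbitrary pair $(V,\ti c)$ the first equivariant class $w_1^{\ti c}(V)$ agrees with that of the trivial pair of the same rank — this is the claim that $w_1$ of the quotient bundle is forced — so that triviality is governed entirely by $w_2^{\ti c}(V)$ matching the trivial value; (5) conclude. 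I expect step~(4) — pinning down that $w_1^{\ti c}(V)$ carries no moduli and equals the universal value — to be the main obstacle, since it requires understanding how the conjugation $\ti c$ over a fixed-point-free involution constrains the quotient bundle's orientation class; the cleanest route is probably to observe that $\det_{\C} V$ with its induced conjugation is a rank-$1$ pair whose $w_1^{\ti c}$ is computable, and that $w_1^{\ti c}(V)\!=\!w_1^{\det_{\C}\ti c}(\det_{\C}V)$ up to the universal rank-dependent term, reducing (4) to the rank-$1$ case and the explicit description of $H^1_c(\Si)$ from~\cite[Section~2.2]{GZ}.
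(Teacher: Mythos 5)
The proposal diverges from the paper's route and has a genuine gap at its central reduction. You reduce the question to whether the quotient bundle $V/\Z_2\to\Si/\Z_2$ is isomorphic (as a real rank-$2n$ bundle) to the quotient of the trivial pair, and then try to detect this by Stiefel--Whitney classes. But you never justify that this reduction is lossless: the real bundle pair $(V,\ti c)$ carries a complex structure on $V$ with $\ti c$ anti-complex-linear, and passing to $V/\Z_2$ forgets it. An isomorphism $V/\Z_2\cong V'/\Z_2$ pulls back to a $\Z_2$-equivariant \emph{real} isomorphism $V\to V'$, which need not be $\C$-linear, so a priori two non-isomorphic rank-$n$ pairs could have isomorphic quotients. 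Resolving this amounts to showing the space of anti-invariant complex structures on a fixed $\Z_2$-equivariant real rank-$2n$ bundle over $(\bK,\fa_{\bK})$ is connected, which is not obviously easier than the original problem. The paper does not go through the quotient for the classification at all: it restricts $(V,\ti c)$ to the core circle, invokes the fact (\cite[Lemma~2.3]{Teh}) that a rank-$n$ real bundle pair over $(S^1,\fa)$ is trivial with \emph{exactly two} homotopy classes of real trivializations, and deduces by a clutching argument that there are exactly two rank-$n$ pairs over $(\bK,\fa_{\bK})$ -- namely $nV_+$ and $V_-\oplus(n\!-\!1)V_+$ -- and only then computes $w_2$ of their quotients. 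Your proposal has no substitute for this step. (The torus case the paper simply delegates to \cite[Lemma~2.2 and~2.3]{GZ}.)

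Two further points. First, your opening reduction ``the bundle pair is trivial iff $V/\Z_2$ is trivial'' is false as written: the quotient of the trivial pair is $(\Si/\Z_2\!\times\!\R^n)\oplus n\La$, where $\La$ is the orientation line bundle of the double cover, and this has $w_1 = n\,w_1(\La)\neq 0$ when $n$ is odd. You seem to realize this later by speaking of a ``fixed universal class'', but the formulation is inconsistent. Second, you flag your step (4) -- that $w_1^{\ti c}(V)$ is determined by the rank -- as ``the main obstacle'' and propose a route through $\det_\C V$; in fact this step is easy and does not need the determinant: $\ti c$ acts anti-$\C$-linearly on fibers, hence reverses orientation iff $n$ is odd, so $w_1(V/\Z_2)=n\,w_1(\La)$ for every $(V,\ti c)$. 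The genuine obstacle is the reduction to quotient bundles described above, which you do not address. (You also assert that rank-$2n$ real bundles over $\bK/\Z_2$ are classified by $(w_1,w_2)$, but for $n=1$ this is rank~2, outside the stable range, and $\pi_2(BO(2))=\Z$, not $\Z_2$; that case would need a separate check even if the main gap were closed.)
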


\begin{proof}
(1) Since the torus case is addressed by \cite[Lemma~2.2]{GZ} and the proof of
\cite[Lemma~2.3]{GZ}, it is enough to consider the case
$(\Si,c)\!=\!(\bK,\fa_{\bK})$.
Let 
$$V_{\pm}=\big(\bI\!\times\!S^1\!\times\!\C\big)\big/\!\!\sim, \quad
(0,z,v)\sim(1,\bar{z},\pm v)~~~\forall~z\!\in\!S^1,\,v\!\in\!\C,$$
where the involutions $\ti{c}_{\pm}$ are induced by the standard conjugation on~$\C$.
By \cite[Lemma~2.3]{Teh}, every rank~$n$ real bundle pair over $(S^1,\fa)$ is trivial
and admits precisely two homotopy classes of isomorphisms covering~$\id_{S^1}$.
The non-trivial class contains the isomorphism given by the constant function on~$S^1$
with the value equal to the diagonal matrix with one entry $-1$ and the remaining entries~1.
By composing isomorphisms of the trivial rank~$n$ real bundle pair over~$(S^1,\fa)$
covering the conjugation~$\fc_{S^1}$ on~$S^1$ with $\fc_{S^1}\!\times\!\id_{\C^n}$,
we see that there are also precisely two homotopy classes of such isomorphisms.
Thus, $(V,\ti{c})$ is isomorphic~to one of the real bundle pairs 
$$(V,\ti{c})=(nV_+,n\ti{c}_+), (V_-\!\oplus\!(n\!-\!1)V_+,\ti{c}_-\oplus (n\!-\!1)\ti{c}_+);$$
note that $(nV_+,n\ti{c}_+)$ is the trivial rank~$n$ real bundle pair.  
These real bundle pairs canonically decompose into two $\Z_2$-equivariant real vector bundles,  
induced by the real and imaginary axes in~$\C$.
By \eref{Vquot_e},
$$\blr{w_2^{\ti{c}}(V),[\bK]^{\fa_{\bK}}}
=\blr{w_2(V/\Z_2),[\bK/\Z_2]} 
=\blr{w_2(V_{\R}\!\oplus\!V_{\fI\R}),[\bK/\Z_2]},$$
where
$$V_{\R}=n(V_+)_{\R},(V_-)_{\R}\!\oplus\!(n\!-\!1)(V_+)_{\R},\qquad
V_{\fI\R}=n(V_+)_{\fI\R},(V_-)_{\fI\R}\!\oplus\!(n\!-\!1)(V_+)_{\fI\R}$$
are the $\Z_2$-quotients of the real and imaginary parts of~$V$ over the Klein bottle
\BE{KBquot_e}\bK/\Z_2\equiv  \bI\!\times\!\bI\big/\!\!\sim, \qquad
(0,t)\sim(1,1\!-\!t),~~(s,0)\sim(s,1)\quad \forall~s,t\!\in\!\bI\,.\EE
We note that
\begin{alignat*}{3}
(V_{\pm})_{\R}&=\big(\bI\!\times\!\bI\!\times\!\R\big)\big/\!\!\sim,&\qquad
(0,t,v)&\sim(1,1\!-\!t,\pm v),~~(s,0,v)\sim(s,1,v)&\quad &\forall~s,t\!\in\!\bI,\,v\!\in\!\R,\\
(V_{\pm})_{\fI\R}&=\big(\bI\!\times\!\bI\!\times\!\R\big)\big/\!\!\sim,&\qquad
(0,t,v)&\sim(1,1\!-\!t,\pm v),~~(s,0,v)\sim(s,1,-v)&\quad &\forall~s,t\!\in\!\bI,\,v\!\in\!\R 
\end{alignat*} 
over the Klein bottle~\eref{KBquot_e}.\\

\noindent
(2) The first $\Z_2$-homology of the Klein bottle~\eref{KBquot_e} is generated by the loops
$$\al,\be\!:\bI\lra \bK/\Z_2\,, \qquad \al(t)=[0,t],\quad
\be(s)=[s,1/2],$$ 
with the intersections $\al^2\!=\!0$, $\al\!\cdot\!\be,\be^2\!=\!1$;
we denote the Poincar\'e duals of the homology classes represented by these loops
by the same symbols.
Since the restriction of $(V_-)_{\R}$ to~$\al$ is trivial and to~$\be$ is 
the Mobius band line bundle,
$$\blr{w_1((V_-)_{\R}),\al}=0, ~~~ \blr{w_1((V_-)_{\R}),\be}=1
\quad\Lra\quad w_1((V_-)_{\R})=\al\in H^1(\bK/\Z_2;\Z_2).$$
Since the restrictions of $(V_-)_{\fI\R}$ to~$\al$ and~$\be$ are 
the Mobius band line bundles,
$$\blr{w_1((V_-)_{\fI\R}),\al}=1, ~~~ \blr{w_1((V_-)_{\fI\R}),\be}=1
\quad\Lra\quad w_1((V_-)_{\fI\R})=\be\in H^1(\bK/\Z_2;\Z_2).$$
We conclude that 
$$w(V_-/\Z_2)=1+(\al\!+\!\be)+\al\be=1+(\al\!+\!\be)+\be^2\,.$$
On the other hand, the restriction of $(V_+)_{\fI\R}$ to~$\al$ is the Mobius
band line bundle and to~$\be$ is trivial.
Thus,
$$\blr{w_1((V_+)_{\fI\R}),\al}=1, ~~~ \blr{w_1((V_+)_{\fI\R}),\be}=0
\quad\Lra\quad w_1((V_+)_{\fI\R})=\al+\be\in H^1(\bK/\Z_2;\Z_2).$$
Since $(V_+)_{\R}$ is the trivial line bundle, we conclude that 
$$w(V_+/\Z_2)=1+(\al\!+\!\be)\,.$$
Putting the two conclusions together, we find that
\begin{equation*}\begin{split}
w_2\big(n(V_+/\Z_2)\big)&=\binom{n}{2}(\al\!+\!\be)^2
=\binom{n}{2}\be^2\,,\\
w_2\big((V_-\!\oplus\!(n\!-\!1)V_+)/\Z_2\big)
&=\binom{n}{2}(\al\!+\!\be)^2+\be^2
=\binom{n}{2}\be^2+\be^2.
\end{split}\end{equation*}
Since $\be^2\neq0$, this establishes the claim.
\end{proof}

\noindent 
Let $\eta\!:\P^1\!\lra\!\P^1$ be as in~\eref{tauetadfn_e} 
and $\ti\eta$ be the lift of~$\eta$ to the line bundle 
$\cO_{\P^1}(-2)\!\lra\!\P^1$ given~by
$$\ti\eta\big(\ell,(u,v)^{\otimes2}\big)
=\big(\eta(\ell),(-\bar{v},\bar{u})^{\otimes2}\big)
\qquad\forall\,\big(\ell,(u,v)\big)\in\cO_{\P^1}(-1)\subset\P^1\!\times\!\C^2 .$$
%We define the real bundle $(L_{\bT},\ti\ft_2)\!\lra\!(\bT,\ft_2)$,
%with $\ft_2$ as in~\eref{T2symm_e}, by
%\begin{gather*}
%L_{\bT}=\bI\!\times\!S^1\!\times\!\C/\!\!\sim
%\lra \bT=\bI\!\times\!S^1/\!\!\sim,~~
%(1,z,v)\sim(0,z,z^2v)~~\forall\,(z,v)\in S^1\!\times\!\C,\\
%\ti\ft_2\big([s,z,v]\big)=\begin{cases}
%[s+\frac12,\bar{z},z\bar{v}],&\hbox{if}~s\!\in\![0,\frac12],\\
%[s-\frac12,\bar{z},\bar{z}\bar{v}],&\hbox{if}~s\!\in\![\frac12,1].
%\end{cases}
%\end{gather*}

\begin{lmm}\label{P1T2_lmm}
With notation as above,
$$w_2^{\ti\eta}(\cO_{\P^1}(-2))\neq0\in H^2_{\eta}(\P^1).$$
% w_2^{\ti\ft_2}(L_{\bT})\neq0\in H^2_{\ft_2}(\bT).
\end{lmm}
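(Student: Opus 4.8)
\noindent
The plan is to reduce the claim to an elementary Stiefel--Whitney computation on $\R\P^2=\P^1/\langle\eta\rangle$. Since $\eta$ has empty fixed locus, \eqref{Vquot_e} applied to the real bundle pair $(\cO_{\P^1}(-2),\ti\eta)\lra(\P^1,\eta)$ gives $w_2^{\ti\eta}(\cO_{\P^1}(-2))=q^*\big(w_2(\cO_{\P^1}(-2)/\Z_2)\big)$, where $\cO_{\P^1}(-2)/\Z_2\lra\P^1/\Z_2=\R\P^2$ is a rank~$2$ real vector bundle; by \eqref{Z2isom_e} the homomorphism $q^*\!:H^2(\R\P^2;\Z_2)\lra H^2_{\eta}(\P^1)$ is an isomorphism, and $H^2(\R\P^2;\Z_2)\cong\Z_2$. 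Thus it is enough to show that $w_2(\cO_{\P^1}(-2)/\Z_2)\neq0$.

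\noindent
First I would identify the rank~$2$ bundle $E:=\cO_{\P^1}(-2)/\Z_2$ over~$\R\P^2$. The key point is that $\cO_{\P^1}(-2)$ is the canonical bundle $K_{\P^1}=T^*\P^1$, and that under the canonical identification $T\P^1=\cO_{\P^1}(2)$ the conjugation $\ti\eta$ corresponds to the conjugation on $T^*\P^1$ induced by the anti-holomorphic involution~$\eta$ (a cotangent covector maps to the complex conjugate of its pullback by $\tnd\eta$). Granting this, since $\eta$ is a free smooth involution the quotient map is a local diffeomorphism, so $T^*(\P^1/\langle\eta\rangle)=(T^*\P^1)/\Z_2$ and hence $E\cong T^*\R\P^2$ as real vector bundles. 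Therefore $w_2(E)=w_2(T^*\R\P^2)=w_2(T\R\P^2)=w_2(\R\P^2)$, the middle equality because a Riemannian metric identifies a manifold's tangent and cotangent bundles as real bundles. Finally $w(\R\P^2)=(1+x)^3=1+x+x^2$, where $x$ generates $H^1(\R\P^2;\Z_2)$, so $w_2(\R\P^2)=x^2\neq0$ (equivalently $\lr{w_2(\R\P^2),[\R\P^2]_{\Z_2}}\equiv\chi(\R\P^2)=1\pmod2$), which yields the conclusion.

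\noindent
The main obstacle is the identification of~$\ti\eta$ with the cotangent conjugation. I expect to verify it by a direct computation in the two standard affine charts of~$\P^1$: writing $\cO_{\P^1}(-2)=\cO_{\P^1}(-1)^{\otimes2}$ and $\ti\eta=\hat\eta^{\otimes2}$, where $\hat\eta(\ell,(u,v))=(\eta(\ell),(-\bar v,\bar u))$ is the tautological anti-linear lift of~$\eta$ to~$\cO_{\P^1}(-1)$, and comparing with the dual of $\tnd\eta$ under $T\P^1=\cO_{\P^1}(2)$; this is short but sign-sensitive because of the anti-linearity and the dualization. Any leftover global sign is immaterial: $(L,\ti c)$ and $(L,-\ti c)$ are isomorphic real bundle pairs over $(\P^1,\eta)$ via multiplication by~$\sqrt{-1}$. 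As fallbacks one could instead pass to $\Z_2$-quotients in the $\eta$-equivariant dual Euler sequence relating $(\cO_{\P^1}(-2),\ti\eta)$, $\C^2\!\otimes\!\cO_{\P^1}(-1)$ (with the induced conjugation), and the trivial pair $(\cO_{\P^1},\fc_{\C})$, and then apply the Whitney formula; or pull $(\cO_{\P^1}(-2),\ti\eta)$ back along a suitable $\eta$-equivariant map from $(\bK,\fa_{\bK})$ and detect its (non)triviality via Lemma~\ref{T2KB_lmm}.
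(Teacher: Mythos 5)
Your argument is correct, and it takes a genuinely different route from the paper's. The paper proves this lemma by constructing an explicit real map $f_{\P^1,\bK}\!:(\bK,\fa_{\bK})\!\to\!(\P^1,\eta)$, trivializing the pullback of $(\cO_{\P^1}(-2),\ti\eta)$ in the explicit chart, recognizing it as the real bundle pair $(V_-,\ti c_-)$ from Lemma~\ref{T2KB_lmm}, and then invoking that lemma's computation of $w_2^{\ti c_-}(V_-)$. Your approach instead identifies $(\cO_{\P^1}(-2),\ti\eta)$ with the canonical bundle $K_{\P^1}=T^*\P^1$ carrying its intrinsic $\eta$-conjugation, uses $q^*$ via~\eref{Vquot_e} and~\eref{Z2isom_e} to reduce to $w_2\big((T^*\P^1)/\Z_2\big)=w_2(T^*\R\P^2)=w_2(T\R\P^2)$, and concludes from $w(T\R\P^2)=(1+x)^3$. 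Both are sound; the paper's computation is self-contained within the classification already established in Lemma~\ref{T2KB_lmm}, whereas yours is shorter and more conceptual once the identification with the (co)tangent bundle is in hand. (You could streamline even further by working with $(T\P^1,\tnd\eta)$ directly, so that $T\P^1/\Z_2=T\R\P^2$ with no dualization needed; then pass to $\cO_{\P^1}(-2)=T^*\P^1$ by noting that a rank~1 real bundle pair and its dual have the same equivariant $w_2$, since their tensor product is the trivial pair, whose $w_2^{\ti\phi}$ vanishes by Proposition~\ref{tensor_prp}.)

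The ``main obstacle'' you flag --- matching the paper's explicit $\ti\eta$ with the cotangent conjugation --- is actually automatic and needs no chart computation. On $\P^1$ the group of complex-linear automorphisms of a line bundle is the constants $\C^*$, so any two conjugations $\ti c_1,\ti c_2$ on $\cO_{\P^1}(-2)$ lifting~$\eta$ satisfy $\ti c_1=\lambda\ti c_2$ for some $\lambda\in S^1$; multiplication by $\mu\in\C^*$ intertwines $\ti c_1$ with $(\bar\mu/\mu)\lambda\ti c_2$, and $\mu\mapsto\bar\mu/\mu$ is surjective onto~$S^1$. Hence all conjugations on a fixed line bundle over $(\P^1,\eta)$ give isomorphic real bundle pairs, which is also what the citation of \cite[Proposition~4.2]{BHH} in the proof of Corollary~\ref{gbund_crl} provides. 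So your identification holds as stated, not merely up to the sign $\ti c\mapsto-\ti c$ you accounted for.
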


\begin{proof}
The real~map $f_{\P^1,\bK}\!: (\bK,\fa_{\bK})\!\lra\!(\P^1,\eta)$ given~by
$$[s,z]\lra \big[\cos\frac{\pi s}{2}+\fI z\sin\frac{\pi s}{2},
\fI\sin\frac{\pi s}{2}+z\cos\frac{\pi s}{2}\big]
\qquad\forall\,s\!\in\!\bI, \,z\!\in\!S^1\subset\C$$
sends $0\!\times\!S^1$ to the circle $|u|\!=\!|v|$ in $\P^1$ and then spins
$S^1$ around the points $[1,\pm1]\!\in\!\P^1$ so that $1\!\times\!S^1$ is again mapped
to the circle $|u|\!=\!|v|$, but in the conjugate way.
Let $\ti{f}_{\P^1,\bK}\!=\!f_{\P^1,\bK}\!\circ\!q$, where $q\!:\bI\!\times\!S^1\!\lra\!\bK$
is the quotient map.
We trivialize the bundle $\ti{f}_{\P^1,\bK}^*\cO_{\P^1}(-2)$ by 
\begin{equation*}\begin{split}
\bI\!\times\!S^1\times\C&\lra \ti{f}_{\P^1,\bK}^*\cO_{\P^1}(-2),\\
(s,z,\la)&\lra \Big(s,z,\fI\bar{z}\la \big(\cos\frac{\pi s}{2}+\fI z\sin\frac{\pi s}{2},
\fI\sin\frac{\pi s}{2}+z\cos\frac{\pi s}{2}\big)^{\otimes2}\Big).
\end{split}\end{equation*}
The conjugation on $\bI\!\times\!S^1\times\C$ induced by~$\ti\eta$ 
via this trivialization is the standard one:
$$\bI\!\times\!S^1\times\C\lra \bI\!\times\!S^1\times\C,\qquad
(s,z,\la)\lra (s,-z,\bar\la).$$
Since 
$$f_{\P^1,\bK}^*\cO_{\P^1}(-2)=
\ti{f}_{\P^1,\bK}^*\cO_{\P^1}(-2)\big/\!\!\sim, \qquad
(1,z,\la)\sim(0,\bar{z},-\la)~~\forall\,(z,\la)\in S^1\!\times\!\C\,,$$
the real bundle pair $f_{\P^1,\bK}^*\cO_{\P^1}(-2)$ is 
$(V_-,\ti{c}_-)\!\lra\!(\bK,\fa_{\bK})$, in the notation of 
the proof of Lemma~\ref{T2KB_lmm}.
Thus, by the proof of Lemma~\ref{T2KB_lmm},
$$f_{\P^1,\bK}^*\,w_2^{\ti\eta}\big(\cO_{\P^1}(-2)\big)
= w_2^{\ti{c}_-}(V_-)\neq0.$$
This establishes the claim.
\end{proof}

%\noindent
%(2) The quotient of $L_{\bT}$ is described~by
%$$L_{\bT}/\Z_2=\bI\!\times\!S^1\!\times\!\C/\!\!\sim
%\lra \bK=\bI\!\times\!S^1/\!\!\sim,\qquad
%(1,z,v)\sim(0,\bar{z},\bar{z}\bar{v})\quad
%\forall~(z,v)\!\in\!S^1\!\times\!\C.$$
%Let $f\!:\bI\!\lra\!\bI$ be a smooth function such that 
%$f(s)\!=\!0$ for $s\!\le\!1/4$ and $f(s)\!=\!1$ for $s\!\ge\!3/4$.
%The~map
%$$F\!:\bK\lra L_{\bT}/\Z_2, \qquad
%F\big([s,z]\big)=\big[s,z,1\!-\!f(s)+f(s)\bar{z}\big],$$
%is then a continuous bundle section.
%If $f'(s)\!>\!0$ for all $s\!\in\!(1/4,3/4)$, $F$ has a single transverse zero
%at $(f^{-1}(1/2),-1)$.
%This establishes the second claim.

\begin{crl}\label{gbund_crl}
Let $(\hat\Si,\si)$ be a symmetric surface so that $\hat\Si^{\si}\!=\!\eset$.
If $(L,\ti\phi)\!\lra\!(\hat\Si,\si)$ is a rank~1 real bundle pair, 
$$\blr{w_2^{\ti\phi}(L),[\hat\Si]_{\Z_2}^{\si}}=\frac12\lr{c_1(L),[\hat\Si]_{\Z}}+2\Z.$$
\end{crl}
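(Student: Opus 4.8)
\noindent
The plan is to push the whole computation down to the quotient surface and reinterpret both sides as the mod~$2$ count of zeros of a single section. Since $\hat\Si^{\si}\!=\!\eset$, the projection $\pi\!:\hat\Si\!\lra\!N\!\equiv\!\hat\Si/\si$ is a free double cover onto a closed (necessarily unorientable) surface. The conjugation $\ti\phi$ covering the free involution~$\si$ makes $\Z_2$ act freely on the total space of~$L$, and I would let $E\!\equiv\!L/\Z_2\!\lra\!N$ be the resulting rank~$2$ real vector bundle; its pullback $\pi^*E$ is isomorphic, as a real vector bundle, to the realification~$L_{\R}$ of~$L$, the $\Z_2$-action on $\pi^*E$ corresponding to $(x,v)\!\mapsto\!(\si(x),\ti\phi(v))$. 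By~\eref{Vquot_e} and the identification of $[\hat\Si]_{\Z_2}^{\si}$ with $[N]_{\Z_2}$ provided by~\eref{Z2isom_e},
$$\blr{w_2^{\ti\phi}(L),[\hat\Si]_{\Z_2}^{\si}}=\blr{w_2(E),[N]_{\Z_2}}\in\Z_2,$$
so the left-hand side of the corollary is the mod~$2$ Euler number of~$E$.

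\noindent
Next I would choose a section $s$ of $E\!\lra\!N$ transverse to the zero section; then $\blr{w_2(E),[N]_{\Z_2}}\!\equiv\!r\pmod 2$ with $r\!=\!|s^{-1}(0)|$, an argument that uses only the $\Z_2$-fundamental class of~$N$ and so is unaffected by the unorientability of~$N$. The pullback $\pi^*s$ is a section of $\pi^*E\!\cong\!L_{\R}$ over~$\hat\Si$, again transverse, whose zero set $\pi^{-1}(s^{-1}(0))$ consists of the $2r$ points $\ti{p},\si(\ti{p})$ as $p$ runs over~$s^{-1}(0)$. Computing the Euler number of $L_{\R}$ from its complex orientation and the orientation of~$\hat\Si$ gives
$$\blr{c_1(L),[\hat\Si]_{\Z}}=\sum_{q\in\pi^{-1}(s^{-1}(0))}\deg_q(\pi^*s).$$
The crux is the identity $\deg_{\si(\ti{p})}(\pi^*s)=\deg_{\ti{p}}(\pi^*s)$: a positively oriented small loop about $\si(\ti{p})$ is the $\si$-image of one about $\ti{p}$ carrying the \emph{reversed} orientation, because $\si$ reverses the orientation of~$\hat\Si$, while the equivariance $\pi^*s\!\circ\!\si\!=\!\ti\phi\!\circ\!\pi^*s$ postcomposes with $\ti\phi$, which is anti-complex-linear and hence reverses the complex orientation of the fibers of~$L_{\R}$; the two sign changes cancel. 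Hence
$$\blr{c_1(L),[\hat\Si]_{\Z}}=2\sum_{p\in s^{-1}(0)}\deg_{\ti{p}}(\pi^*s),$$
which is in particular even, and since each transverse zero contributes $\pm1$,
$$\tfrac{1}{2}\blr{c_1(L),[\hat\Si]_{\Z}}=\sum_{p\in s^{-1}(0)}\deg_{\ti{p}}(\pi^*s)\equiv r\equiv\blr{w_2^{\ti\phi}(L),[\hat\Si]_{\Z_2}^{\si}}\pmod 2,$$
which is the asserted identity.

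\noindent
The step I expect to be the main obstacle is the local-degree comparison $\deg_{\si(\ti{p})}(\pi^*s)=\deg_{\ti{p}}(\pi^*s)$: one must keep careful track of the two orientation reversals --- of the domain by~$\si$ and of the $L$-fibers by the anti-linear lift~$\ti\phi$ --- and verify that they cancel rather than reinforce each other; the remaining points (descent of $L$ to $E$, $\pi^*E\!\cong\!L_{\R}$, the mod~$2$ Euler-class interpretation over the unorientable base~$N$) are routine. As a bonus the argument shows that $\blr{c_1(L),[\hat\Si]_{\Z}}$ is always even, which is exactly what makes the right-hand side of the corollary well defined.
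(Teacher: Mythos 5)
Your argument is correct, and it takes a genuinely different route from the paper's. The paper first treats the $(\P^1,\eta)$ case directly, via the explicit Klein-bottle pullback computation of Lemma~\ref{P1T2_lmm} combined with the additivity of $w_2^{\ti\phi}$ under tensor products (Proposition~\ref{tensor_prp}) and the classification of rank~1 real bundle pairs from~\cite{BHH}; it then handles a general $(\hat\Si,\si)$ by pinching a $\si$-invariant circle to produce a two-nodal symmetric surface whose $\P^1$-component carries the entire degree, evaluating there, and pulling back along the degree-one collapsing map. You instead work entirely on the free quotient $N=\hat\Si/\si$: using~\eref{Vquot_e} you identify the left-hand side with the mod~$2$ zero count of a transverse section~$s$ of $E=L/\Z_2\!\lra\!N$, pull $s$ back to the $\si$-equivariant section $\pi^*s$ of $L$ over $\hat\Si$, identify the right-hand side with the signed zero count of $\pi^*s$ via the Euler class, and match the two using the local-degree identity $\deg_{\si(\ti p)}(\pi^*s)=\deg_{\ti p}(\pi^*s)$. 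Your justification of that identity is correct: locally $\pi^*s$ near $\si(\ti p)$ is $\ti\phi\circ(\pi^*s)\circ\si^{-1}$, and pre- and post-composing a map $\R^2\!\lra\!\R^2$ with orientation-reversing maps leaves the local degree unchanged, since $(-1)\cdot d\cdot(-1)=d$. Your route is shorter and more self-contained, avoiding both the degeneration argument and the classification of real bundle pairs, and it establishes the evenness of $\lr{c_1(L),[\hat\Si]_{\Z}}$ (which is needed for the statement to make sense) as a byproduct rather than as an input; the paper's route, on the other hand, organizes the computation around the Klein-bottle lemmas~\ref{T2KB_lmm} and~\ref{P1T2_lmm}, which it also exploits elsewhere.
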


\begin{proof}
By Lemma~\ref{P1T2_lmm} and Proposition~\ref{tensor_prp}, the claim holds for 
the real bundle pairs
$$(\cO_{\P^1}(-2a),\ti\eta^a)
    \equiv(\cO_{\P^1}(-2),\ti\eta)^{\otimes a}\lra(\P^1,\eta) $$
%\qquad\hbox{and}\qquad\\
%(L_{\bT}^{\otimes a},\ti\ft_2^a)&\equiv(L_{\bT},\ti\ft_2)^{\otimes a}\lra (\bT,\ft_2)\,,
%\end{split}\end{equation*}
with $a\!\in\!\Z$.
By \cite[Propositions~4.2]{BHH}, these are all the rank~1 real bundle pairs 
over~$(\hat\P^1,\eta)$.
We thus assume that $\hat\Si\!\neq\!\P^1$ for the remainder of the proof.\\

\noindent
By \cite[Theorem~1.2]{Nat},
there exists a $\Z_2$-invariant circle $S\!\subset\!\hat\Si$.
% splitting $\hat\Si$ into two bordered surfaces interchanged by~$\si$.
Let $U\!\subset\!\hat\Si$ be a $\Z_2$-invariant tubular neighborhood of~$S$
and $\hat\Si'$ be the two-nodal surface obtained from~$\hat\Si$ by collapsing 
the boundary circles of~$U$.
The involution~$\si$ descends to an involution~$\si'$ on~$\hat\Si'$, 
which has no fixed points, so that the quotient map $q\!:\hat\Si\!\lra\!\hat\Si'$ 
intertwines the two involutions.
The image of~$\bar{U}$ in~$\hat\Si'$ is an irreducible component~$C$ of~$\hat\Si'$ 
homeomorphic to~$\P^1$ and preserved by~$\si'$; 
let $C'$ denote the remaining component of~$\hat\Si'$.
Given $a\!\in\!\Z$, let $(L',\ti\phi')\!\lra\!(\hat\Si',\si')$ be the real bundle pair so~that 
$$\blr{c_1(L'),[C]_{\Z}}=-2a, \qquad 
(L',\phi')|_{C'}=(C'\!\times\!\C,\si'|_{C'}\!\times\!\fc_{\C}).$$
By the previous paragraph, 
\begin{equation*}\begin{split}
\blr{w_2^{\ti\phi'}(L'),[\hat\Si']_{\Z_2}^{\si'}}
&=\blr{w_2^{\ti\phi'}(L'),[C]_{\Z_2}^{\si'}}
+\blr{w_2^{\ti\phi'}(L'),[C']_{\Z_2}^{\si'}}\\
&=\frac12\blr{c_1(L'),[C]_{\Z}}+0+2\Z
=\frac12\blr{c_1(L'),[\hat\Si']_{\Z}}+2\Z\,.
\end{split}\end{equation*}
Since the degree of $q$ is~1, it follows that 
\begin{equation*}\begin{split}
&\blr{w_2^{q^*\ti\phi'}(q^*L'),[\hat\Si]_{\Z_2}^{\si}}
=\blr{q^*w_2^{\ti\phi'}(L'),[\hat\Si]_{\Z_2}^{\si}}
=\blr{w_2^{\ti\phi'}(L'),q_*[\hat\Si]_{\Z_2}^{\si}}
=\blr{w_2^{\ti\phi'}(L'),[\hat\Si']_{\Z_2}^{\si'}}\\
&\qquad=\frac12\blr{c_1(L'),[\hat\Si']_{\Z}}+2\Z
=\frac12\blr{c_1(L'),q_*[\hat\Si]_{\Z}}+2\Z
=\frac12\blr{c_1(q^*L'),[\hat\Si]_{\Z}}+2\Z.
\end{split}\end{equation*}
This establishes the claim for the real bundle pairs 
$(L,\ti\phi)=q^*(L',\ti\phi')$, for each $a\!\in\!\Z$ as above.
By \cite[Propositions~4.2]{BHH}, these are all the rank~1 real bundle pairs 
over~$(\hat\Si,\si)$.
\end{proof}

\begin{crl}\label{equivSQrt_crl}
Suppose $(M,\phi)$ is a topological space with an involution and
$(L,\ti\phi)\!\lra\!(M,\phi)$ is a rank~1 real bundle pair so that 
$w_2^{\ti\phi}(L)\!\in\!H^2_{\phi}(M)$ is a spin class.
Let $\al\!:(S^1,\fa)\!\lra\!(M,\phi)$ be a real map.
If $f\!:\Si\!\lra\!M$ is a continuous map from an oriented bordered Riemann surface
such that $\prt f\!=\!\al$, then
$$\flr{w_2^{\ti\phi}(L),[\al]_{\Z_2}^{\phi}}=
\frac12\blr{\hat{f}^*c_1(L),[\hat\Si]_{\Z}}+2\Z,$$
where $\hat{f}\!: (\hat\Si,\si)\!\lra\!(M,\phi)$ is the double of~$f$. 
If $\Si$ is a disk, the conclusion holds even if 
\hbox{$w_2^{\ti\phi}(L)\!\in\!H^2_{\phi}(M)$} is just atorical.
\end{crl}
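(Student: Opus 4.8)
The plan is to deduce the corollary from Corollary~\ref{gbund_crl} by doubling the filling~$f$ and pulling back the real bundle pair $(L,\ti\phi)$, so that the equivariant~$w_2$ of~$L$ gets evaluated through the doubled domain. First I would record the relevant topology. Since $\al$ is a real map from $(S^1,\fa)$, the surface $\Si$ has a single boundary component and carries the sh-structure whose boundary involution is~$\fa$; let $\hat f\!:(\hat\Si,\si)\!\lra\!(M,\phi)$ be the double of~$f$, with $\hat f|_{\Si}\!=\!f$ and $\hat f|_{\prt\Si}\!=\!\al$. Because $\fa$ is fixed-point free, $\hat\Si^{\si}\!=\!\eset$, so $\si$ is a free orientation-reversing involution and $N\!\equiv\!\hat\Si/\Z_2$ is a closed connected unorientable surface with orientation double cover $\hat\Si\!\lra\!N$; moreover $N\!=\!\R\P^2$ when $\Si\!=\!D^2$. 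Write $\bar f\!\equiv\!\bB_{\phi,\si}\hat f\!:N\!\lra\!\bB_{\phi}M$ for the induced map, as in~\eref{equivback_e2} and the discussion of~\eref{Z2fib_e2}, and let $\gm\!\subset\!N$ be the image of~$\prt\Si$ under the quotient map.

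The topological heart of the argument is the following. The circle~$\gm$ is one-sided in~$N$ because its preimage~$\prt\Si$ in the orientation double cover is connected; hence $2[\gm]\!=\!0$ in $H_1(N;\Z)$ while $[\gm]\!\neq\!0$ in $H_1(N;\Z_2)$, so $[\gm]$ is the unique nonzero two-torsion class $b_N\!\in\!H_1(N;\Z)$. Since $\si$ restricts to~$\fa$ on~$\prt\Si$ and $\hat f|_{\prt\Si}\!=\!\al$, a neighborhood of~$\gm$ in~$N$ is the M\"obius band $\prt\Si/\fa$-bundle and $\bar f|_{\gm}$ is exactly the map $S^1/\Z_2\!\lra\!\bB_{\phi}M$ induced by~$\al$; in particular $\bar f_*b_N\!=\![\al]_{\Z_2}^{\phi}$ and the free homotopy class of~$\bar f|_{\gm}$ is the Klein boundary in~$\bB_{\phi}M$ determined by~$\al$.

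Granting this, the rest is assembly. Using $[\hat\Si]_{\Z_2}^{\si}\!=\![N]_{\Z_2}$ and $[\hat f]_{\Z_2}^{\phi}\!=\!\bar f_*[N]_{\Z_2}$ from Section~\ref{equivdfn_e}, the map~$\bar f$ is an admissible choice of the unorientable surface in Lemma-Definition~\ref{spinpair_dfn}\ref{spinpar_it} for $b\!=\![\al]_{\Z_2}^{\phi}$ (which equals $\bar f_*b_N$ and is therefore two-torsion) and the spin class~$w_2^{\ti\phi}(L)$, whence $\flr{w_2^{\ti\phi}(L),[\al]_{\Z_2}^{\phi}}\!=\!\blr{w_2^{\ti\phi}(L),\bar f_*[N]_{\Z_2}}\!=\!\blr{w_2^{\ti\phi}(L),[\hat f]_{\Z_2}^{\phi}}$; when $\Si\!=\!D^2$ we have $N\!=\!\R\P^2$ and the $\R\P^2$-description of~$\flr{\cdot,\cdot}$ given after the proof of Lemma-Definition~\ref{spinpair_dfn} yields the same equality assuming only that $w_2^{\ti\phi}(L)$ is atorical. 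It then remains to compute $\blr{w_2^{\ti\phi}(L),[\hat f]_{\Z_2}^{\phi}}$: by the naturality~\eref{equivback_e} of the equivariant Stiefel-Whitney classes, $w_2^{\hat f^*\ti\phi}(\hat f^*L)\!=\!\bar f^*w_2^{\ti\phi}(L)$, so $\blr{w_2^{\ti\phi}(L),[\hat f]_{\Z_2}^{\phi}}\!=\!\blr{w_2^{\hat f^*\ti\phi}(\hat f^*L),[\hat\Si]_{\Z_2}^{\si}}$, and applying Corollary~\ref{gbund_crl} to the rank~1 real bundle pair $(\hat f^*L,\hat f^*\ti\phi)\!\lra\!(\hat\Si,\si)$ together with $c_1(\hat f^*L)\!=\!\hat f^*c_1(L)$ gives $\blr{w_2^{\hat f^*\ti\phi}(\hat f^*L),[\hat\Si]_{\Z_2}^{\si}}\!=\!\frac12\blr{\hat f^*c_1(L),[\hat\Si]_{\Z}}\!+\!2\Z$, which is the asserted formula.

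The step I expect to be the main obstacle is the topological identification in the second paragraph: one must verify carefully that $N\!=\!\hat\Si/\Z_2$ is a closed unorientable surface (equal to~$\R\P^2$ in the disk case), that the distinguished circle~$\gm$ is one-sided and therefore represents the nonzero two-torsion class~$b_N$, and that $\bar f|_{\gm}$ is the map induced by~$\al$, so that $\bar f_*b_N\!=\![\al]_{\Z_2}^{\phi}$ and Lemma-Definition~\ref{spinpair_dfn} (or its $\R\P^2$-variant in the disk case, which is what accounts for the weakening of ``spin'' to ``atorical'' there) is genuinely applicable. Once this is in place the remainder is only naturality of the equivariant characteristic classes and a direct invocation of Corollary~\ref{gbund_crl}.
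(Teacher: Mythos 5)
Your proposal follows essentially the same route as the paper's proof: identify the quotient $N=\hat\Si/\Z_2$ as a closed unorientable surface in which the image~$\gm$ of $\prt\Si$ represents the nonzero two-torsion class of $H_1(N;\Z)$, note that $\bB_{\phi,\si}\hat f$ sends $\gm$ to $\bB_{\phi,\fa}\al$ so that Lemma-Definition~\ref{spinpair_dfn} computes the left-hand side as $\blr{w_2^{\ti\phi}(L),[\hat f]_{\Z_2}^{\phi}}$, and then pull back by naturality and apply Corollary~\ref{gbund_crl}. Your extra explicitness about the Mayer--Vietoris-style identification of $[\gm]$ with $b_N$ and the $\R\P^2$-observation for the disk case are exactly the points the paper handles implicitly (the latter via the remark following Lemma-Definition~\ref{spinpair_dfn}), so there is no gap and no genuinely different idea.
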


\begin{proof}
The map $\bB_{\phi,\si}\hat{f}\!:\hat\Si/\Z_2\!\lra\!\bB_{\phi}M$ takes 
the image of $\prt\Si$ in~$\hat\Si/\Z_2$, which represents the nonzero two-torsion
class in $H_1(\hat\Si/\si;\Z)$, to $\bB_{\phi,\fa}\al$.
Thus,
\begin{equation*}\begin{split}
\flr{w_2^{\ti\phi}(L),[\al]_{\Z_2}^{\phi}}
=\flr{w_2(\bB_{\ti\phi}L),[\bB_{\phi,\fa}\al]_{\Z_2}}
&=\blr{w_2(\bB_{\ti\phi}L),\{\bB_{\phi,\si}\hat{f}\}_*[\hat\Si/\Z_2]_{\Z_2}}\\
&=\lr{w_2^{\ti\phi}(L),\{\bB_{\phi,\si}\hat{f}\}_*[\hat\Si/\Z_2]_{\Z_2}}
=\lr{\hat{f}^*w_2^{\ti\phi}(L),[\hat\Si]_{\Z_2}^{\si}};
\end{split}\end{equation*}
see Section~\ref{topol_subs} for the second equality.
The claim now follows from Corollary~\ref{gbund_crl}.
\end{proof}

\subsection{Some examples}
\label{examples_sec}

\noindent
We now give two examples.
The first describes rank~1 real bundle pairs $(V,\ti\phi)$ over 
a simply connected space $(M,\phi)$ such that $w_2^{\ti\phi}(V)$ is a non-trivial square.
The second example describes  a rank~1 real bundle pair $(V,\ti\phi)$
so that $w_2^{\ti\phi}(V)$ is atorical, but not spin.
These examples imply that there is no simple description of the condition
of $w_2^{\ti\phi}(V)$ vanishing on the tori~$\bB_{\phi,\fa_{\bT}}f$;
see \eref{equivback_e2} and the beginning of Section~\ref{classify_subs} for the notation.

\begin{eg}\label{SQ_eg}
% We now give an example of a rank~1 real bundle pair $(V,\ti\phi)\!\lra\!(M,\phi)$
% with $\pi_1(M)\!=\!0$ and $w_2^{\ti\phi}(V)$ a non-trivial square.
For each $m\!\in\!\Z$, define
\begin{equation*}\begin{split}
\eta_{2m-1}\!:\P^{2m-1}&\lra \P^{2m-1} \qquad\hbox{by}\\
\big[W_1,W_2,\ldots,W_{2m-1},W_{2m}\big]
&\lra\big[-\ov{W}_2,\ov{W}_1,\ldots,-\ov{W}_{2m},\ov{W}_{2m-1}\big].
\end{split}\end{equation*}
In particular, $\eta_1\!=\!\eta$.
Let $(\cO_{\P^{2m-1}}(-2),\ti\eta_{2m-1})\lra(\P^{2m-1},\eta_{2m-1})$ be
the real bundle pair given~by
$$\ti\eta_{2m-1}\big(\ell,(W_1,\ldots,W_{2m})^{\otimes2}\big)
=\big(\eta_{2m-1}(\ell),(-\ov{W}_2,\ov{W}_1,
\ldots,-\ov{W}_{2m},\ov{W}_{2m-1})^{\otimes2}\big).$$
Fix $a\!\in\!\Z$ and take
$$(M,\phi)=(\P^{2m-1},\eta_{2m-1}), \quad 
(V,\ti\phi)=(\cO_{\P^{2m-1}}(2a),\ti\eta_{2m-1})
\equiv(\cO_{\P^{2m-1}}(-2),\ti\eta_{2m-1})^{\otimes(-a)}\,.$$
By Corollary~\ref{SQrt_crl}\ref{SQrt_it}, $w_2^{\ti\phi}(V)$
is a square class.
Since the fixed locus of~$\phi$ is empty, the natural projection
$$\bB_{\phi}M\lra M/\Z_2$$ 
is a homotopy equivalence
and $w_2^{\ti\phi}(V)$ corresponds to $w_2(V/\Z_2)$.
We show that the rank~2 vector bundle
\BE{Pnbund_e} V/\Z_2\equiv \cO_{\P^{2m-1}}(2a)/\Z_2\lra M/\Z_2\equiv \P^{2m-1}/\Z_2\,\EE
is non-orientable, is non-split if $a\!\neq\!0$, 
and has a non-zero $w_2$ if $a$ is odd.
Since it is sufficient to verify these statements for the restriction of this bundle
to $\P^1/\Z_2\!=\!\R\P^2$, we can assume that $m\!=\!1$.
An orientation on $\cO_{\P^1}(2a)/\Z_2$ would lift to an orientation on 
$\cO_{\P^1}(2a)$
preserved by~$\ti\eta_1$; since $\ti\eta_1$ is orientation-reversing,
$\cO_{\P^1}(2a)/\Z_2$  is not orientable.
A splitting of $\cO_{\P^1}(2a)/\Z_2$ would induce a splitting of $\cO_{\P^1}(2a)$ 
into two real line bundles, each of which must be trivial, since $\P^1$ is simply connected.
Since the euler class of $\cO_{\P^1}(2a)$ is~$2a$, this is impossible if $a\!\neq\!0$,
and so $\cO_{\P^1}(2a)/\Z_2$ does not split as a sum of line bundles if $a\!\neq\!0$.
By Corollary~\ref{gbund_crl}, $w_2^{\ti\eta_1}(\cO_{\P^1}(2a))\!\neq\!0$ if $a$ is odd.
We note that the bundles~\eref{Pnbund_e} are pairwise distinct, since 
an isomorphism between a pair of them would lift
to an isomorphism of the bundles $\cO_{\P^1}(2a)$ as real vector bundles.
\end{eg}

\begin{eg}\label{nontoric_eg}
We now describe a rank~1 real bundle pair $(V,\ti\phi)\!\lra\!(M,\phi)$
so that $w_2^{\ti\phi}(V)$ vanishes on every homology class represented by 
a torus (and in particular on the tori of the form $\bB_{\phi,\fa_{\bT}}f$),
but not on the image of $H_2^{\phi}(M;\Z)$ in $H_2^{\phi}(M)$
and thus is not a square class or even spin.
Let $\pi\!:\Si_3\!\lra\!\Si_2$ be a double cover of a genus~2 closed oriented surface
by a  genus~3 surface and $\phi\!: \Si_3\!\lra\!\Si_3$ be the deck transformation
(which is orientation-preserving).
Since $\phi$ has no fixed points,
$$H^2_{\phi}(\Si_3)=H^2(\Si_2;\Z_2).$$
The trivial rank~1 real bundle pair $(\Si_3\!\times\!\C,\ti\phi_0)\!\lra\!(\Si_3,\phi)$ 
induces a rank~2 bundle over~$\Si_2$ which splits into the line bundles
$$L_{\R}\!\equiv\!(\Si_3\!\times\!\R)/\Z_2\approx\Si_2\!\times\!\R,\qquad
L_{\fI\R}\!\equiv\!(\Si_3\!\times\!\fI\R)/\Z_2\lra\Si_2.$$
The line bundle $L_{\fI R}$ is not trivial: it restricts to the Mobius band line bundle
along any loop in $\Si_2$ not in the image of a loop from~$\Si_3$.
Thus, there exists a line bundle $L\!\lra\!\Si_2$ such that
$$w_1(L_{{\fI}\R})w_1(L)\neq 0 \qquad\Lra\qquad
\qquad w_2\big((\Si_3\!\times\!\C)/\Z_2\otimes_{\R}\!L\big)\neq0.$$
The involution~$\phi$ naturally lifts to an involution~$\ti\phi_L$ on
$\pi^*L\!\lra\!\Si_3$.
We~take
$$(V,\ti\phi)=(\Si_3\!\times\!\C\otimes_{\R}\pi^*L,\ti\phi_0\!\otimes_{\R}\!\ti\phi_L)
\lra(\Si_3,\phi).$$
Since $w_2^{\ti\phi}(V)$ corresponds~to
$$w_2(V/\Z_2)=w_2\big((\Si_3\!\times\!\C)/\Z_2\otimes_{\R}\!L\big)\in H^2(M;\Si_2),$$
$w_2^{\ti\phi}(V)$ does not vanish on the image of $H_2^{\phi}(M;\Z)$ in $H_2^{\phi}(M)$.
However, it vanishes on the homology classes represented by tori, since
every map $\bT\!\lra\!\Si_2$ is of degree~0.
\end{eg}

\section{Relative signs}
\label{main_sec}

\noindent
This section applies the relative sign principle introduced in \cite[Section~6]{Ge2}
in the basic case $(\Si,c)\!=\!(D^2,\id_{S^1})$ to arbitrary oriented sh-surfaces~$(\Si,c)$.
For a real Cauchy-Riemann operator~$D$ on a real bundle pair $(V,\ti{c})$ over 
an oriented sh-surface~$(\Si,c)$ and a point $x_i\!\in\!(\prt\Si)_i$ on each boundary
component with $|c_i|\!=\!0$, it is convenient to define
\BE{wtdetDdfn_e}\wt\det(D)= \det(D)\otimes\hspace{-.3in}
\bigotimes_{\begin{subarray}{c}|c_i|=0\\ \lr{w_1(V^{\ti{c}}),(\prt\Si)_i}=0\end{subarray}}
\hspace{-.3in}\big(\La_{\R}^{\top}V_{x_i}^{\ti{c}}\big)^*\,;\EE
a similar, but not identical, twisted determinant line is introduced 
in \cite[Section~5]{Sol}.
If $(V,\ti{c})$ is induced from a real bundle pair over a smooth manifold $(X,\phi)$
with an orientable maximal totally real subbundle,
the additional factors in $\wt\det(D)$ are systematically orientable and can be essentially 
ignored 
(dropping them would change~\eref{epsConjTrar_e} by 
the sign of the permutation induced by~$h$ on the set of  boundary components
with $|c_i|\!=\!0$ and $\lr{w_1(V^{\ti\phi}),b_i}\!=\!0$).
We study the changes in the orientation of $\wt\det(D)$ under various operations on~$D$.\\

\noindent
The (twisted) orientations of the  determinant line of a real Cauchy-Riemann operator~$D$ 
and of the moduli space of $J$-holomorphic maps from an oriented sh-surface at 
a point~$u$ are determined by certain collections of trivializations,
which we call \sf{orienting collections}.
The conjugation operation of Section~\ref{conjsign_subs} transforms such a collection
to an orienting collection for the conjugate operator~$\bar{D}$.
There is a natural isomorphism between $\wt\det(D)$ and $\wt\det(\bar{D})$;
we call the sign of this isomorphism with respect to an orienting collection and its conjugate 
\sf{the relative sign of the conjugation}.
We compute it in Proposition~\ref{relsign_prp} below and  show that in particular 
it is independent of the choice of the initial orienting collection.
This computation is similar to the analogous computations in
\cite[Section~4]{FOOO9} and \cite[Section~2]{Sol}, but 
we do not assume the existence of any ambient structure.
This leads to more general orientability results and fits well with
studying local systems of orientations when the index bundles are not orientable.
Even in the basic case of the disk, the approach of comparing relative orientations
without imposing additional structure is more systematic, 
helps avoid the sort of mistake made in \cite[Proposition~11.5]{FOOOold}, 
which continued on as \cite[Proposition~2.1]{Cho},
and combines cases that appear to be unnecessarily separated in~\cite{FOOO9}.\\

\noindent
A diffeomorphism~$h$ of~$\Si$, possibly interchanging the boundary components of~$\Si$, 
likewise induces an isomorphism  between $\wt\det(D)$ and $\wt\det(h^*D)$.
If the operators $D$ and $h^*D$ are homotopic in a natural way,
the orienting collection for~$D$ can be transferred along a path~$\gm$ 
to an orienting collection for~$h^*D$.
We compute the sign of the natural isomorphism between $\wt\det(D)$ and $\wt\det(h^*D)$
with respect to the corresponding orientations in Proposition~\ref{transfsign_prp}. 
It is again independent of the initial orienting collection,
but does depend on the choice of the transferring path~$\gm$ 
unless an appropriate determinant line bundle is orientable.\\

\noindent
In Section~\ref{MSreal_subs}, we determine analogous signs for the determinant line
bundles of the moduli spaces of $J$-holomorphic maps from the disk and the annulus
with the five possible boundary involutions.
This allows us to establish more general versions of Theorems~\ref{g0orient_thm} 
and~\ref{g1orient_thm}.

\subsection{Conjugation}
\label{conjsign_subs}

\noindent
Let $(\Si,c)$ be an oriented sh-surface, $\fJ\!\in\!\cJ_c$,
and $D\!=\!\dbar\!+\!A$ be a compatible real Cauchy-Riemann operator on 
a real bundle pair $(V,\ti{c})$ over~$(\Si,c)$.
We denote by $\bar{V}$ the complex vector bundle obtained from~$V$ 
by multiplying the complex structure by~$(-1)$;
$(\bar{V},\ti{c})$ is still a real bundle pair over $(\Si,c)$.
Let
$$\bar{D}\!=\!\dbar\!+\!A\!:
\Ga(\Si;\bar{V})^{\ti{c}}\lra\Ga_{-\fJ}^{0,1}(\Si;\bar{V}).$$
Thus, $D$ and $\bar{D}$ are the same operators between the same real vector spaces
with different complex structures.\\

\noindent
By \cite[Proposition~5.4]{GZ}, an orientation of $\wt\det\, D$ is induced by
\begin{enumerate}[label=(OC\arabic*),leftmargin=*]
\item for each boundary component $(\prt\Si)_i$ with $|c_i|\!=\!0$, 
a choice of trivialization of the canonically oriented vector bundle $V^{\ti{c}}\!\oplus\!3\La^{\top}_{\R}V^{\ti{c}}$
over~$(\prt\Si)_i$,
\item for each boundary component $(\prt\Si)_i$ with $|c_i|\!=\!1$,
a choice of trivialization of the real bundle pair $(V,\ti{c})$ over~$(\prt\Si)_i$.  
\end{enumerate}
We will call such a collection of trivializations an \textsf{orienting collection}
for~$D$.
It induces an orienting collection of  trivializations for~$\bar{D}$: 
\begin{enumerate}[label=(OC\arabic*$_c$),leftmargin=*]
\item for each boundary component $(\prt\Si)_i$ with $|c_i|\!=\!0$, 
take the same choices of trivializations as for~$D$;
\item for each boundary component $(\prt\Si)_i$ with $|c_i|\!=\!1$,
compose the corresponding choice for~$D$ with the standard conjugation on~$\C^{\rk_{\C}V}$.
\end{enumerate}
We will call the latter collection of trivializations 
the \textsf{conjugate orienting collection}.\\

\noindent
Since $D$ and $\bar{D}$ have the same kernel and cokernel, the identity maps
$$\ker\,D\lra\ker\,\bar{D} \qquad\hbox{and}\qquad \cok\,D\lra\cok\,\bar{D}$$
induce an isomorphism 
\BE{conjisom_e}\wt\det(D)\lra\wt\det(\bar{D}).\EE
We call the sign of this isomorphism  with respect to the orientations induced 
by an orienting collection for~$D$ and its conjugate for~$\bar{D}$ 
\textsf{the relative sign of the conjugation on~$D$}. 
By the next proposition, which generalizes the conclusion of the main step in the proof of
 \cite[Lemma~5.1]{Ge2}, it is independent
of the orienting collection for~$D$ and of~$D$ itself.\\

\noindent
For a real vector bundle $W\!\lra\!(\prt\Si)_i$, we define 
\BE{tiw1dfn_e}\ti{w}_1\big(W,(\prt\Si)_i\big) =\begin{cases}
1,&\hbox{if}~\lr{w_1(W),[(\prt\Si)_i]_{\Z_2}}\neq0;\\
0,&\hbox{if}~\lr{w_1(W),[(\prt\Si)_i]_{\Z_2}}=0.
\end{cases}\EE

\begin{prp}\label{relsign_prp}
Let $(\Si,c)$ be a genus~$g$ oriented sh-surface, $\fJ\!\in\!\cJ_c$, and
$D$  be a real Cauchy-Riemann operator on 
a real bundle pair $(V,\ti{c})$ over~$(\Si,c)$ compatible with~$\fJ$.
The relative sign of the conjugation on~$D$
is $(-1)^{\eps_D}$, where 
\BE{epsD_e}\eps_D=\frac{1}{2}\bigg(\mu(V,\ti{c})
+\sum_{|c_i|=0}\ti{w}_1\big(V^{\ti{c}},(\prt\Si)_i\big)\bigg)
+\big(1\!-\!g+|c|_0\!+\!|c|_1\big)\rk_{\C}V+2\Z\in\Z_2\,,\EE
for every orienting collection for~$D$.
\end{prp}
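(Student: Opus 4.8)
The plan is to reduce the statement to a direct sum decomposition of $(V,\ti c)$ into standard rank~1 pieces, compute the relative sign of the conjugation on each piece explicitly, and then verify that both sides of \eref{epsD_e} are additive under direct sum (in the appropriate $\Z_2$-graded sense), so that the general case follows from the additivity of determinant lines. First I would recall from \cite[Propositions~4.1,~4.2]{BHH} (cited in Section~\ref{analys_subs}) that a real bundle pair over an oriented sh-surface is classified by its rank, its Maslov index $\mu(V,\ti c)$, and the orientability of $V^{\ti c}$ over each boundary component with $|c_i|\!=\!0$; consequently $(V,\ti c)$ is isomorphic to a direct sum of rank~1 real bundle pairs, and since the determinant line, the relative sign of the conjugation, and the orienting collections (OC1)--(OC2) are all compatible with direct sums (one obtains an orienting collection for a sum by taking orienting collections on the summands, and the isomorphism \eref{conjisom_e} is the tensor product of the corresponding isomorphisms), it suffices to treat rank~1 bundle pairs. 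I would also note that the relative sign, being defined via the purely algebraic isomorphism induced by the identity maps on kernel and cokernel, is a homotopy invariant of the operator through Fredholm families, hence independent of $A$ and of $\fJ\!\in\!\cJ_c$; this handles the ``for every orienting collection / independent of $D$'' part of the claim simultaneously with the computation, provided the final formula depends only on the topological invariants appearing in \eref{epsD_e}.

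Next I would set up the model computations for rank~1 pairs. The key point is to compare the complex orientation of $\det D$ (via $\ker$ and $\cok$, which are complex when the operator is genuinely $\C$-linear) with the real orientation induced by an orienting collection. For a rank~1 real bundle pair $(V,\ti c)\!\to\!(\Si,c)$, I would pick a convenient explicit holomorphic representative: over each crosscap boundary component the operator deforms to the standard $\dbar$ on a trivialized pair, and over each $|c_i|\!=\!0$ component it deforms, depending on whether $V^{\ti c}$ is orientable there, to one of two model boundary value problems as in \cite[Appendix~C]{MS} / \cite[Section~5]{GZ}. Doubling $(V,\ti c)$ to $(\hat V,\br c)\!\to\!(\hat\Si,\hat c)$ identifies $\ker D\!\oplus\!\cok D^*$ with the appropriate $\Z_2$-fixed part of $H^0(\hat\Si;\hat V)\!\oplus\!H^1(\hat\Si;\hat V)$, and Riemann--Roch on the genus-$\hat g\!=\!2g\!+\!|c|_0\!+\!|c|_1\!-\!1$ double computes the complex index as $\mu(V,\ti c)/2 + \rk_\C V\cdot(1-\hat g/2)$ after halving. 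The conjugation \eref{conjisom_e} is then literally the identity on underlying real vector spaces but reverses the complex structure on the $\C$-factor; on a complex vector space of complex dimension $d$ reversing $\fI$ changes the induced real orientation by $(-1)^d$, and on the twisting factors $\La^\top_\R V^{\ti c}_{x_i}$ in \eref{wtdetDdfn_e} one must additionally track the effect through the chosen trivializations, which contributes the $\ti w_1(V^{\ti c},(\prt\Si)_i)$ terms. Assembling these two contributions — the complex-dimension parity, which is $\mu(V,\ti c)/2 + (1-g+|c|_0+|c|_1)\rk_\C V$ modulo~$2$ after the arithmetic, and the twisting parity $\tfrac12\sum \ti w_1$ — yields exactly $\eps_D$ of \eref{epsD_e}.

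The main obstacle I expect is the careful bookkeeping of signs in the half-dimension/doubling argument: one must be precise about (i) how the orienting collection (OC1)--(OC2) on $(\Si,c)$ corresponds, after doubling, to an orientation of the real index and how $\br c$ acts on it, (ii) the factor of $\tfrac12$ in $\mu(V,\ti c)$ and why $\mu(V,\ti c)/2 \equiv \mu(V,\ti c)/2 \pmod 2$ combines correctly with $\tfrac12\sum_i \ti w_1$ — note $\mu(V,\ti c)$ and $\sum_i \ti w_1$ need not individually be even but their sum is, which is implicitly asserted by \eref{epsD_e} and should be checked via the parity relation between the Maslov index and the orientability of $V^{\ti c}$ along $\prt\Si$, and (iii) the interplay between the twisted determinant line $\wt\det(D)$ and the honest $\det(D)$, i.e.\ verifying that dropping the twisting factors changes the relative sign precisely by the terms recorded in the footnote after \eref{wtdetDdfn_e}. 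I would handle this by first doing the four or five rank~1 model cases over the disk and the annulus by hand (which also feeds directly into Section~\ref{MSreal_subs}), confirming the formula there, and then invoking additivity and the \cite{BHH} classification to propagate to arbitrary $(V,\ti c)$ and arbitrary genus, with the gluing/excision properties of determinant lines from \cite[Appendix~A]{MS} supplying the additivity of both $\eps_D$ and the relative sign under connect-summing of domains and Whitney sums of bundles.
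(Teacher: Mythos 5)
Your plan hinges on a reduction to rank~1 via direct-sum additivity, and that step has a genuine gap. The orienting collections (OC1) are not trivializations of $V^{\ti c}$ but of $V^{\ti c}\!\oplus\!3\La_{\R}^{\top}V^{\ti c}$, and for $V\!=\!V_1\!\oplus\!V_2$ one has $\La_{\R}^{\top}(V_1\!\oplus\!V_2)^{\ti c}\!=\!\La_{\R}^{\top}V_1^{\ti c}\!\otimes\!\La_{\R}^{\top}V_2^{\ti c}$, which is not a direct sum; so an orienting collection for a sum is \emph{not} obtained ``by taking orienting collections on the summands.'' Similarly, the twisted determinant \eref{wtdetDdfn_e} tensors by $(\La_{\R}^{\top}V^{\ti c}_{x_i})^*$ only over those components with $\lr{w_1(V^{\ti c}),(\prt\Si)_i}\!=\!0$, and that set changes under direct sum, so $\wt\det(D_1\!\oplus\!D_2)$ is not $\wt\det(D_1)\!\otimes\!\wt\det(D_2)$. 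Finally, the right-hand side \eref{epsD_e} is itself not additive: $\ti{w}_1$ of a sum is the parity of the sum of the $\ti{w}_1$'s, so $\tfrac12\sum_i\ti{w}_1$ picks up cross-terms whenever two summands are both non-orientable over the same component. You could try to evade this by choosing the rank-1 splitting so that at most one summand carries non-orientability on each boundary circle, but that is exactly the kind of careful bookkeeping your proposal leaves unaddressed, and it is the reason the paper does \emph{not} reduce to rank~1: the orientation scheme of \cite{Ge,GZ} instead passes from $D$ on $V$ to operators on $V\!\oplus\!3\La_{\C}^{\top}V$ and $\La_{\C}^{\top}V$ (reaching rank~1 only at the $\La^{\top}$ level), and then pinches collars to decompose into closed-surface pieces (oriented by the complex orientation, contributing $(-1)^{\ind_{\C}}$) and standard disk pieces (on which the conjugation is orientation-preserving). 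Your doubling/Riemann--Roch computation also has a sign slip: the real index of $D$ \emph{equals} the complex index of the doubled operator on $(\hat\Si,\hat V)$, it is not half of it; the ``after halving'' step would give a non-integer and should be dropped. The parity observation $\mu(V,\ti c)\equiv\sum_i\ti{w}_1\pmod 2$ you flag is correct and does need to be checked, but it does not rescue the additivity claim. To make your route work you would essentially need to reprove the compatibility results of \cite[Section~5]{GZ} and \cite[Section~3]{Ge} for direct sums, at which point you are better off following the pinching scheme directly as in the paper.
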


\begin{proof}
Let $n\!=\!\rk\,V$.
By the proof of \cite[Theorem~1.1]{GZ}, $\det(D)$ is oriented by first pinching a 
circle near each boundary component $(\prt\Si)_i$ with $|c_i|\!=\!1$ to obtain 
\begin{enumerate}[label=$\bullet$,leftmargin=*]
\item a real Cauchy-Riemann operator~$D'$ on a real bundle pair $(V',\ti{c}')$ over 
an oriented sh-surface $(\Si',\fJ')$ with the boundary components $(\prt\Si)_i$
with $|c_i|\!=\!0$ and an interior marked point $p_i\!\in\!\Si'$ for each  
boundary component $(\prt\Si)_i$ with $|c_i|\!=\!1$ such~that 
$$g(\Si')=g(\Si), \qquad
\big(V',\ti{c}'\big)\big|_{(\prt\Si)_i}=\big(V,\ti{c}\big)\big|_{(\prt\Si)_i}
~~\hbox{if}~|c_i|=0, 
\quad\hbox{and}\quad  \mu(V',\ti{c}')=\mu(V,\ti{c}),$$
\item for each boundary component $(\prt\Si)_i$ with $|c_i|\!=\!1$,
a real Cauchy-Riemann operator~$D_i$ on a real bundle pair $(V_i,\ti{c}_i)$ over 
the disk with boundary $(\prt\Si)_i$ and  boundary involution~$c_i$
such~that 
$$(V_i,\ti{c}_i)=(V,\ti{c})\big|_{(\prt\Si)_i}\ \qquad\hbox{and}\qquad \mu(V_i,\ti{c}_i)=0.$$
\end{enumerate}
The lines $\det(D)$ and $\det(\bar{D})$ are then oriented via isomorphisms
\BE{Dsplit_e1}\begin{split} 
\det(D)&\approx  \det(D')\otimes
\bigotimes_{|c_i|=1}\!\!\big(\det(D_i)\!\otimes\!\La_{\R}^{\top}(V'_{p_i})^*\big),\\
\det(\bar{D})&\approx  \det(\bar{D}')\otimes
\bigotimes_{|c_i|=1}\!\!\big(\det(\bar{D}_i)\!\otimes\!\La_{\R}^{\top}(\bar{V}'_{p_i})^*\big).
\end{split}\EE
For each $i$ with $|c_i|\!=\!1$, $\det(D_i)$ is oriented by homotoping $D_i$ 
to the standard real $\dbar$-operator on $D^2\!\times\C^n$ with the antipodal boundary
involution via the chosen trivialization of~$V|_{(\prt\Si)_i}$.
The latter operator is surjective and its kernel consists of constant functions with values 
in $\R^n\!\subset\!\C^n$; this determines an orientation on~$\det(D_i)$.
In particular, the identity map from $\det(D_i)$ to $\det(\bar{D}_i)$ is 
orientation-preserving for each~$i$ with $|c_i|\!=\!1$.
On the other hand, $\La_{\R}^{\top}V'_{p_i}$ and $\La_{\R}^{\top}\bar{V}'_{p_i}$
are oriented by the complex orientations of $V'$ and $\bar{V}'$ and so
the sign of the identity isomorphism between the last factors in~\eref{Dsplit_e1} is $(-1)^n$ for
each~$i$ with $|c_i|\!=\!1$. This accounts for $|c|_1n$ in~\eref{epsD_e}.\\

\noindent
By the proof of \cite[Theorem~1.1]{Ge}, $\det(D')$ and $\det(\bar{D}')$ 
in \eref{Dsplit_e1} are oriented via isomorphisms
\BE{Dsplit_e2}\begin{split} 
\det(D')\otimes \det(D_1)^{\otimes4} &\approx \det(D_{+3})\otimes \det(D_1),\\
\det(\bar{D}')\otimes \det(\bar{D}_1)^{\otimes4} &\approx 
\det(\bar{D}_{+3})\otimes \det(\bar{D}_1),
\end{split}\EE  
where $D_1$ and $D_{+3}$ are real Cauchy-Riemann operators on the bundles
$$V_1\!\equiv\!\La_{\C}^{\top}V',~V_{+3}\!\equiv\!V'\oplus3\La_{\C}^{\top}V'\lra\Si'$$
with the involutions $\ti{c}_1$ and $\ti{c}_{+3}$ induced by~$\ti{c}'$ and 
$\bar{D}_1$ and $\bar{D}_{+3}$ are the conjugates of~$D_1$ and~$D_{+3}$,
respectively.
The identity map between the second factors on the left-hand sides of the two expressions
in~\eref{Dsplit_e2} is clearly orientation-preserving.\\

\noindent
By the proof of \cite[Proposition~3.1]{Ge}, $\det(D_{+3})$ is oriented by first pinching 
a  circle near each boundary component $(\prt\Si')_i$ to obtain 
\begin{enumerate}[label=$\bullet$,leftmargin=*]
\item a real Cauchy-Riemann operator~${D}_{+3}^{\cl}$ on a bundle $V_{+3}^{\cl}$ over 
a closed Riemann surface $\Si^{\cl}$ with a marked point $p_i$ for each  
boundary component $(\prt\Si')_i$ such~that 
$$g(\Si^{\cl})=g(\Si')=g(\Si) \qquad\hbox{and}\qquad
2\deg\big(V_{+3}^{\cl}\big)=\mu\big(V_{+3},\ti{c}_{+3}\big)=4\mu\big(V,\ti{c}\big)\,,$$
\item for each boundary component $(\prt\Si')_i$,
the standard Cauchy-Riemann operator~$D_{+3;i}$ on the trivial bundle $D^2\!\times\!\C^{n+3}$
obtained by extending the chosen trivialization~of  
$$V'^{\ti{c}'}\oplus 3\La^{\top}_{\R}V'^{\ti{c}'}=
V^{\ti{c}}\!\oplus\!3\La^{\top}_{\R}V^{\ti{c}}$$
over $(\prt\Si)_i$ to a tubular neighborhood.
\end{enumerate}
The lines $\det(D_{+3})$ and $\det(\bar{D}_{+3})$ are then oriented via isomorphisms
\BE{Dsplit_e5}\begin{split} 
\det(D_{+3})&\approx  \det(D_{+3}^{\cl})\otimes
\bigotimes_{|c_i|=0}\!\!\big(\det(D_{+3;i})\!\otimes\!\La_{\R}^{\top}(V_{+3}^{\cl}|_{p_i})^*\big),\\
\det(\bar{D}_{+3})&\approx  \det(\bar{D}_{+3}^{\cl})\otimes
\bigotimes_{|c_i|=0}\!\!\big(\det(\bar{D}_{+3;i})\!\otimes\!
\La_{\R}^{\top}(\bar{V}_{+3}^{\cl}|_{p_i})^*\big).
\end{split}\EE
The line $\det(D_{+3}^{\cl})$ is oriented by deforming $D_{+3}^{\cl}$
to a complex Cauchy-Riemann operator on~$V^{\cl}$.
Thus,  the sign of the identity isomorphism
between the first factors on the right-hand sides in~\eref{Dsplit_e5} 
contributes
$$\ind_{\C}D_{+3}^{\cl}=\deg V_{+3}^{\cl}+\big(1\!-\!g(\Si^{\cl})\big)(n\!+\!3)
=2\mu\big(V,\ti{c}\big)+\big(1\!-\!g(\Si)\big)(n\!+\!3)$$
to~\eref{epsD_e}.
For each $i$ with $|c_i|\!=\!0$, $D_{+3;i}$ is a surjective operator and its kernel 
consists of constant functions with values in $\R^{n+3}\!\subset\!\C^{n+3}$; 
this determines an orientation on~$\det(D_{+3;i})$.
In particular, the identity map from $\det(D_{+3;i})$ to $\det(\bar{D}_{+3;i})$ is 
orientation-preserving for each~$i$ with $|c_i|\!=\!0$.
On the other hand, $\La_{\R}^{\top}V_{+3}^{\cl}|_{p_i}$ and
$\La_{\R}^{\top}\bar{V}_{+3}^{\cl}|_{p_i}$
are oriented by the complex orientations of $V_{+3}^{\cl}$ and $\bar{V}_{+3}^{\cl}$ 
and so the sign of the identity isomorphism
between the last factors in~\eref{Dsplit_e5} is $(-1)^{n+3}$ for
each~$i$ with $|c_i|\!=\!0$. This contributes $|c|_0(n\!+\!3)$ to~\eref{epsD_e}.\\

\noindent
By the proof of \cite[Proposition~3.2]{Ge}, $\det(D_1)$ is also oriented by first pinching 
a  circle near each boundary component $(\prt\Si')_i$ to obtain 
\begin{enumerate}[label=$\bullet$,leftmargin=*]
\item a real Cauchy-Riemann operator~${D}_1^{\cl}$ on a line bundle $V_1^{\cl}$ over 
$\Si^{\cl}$ such~that 
$$2\deg\,V_1^{\cl}=\mu(V',\ti{c}')-\sum_{|c_i|=0}\!\!
\ti{w}_1\big(V'^{\ti{c}},(\prt\Si')_i\big)
=\mu(V,\ti{c})-\sum_{|c_i|=0}\!\!\ti{w}_1\big(V^{\ti{c}},(\prt\Si)_i\big)\,,$$
\item for each boundary component $(\prt\Si')_i$,
a real Cauchy-Riemann operator~$D_{1;i}$ 
on a real bundle pair $(V_{1;i},\ti{c}_{1;i})$ over 
the disk with boundary~$(\prt\Si)_i$ and trivial boundary involution
such~that 
$$(V_{1;i},\ti{c}_{1;i})=(V_1,\ti{c}_1)\big|_{(\prt\Si)_i}\ 
\qquad\hbox{and}\qquad \mu(V_{1;i},\ti{c}_{1;i})\in\{0,1\}.$$
\end{enumerate}
The lines $\det(D_1)$ and $\det(\bar{D}_1)$ are then oriented via isomorphisms
\BE{Dsplit_e7}\begin{split} 
\det(D_1)&\approx  \det(D_1^{\cl})\otimes
\bigotimes_{|c_i|=0}\!\!\big(\det(D_{1;i})\!\otimes\!(V_1^{\cl}|_{p_i})^*\big),\\
\det(\bar{D}_1)&\approx  \det(\bar{D}_1^{\cl})\otimes
\bigotimes_{|c_i|=0}\!\!\big(\det(\bar{D}_{1;i})\!\otimes\!
(\bar{V}_1^{\cl}|_{p_i})^*\big).
\end{split}\EE
The line $\det(D_1^{\cl})$ is oriented by deforming $D_1^{\cl}$
to a complex Cauchy-Riemann operator on the line bundle~$V_1^{\cl}$.
Thus,  the sign of the identity isomorphism
between the first factors on the right-hand sides in~\eref{Dsplit_e7} 
contributes
$$\ind_{\C}D_1^{\cl}=\deg V_1^{\cl}+1\!-\!g(\Si^{\cl})
=\frac12\bigg(\mu(V,\ti{c})-\sum_{|c_i|=0}\!\!\ti{w}_1\big(V^{\ti{c}},(\prt\Si)_i\big)\bigg)
+1\!-\!g(\Si)$$
to~\eref{epsD_e}.
For each $i$ with $|c_i|\!=\!0$,
the identity isomorphism between the last factors in~\eref{Dsplit_e7} is 
orientation-reversing because the lines
$V_1^{\cl}|_{p_i}$ and $\bar{V}_1^{\cl}|_{p_i}$
are oriented by their complex orientations. 
This contributes $|c|_0$ to~\eref{epsD_e}.\\

\noindent
For each $i$ with $|c_i|\!=\!0$ and $\mu(V_{1;i},\ti{c}_{1;i})=0$,
i.e.~$\ti{w}_1(V^{\ti{c}},(\prt\Si)_i)\!=\!0$,
the operator $D_{1;i}$ is surjective and the evaluation homomorphism
$$\ker D_{1;i}\lra V_{1;i}^{\ti{c}_{1;i}}\big|_{x_i}
=\La^{\top}_{\R}V^{\ti{c}}\big|_{x_i}$$
is an isomorphism and induces the same orientations on $\det(D_{1;i})$
and $\det(\bar{D}_{1;i})$ from any orientation of the right-hand side.
For each $i$ with $|c_i|\!=\!0$ and $\mu(V_{1;i},\ti{c}_{1;i})=1$,
i.e.~$\ti{w}_1(V^{\ti{c}},(\prt\Si)_i)\!=\!1$,
the operator $D_{1;i}$ is surjective and the evaluation homomorphism
\BE{2ptisom_e}\ker D_{1;i}\lra
V_{1;i}^{\ti{c}_{1;i}}\big|_{x_i} \oplus
V_{1;i}^{\ti{c}_{1;i}}\big|_{x_i'}
=
\La^{\top}_{\R}V^{\ti{c}}\big|_{x_i}\oplus 
\La^{\top}_{\R}V^{\ti{c}}\big|_{x_i'}\EE
is an isomorphism for any $x_i'\!\in\!(\prt\Si)_i\!-\!x_i$.
This isomorphism again induces orientations on $\det(D_{1;i})$
and $\det(\bar{D}_{1;i})$ from orientations of the right-hand side.
However, in this case, the two orientations of the second component 
on the right-hand side of~\eref{2ptisom_e} are opposite, since they are obtained
by transporting the same orientation of the first component in the positive 
direction along $(\prt\Si)_i$ with respect to the orientations induced
by $\fJ$ for  $\det(D_{1;i})$ and $-\fJ$ for~$\det(\bar{D}_{1;i})$.
Thus, in either case, the isomorphism between the first components in
the $i$-th factors in~\eref{Dsplit_e7}   contributes
$\ti{w}_1(V^{\ti{c}},(\prt\Si)_i)$ to~\eref{epsD_e}.
This completes the proof.
\end{proof}

\noindent
In the case $|c|_1\!=\!0$, \eref{epsD_e} agrees with the pin$^+$  formula of
\cite[Proposition~2.12]{Sol}.
In turn, the latter specializes to 
\cite[Theorem~1.3]{FOOO9} whenever $\Si$ is the disk and $V^{\ti{c}}$ is 
orientable.

\subsection{Interchange of boundary components}
\label{bdintersign_subs}

\noindent
Let $(\Si,c)$ be an oriented sh-surface, $h\!:\Si\!\lra\!\Si$ be a diffeomorphism
such that $h\!\circ\!c=c\!\circ\!h$ on~$\prt\Si$, and
$|h|,\sgn_h\!\in\!\{0,1\}$ be as at the beginning of Section~\ref{DMsign_subs}.
We define $\sgn_h^0\!\in\!\{0,1\}$ to be the sign of the permutation 
induced by~$h$ on the~set of boundary components~$(\prt\Si)_i$ with $|c_i|\!=\!0$. 
For $k\!=\!0,1$, let
$$\cT_{h;k}=\bI\!\times\!\bigsqcup_{|c_i|=k}\!\!(\prt\Si)_i\big/\!\!\sim, 
\qquad (1,z)\sim\big(0,c^{|h|}(h(z))\big)~~\forall\,z\!\in\!
\bigsqcup_{|c_i|=k}\!\!(\prt\Si)_i\,,;$$
where $c^0\!\equiv\!\id$;
this is a union of tori if $|h|\!=\!0$ and 
of tori  and Klein bottles if $|h|\!=\!1$.
The boundary involutions~$c_i$ induce an involution~$c_{h;k}$ on~$\cT_{h;k}$,
which is trivial if $k\!=\!0$ and has no fixed points if $k\!=\!1$.\\

\noindent
If  $(\Si,c)$ and $h$ are as above, $(X,\phi)$ is a manifold with an involution, 
$\k\!\equiv\!(k_1,\ldots,k_{|c|_0+|c|_1})$ is a tuple of non-negative integers,
and $\b$ is as in~\eref{btuple_eq},
we will~call
$$(u_0,\x_0,\fJ_0),(u_1,\x_1,\fJ_1)\in \fB_{\k}(X,\b)^{\phi,c}\!\times\!\cJ_c$$
\textsf{$h$-related} if 
$$u_1=\phi^{|h|}\!\circ\!u_0\!\circ\!h, \qquad h(\x_1)=\x_0,
\quad\hbox{and}\quad \fJ_1=(-1)^{|h|}h^*\fJ_0.$$
If $\gm\!\equiv\!(u_t,\x_t,\fJ_t)$ is any path in $\fB_{\k}(X,\b)^{\phi,c}\!\times\!\cJ_c$ 
such that $(u_0,\x_0,\fJ_0)$ and $(u_1,\x_1,\fJ_1)$ are $h$-related and $k\!=\!0,1$, let
$[\gm]_{h;k}\!\in\!H_2(X;\Z_2)$ denote the push-forward of the 
fundamental homology class of~$\cT_{h;k}$ with $\Z_2$-coefficients
by the continuous~map 
$$\gm_{h;k}\!:\,\cT_{h;k}\lra X, \qquad [t,z]\lra u_t(z)~~\forall\,[t,z]\!\in\!\cT_{h;k}\,;$$
this map is $\Z_2$-equivariant.
%The map $\gm_{h;1}$ factors through a double cover and thus $[\gm]_{h;1}\!=\!0$.
Let $[\gm]_{h;1}^c\!\in\!H_2^{\phi}(X)$
denote the equivariant push-forward of the $\Z_2$-equivariant fundamental 
class $[\cT_{h;1}]_{\Z_2}^{c_{h;1}}\!\in\!H_2^{c_{h;1}}(\cT_{h;1})$ by~$\gm_{h;1}$.\\

\noindent
Let $(\Si,c)$, $h$, $(X,\phi)$, and $\gm$ be as above,
$(V,\ti\phi)\!\lra\!(X,\phi)$ be a real bundle pair, 
$D_0$ and $D_1$ be the real Cauchy-Riemann operators on the real bundles
pairs $u_0^*(V,\ti\phi)$ and $u_1^*(V,\ti\phi)$ induced
from a connection~$\na$ and a deformation~$A$ on~$V$
as in 
the last paragraph of Section~\ref{analys_subs}, and
$$(V^h,D_0^h)=\begin{cases}(V,D_0),&\hbox{if}~h~\hbox{is orientation-preserving};\\
(\bar{V},\bar{D}_0),&\hbox{if}~h~\hbox{is orientation-reversing}.
\end{cases}$$
If $(u_0,\fJ_0)$ and $(u_1,\fJ_1)$ are $h$-related,
\BE{huphi_e}h^{-1}\!:(\Si,(-1)^{|h|}\fJ_0)\lra (\Si,\fJ_1) \qquad\hbox{and}\qquad 
u_0^*\ti\phi^{|h|}\!:u_0^*(V^h,\ti\phi)\lra u_1^*(V,\ti\phi)\EE
are isomorphisms and thus induce an isomorphism
\BE{wtDisom_e}\wt\det(D_0^h)\equiv  \det(D_0^h)\otimes\hspace{-.4in}
\bigotimes_{\begin{subarray}{c}|c_i|=0\\ \lr{u_0^*w_1(V^{\ti\phi}),(\prt\Si)_i}=0\end{subarray}}
\hspace{-.4in}\big(\La_{\R}^{\top}V_{u_0(x_i)}^{\ti\phi}\big)^*
  \lra \wt\det(D_1)\equiv 
  \det(D_1)\otimes\hspace{-.4in}
\bigotimes_{\begin{subarray}{c}|c_i|=0\\ \lr{u_1^*w_1(V^{\ti\phi}),(\prt\Si)_i}=0\end{subarray}}
\hspace{-.4in}\big(\La_{\R}^{\top}V_{u_1(x_i)}^{\ti\phi}\big)^*,\EE
where $x_i$ is the first marked point on the boundary component $(\prt\Si)_i$.
The path~$\gm$ can be used to transfer an orienting collection for~$D_0$
to an orienting collection for~$D_1$,
which we will call \textsf{the $\gm$-transferred orienting collection}.
The next proposition describes the sign of the above isomorphism.
%We denote by 
%$\sgn^0_h(u_0)\!\in\!\Z_2$ the sign of the permutation
%on the~set
%$$\big\{(\prt\Si)_i\!:\,|c_i|\!=\!0,~\lr{u_0^*w_1(V^{\ti{\phi}}),(\prt\Si)_i}=1\big\}$$
%induced~by~$h$.
%We denote by 
%$\sgn^{\ti\phi}_h(u_0)\!\in\!\Z_2$ the sign of the permutation
%on the~set
%$$\big\{(\prt\Si)_i\!:\,|c_i|\!=\!0,~\lr{u_0^*w_1(V^{\ti{\phi}}),(\prt\Si)_i}=1\big\}$$
%induced~by~$h$.

\begin{prp}\label{transfsign_prp}
Suppose $(X,\phi)$ is a manifold with an involution, 
$(V,\ti\phi)\!\lra\!(X,\phi)$ is a real bundle pair,
$(\Si,c)$ is an oriented sh-surface,
$\k\!\equiv\!(k_1,\ldots,k_{|c|_0+|c|_1})$ is a tuple of non-negative integers,
$\b$ is as in~\eref{btuple_eq},
$\gm\!\equiv\!(u_t,\x_t,\fJ_t)$ is a path in $\fB_{\k}(X,\b)^{\phi,c}\!\times\!\cJ_c$,
and $h\!:\Si\!\lra\!\Si$ is a diffeomorphism such~that
$(u_0,\x_0,\fJ_0)$ and $(u_1,\x_1,\fJ_1)$ are $h$-related.
If $V^{\ti\phi}\!\lra\!X^{\phi}$ is not orientable, assume that
$k_i\!>\!0$ for every boundary component $(\prt\Si)_i$ such that $|c_i|\!=\!0$ 
and $\lr{w_1(V^{\ti\phi}),b_i}\!=\!0$.
Denote by $D_0$ and $D_1$ the real Cauchy-Riemann operators on the bundle
pairs $u_0^*(V,\ti\phi)$ and $u_1^*(V,\ti\phi)$ induced as in Section~\ref{analys_subs}
and choose an orienting collection for~$D_0$.
The sign of the isomorphism~\eref{wtDisom_e} is $(-1)^{\eps^{\ti\phi}_{\gm,h}}$, where
\BE{epsConjTrar_e}\begin{split}
\eps^{\ti\phi}_{\gm,h}=\blr{w_2(V^{\ti\phi}),[\gm]_{h;0}}
+\blr{w_2^{\La^{\top}_{\C}\ti\phi}(\La^{\top}_{\C}V),[\gm]_{h;1}^c}
+(\rk_{\C}V)\sgn_h+\sgn^0_h+2\Z\in\Z_2\,,
\end{split}\EE
if $\wt\det(\bar{D}_0)$  and $\wt\det(D_1)$ are oriented using the 
conjugate and $\gm$-transferred orienting collections, respectively. 
\end{prp}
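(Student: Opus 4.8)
\noindent
The plan is to adapt the argument that proves Proposition~\ref{relsign_prp}. Using the two isomorphisms in~\eref{huphi_e} we view $D_1$ as the pullback of $D_0^h$ by $h^{-1}$, twisted by the bundle isomorphism $u_0^*\ti\phi^{|h|}$; when $h$ is orientation-preserving this is $D_0$ with its given orienting collection, and when $h$ is orientation-reversing this is $\bar D_0$ with the conjugate orienting collection. We then decompose $\wt\det(D_0^h)$ and $\wt\det(D_1)$ into tensor products of determinant lines of real Cauchy--Riemann operators on disks and on closed Riemann surfaces, exactly as in the proof of Proposition~\ref{relsign_prp} via the pinching constructions of~\cite{Ge,GZ}, and follow the effect of $h^{-1}$, of $u_0^*\ti\phi^{|h|}$, and of the $\gm$-transferred (resp.\ conjugate) orienting collection through each step. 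Two preliminary reductions make this manageable. First, the sign of~\eref{wtDisom_e} is unchanged under homotopies of $(V,\ti\phi)$, of the connection and deformation term defining $D_0$, of $h$ through diffeomorphisms with $h\!\circ\!c\!=\!c\!\circ\!h$ on~$\prt\Si$, and of the path~$\gm$ rel its $h$-related endpoints, provided $[\gm]_{h;0}$ and $[\gm]_{h;1}^c$ are held fixed. Second, it is compatible with stabilizing $(V,\ti\phi)$ by a trivial real bundle pair $(X\!\times\!\C,\phi\!\times\!\fc_{\C})^{\oplus N}$, under which the right side of~\eref{epsConjTrar_e} is additive: $w_2(V^{\ti\phi})$ and $w_2^{\La^{\top}_{\C}\ti\phi}(\La^{\top}_{\C}V)$ are unchanged and the extra $N\sgn_h$ is matched by the new $\La^{\top}_{\R}$-factors produced in the pinching. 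It therefore suffices to control the dependence on~$\gm$ and to compute one constant term.

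\noindent
For the dependence on~$\gm$, let $\gm,\gm'$ be two paths with the same $h$-related endpoints. Concatenating $\gm$ with the reverse of~$\gm'$ and closing up by the $h$-identification of the endpoints produces, over each boundary component $(\prt\Si)_i$, a torus when $|c_i|\!=\!0$ or $|h|\!=\!0$ and a Klein bottle when $|c_i|\!=\!1$ and $|h|\!=\!1$; the induced real bundle pairs over these surfaces assemble into the families over~$\cT_{h;0}$ and~$\cT_{h;1}$ of the concatenated path. The ratio of the two signs is the monodromy, along the concatenation loop, of the twisted determinant line of these families transported from a basepoint. Over the standard-boundary torus families this monodromy is detected by $w_2(V^{\ti\phi})$, and over the crosscap families by the $\Z_2$-equivariant~$w_2$ of $\La^{\top}_{\C}V$: here Lemma~\ref{T2KB_lmm} together with the classification \cite[Proposition~4.2]{BHH} identifies the relevant obstruction over a torus or a Klein bottle with this equivariant class. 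Summing over the boundary components, the difference of the two signs is
$$\blr{w_2(V^{\ti\phi}),[\gm]_{h;0}\!+\![\gm']_{h;0}}+\blr{w_2^{\La^{\top}_{\C}\ti\phi}(\La^{\top}_{\C}V),[\gm]_{h;1}^c\!+\![\gm']_{h;1}^c},$$
which is exactly the difference of the two values predicted by~\eref{epsConjTrar_e}. Hence the sign is affine-linear in $([\gm]_{h;0},[\gm]_{h;1}^c)$ with linear part given by the first two terms of~\eref{epsConjTrar_e}, and only the constant term remains.

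\noindent
To obtain the constant term I would take $(V,\ti\phi)$ to be the trivial real bundle pair, $D_0$ the standard $\bp$-operator with its standard orienting collection, and~$\gm$ a path along which $u_t$ is constant -- so that $[\gm]_{h;0}\!=\!0$ and $[\gm]_{h;1}^c$ pairs trivially with every class -- and then run the pinching decomposition from the proof of Proposition~\ref{relsign_prp}. Comparing the two ordered tensor products requires reordering the factors by the permutation that~$h$ induces on the boundary components, which contributes a Koszul sign: each disk-operator determinant line sits in degree equal to the real index of that operator, while the one-dimensional real and complex factors $\La^{\top}_{\R}V_{p_i}$, $\La^{\top}_{\R}V_{+3}^{\cl}|_{p_i}$, $V_1^{\cl}|_{p_i}$ are of even real dimension and contribute nothing. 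Together with the sign of the automorphism that~$h$ induces on the Deligne--Mumford moduli of the pinched domain, which is governed by Proposition~\ref{DMsgn_prp}, these collect to the constant $(\rk_{\C}V)\sgn_h+\sgn^0_h$. When $h$ is orientation-reversing one must in addition pass from the $\gm$-transferred to the conjugate orienting collection on $\bar D_0$ on each crosscap; by the $|c_i|\!=\!1$ portion of the proof of Proposition~\ref{relsign_prp} this is precisely the change that makes the conjugation isomorphism~\eref{conjisom_e} compatible with~$h^{-1}$, which is why the conjugation sign~$\eps_D$ of~\eref{epsD_e} does not reappear in~\eref{epsConjTrar_e}.

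\noindent
The main obstacle is the sign bookkeeping in this last step: keeping track, through the iterated pinching of~\cite{Ge,GZ}, of which permutation sign ($\sgn_h$, $\sgn^0_h$, or the permutation of the crosscap points) multiplies which tensor factor, separating the genuine contributions from those already built into the ordering conventions behind the orientation of $\cM_{\Si}$ and behind Proposition~\ref{DMsgn_prp}, and verifying that the line-bundle operator on~$\La^{\top}_{\C}V'$ entering via \cite[Theorem~1.1]{Ge} contributes no permutation sign (two complex lines commute in the tensor product) while the disk operators obtained from $V'\!\oplus\!3\La^{\top}_{\C}V'$ do. The orientation-reversing case adds the further care of disentangling how~$h^{-1}$, the conjugation, and the crosscap trivializations interact; once this is done, the two cases combine into the single formula~\eref{epsConjTrar_e}.
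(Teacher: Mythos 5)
Your proposal takes a genuinely different route from the paper's: you argue the sign is affine-linear in $([\gm]_{h;0},[\gm]_{h;1}^c)$ via a closing-up/monodromy argument and then determine the constant in a model case, whereas the paper compares the $\gm$-transferred and $h$-pulled-back boundary trivializations directly boundary-cycle by boundary-cycle, reading off the $w_2$ contributions from the classification of bundles over tori and Klein bottles at the same moment it reads off the permutation signs from the ordering of the evaluation targets. Your affine-linearity step is sound and is essentially the same normalization of the $\cT$-families as in the paper's proof, so the issues are concentrated in the constant term; there are two concrete problems there.

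First, the claim that $\La^{\top}_{\R}V'_{p_i}$ and $\La^{\top}_{\R}(V_{+3}^{\cl}|_{p_i})$ are of even real dimension is false: for a complex fiber $W$, $\La^{\top}_{\R}W$ is the top real exterior power, a $1$-dimensional real line (only $V_1^{\cl}|_{p_i}$, being the fiber of a complex line bundle, is $\R^2$). So these factors cannot simply be dropped from the Koszul bookkeeping; whether they cancel against other contributions — the $\wt\det$-twist factors, the $\det D_{1;i}$ factors whose index depends on $\ti{w}_1(V^{\ti{c}},(\prt\Si)_i)$, and the $\det(D_1)^{\otimes(-3)}$ appearing in~\eref{Dsplit_e2} — is exactly where the delicate accounting lives, and your argument as stated gets the dimension count wrong before that accounting even begins. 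Second, Proposition~\ref{DMsgn_prp} plays no role in Proposition~\ref{transfsign_prp}: the latter concerns only the isomorphism~\eref{wtDisom_e} between twisted determinant lines of the operators $D_0^h$ and $D_1$, induced purely by $h^{-1}$ and $u_0^*\ti\phi^{|h|}$ as in~\eref{huphi_e}, and involves no Deligne--Mumford moduli. The Deligne--Mumford sign enters only later, in Corollaries~\ref{seorient_crl} and~\ref{etaorient_crl}, when determinant-line orientations are combined with orientations of $\cM_{\Si}$ to orient the full moduli space. Inserting it here would double-count and give the wrong constant. With these two corrections — tracking the genuine odd-dimensional $\La^{\top}$ factors and dropping the Deligne--Mumford input — your linearity-plus-constant strategy should reproduce~\eref{epsConjTrar_e}, but as written it does not.
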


\begin{proof}
Let $n\!=\!\rk_{\C}V$. 
The action of $h$ on the set of boundary components
of~$\Si$ preserves the~subsets
\BE{hactsplit_e}\big\{(\prt\Si)_i\!:\,|c_i|\!=\!0\big\} \qquad\hbox{and}\qquad
%\big\{(\prt\Si)_i\!:\,|c_i|\!=\!0,\,\lr{w_1(V^{\ti{\phi}}),b_i}\!\neq\!0\big\}, \quad
\big\{(\prt\Si)_i\!:\,|c_i|\!=\!1\big\}\EE
and thus breaks these subsets into cycles corresponding to topological components
of~$\cT_{h;0}$ and~$\cT_{h;1}$.
Since the maps~$h^{-1}$ and $u_0^*\ti\phi^{|h|}$ in~\eref{huphi_e}
are holomorphic and $\C$-linear, respectively, it is sufficient to compare
the orientation of the right-hand side of~\eref{wtDisom_e} 
with the orientation induced by the push-forward of 
the orienting collection for $D_0^h$ by $(h^{-1},u_0^*\ti\phi^{|h|})$.\\
%Since $h$ interchanges the additional factors in~\eref{wtDisom_e},
%its action on these factors contributes to~\eref{epsConjTrar_e} $n$ times
%the sign of the permutation  induced by~$h$ on the first set in~\eref{hactsplit_e}.

\noindent
For a subset $(\prt\Si)_{i_1},\ldots,(\prt\Si)_{i_k}$ of boundary components of~$\Si$
such that $|c_{i_1}|\!=\!0$ and
\BE{BndComp_e}
h\big((\prt\Si)_{i_j}\big)=(\prt\Si)_{i_{j+1}}\qquad\forall\,~j\!=\!1,\ldots,k\,,\EE
where $i_{k+1}\!\equiv\!i_1$, let 
\BE{cTcomp_e}  \cT=\bI\!\times\!\bigsqcup_{j=1}^k\!(\prt\Si)_{i_j}\big/\!\!\sim, 
\qquad (1,z)\sim\big(0,c^{|h|}(h(z))\big)~~\forall\,z\!\in\!
\bigsqcup_{j=1}^k\!(\prt\Si)_{i_j};\EE
this is a connected component of $\cT_{h;0}$.
The maps~$h^{-1}$ and $u_0^*\ti\phi^{|h|}$ in~\eref{huphi_e} induce trivializations~of
\BE{LaRcomp_e2} 
\big(u_1^*V^{\ti\phi}\!\oplus\!3\La_{\R}^{\top}u_1^*V^{\ti\phi}\big)\big|_{(\prt\Si)_{i_j}}
=h^*\big(u_0^*V^{\ti\phi}\!\oplus\!3\La_{\R}^{\top}u_0^*V^{\ti\phi}\big)\big|_{(\prt\Si)_{i_j}}
%=\big(u_0^*V^{\ti\phi}\!\oplus\!3\La_{\R}^{\top}u_0^*V^{\ti\phi}\big)\big|_{(\prt\Si)_{i_{j+1}}}
\EE
from the original orienting collection for~$D_0$;
if $|h|\!=\!1$, this is the same trivialization as the one induced from
the conjugate trivialization for~$D_0^h\!=\!\bar{D}_0$. 
% obtained from the conjugate trivialization via~$h$ and $\ti\phi$ 
% agrees with the original trivialization of the right-hand side of~\eref{LaRcomp_e2} 
% for each $j\!=\!1,\ldots,k$.
Since $\pi_1(\SO(n\!+\!3))\!=\!\Z_2$,
there are two homotopy classes of such trivializations.
If the $\gm$-transferred trivialization of the left-hand side in~\eref{LaRcomp_e2} 
agrees with  the original trivialization of the right-hand side up to homotopy, 
let $\eps_j\!=\!0$;
otherwise, let $\eps_j\!=\!1$.
The oriented vector bundle 
$\gm_{h;0}^*(V^{\ti\phi}\!\oplus\!3\La_{\R}^{\top}V^{\ti\phi})|_{\cT}$ 
is then isomorphic~to
$$\bI\!\times\!\bigsqcup_{j=1}^k\!(\prt\Si)_{i_j}\!\times\!\R^{n+3}\big/\!\!\sim, 
\qquad (1,z,v)\sim\big(0,h(z),g(z)v\big)~~
\forall\,z\!\in\!\bigsqcup_{j=1}^k\!(\prt\Si)_{i_j}\,,~
v\!\in\!\R^{n+3}\,,$$
for some $g\!:\bigsqcup_{j=1}^k\!(\prt\Si)_{i_j}\!\lra\!\SO(n\!+\!3)$ such that 
$g|_{(\prt\Si)_{i_j}}$ is homotopic to the constant map~$\bI_{n+3}$ if and only~if
$\eps_j\!=\!0$.
Since the oriented vector bundles over $\cT$ (of rank at least~3) are classified by 
the value of their~$w_2$ in $H^2(\cT;\Z_2)\!\approx\Z_2$, 
\BE{eps1bsum_e}\eps_1+\ldots+\eps_k
=\blr{\gm_{h;0}^*w_2(V^{\ti\phi}\!\oplus\!3\La_{\R}^{\top}V^{\ti\phi}),[\cT]_{\Z_2}}
=\blr{\gm_{h;0}^*w_2(V^{\ti\phi}),[\cT]_{\Z_2}}.\EE
By \cite[Proposition~5.4]{GZ}, changing the choice~(OC1) for a given boundary 
component $(\prt\Si)_i$ with $|c_i|\!=\!0$ changes the orientation of 
the determinant line bundle.
Since the two choices~(OC1) for~$D_1$
obtained from the trivializing collection for~$u_0$ via~$h$
and $\gm$-transfer agree if and only~if $\eps_j\!=\!0$, 
the possible differences in these choices for $(\prt\Si)_{i_1},\ldots,(\prt\Si)_{i_k}$
contribute~\eref{eps1bsum_e} to~\eref{epsConjTrar_e}.
Summing over all cycles of the action of~$h$ on the set $\{(\prt\Si)_i\!:|c_i|\!=\!0\}$
gives the first term on the right-hand side of~\eref{epsConjTrar_e}.
However, $h$ interchanges the order of the $(n\!+\!3)$-dimensional target spaces of
the evaluation isomorphisms orienting $\det(D_{+3;i})$ as below~\eref{Dsplit_e5}.
This contributes $(n\!+\!3)$ times the sign of the permutation on 
$\{(\prt\Si)_i\!:|c_i|\!=\!0\}$ induced by~$h$.
%Along with the sign in the first paragraph, this contributes $n$ times
%the sign of this permutation plus~$\sgn^{\ti\phi}_h(u_0)$.
\\

\noindent
For a subset $(\prt\Si)_{i_1},\ldots,(\prt\Si)_{i_k}$ of boundary components of~$\Si$
such that $|c_{i_1}|\!=\!1$ and \eref{BndComp_e} holds, let~$\cT$ be as in~\eref{cTcomp_e}. 
The maps~$h^{-1}$ and $u_0^*\ti\phi^{|h|}$ in~\eref{huphi_e} induce trivializations~of
$u_1^*(V,\ti\phi)\big|_{(\prt\Si)_i}$ from the orienting collection for~$D_0^h$;
this is the same trivialization as the trivialization~of 
\BE{LaRcomp_e3}  h^*c^{|h|*}u_0^*(V,\ti\phi)\big|_{(\prt\Si)_i}
=h^*u_0^*\phi^{|h|*}(V,\ti\phi)\big|_{(\prt\Si)_i}
=u_1^*(V,\ti\phi)\big|_{(\prt\Si)_i} \EE
induced by the pull-back from the orienting collection for~$D_0$.
If this trivialization agrees with
the $\gm$-transferred trivialization  up to homotopy, 
let $\eps_j\!=\!0$; otherwise, let $\eps_j\!=\!1$.
The real bundle $\gm_{h;1}^*(V,\ti\phi)|_{\cT_{h;1}}$ is then isomorphic~to 
$$\bI\!\times\!\bigsqcup_{j=1}^k\!(\prt\Si)_{i_j}\!\times\!\C^n\big/\!\!\sim, 
\qquad (1,z,v)\sim\big(0,c^{|h|}(h(z)),g(z)v\big)~~
\forall\,z\!\in\!\bigsqcup_{j=1}^k\!(\prt\Si)_{i_j}\,,~
v\!\in\!\C^n\,,$$
for some $g\!:\bigsqcup_{j=1}^k\!(\prt\Si)_{i_j}\!\lra\!\GL_n\C$ with  
$g(c(z))\!=\!\ov{g(z)}$ such that $g|_{(\prt\Si)_{i_j}}$ is homotopic,
in the space of such maps, to the constant map~$\bI_n$ if and only~if
$\eps_j\!=\!0$.
By \cite[Lemma~2.3]{Teh}, $\La^{\top}_{\C}$ induces a bijection between sets of such
homotopy classes for rank~$n$ and rank~1 real bundle pairs.
By Lemma~\ref{T2KB_lmm}, rank~1 real bundle pairs over~$\cT$
are classified by the evaluation of their $\Z_2$-equivariant $w_2$-class 
on~$[\cT]^{c_{h;1}}$.
Thus,
\BE{eps2sum_e}\eps_1\!+\!\ldots\!+\!\eps_k=
\blr{\gm_{h;1}^*w_2^{\La^{\top}_{\C}\ti\phi}(\La^{\top}_{\C}V),[\cT]^{c_{h;1}}}\,.\EE
By \cite[Proposition~5.4]{GZ}, changing the choice~(OC2) for a given boundary 
component $(\prt\Si)_i$ with $|c_i|\!=\!1$ changes the orientation of 
the determinant line bundle.
Since the two choices~(OC2) for~$D_1$ obtained from the trivializing collection for~$u_0$ via~$h$
and $\gm$-transfer agree if and only~if $\eps_j\!=\!0$, 
the possible differences in these choices for $(\prt\Si)_{i_1},\ldots,(\prt\Si)_{i_k}$
contribute~\eref{eps2sum_e} to~\eref{epsConjTrar_e}.
Summing over all cycles of the action of~$h$ on the set $\{(\prt\Si)_i\!:|c_i|\!=\!1\}$
gives the second term on the right-hand side of~\eref{epsConjTrar_e}.
However, $h$ interchanges the order of the $n$-dimensional target spaces for 
the evaluation isomorphisms orienting $\det(D_i)$ as below~\eref{Dsplit_e1}.
This contributes $n$~times the sign of the permutation on $\{(\prt\Si)_i\!:|c_i|\!=\!1\}$
induced by~$h$.
\end{proof}

\subsection{Some consequences}
\label{relsgncrl_subs}

\noindent
We now combine the conclusions of Propositions~\ref{relsign_prp} and~\ref{transfsign_prp}
with some topological assumptions on~$(X,\phi)$ related to 
the orientability of moduli spaces of real maps to~$(X,\phi)$.
Corollaries~\ref{transfsign_crl0} and~\ref{transfsign_crl} below encode the orientability
of $\wt\det(D)$ over a loop in the moduli space of $J$-holomorphic maps.
They are used in Section~\ref{MSreal_subs} to establish more general
versions of Theorems~\ref{g0orient_thm}, \ref{g1orient_thm}, and~\ref{seorient_thm}.

\begin{rmk}\label{Vorient_rmk}
Corollaries~\ref{transfsign_crl0} and~\ref{transfsign_crl} also determine the signs of
the corresponding isomorphisms on $\det(D)$ whenever $V^{\ti\phi}\!\lra\!X^{\phi}$
is orientable: the difference with $\wt\det(D)$ is given by  $\sgn_h^0$ in the
case of Corollary~\ref{transfsign_crl0}
and by $\sgn_h$ in the case of  Corollary~\ref{transfsign_crl}.
\end{rmk}

\begin{crl}\label{transfsign_crl0}
Let $(X,\phi)$ be a manifold with an involution,
$(V,\ti\phi)\!\lra\!(X,\phi)$ be a real bundle pair,
and $(\Si,c)$ be a genus~$g$ oriented sh-surface so~that
$$w_2(V^{\ti\phi})\in H^2(X^{\phi};\Z_2)
\qquad\hbox{and}\qquad 
w_2^{\La^{\top}_{\C}\ti\phi}(\La^{\top}_{\C}V)\in H^2_{\phi}(X)$$
are spin classes in the sense of Definition~\ref{Klein_dfn}
if $|c|_0\!\neq\!0$ and $|c|_1\!\neq\!0$, respectively.
Suppose \hbox{$\k\!\equiv\!(k_1,\ldots,k_{|c|_0+|c|_1})$} 
is a tuple of non-negative integers, $\b$ is as in~\eref{btuple_eq},
$\gm\!\equiv\!(u_t,\x_t,\fJ_t)$ is a path in $\fB_{\k}(X,\b)^{\phi,c}\!\times\!\cJ_c$,
and $h\!:\Si\!\lra\!\Si$ is a diffeomorphism such~that
$(u_0,\x_0,\fJ_0)$ and $(u_1,\x_1,\fJ_1)$ are $h$-related.
If $V^{\ti\phi}\!\lra\!X^{\phi}$ is not orientable, assume that
$k_i\!>\!0$ for every boundary component $(\prt\Si)_i$ such that $|c_i|\!=\!0$ 
and $\lr{w_1(V^{\ti\phi}),b_i}\!=\!0$.
Denote by $D_0$ and $D_1$ the real Cauchy-Riemann operators on the bundle
pairs $u_0^*(V,\ti\phi)$ and $u_1^*(V,\ti\phi)$ induced as in Section~\ref{analys_subs},
choose an orienting collection for~$D_0$,
and take the $\gm$-transferred orienting collection for~$D_1$.
\begin{enumerate}[label=(\arabic*),leftmargin=*]
\item If $h$ is orientation-preserving, the sign of the isomorphism~\eref{wtDisom_e} 
is $(-1)^{(\rk_{\C}V)\sgn_h+\sgn_h^0}$. 

\item If $h$ is orientation-reversing, the sign of the isomorphism~\eref{wtDisom_e} 
composed with the isomorphism~\eref{conjisom_e} with $D\!=\!D_0$
is $(-1)^{\ti\eps^{\ti\phi}_{\gm,h}}$, where 
\BE{flipsign_e0}\begin{split}
\ti\eps^{\ti\phi}_{\gm,h}=
\frac{1}{2}\bigg(\!\!\lr{c_1(V),B}
+\sum_{i=1}^{|c|_0}\ti{w}_1\big(V^{\ti\phi},b_i\big)\!\!\bigg)
+\sum_{i=1}^{|c|_0}\flr{w_2(V^{\ti\phi}),b_i}
+\!\!\!\sum_{i=|c|_0+1}^{|c|_0+|c|_1}\!\!\!\!
\flr{w_2^{\La^{\top}_{\C}\ti\phi}(\La^{\top}_{\C}V),b_i}&\\
+\big(1\!-\!g+|c|_0+|c|_1+\sgn_h\big)\rk_{\C}V +\sgn_h^0+2\Z\in\Z_2&,
\end{split}\EE
\end{enumerate}
where $\flr{w,b_i}$ is as in~\eref{Torspindfn_e} if $b_i$ is a two-torsion class
and~0 otherwise.
\end{crl}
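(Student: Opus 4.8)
The plan is to deduce both parts directly from Propositions~\ref{relsign_prp} and~\ref{transfsign_prp} by evaluating the abstract index-theoretic quantities $\eps^{\ti\phi}_{\gm,h}$ and $\eps_{D}$ on the specific classes coming from the pullbacks $u_0^*(V,\ti\phi)$ and $u_1^*(V,\ti\phi)$, using the spin hypotheses to kill the path-dependent terms. For part~(1), when $h$ is orientation-preserving we have $D_0^h=D_0$ and $\bar D_0$ does not enter, so the sign of~\eref{wtDisom_e} is exactly $(-1)^{\eps^{\ti\phi}_{\gm,h}}$ with $\eps^{\ti\phi}_{\gm,h}$ as in~\eref{epsConjTrar_e}. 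First I would observe that since $w_2(V^{\ti\phi})$ and $w_2^{\La^{\top}_{\C}\ti\phi}(\La^{\top}_{\C}V)$ are spin classes, they pull back to zero on the closed oriented surfaces $[\gm]_{h;0}$ (a union of tori) and — via \eref{equivback_e} and the fact that a spin class is in particular atorical together with the vanishing on the relevant $\Z_2$-quotients — on $[\gm]_{h;1}^c$; more precisely, $\cT_{h;1}$ is a union of tori and Klein bottles, so $\gm_{h;1}$ restricted to each component factors as in Section~\ref{topol_subs} and the pairing $\blr{w_2^{\La^{\top}_{\C}\ti\phi}(\La^{\top}_{\C}V),[\cT]^{c_{h;1}}}$ from~\eref{eps2sum_e} vanishes because of the Klein-boundary/spin vanishing. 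Hence the first two terms of~\eref{epsConjTrar_e} drop out, leaving $(\rk_{\C}V)\sgn_h+\sgn^0_h$, which is the claimed exponent.

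For part~(2), when $h$ is orientation-reversing, the isomorphism~\eref{wtDisom_e} goes from $\wt\det(\bar D_0)$ to $\wt\det(D_1)$, and composing with the conjugation isomorphism~\eref{conjisom_e} for $D=D_0$ produces an isomorphism $\wt\det(D_0)\to\wt\det(D_1)$. Its sign is therefore the sum of the two contributions: the relative sign of the conjugation on~$D_0$, namely $(-1)^{\eps_{D_0}}$ from Proposition~\ref{relsign_prp}, plus $\eps^{\ti\phi}_{\gm,h}$ from Proposition~\ref{transfsign_prp}. Next I would evaluate $\eps_{D_0}$ as in~\eref{epsD_e} for the pullback operator on $u_0^*(V,\ti\phi)$: the Maslov index satisfies $\mu(u_0^*V,u_0^*\ti\phi)=\lr{c_1(\hat{u_0^*V}),[\hat\Si]}=\lr{c_1(V),B}$ by the definition~\eref{Maslovdfn_e} and the degree condition $\hat u_{0*}[\hat\Si]=B$; the boundary terms $\ti{w}_1(V^{\ti\phi},(\prt\Si)_i)$ become $\ti{w}_1(V^{\ti\phi},b_i)$ since $u_0$ restricted to $(\prt\Si)_i$ represents $b_i$; and the rank and genus factors are unchanged. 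This reproduces the first summand of~\eref{flipsign_e0} together with the $(1-g+|c|_0+|c|_1)\rk_{\C}V$ piece. Then I would evaluate the two terms of $\eps^{\ti\phi}_{\gm,h}$ in~\eref{epsConjTrar_e}: using Lemma-Definition~\ref{spinpair_dfn}, the spin (resp.\ atorical) hypotheses let me replace $\blr{w_2(V^{\ti\phi}),[\gm]_{h;0}}$ and $\blr{w_2^{\La^{\top}_{\C}\ti\phi}(\La^{\top}_{\C}V),[\gm]_{h;1}^c}$ by the well-defined pairings $\sum_{i=1}^{|c|_0}\flr{w_2(V^{\ti\phi}),b_i}$ and $\sum_{i=|c|_0+1}^{|c|_0+|c|_1}\flr{w_2^{\La^{\top}_{\C}\ti\phi}(\La^{\top}_{\C}V),b_i}$, since each torus/Klein component of $\cT_{h;0}$, $\cT_{h;1}$ has boundary circles mapping via $u_0$ to representatives of the classes $b_i$ in the cycle, and the spin condition makes the choice of bounding surface irrelevant (each $b_i$ with $|c_i|=0$ is a two-torsion class and each $b_i$ with $|c_i|=1$ is a Klein boundary). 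Finally the leftover $(\rk_{\C}V)\sgn_h+\sgn^0_h$ from~\eref{epsConjTrar_e} adds to the formula, giving~\eref{flipsign_e0}.

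The main subtlety I expect is the bookkeeping in the orientation-reversing case: one must check carefully that the orienting collection fed into Proposition~\ref{transfsign_prp} (the conjugate collection on $D_0^h=\bar D_0$) is exactly the one Proposition~\ref{relsign_prp} compares against, so that the two signs genuinely add without an extra discrepancy, and that the $\wt\det$ twisting factors $(\La^{\top}_{\R}V^{\ti\phi}_{\cdot})^*$ match up under the composition. I would resolve this by tracing through the definitions: Proposition~\ref{relsign_prp} compares the orientation of $\wt\det(D_0)$ induced by an orienting collection with that of $\wt\det(\bar D_0)$ induced by its conjugate, and Proposition~\ref{transfsign_prp} compares the orientation of $\wt\det(\bar D_0)$ induced by that same conjugate collection with the $\gm$-transferred orientation on $\wt\det(D_1)$; composing the two isomorphisms therefore compares the original orientation of $\wt\det(D_0)$ with the $\gm$-transferred one on $\wt\det(D_1)$, and the total sign is the product, i.e.\ the sum of exponents, as asserted. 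The closing clause about $\flr{w,b_i}$ being $0$ when $b_i$ is not two-torsion is exactly the convention needed so that the $\flr{\cdot}$-terms are defined for every summand.
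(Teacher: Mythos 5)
Your proposal is correct and follows the same route as the paper's proof: specialize Propositions~\ref{relsign_prp} and~\ref{transfsign_prp} to the pullback operators $D_0$, $D_1$; for~(1) use the spin hypotheses to kill the two pairing terms in~\eref{epsConjTrar_e} because $[\gm]_{h;0}$ and $[\gm]_{h;1}^c$ are represented by tori when $h$ is orientation-preserving; for~(2) add the conjugation sign $\eps_{D_0}$ from~\eref{epsD_e} (with $\mu(u_0^*V,u_0^*\ti\phi)=\lr{c_1(V),B}$ by~\eref{Maslovdfn_e}) to $\eps^{\ti\phi}_{\gm,h}$, and convert the pairings against $[\gm]_{h;0}$ and $[\gm]_{h;1}^c$ into the $\flr{\cdot,\cdot}$ sums.

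One small imprecision worth tightening: you assert that each $b_i$ with $|c_i|=0$ is two-torsion and each with $|c_i|=1$ is a Klein boundary. That is not true componentwise; it holds only for boundary components lying in an odd-length cycle of the permutation induced by $h$. The paper handles this via a short case split on cycle parity: for an even-length cycle, $\cT$ is a torus (so the pairing with the spin class vanishes) and, since $b_{i_{j+1}}=-b_{i_j}$, none of the $b_{i_j}$ in that cycle is two-torsion unless it is already zero, so the corresponding $\flr{\cdot,b_{i_j}}$ terms vanish by the stated convention; for an odd-length cycle, all the $b_{i_j}$ coincide and are two-torsion, $\cT$ is a Klein bottle, and \eref{Torspindfn_e} identifies the pairing with $k\cdot\flr{w,b_{i_1}}=\flr{w,b_{i_1}}\pmod 2$. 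This parity analysis is exactly what justifies the identity $\lr{w_2(V^{\ti\phi}),[\gm]_{h;0}}=\sum_{i}\flr{w_2(V^{\ti\phi}),b_i}$ (and its $|c|_1$ analogue), which you invoke but don't quite derive. Your reconciliation of the orienting collections in the composition $\wt\det(D_0)\to\wt\det(\bar D_0)\to\wt\det(D_1)$ is exactly the right check to make, and is correct.
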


\begin{proof}
(1) Suppose $h$ is orientation-preserving.
Since $[\gm]_{h;0}$ and $[\gm]_{h;1}^c$ are sums of classes represented by tori,
while $w_2(V^{\ti\phi})$ and $w_2^{\La^{\top}_{\C}\ti\phi}(\La^{\top}_{\C}V)$ are
spin classes
$$\lr{w_2(V^{\ti\phi}),[\gm]_{h;0}}=0 \qquad\hbox{and}\qquad 
\blr{w_2^{\La^{\top}_{\C}\ti\phi}(\La^{\top}_{\C}V),[\gm]_{h;1}^c}=0.$$
The first claim now follows from~\eref{epsConjTrar_e2}.\\

\noindent
(2) Suppose $h$ is orientation-reversing.
The class $[\gm]_{h;0}$ is the sum of the classes represented~by
$$\gm_{h;0}\!:\cT\equiv\bI\!\times\!\bigsqcup_{j=1}^k\!(\prt\Si)_{i_j}\big/\!\!\sim\lra X,
\qquad (1,z)\sim\big(0,c(h(z))\big)~~\forall\,z\!\in\!\bigsqcup_{j=1}^k\!(\prt\Si)_{i_j},$$
with $\cT$ as in~\eref{cTcomp_e}. 
Since $h$ is orientation-reversing,  $b_{i_{j+1}}\!=\!-b_{i_j}$ 
for every $j\!=\!1,\ldots,k$; in particular, $b_{i_1}\!=\!(-1)^kb_{i_1}$.
If $k$ is even, then $\cT$ is a torus and~so
$$\lr{w_2(V^{\ti\phi}),\gm_{h;0*}[\cT]_{\Z_2}}=0=\sum_{j=1}^k\flr{w_2(V^{\ti\phi}),b_{i_j}}\,.$$
If $k$ is odd, then $b_{i_j}\!=\!b_{i_1}$ is a two-torsion class for all $j\!=\!1,\ldots,k$
and $\cT$ is a Klein bottle. Thus,
$$\lr{w_2(V^{\ti\phi}),\gm_{h;0*}[\cT]_{\Z_2}}=\flr{w_2(V^{\ti\phi}),b_{i_1}}
=\sum_{j=1}^k\flr{w_2(V^{\ti\phi}),b_{i_j}}\,;$$
the first equality holds by~\eref{Torspindfn_e}.
Putting these two cases together, we find~that 
$$\lr{w_2(V^{\ti\phi}),[\gm]_{h;0}}=\sum_{i=1}^{|c|_0}\flr{w_2(V^{\ti\phi}),b_i}\,.$$
By the same argument, 
$$\blr{w_2^{\La^{\top}_{\C}\ti\phi}(\La^{\top}_{\C}V),[\gm]_{h;1}^c}
=\!\sum_{i=|c|_0+1}^{|c|_0+|c|_1}\!\!\!\!
\flr{w_2^{\La^{\top}_{\C}\ti\phi}(\La^{\top}_{\C}V),b_i}\,.$$
Combining the last two equations with~\eref{epsConjTrar_e2}, we find that 
$$\ti\eps^{\ti\phi}_{\gm,h}
=\eps^{\ti\phi}_{\gm,h}+\eps_{D_0}
=\sum_{i=1}^{|c|_0}\flr{w_2(V^{\ti\phi}),b_i}+
\!\sum_{i=|c|_0+1}^{|c|_0+|c|_1}\!\!\!\!
\flr{w_2^{\La^{\top}_{\C}\ti\phi}(\La^{\top}_{\C}V),b_i}
+(\rk_{\C}V)\sgn_h+\sgn_h^0+\eps_{D_0}\,,$$
where $\eps_{D_0}$ is the relative sign of the conjugation on~$D_0$ computed
by~\eref{epsD_e}.
Along with~\eref{Maslovdfn_e}, this establishes the second claim.
\end{proof}

\begin{crl}\label{transfsign_crl}
Let $(X,\phi)$ be a manifold with an involution and
$(V,\ti\phi)\!\lra\!(X,\phi)$ be a real bundle pair such~that
$$w_2(V^{\ti\phi})=w+\vp|_{X^{\phi}}$$ 
for a spin class $w\!\in\!H^2(X^{\phi};\Z_2)$ and some $\vp\!\in\!H^2(X;\Z_2)$. 
Suppose
$\Si$ is a genus~$g$ oriented surface with $m$ ordered boundary components,
$\k\!\equiv\!(k_1,\ldots,k_m)$ is a tuple of non-negative integers,
$\b$ is as in~\eref{btupldfn_e},
$\gm\!\equiv\!(u_t,\x_t,\fJ_t)$ is a path in 
$\fB_{\k}(X,\b)^{\phi,\id_{\prt\Si}}\!\times\!\cJ_{\Si}$,
and \hbox{$h\!:\Si\!\lra\!\Si$} is a diffeomorphism such~that
$(u_0,\x_0,\fJ_0)$ and $(u_1,\x_1,\fJ_1)$ are $h$-related.
If $V^{\ti\phi}\!\lra\!X^{\phi}$ is not orientable, assume that
$k_i\!>\!0$ for every boundary component $(\prt\Si)_i$ such that
$\lr{w_1(V^{\ti\phi}),b_i}\!=\!0$.
Denote by $D_0$ and $D_1$ the real Cauchy-Riemann operators on the bundle
pairs $u_0^*(V,\ti\phi)$ and $u_1^*(V,\ti\phi)$ induced as in Section~\ref{analys_subs},
choose an orienting collection for~$D_0$, and take the 
$\gm$-transferred orienting collection for~$D_1$.
\begin{enumerate}[label=(\arabic*),leftmargin=*]
\item If $h$ is orientation-preserving, the sign of the isomorphism~\eref{wtDisom_e} 
is $(-1)^{(\rk_{\C}V)\sgn_h+\sgn_h}$. 

\item If $h$ is orientation-reversing, the sign of the isomorphism~\eref{wtDisom_e} 
composed with the isomorphism~\eref{conjisom_e} with $D\!=\!D_0$
is $(-1)^{\ti\eps^{\ti\phi}_{\gm,h}}$, where 
\BE{flipsign_e}\begin{split}
\ti\eps^{\ti\phi}_{\gm,h}=
\frac{1}{2}\bigg(\lr{c_1(V),\fd(\be)}
+\sum_{i=1}^m\ti{w}_1\big(V^{\ti\phi},b_i\big)\bigg)
+\blr{\vp,\fd(\be)}+\sum_{i=1}^m\flr{w,b_i}&\\
+\big(1\!-\!g+m+\sgn_h\big)\rk_{\C}V +\sgn_h&+2\Z\in\Z_2,
\end{split}\EE
\end{enumerate}
where $\flr{w,b_i}$ is as in~\eref{Torspindfn_e} if $b_i$ is a two-torsion class
and~0 otherwise.
\end{crl}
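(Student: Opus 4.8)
The plan is to deduce this from Proposition~\ref{transfsign_prp} (and, for the orientation-reversing case, Proposition~\ref{relsign_prp}) specialized to $(\Si,c)\!=\!(\Si,\id_{\prt\Si})$; the only new point beyond Corollary~\ref{transfsign_crl0} is that the global spin hypothesis on $w_2(V^{\ti\phi})$ there is replaced by $w_2(V^{\ti\phi})\!=\!w+\vp|_{X^\phi}$. Since $|c_i|\!=\!0$ for all $i$, here $|c|_0\!=\!m$, $|c|_1\!=\!0$, $\cT_{h;1}\!=\!\eset$ and $\sgn_h^0\!=\!\sgn_h$, and the hypotheses of the corollary are exactly those of Proposition~\ref{transfsign_prp} in this case; so that proposition gives that the sign of~\eref{wtDisom_e} is $(-1)^{\eps^{\ti\phi}_{\gm,h}}$ with $\eps^{\ti\phi}_{\gm,h}\!=\!\lr{w_2(V^{\ti\phi}),[\gm]_{h;0}}+(\rk_\C V)\sgn_h+\sgn_h+2\Z$. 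Writing $w_2(V^{\ti\phi})\!=\!w+\vp|_{X^\phi}$, I would then evaluate $\lr{w,[\gm]_{h;0}}$ by applying to $w$ the computation in the proof of Corollary~\ref{transfsign_crl0}(2) (which uses only that $w$ is spin), and separately evaluate $\lr{\vp|_{X^\phi},[\gm]_{h;0}}$, which is the genuinely new ingredient.

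For the latter I would pass to doubles. Let $(\hat\Si,\hat c)$ be the double of $(\Si,\id_{\prt\Si})$, so $\hat\Si$ is a closed oriented surface and $\prt\Si\!=\!\hat\Si^{\hat c}$ is a separating curve splitting $\hat\Si$ into its two halves $\Si^\pm$. The diffeomorphism $h$ induces an orientation-preserving diffeomorphism $\hat h$ of $\hat\Si$, given by $[\pm,z]\!\mapsto\![\pm,h(z)]$ if $h$ is orientation-preserving and by $[\pm,z]\!\mapsto\![\mp,h(z)]$ if $h$ is orientation-reversing, and each $u_t$ doubles to $\hat{u}_t\!:\!\hat\Si\!\lra\!X$ with $\hat{u}_{0*}[\hat\Si]_\Z\!=\!\fd(\be)$. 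A direct check shows that $h$-relatedness of $(u_0,\fJ_0)$ and $(u_1,\fJ_1)$ amounts to $\hat{u}_1\!=\!\hat{u}_0\!\circ\!\hat h$, so the family $(\hat{u}_t)_{t\in\bI}$ descends to a continuous map $G\!:\!\hat W\!\lra\!X$ on the closed oriented mapping torus $\hat W\!=\!\bI\!\times\!\hat\Si/((1,x)\!\sim\!(0,\hat h(x)))$. The fibrewise involutions $\hat c$ assemble to an involution $\hat c_W$ on $\hat W$ with $\hat W^{\hat c_W}\!=\!\cT_{h;0}$, and $G$ restricts to $\gm_{h;0}$ on $\cT_{h;0}$ and to $\hat{u}_0$ on the fibre $\hat\Si_0\!=\!\{0\}\!\times\!\hat\Si$. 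The crux is the identity $j_*[\cT_{h;0}]_{\Z_2}\!=\!|h|\,[\hat\Si_0]_{\Z_2}$ in $H_2(\hat W;\Z_2)$, where $j\!:\!\cT_{h;0}\!\hookrightarrow\!\hat W$: by the Wang sequence of $\hat W\!\lra\!S^1$ and $\hat h_*[\hat\Si]_{\Z_2}\!=\![\hat\Si]_{\Z_2}$ the left side is $0$ or $[\hat\Si_0]_{\Z_2}$, and computing its $\Z_2$-intersection number with a section of $\hat W\!\lra\!S^1$ reduces to counting modulo~$2$ how often an arc in $\hat\Si$ from a point $p$ to $\hat h^{-1}(p)$ meets the separating curve $\prt\Si$, i.e.~to whether $\hat h^{-1}$ preserves or swaps $\Si^\pm$ --- exactly the dichotomy $|h|\!=\!0$ versus $|h|\!=\!1$. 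Pushing forward by $G$ then gives $\lr{\vp|_{X^\phi},[\gm]_{h;0}}\!=\!|h|\lr{\vp,\hat{u}_{0*}[\hat\Si]_{\Z_2}}\!=\!|h|\lr{\vp,\fd(\be)}$ in $\Z_2$, for every $\vp\!\in\!H^2(X;\Z_2)$.

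With this the two cases are routine. For orientation-preserving $h$ the $\vp$-term vanishes (as $|h|\!=\!0$) and the $w$-term vanishes since $w$ is spin and $[\gm]_{h;0}$ is a sum of classes of maps from tori into $X^\phi$; hence $\eps^{\ti\phi}_{\gm,h}\!=\!(\rk_\C V)\sgn_h+\sgn_h$, which is part~(1). For orientation-reversing $h$, the composite of~\eref{wtDisom_e} with~\eref{conjisom_e} for $D\!=\!D_0$ has sign $(-1)^{\eps_{D_0}+\eps^{\ti\phi}_{\gm,h}}$, where $(-1)^{\eps_{D_0}}$ is the sign of~\eref{conjisom_e} given by Proposition~\ref{relsign_prp} and~\eref{epsD_e}. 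Since $\mu(u_0^*(V,\ti\phi))\!=\!\lr{c_1(V),\fd(\be)}$ (the double of $u_0^*(V,\ti\phi)$ is $\hat{u}_0^*V$), $\ti{w}_1((u_0^*V)^{u_0^*\ti\phi},(\prt\Si)_i)\!=\!\ti{w}_1(V^{\ti\phi},b_i)$, $|c|_0\!=\!m$ and $|c|_1\!=\!0$, formula~\eref{epsD_e} gives $\eps_{D_0}\!=\!\frac12\big(\lr{c_1(V),\fd(\be)}+\sum_{i=1}^m\ti{w}_1(V^{\ti\phi},b_i)\big)+(1\!-\!g\!+\!m)\rk_\C V+2\Z$. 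Finally $\lr{w_2(V^{\ti\phi}),[\gm]_{h;0}}\!=\!\lr{w,[\gm]_{h;0}}+\lr{\vp|_{X^\phi},[\gm]_{h;0}}\!=\!\sum_{i=1}^m\flr{w,b_i}+\lr{\vp,\fd(\be)}$, the first summand being evaluated as in the proof of Corollary~\ref{transfsign_crl0}(2) and the second as above; adding $\eps_{D_0}$, this, and $(\rk_\C V)\sgn_h+\sgn_h$, and regrouping $(1\!-\!g\!+\!m)\rk_\C V+(\rk_\C V)\sgn_h\!=\!(1\!-\!g\!+\!m\!+\!\sgn_h)\rk_\C V$, gives~\eref{flipsign_e}.

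The main obstacle is precisely the identity $j_*[\cT_{h;0}]_{\Z_2}\!=\!|h|[\hat\Si_0]_{\Z_2}$ and the resulting expression for $\lr{\vp|_{X^\phi},[\gm]_{h;0}}$ in terms of $\fd(\be)$: this is the content absent from Corollary~\ref{transfsign_crl0}, where $w_2(V^{\ti\phi})$ is globally spin, and it forces the torus and Klein-bottle components of $\cT_{h;0}$ to be treated together, via the connected double $\hat\Si$, rather than cycle by cycle.
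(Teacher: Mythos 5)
Your proof is correct, but it reaches the key input $\lr{\vp,[\gm]_{h;0}}=|h|\lr{\vp,\fd(\be)}$ by a genuinely different route from the paper's. The paper does not double: it takes the quotient $Z_h=\bI\!\times\!\Si/\!\!\sim$, identifying only $(1,z)\!\sim\!(0,h(z))$ for $z\!\in\!\prt\Si$, so that the map $F_\gm([s,z])=u_s(z)$ is a $\Z_2$-$3$-chain with $\prt F_\gm=u_0\!\sqcup_h\!u_1+\gm_{h;0}$; pairing with $\vp$ then gives $\lr{\vp,[\gm]_{h;0}}=\lr{\vp,[u_0\!\sqcup_h\!u_1]_{\Z_2}}$, which is $0$ for $|h|\!=\!0$ (since $u_1\!=\!u_0\!\circ\!h$) and $\lr{\vp,\fd(\be)}$ for $|h|\!=\!1$ (since $[u_0\!\sqcup_h\!u_1]_{\Z}\!=\!\fd(\be)$). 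Your version instead closes everything up: it passes to the double $\hat\Si$, verifies $\hat{u}_1=\hat{u}_0\circ\hat{h}$, builds the \emph{closed} mapping torus $\hat W$, and then computes $j_*[\cT_{h;0}]_{\Z_2}=|h|[\hat\Si_0]_{\Z_2}$ via the Wang sequence plus a mod-$2$ intersection count with a section. This is a valid alternative — arguably cleaner conceptually because it works in a closed $3$-manifold and makes the role of $\prt\Si$ being a separating curve geometrically vivid — but it uses more machinery than the paper's direct bordism, and it requires one more observation than you state: to conclude from Wang that $j_*[\cT_{h;0}]_{\Z_2}$ lies in the image of $H_2(\hat\Si;\Z_2)$ you need to know that it dies under the connecting map $H_2(\hat W;\Z_2)\!\lra\!H_1(\hat\Si;\Z_2)$, i.e.~that $[\prt\Si]_{\Z_2}=0$ in $H_1(\hat\Si;\Z_2)$, which holds precisely because $\prt\Si$ separates $\hat\Si$. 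With that step filled in, your intersection computation and the remaining bookkeeping (the $w$-term via Corollary~\ref{transfsign_crl0}(2), the $\eps_{D_0}$ term via Proposition~\ref{relsign_prp}, and $\mu(u_0^*(V,\ti\phi))=\lr{c_1(V),\fd(\be)}$) are exactly as in the paper and give~\eref{flipsign_e}.
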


\begin{proof}
By our assumptions on $\Si$, $|c|_0\!=\!m$, $|c|_1\!=\!0$, $\sgn_h^0\!=\!\sgn_h$,
and $[\gm]_{h;1}^c\!=\!0$ in~\eref{epsD_e} and~\eref{epsConjTrar_e}.
Let 
\begin{alignat*}{2}
&Z_h=\bI\!\times\!\Si/\!\!\sim,&\qquad 
&(1,z)\sim\big(0,h(z)\big)~~\forall\,z\!\in\!\prt\Si, \\
&F_{\gm}\!: Z_h\lra X, &\qquad 
&F_{\gm}\big([s,z]\big)=u_s(z)~~\forall\,(s,z)\!\in\!\bI\!\times\!\Si\,.
\end{alignat*}
Thus, $F_{\gm}$ determines a three-chain in $X$ with coefficients in~$\Z_2$ and
boundary
$$\prt F_{\gm}=u_0\!\sqcup_h\!u_1+\gm_{h;0}.$$
In particular, for any $\vp\!\in\!H^2(X;\Z_2)$,
$$\blr{\vp,[\gm]_{h;0}}=
\blr{\vp,[u_0\!\sqcup_h\!u_1]_{\Z_2}}\,.$$
Since $w_2(V^{\ti\phi})=w+\vp|_{X^{\phi}}$, \eref{epsConjTrar_e} becomes 
\BE{epsConjTrar_e2}
\eps^{\ti\phi}_{\gm,h}=\lr{w,[\gm]_{h;0}}+\blr{\vp,[u_0\!\sqcup_h\!u_1]_{\Z_2}}
+(\rk_{\C}V)\sgn_h+\sgn_h\,.\EE
Below we analyze the first two terms on the right-hand side of~\eref{epsConjTrar_e2}
in the two cases separately.\\

\noindent
(1) Suppose $h$ is orientation-preserving.
Since $[\gm]_{h;0}$ is a sum of classes represented by tori and 
$w$ is a spin class, $\lr{w,[\gm]_{h;0}}\!=\!0$.
Since $u_1\!=\!u_0\!\circ\!h$ in this case,  $[u_0\!\sqcup_h\!u_1]_{\Z_2}\!=\!0$.
Thus, the first claim follows from~\eref{epsConjTrar_e2}.\\

\noindent
(2) Suppose $h$ is orientation-reversing.
By the same argument as in~(2) of the proof of Corollary~\ref{transfsign_crl0},
%The class $[\gm]_{h;0}$ is the sum of the classes represented~by
%$$\gm_{h;0}\!:\cT\equiv\bI\!\times\!\bigsqcup_{j=1}^k\!(\prt\Si)_{i_j}\big/\!\!\sim\lra X,
%\qquad (1,z)\sim\big(0,h(z)\big)~~\forall\,z\!\in\!\bigsqcup_{j=1}^k\!(\prt\Si)_{i_j},$$
%with $\cT$ as in~\eref{cTcomp_e}. 
%Since $h$ is orientation-reversing,  $b_{i_{j+1}}\!=\!-b_{i_j}$ 
%for every $j\!=\!1,\ldots,k$; in particular, $b_1\!=\!(-1)^kb_1$.
%If $k$ is even, then $\cT$ is again a torus and~so
%$$\lr{w,\gm_{h;0*}[\cT]_{\Z_2}}=0=\sum_{j=1}^k\flr{w,b_{i_j}}\,.$$
%If $k$ is odd, then $b_{i_j}\!=\!b_{i_1}$ is a two-torsion classes for all $j\!=\!1,\ldots,k$
%and $\cT$ is a Klein bottle; thus,
%$$\lr{w,\gm_{h;0*}[\cT]_{\Z_2}}=\flr{w,b_{i_1}}=\sum_{j=1}^k\flr{w,b_{i_j}}\,.$$
%Putting these two cases together, we find~that 
$$\lr{w,[\gm]_{h;0}}=\sum_{i=1}^m\flr{w,b_i}\,.$$
Since  $[u_0\!\sqcup_h\!u_1]_{\Z}\!=\!\fd(\be)$ in this case,
$$\blr{\vp,[u_0\!\sqcup_h\!u_1]_{\Z_2}}=\blr{\vp,\fd(\be)}.$$
Combining the last two equations with~\eref{epsConjTrar_e2},
we find that 
$$\ti\eps^{\ti\phi}_{\gm,h}
=\eps^{\ti\phi}_{\gm,h}+\eps_{D_0}
=\sum_{i=1}^m\flr{w,b_i}+\blr{\vp,\fd(\be)}
+(\rk_{\C}V)\sgn_h+\sgn_h+\eps_{D_0}\,,$$
where $\eps_{D_0}$ is the relative sign of the conjugation on~$D_0$ computed
by~\eref{epsD_e}. 
Along with~\eref{Maslovdfn_e}, this establishes the second claim.
\end{proof}

\begin{rmk}\label{Sol_rmk}
If $h$ is orientation-reversing and $w\!=\!\ka^2$ for some 
$\ka\!\in\!H^1(X^{\phi};\Z_2)$, then \eref{flipsign_e} becomes
\BE{flipsign_e2}\begin{split}
\ti\eps^{\ti\phi}_{\gm,h}=
\frac{1}{2}\bigg(\lr{c_1(V),\fd(\be)}
+\sum_{i=1}^m\ti{w}_1\big(V^{\ti{c}},b_i\big)\bigg)
+\blr{\vp,\fd(\be)}+\sum_{i=1}^m\flr{\ka,b_i}&\\
+\big(1\!-\!g+m+\sgn_hV\big)\rk_{\C}V +\sgn_h&+2\Z\in\Z_2;
\end{split}\EE
see~\eref{flrka_e}.
The two sign formulas of \cite[Proposition~5.1]{Sol} are special cases of
this formula:
\hbox{$(V,\ti\phi)\!=\!(TX,\tnd\phi)$}, $\fJ_1\!=\!\fJ_0$, and either
$\ka\!=\!0$ or $\ka\!=\!w_1(TX^{\phi})$,
i.e.~the Lagrangian $X^{\phi}$ is either relatively pin$^+$
or relatively~pin$^-$, respectively.
%\footnote{The terms on the last lines in the sign formulas in \cite[Proposition~5.1]{Sol} 
%should be taken to be~0 if there  are no marked points.}
In turn, \cite[Proposition~5.1]{Sol} includes the orientability results
of \cite{FOOO9, Melissa}. 
\end{rmk}

\section{Applications}
\label{appl_sec}

\noindent
Let $(X,\om)$ be a symplectic manifold, $L\!\subset\!X$ be a Lagrangian submanifold,
$\be\!\in\!H_2(X,L;\Z)$,  and $J$ be an $\om$-compatible almost complex structure.
As shown in \cite[Section~8.1.1]{FOOO}, the moduli space 
$$\fM_{D^2}(X,L,\be;J)\equiv 
\big\{u\!\in\!C^{\i}(D^2,X)\!:\,\bp_{J,\fJ_0}u\!=\!0,
\,u(S^1)\!\subset\!L,\, u_*[D^2,S^1]=\be\big\}/\sim$$
of holomorphic maps from the disk $D^2$ with the standard complex structure~$\fJ_0$
is orientable
if ($L$ is orientable and) the pair $(X,L)$ admits a relative spin structure;
furthermore, every such structure canonically determines 
an orientation on~$\fM_{D^2}(X,L,\be;J)$.
This fundamental observation is extended in \cite[Theorem~1.1]{Sol} to  
arbitrary bordered Riemann surfaces~$(\Si,\fJ_0)$, with a fixed complex structure on
the domain,
and a relative pin$^{\pm}$ structure on the pair~$(X,L)$, if there is one.
In this case, the moduli space
$$\fM_{\Si}'(X,L,\be;J)\equiv 
\big\{u\!\in\!C^{\i}(\Si,X)\!:\,\bp_{J,\fJ_0}u\!=\!0,\,u(\prt\Si)\!\subset\!L,\,
u_*[\Si,\prt\Si]=\be\big\}$$
need not be orientable, but its determinant line bundle is isomorphic to 
a tensor product of the pull-backs of $\La_{\R}^{\top}(TL)$ by evaluation maps at boundary
marked points.
The pair~$(X,L)$ admits a relative pin$^{\pm}$ structure if and only~if
\BE{pincod_e}w_2(L)=\vp|_L \qquad\hbox{or}\qquad w_2(TL)=w_1(L)^2+\vp|_L\EE
for some $\vp\!\in\!H^2(X;\Z_2)$.
As observed in \cite[Remark~1.9]{Ge}, there is nothing special about~$w_1(L)$
in~\eref{pincod_e} for the purposes of the above isomorphism statement.\\

\noindent
In Sections~\ref{SpinSubStr_subs} and~\ref{MSopen_subs}, 
we introduce generalizations of the notions of 
(relative) spin/pin structures  which better capture the orientability 
in open Gromov-Witten theory.
For a symplectic manifold~$(X,\om)$ with an anti-symplectic involution~$\phi$,
we introduce a notion of compatibility of such structures with the involution
that better captures the relevant features than a similar notion introduced 
in~\cite{FOOO9}; see Sections~\ref{MSreal_subs} and~\ref{FOOO_subs} for details.

\subsection{Spin sub-structures}
\label{SpinSubStr_subs}

\noindent
We recall that a \sf{spin structure} on a real oriented vector bundle $W\!\lra\!M$
of $\rk_{\R}W\!\ge\!3$ is a collection of homotopy classes of trivializations of 
$\al^*W\!\lra\!S^1$, one class for each loop $\al\!:S^1\!\lra\!M$, such that for 
every bordered surface~$\Si$ and continuous map $f\!:\Si\!\lra\!M$ 
the vector bundle $f^*W\!\lra\!\Si$ admits a trivialization
restricting to the chosen homotopy classes of trivializations on the boundary 
components of~$\Si$ under their identification with~$S^1$.
A \sf{spin structure} on a real oriented vector bundle $W\!\lra\!M$
of $\rk_{\R}W\!=\!2$ is a spin structure on $W\!\oplus\!M\!\times\!\R\!\lra\!M$.
An oriented vector bundle $W\!\lra\!M$ admits a spin structure if and only if 
the oriented vector bundle
\BE{Wstab_e}f^*\!W\oplus\Si\!\times\!\R\lra\Si\EE
is trivial for every closed surface~$\Si$ and continuous map $f\!:\Si\!\lra\!M$,
i.e.~$w_2(W)\!=\!0$.
%Spin structures play a prominent role in orienting moduli spaces in open Gromov-Witten
%theory; see \cite[Section~8.1]{FOOO}.
However, \cite[Theorem~1.1]{Ge} implies that the triviality of~\eref{Wstab_e} 
matters for the orientability problem only when $\Si\!=\!\bT$ is the two-torus,
and so the notion of spin structure
is too restrictive from this point of view whenever $M$ is not simply connected.
As it is generally difficult to describe the atorical classes,
we focus on the larger collection of spin classes in $H^2(M;\Z_2)$;
see Definition~\ref{Klein_dfn}.
The next lemma describes a geometric realization of such classes.

\begin{lmm}\label{spinclass_lmm}
Let $M$ be a paracompact topological space.
For every spin class $w\!\in\!H^2(M;\Z_2)$, there exists a complex line bundle
$W\!\lra\!M$ such that $w_2(W)\!=\!w$.
\end{lmm}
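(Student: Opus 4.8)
The plan is to produce the line bundle $W$ as a pullback of the tautological line bundle on an infinite-dimensional model for the classifying object of $H^2(-;\Z_2)$, adjusted so that its $w_2$ (rather than $w_1$) realizes $w$. First I would recall that complex line bundles on $M$ are classified by $H^2(M;\Z)$ via the first Chern class, and that $w_2$ of a complex line bundle $W$ is the mod~$2$ reduction of $c_1(W)$. Thus the statement is equivalent to: every spin class $w\in H^2(M;\Z_2)$ lifts to an integral class in $H^2(M;\Z)$, i.e. $w$ is in the image of the reduction map $\rho\colon H^2(M;\Z)\to H^2(M;\Z_2)$.

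Next I would identify the obstruction to such a lift. From the Bockstein long exact sequence associated to $0\to\Z\xrightarrow{\times 2}\Z\to\Z_2\to 0$, a class $w\in H^2(M;\Z_2)$ lifts to $H^2(M;\Z)$ if and only if $\beta(w)=0\in H^3(M;\Z)$, where $\beta$ is the integral Bockstein; equivalently $\mathrm{Sq}^1 w=0$ after mod~$2$ reduction of $\beta$. The heart of the argument is therefore to show that a spin class has vanishing Bockstein. I would use the algebraic description of the spin classes provided just before Section~3 in the excerpt: by the Universal Coefficient Theorem the spin classes are exactly the elements of $\mathrm{Ext}(H_1(M;\Z),\Z_2)$ sitting inside $H^2(M;\Z_2)$, i.e. the kernel of the evaluation map $H^2(M;\Z_2)\to\Hom(H_2(M;\Z),\Z_2)$. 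Dually (again by the Universal Coefficient Theorem applied to $H^3$), the image of $\rho\colon H^2(M;\Z)\to H^2(M;\Z_2)$ is precisely the set of classes whose Bockstein is a $2$-torsion class that is hit appropriately — more concretely, one shows the image of $\rho$ contains $\mathrm{Ext}(H_1(M;\Z),\Z_2)$ by a direct diagram chase with the short exact sequences
$$
0\to \mathrm{Ext}(H_1(M;\Z),\Z)\to H^2(M;\Z)\to \Hom(H_2(M;\Z),\Z)\to 0
$$
and
$$
0\to \mathrm{Ext}(H_1(M;\Z),\Z_2)\to H^2(M;\Z_2)\to \Hom(H_2(M;\Z),\Z_2)\to 0,
$$
together with the surjection $\mathrm{Ext}(H_1(M;\Z),\Z)\to\mathrm{Ext}(H_1(M;\Z),\Z_2)$ induced by $\Z\to\Z_2$ (which is surjective because $\mathrm{Ext}(A,-)$ is right exact on the relevant short exact sequence of coefficients, the next term being $\mathrm{Ext}^2=0$ over $\Z$). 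Chasing the naturality square relating these two sequences shows that any $w$ in the $\mathrm{Ext}$-summand — i.e. any spin class — is the reduction of an integral class. Then take $W$ to be any complex line bundle with $c_1(W)$ equal to that integral lift; paracompactness of $M$ guarantees such a line bundle exists (line bundles over paracompact $M$ are classified by $[M,\mathbb{CP}^\infty]=H^2(M;\Z)$).

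The main obstacle I anticipate is the homological algebra bookkeeping: $H_1(M;\Z)$ and $H_2(M;\Z)$ need not be finitely generated for a general paracompact space, so I must run the Universal Coefficient Theorem and the right-exactness of $\mathrm{Ext}(H_1(M;\Z),-)$ in full generality, being careful that the splittings in the Universal Coefficient sequences are not natural and hence cannot be used to split the naturality square — the diagram chase has to go through the subgroups and quotients directly. A cleaner alternative, which I would fall back on if the chase gets unwieldy, is homotopy-theoretic: represent $w$ by a map $g\colon M\to K(\Z_2,2)$, and observe that being a spin class means $g$ restricted to the $2$-skeleton (or more precisely, $g$ annihilates all spherical classes in degree $2$) so that $g$ lifts through the fibration $K(\Z,2)\to K(\Z_2,2)$ whose fiber is $K(\Z,1)=\mathbb{CP}^\infty$... actually through $\mathbb{CP}^\infty=K(\Z,2)\xrightarrow{\rho}K(\Z_2,2)$ with homotopy fiber $K(\Z,1)$; the obstruction to such a lift lives in $H^3(M;\Z)$ and equals $\beta(w)$, which one shows vanishes for spin classes since $\beta(w)$ pairs trivially against all $H_3$ represented by closed oriented $3$-manifolds and is itself detected by such pairings once one knows $w$ kills all spherical $2$-cycles. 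Either route reduces the statement to the single clean assertion ``$\beta(w)=0$ for spin classes'', and a lift of $w$ then furnishes the desired line bundle $W$ with $w_2(W)=\rho(c_1(W))=w$.
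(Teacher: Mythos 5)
Your primary route matches the paper's proof: the paper also reduces the claim to lifting $w$ to an integral class (using that $w_2$ of a complex line bundle is the mod-$2$ reduction of $c_1$ and that line bundles over paracompact $M$ are classified by $H^2(M;\Z)$), sets up the same naturality diagram of Universal Coefficient sequences for the coefficient change $\Z\to\Z_2$, and concludes from the surjectivity of $\Ext(H_1(M;\Z),\Z)\to\Ext(H_1(M;\Z),\Z_2)$; your worry about the non-natural UCT splitting is correctly handled, since the chase only needs the subgroup inclusions $\Ext\hookrightarrow H^2$ and the commutativity of the left square.

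One caution about the fallback you say you would retreat to: it is not actually a safe retreat as written. First, $K(\Z,1)=S^1$, not $\C\P^\infty$, and the homotopy fiber of $\rho\colon K(\Z,2)\to K(\Z_2,2)$ is $K(\Z,2)$ (from the fibration sequence of $0\to\Z\xrightarrow{\times2}\Z\to\Z_2\to0$), although the lifting obstruction is indeed $\beta(w)\in H^3(M;\Z)$. Second, and more seriously, the concluding step — that $\beta(w)=0$ because it ``pairs trivially against all $H_3$ represented by closed oriented $3$-manifolds and is itself detected by such pairings'' — does not hold: $H^3(M;\Z)$ contains $\Ext(H_2(M;\Z),\Z)$, which is torsion and invisible to pairing with integral $H_3$, so $\beta(w)$ need not be detected this way. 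To close that gap you would end up re-running the UCT diagram chase anyway, so the fallback does not genuinely simplify anything. Your primary argument, however, is complete and is the paper's argument.
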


\begin{proof}
By the proof of the Universal Coefficient Theorem for Cohomology \cite[Theorem~53.1]{Mu2},
there is a commutative diagram of short exact sequences
$$\xymatrix{0\ar[r]& \Ext\big(H_1(M;\Z),\Z\big)\ar[r]\ar[d]& 
H^2(M;\Z) \ar[r]\ar[d]& \Hom\big(H_2(M;\Z),\Z\big) \ar[r]\ar[d]&0\\
0\ar[r]& \Ext\big(H_1(M;\Z),\Z_2\big)\ar[r]& 
H^2(M;\Z_2) \ar[r]& \Hom\big(H_2(M;\Z),\Z_2\big) \ar[r]&0\,,}$$
with the vertical arrows induced by the nonzero homomorphism $\Z\!\lra\!\Z_2$.
By \hbox{\cite[Exercise~52.4]{Mu2}},
the first vertical arrow in the above diagram is surjective.
It follows that every element of $\Ext(H_1(M;\Z),\Z_2)$ comes from $H^2(M;\Z)$
and thus equals $w_2(W)$ for some complex line bundle $W\!\lra\!M$.
\end{proof}

\noindent
For example, if $w\!=\!\ka^2$ for some $\ka\!\in\!H^1(M;\Z_2)$, then
$w\!=\!w_2(W'\!\oplus\!W')$ for some real line bundle $W'\!\lra\!M$,
since these line bundles correspond to elements of $H^1(M;\Z_2)$.
By Lemma~\ref{sqvan_lmm}, quite often every spin class is a square,
in which case every spin class is of the form
$w_2(W'\!\oplus\!W')\!=\!w_1(W')^2$ for some real line bundle $W'\!\lra\!M$.\\

\noindent
Every oriented real vector bundle $W\!\lra\!S^1$ admits a trivialization.
If $\rk\,W\!\ge\!3$, there are two homotopy classes of trivializations,
since $\pi_1(\SO(k))\!=\!\Z_2$ for $k\!\ge\!3$.
If $W\!\lra\!S^1$ is the restriction of a vector bundle over~$D^2$,
there is a unique homotopy class of trivializations of  $W\!\lra\!S^1$
that extend to a trivialization over~$D^2$;
this is used in the proof of Lemma~\ref{spinsubstr_lmm} below.

\begin{dfn}\label{subspinbnd_dfn}
Let  $W\!\lra\!M$ be an oriented real  vector bundle.
\begin{enumerate}[label=(\arabic*),leftmargin=*]
\item\label{spindfn_it} If $\rk_{\R}W\!\ge\!3$, a \sf{spin sub-structure} on $W\!\lra\!M$ 
is a collection of homotopy classes of trivializations of $\al^*W\!\lra\!S^1$,
one class for each loop $\al\!:S^1\!\lra\!M$, such that 
for~every 
\begin{enumerate}[label=$\bullet$,leftmargin=*]
\item connected oriented surface~$\Si$ with boundary components $(\prt\Si)_1$
and $(\prt\Si)_2$, 
\item orientation-reversing diffeomorphism $\vph_1\!:S^1\!\lra\!(\prt\Si)_1$,
\item orientation-preserving diffeomorphism $\vph_2\!:S^1\!\lra\!(\prt\Si)_2$, and
\item continuous map $F\!:\Si\!\lra\!M$, 
\end{enumerate}
the vector bundle $F^*W\!\lra\!\Si$ admits a trivialization such that its pull-backs
by $\vph_1$ and $\vph_2$ agree with the chosen homotopy classes of trivializations 
for the loops $F\!\circ\!\vph_1,F\!\circ\!\vph_2\!:S^1\!\lra\!M$.
\item If $\rk_{\R}W\!=\!2$, a \sf{spin sub-structure} on $W\!\lra\!M$ is 
a spin sub-structure on $W\!\oplus\!M\!\times\!\R\!\lra\!M$.
%\item A \sf{pin sub-structure} on $W$ is a spin sub-structure on
%$W\!\oplus\!3\La_{\R}^{\top}W$.
\end{enumerate}
\end{dfn}

\begin{lmm}\label{spinsubstr_lmm}
An oriented real vector bundle $W\!\lra\!M$ admits a spin sub-structure
if and only if  $w_2(W)$ is a spin class.
\end{lmm}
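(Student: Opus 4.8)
The plan is to prove both implications of Lemma~\ref{spinsubstr_lmm} by reducing to the rank~$\ge3$ case (the rank~2 case is definitional) and then characterizing the obstruction to gluing trivializations along a surface $\Si$ with two boundary circles in terms of $w_2$.

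\emph{Necessity.} Suppose $W\!\lra\!M$ has rank $\ge3$ and admits a spin sub-structure. I would show $w_2(W)$ is spin, i.e.~$f^*w_2(W)\!=\!0$ for every continuous $f\!:\Si\!\lra\!M$ from a closed oriented surface~$\Si$. Cut $\Si$ along a simple closed curve into a connected surface $\Si'$ with two boundary components, with a reversing parametrization $\vph_1$ and a preserving parametrization $\vph_2$ of the two circles that are identified in~$\Si$. The chosen homotopy classes of trivializations of $W$ over the two boundary loops $f\!\circ\!\vph_1$ and $f\!\circ\!\vph_2$ agree (since $\vph_1,\vph_2$ parametrize the \emph{same} loop in~$M$, up to orientation, but the definition's orientation conventions are set up precisely so that the two chosen classes match under the gluing). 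The spin sub-structure condition then produces a trivialization of $F^*W\!\lra\!\Si'$ whose boundary restrictions agree after gluing, hence descends to a trivialization of $f^*W$ over a closed surface obtained from $\Si$, forcing $f^*w_2(W)\!=\!0$. I would need to be careful that the relative $w_2$ obstruction over $\Si'$ (which lives in $H^2(\Si',\prt\Si';\Z_2)\cong\Z_2$) is exactly the glued-up $f^*w_2(W)$; this is the standard identification of $w_2$ of an oriented bundle over a surface with the mod-2 obstruction to a trivialization.

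\emph{Sufficiency.} Suppose $w_2(W)\in H^2(M;\Z_2)$ is a spin class; I want to construct a spin sub-structure. First stabilize so $\rk W\!\ge\!3$. By Lemma~\ref{spinclass_lmm} there is a complex line bundle $W'\!\lra\!M$ with $w_2(W')\!=\!w_2(W)$, equivalently $w_2(W\oplus\ov{W'_\R})\!=\!0$ where $W'_\R$ is $W'$ viewed as an oriented real rank-2 bundle (using $w_2(W'_\R)\!=\!w_2(W')$, $w_1(W'_\R)\!=\!0$). Hence $W\oplus W'_\R$ admits an honest spin structure $\mathfrak s$, in particular a coherent family of homotopy classes of trivializations over all loops. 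Now over a circle, a trivialization of $W'_\R\!\lra\!S^1$ is essentially unique up to homotopy only after a choice; to get a \emph{sub-structure} on $W$ alone I would split off, for each loop $\al\!:S^1\!\lra\!M$, the pull-back $\al^*W'_\R$ using that it is trivial over $S^1$, and restrict the $\mathfrak s$-trivialization to the $W$-summand — but this requires choosing trivializations of $\al^*W'_\R$ coherently, which is exactly the kind of data that need not glue over a closed surface. The fix is that the spin sub-structure only demands gluing over surfaces $\Si$ with \emph{two} boundary components and \emph{mixed} orientation conventions on the two circles; such a $\Si$ has the homotopy type of a wedge of circles and a cylinder with the two boundary orientations set up so that $F^*W'_\R$, restricted to the boundary in the prescribed way, always extends over $\Si$ (the relative obstruction in $H^2(\Si,\prt\Si;\Z_2)$ pairs trivially because the two boundary classes are opposite). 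So the defect in coherently trivializing $W'_\R$ is invisible to exactly the test surfaces appearing in Definition~\ref{subspinbnd_dfn}\ref{spindfn_it}. I would make this precise by: (i) fixing once and for all a trivialization of $\al^*W'_\R$ for each $\al$ by pulling back along a classifying-space-level choice, or more elementarily by using that $w_1(W'_\R)\!=\!0$ and trivializing via the orientation plus the complex structure; (ii) defining the sub-structure on $\al^*W$ to be the $\mathfrak s$-trivialization modulo this fixed $W'_\R$-trivialization; (iii) checking the gluing axiom: given $\Si,\vph_1,\vph_2,F$ as in the definition, extend the $\mathfrak s$-trivialization of $F^*(W\oplus W'_\R)$ over $\Si$ (possible since $\mathfrak s$ is a genuine spin structure and the boundary classes are the $\mathfrak s$-classes), then subtract an extension over $\Si$ of the fixed $W'_\R$-trivializations of the two boundary circles — this last extension exists precisely because the relative $w_2$-obstruction of $F^*W'_\R$ over $(\Si,\prt\Si)$ vanishes under the given orientation conventions.

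\emph{Main obstacle.} The crux is step (iii) of sufficiency: verifying that the fixed boundary trivializations of $F^*W'_\R$ always extend over the \emph{relative} surface $\Si$. This is where the asymmetry in Definition~\ref{subspinbnd_dfn} (one boundary circle parametrized orientation-reversingly, the other orientation-preservingly) is essential — it is exactly the condition that makes the mod-2 relative obstruction cancel, so that $W'_\R$ contributes nothing and only the genuine spin structure $\mathfrak s$ on $W\oplus W'_\R$ matters. I expect the bookkeeping of orientations of boundary circles versus the sign of the obstruction class to be the delicate point; everything else is a routine application of obstruction theory for oriented bundles over surfaces together with $\pi_1(\SO(k))\!=\!\Z_2$. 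I would also remark that when $w_2(W)\!=\!\ka^2$ is a square (Lemma~\ref{sqvan_lmm}), one may take $W'_\R\!=\!W''\!\oplus\!W''$ for a real line bundle $W''$ with $w_1(W'')\!=\!\ka$, which makes the construction slightly more concrete but is not needed in general.
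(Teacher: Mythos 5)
Your necessity argument is a genuine alternative to the paper's, and it works for all $\Si$ of genus at least one: cutting along a non-separating simple closed curve produces a connected surface $\Si'$ with two boundary circles both mapping to the same loop $f\circ\gm$, and the spin sub-structure axiom then gives a trivialization that descends after gluing. The paper instead removes two disjoint disks, so that both boundary loops are contractible, and first shows (after a normalization of the choices over null-homologous loops) that the chosen homotopy class over a contractible loop is the one bounding over a disk; your route avoids that normalization step, which is a mild simplification. However, your argument does not apply to $\Si=S^2$ (no non-separating curve), so as written the necessity direction is incomplete; a degree-one map $\bT\to S^2$ would patch it, but you do not say so.

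The serious issue is in the sufficiency direction, precisely at the step you flag. Your reduction to a genuine spin structure on $W\oplus W'_\R$ requires a family $\{\phi_\al\}$ of homotopy classes of trivializations of $\al^*(W'_\R\oplus\R)$, one per loop $\al$, with the property that for every $(\Si,\vph_1,\vph_2,F)$ as in Definition~\ref{subspinbnd_dfn} the two chosen boundary trivializations extend across $\Si$. Such a family is, by definition, a spin sub-structure on $W'_\R\oplus\R$ --- a bundle with $w_2$ equal to $w_2(W)$. You would therefore be invoking the lemma you are proving. Neither proposed escape yields a canonical family without this circularity: a complex trivialization of $\al^*W'$ is determined only up to a map $S^1\to\C^*$ of arbitrary winding number (so the real class is ambiguous by all of $\pi_1(\SO(2))=\Z$), and a classifying-space-level nullhomotopy of the composite $S^1\to M\to\C\P^{\infty}$ is ambiguous by $\pi_2(\C\P^{\infty})=\Z$, with exactly the same effect. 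The assertion that ``the relative obstruction in $H^2(\Si,\prt\Si;\Z_2)$ pairs trivially because the two boundary classes are opposite'' is not correct: for connected $\Si$ with two boundary circles the restriction $H^1(\Si;\Z_2)\to H^1(\prt\Si;\Z_2)\cong\Z_2\!\oplus\!\Z_2$ has image the diagonal, so the obstruction to matching prescribed boundary trivializations is the difference mod~$2$ of the two boundary winding numbers, which is generically nonzero for an uncoordinated choice of $\{\phi_\al\}$. The paper sidesteps all of this by constructing the spin sub-structure on $W$ directly: pick one representative loop $\al$ per class in $H_1(M;\Z)$ and one homotopy class of trivializations of $\al^*W$; transfer to any homologous loop $\be$ along a bordism $(\Si,\vph_1,\vph_2,F)$ by extending the trivialization over $\Si$ and restricting to $(\prt\Si)_2$; and prove well-definedness by gluing two such bordisms into a closed, possibly non-orientable surface $\hat\Si$ and using $\blr{w_2(W),\hat{F}_*[\hat\Si]_{\Z_2}}=0$ together with $\pi_1(\SO(k))=\Z_2$. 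If you insist on routing through the auxiliary $W'$, you must first run exactly that bordism argument for $W'_\R\oplus\R$, so nothing is gained.
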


\begin{proof} We can assume that $\rk\,W\!\ge\!3$ and $M$ is connected.\\

\noindent
(1) Suppose we have chosen a spin sub-structure $W\!\lra\!M$.
Let $F_0\!:D^2\!\lra\!M$ be a fixed continuous map (e.g.~a constant map).
By reversing the chosen homotopy classes of trivializations
for all null-homologous loops if necessary (reversing for either all or for none of them),
we can assume that the restriction of a trivialization of $F_0^*W\!\lra\!D^2$ to~$S^1$ 
agrees with the chosen homotopy class of trivializations 
for the loop $F_0|_{S^1}\!:S^1\!\lra\!M$.
If $F_1\!:D^2\!\lra\!M$ is another continuous map, there exists a continuous map
$F\!:\bI\!\times\!S^1\!\lra\!M$ such that the~map
\begin{gather*}
f=F_0\!\sqcup\!F\!\sqcup\!F_1\!: 
S^2\approx \big(0\!\times\!D^2\sqcup \bI\!\times\!S^1\sqcup 1\!\times\!D^2\big)\big/\!\!\sim
\lra M, \\
0\!\times\!D^2\ni(0,z)\sim(0,z)\in\bI\!\times\!S^1,~
0\!\times\!D^2\ni(1,z)\sim(1,z)\in\bI\!\times\!S^1~~\forall\,z\!\in\!S^1\subset D^2,
\end{gather*}
is continuous and homotopically trivial.\footnote{Project $[1/4,3/4]\!\times\!S^1$ 
onto a path $[1/4,3/4]\!\lra\!M$ between $F_0(0)$ and $F_1(0)$;
then take $F$ on $[0,1/4]\!\times\!S^1$ and $[3/4,1]\!\times\!S^1$
to be homotopies from $F_0|_{S^1}$ to the constant loop at~$F_0(0)$
and from the constant loop at~$F_1(0)$ to $F_1|_{S^1}$, respectively.}
Thus, $f^*W\!\lra\!S^2$ admits a trivialization that restricts to 
the chosen homotopy class of trivializations for the loop~$F_0|_{S^1}$.
So the chosen homotopy class of trivializations over a contractible loop consists of 
trivializations that extend over a bounding disk.\\

\noindent
If $f\!:\Si\!\lra\!M$ is any continuous map from a closed oriented surface,
the restriction of $f^*W$ to the complement of two disjoint disks~$D_1^2$ and~$D_2^2$
admits a trivialization that restricts to the chosen homotopy classes of
trivializations on~$\prt D_1^2$ and~$\prt D_2^2$.
Since both of the latter extend over the corresponding disks, 
it follows that $f^*W\!\lra\!\Si$ is a trivial vector bundle and~so
$$\blr{w_2(W),f_*[\Si]_{\Z_2}}=\blr{w_2(f^*W),[\Si]_{\Z_2}}=0.$$
Thus, $w_2(W)$ is a spin class.\\

\noindent
(2) A spin sub-structure on $W\!\lra\!M$ can be obtained as follows.
Pick a representative $\al\!:S^1\!\lra\!M$ for each element~$[\al]$ of $H_1(M;\Z)$
and a homotopy class of trivializations of $\al^*W$.
Given a loop $\be\!:S^1\!\lra\!M$ with $[\be]\!=\![\al]$ in~$H_1(M;\Z)$, choose 
\begin{enumerate}[label=$\bullet$,leftmargin=*]
\item a connected oriented surface~$\Si$ with boundary components $(\prt\Si)_1$
and $(\prt\Si)_2$, 
\item an orientation-reversing diffeomorphism $\vph_1\!:S^1\!\lra\!(\prt\Si)_1$,
\item an orientation-preserving diffeomorphism $\vph_2\!:S^1\!\lra\!(\prt\Si)_2$, and
\item a continuous map $F\!:\Si\!\lra\!M$, 
\end{enumerate}
so that $F\!\circ\!\vph_1\!=\!\al$ and $F\!\circ\!\vph_2\!=\!\be$.
Since an oriented vector bundle over a connected surface with at least one boundary
component is trivial, there exists a trivialization of $f^*W\!\lra\!\Si$
extending the chosen trivialization over~$(\prt\Si)_1$. 
We take the homotopy class of the trivializations for $\be^*W\!\lra\!S^1$ to be
the homotopy class of the restriction of this trivialization to~$(\prt\Si)_2$.\\

\noindent
Given another collection $(\Si',\vph_1',\vph_2',F')$ as above, let 
$$\br\Si=(\Si\sqcup\bar\Si')/\!\!\sim,\quad z\sim\vph_1'(\vph_1^{-1}(z))~\forall\,
z\!\in\!(\prt\Si)_1,\qquad
\hat\Si=\br\Si/\!\!\sim,\quad z\sim\vph_2'(\vph_2^{-1}(z))~\forall\,
z\!\in\!(\prt\Si)_2\,,$$
where $\bar\Si'$ denotes $\Si'$ with the opposite orientation.
The maps~$F$ and~$F'$ induce continuous maps $\br{F}\!:\br\Si\!\lra\!M$
and $\hat{F}\!:\hat\Si\!\lra\!M$.
A trivialization of $\al^*W$ in the chosen homotopy class
extends to a trivialization of $\br{F}^*W\!\lra\!\br\Si$ and
$$\hat{F}^*W\approx \br\Si\!\times\!\R^k\big/\!\sim, \qquad
(z,v)\sim\big(\vph_2'(\vph_2^{-1}(z)),g(z)v\big)~\forall\,(z,v)\in(\prt\Si)_2\!\times\!\R^k\,,$$
for some $g\!:(\prt\Si)_2\!\lra\!\SO(k)$.
Since oriented vector bundles over $\hat\Si$ with rank at least~3 are classified
by their~$w_2$ and $\pi_1(\SO(k))\!\approx\!\Z_2$ for $k\!\ge\!3$,
$g$ is homotopically trivial if and only if $w_2(\hat{F}^*W)\!=\!0$.
By our assumption on~$w_2(W)$,
$$\blr{w_2(\hat{F}^*W),[\hat\Si]_{\Z_2}}=\blr{w_2(W),\hat{F}_*[\hat\Si]_{\Z_2}}=0.$$
Thus, $g$ is homotopically trivial and so the trivializations of $\be^*W$ induced
via~$F$ and~$F'$ are the same.\\

\noindent
Suppose $(\Si,\vph_1,\vph_2,F)$ is as in Definition~\ref{subspinbnd_dfn}\ref{spindfn_it},
$\al$ is the chosen representative for the homology class
$$[F\circ\vph_1]_{\Z}=[F\circ\vph_2]_{\Z}\in H_1(M;\Z),$$
and $(\Si',\vph_1',\vph_2',F')$ is as in the construction of the induced trivialization
for $\be\!\equiv\!F\!\circ\!\vph_1$, replacing $(\Si,\vph_1,\vph_2,F)$.
Let 
$$\br{F}\!=\!F'\!\sqcup\!F\!:\, \br\Si=(\Si'\!\sqcup\!\Si)/\!\!\sim\lra M, 
\qquad z\sim\vph_2'(\vph_1^{-1}(z))~\forall\,z\!\in\!(\prt\Si)_1.$$
The chosen trivialization of $\al^*W$ extends to a trivialization of~$\br{F}^*W$.
By construction, the restrictions of the latter to~$(\prt\Si)_1$ and~$(\prt\Si)_2$
lie in the chosen homotopy classes of trivializations for the loops 
$F\!\circ\!\vph_1$ and $F\!\circ\!\vph_2$.
Thus, trivializations of $\{F\!\circ\!\vph_1\}^*W$ and $\{F\!\circ\!\vph_2\}^*W$
in the chosen homotopy classes extend to  a trivialization of~$F^*W$,
as required.
\end{proof}

\noindent
A spin sub-structure can thus be viewed as a weak spin structure for oriented vector bundles
with $w_2$ lying in the subgroup $\Ext(H_1(M;\Z),\Z_2)$ of $H^2(M;\Z_2)$, 
instead of being~0;
this extension group is non-trivial if $H_1(M;\Z)$ has even torsion.
The group of~{\it{maps}}  
$$\vt\!: H_1(M;\Z)\lra \Z_2$$
acts freely and transitively on the set of spin sub-structures on $W\!\lra\!M$,
if this set is non-empty; $\vt$ changes the chosen homotopy class of trivializations 
along a loop $\al\!:S^1\!\lra\!M$ if and only if $\vt(\al)\!\neq\!0$.
In the case of spin structures, the same role is played~by
$$H^1(M;\Z_2)=\Hom(H_1(M;\Z),\Z_2),$$
the group of~{\it{homomorphisms}} from $H_1(M;\Z)$ to~$\Z_2$.\\

\noindent
Let $\fc_{S^1}\!:S^1\!\lra\!S^1$ denote the restriction of the standard conjugation on
$S^1\!\subset\!\C$.
If $\al\!:S^1\!\lra\!M$, let 
$$\al_{\fc}=\al\!\circ\!\fc_{S^1}\!:S^1 \lra M\,.$$
For any vector bundle $W\!\lra\!M$, the diffeomorphism~$\fc_{S^1}$ induces 
the commutative diagram
\BE{conjtriv_e}\begin{split}
\xymatrix{ \al^*W\ar[r]^{\ti\fc_{S^1}}\ar[d]& \al_{\fc}^*W\ar[d]&&
(z,w)\ar[r]\ar[d]& (\fc_{S^1}(z),w)\ar[d]& \forall~w\!\in\!W_{\al(z)}\\
S^1\ar[r]^{\fc_{S^1}}&S^1&& z\ar[r]& \fc_{S^1}(z)& \forall\,z\!\in\!S^1\,.}
\end{split}\EE
If $[\al]\!\in\!H_1(M;\Z)$ is a two-torsion class, $[\al_c]\!=\![\al]$.
The next lemma describes the action of the above commutative diagram on 
homotopy classes of trivializations.

\begin{lmm}\label{subspinch_lmm}
Let $W\!\lra\!M$ be an oriented real vector bundle with a chosen spin sub-structure
and $\al\!:S^1\!\lra\!M$ be a representative of a two-torsion element of~$H_1(M;\Z)$. 
The commutative diagram~\eref{conjtriv_e} takes the chosen homotopy class of trivializations
of~$\al^*W$ to the chosen homotopy class of trivializations of~$\al_{\fc}^*W$
if and only if $\flr{w_2(W),[\al]_{\Z_2}}\!=\!0$,
with $\flr{w_2(W),[\al]_{\Z_2}}$ defined as in~\eref{Torspindfn_e}.
In particular, if $w_2(W)\!=\!\ka^2$ for some $\ka\!\in\!H^1(M;\Z_2)$,
the commutative diagram~\eref{conjtriv_e} respects a spin sub-structure
on~$W$ if and only if $\lr{\ka,[\al]_{\Z_2}}\!=\!0$.
\end{lmm}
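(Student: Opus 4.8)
The plan is to identify both sides of the asserted equivalence with the obstruction to trivializing $W$ over a suitable closed unorientable surface. Since $W$ carries a spin sub-structure, $w_2(W)$ is a spin class by Lemma~\ref{spinsubstr_lmm}, so $\flr{w_2(W),[\al]_{\Z_2}}$ is defined via~\eref{Torspindfn_e}. First I would reduce to $\rk_{\R}W\!\ge\!3$: if $\rk_{\R}W\!\le\!1$ then $W$ is trivial and the claim is vacuous, while if $\rk_{\R}W\!=\!2$ the spin sub-structure, the class $w_2(W)$, and the operation~\eref{conjtriv_e} are all unchanged upon replacing $W$ by $W\!\oplus\!M\!\times\!\R$. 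So assume $\rk_{\R}W\!=\!k\!\ge\!3$; then an oriented rank-$k$ real vector bundle over $S^1$ has exactly two homotopy classes of trivializations, permuted by the generator of $\pi_1(\SO(k))\!=\!\Z_2$, and~\eref{conjtriv_e}, which implements pull-back along $\fc_{S^1}$ on trivializations, induces a bijection of this two-element set (its inverse being the analogous operation attached to~$\al_{\fc}$, since $(\al_{\fc})_{\fc}\!=\!\al$).

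Next, using $2[\al]\!=\!0\!\in\!H_1(M;\Z)$, a minor refinement of the construction preceding Lemma-Definition~\ref{spinpair_dfn}---choosing the common boundary loop to be $\al$ itself, e.g.\ by realizing the null-homologous loop $\al^2$ by a compact oriented surface with one boundary component and attaching it to a pair of pants two of whose boundary circles map by~$\al$---produces a compact connected oriented surface $\Si_0$ with boundary components $(\prt\Si_0)_1,(\prt\Si_0)_2$, orientation-preserving diffeomorphisms $\vph_1\!:S^1\!\lra\!(\prt\Si_0)_1$ and $\vph_2\!:S^1\!\lra\!(\prt\Si_0)_2$, and a continuous map $F\!:\Si_0\!\lra\!M$ with $F\!\circ\!\vph_1\!=\!F\!\circ\!\vph_2\!=\!\al$, such that the induced map $\hat F$ from the closed unorientable surface $\hat\Si\!=\!\Si_0/(\vph_1(z)\!\sim\!\vph_2(z))$ satisfies $\hat F_*b_{\hat\Si}\!=\![\al]$. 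By~\eref{Torspindfn_e} and naturality of Stiefel-Whitney classes, $\flr{w_2(W),[\al]_{\Z_2}}\!=\!\blr{w_2(\hat F^*W),[\hat\Si]_{\Z_2}}$; since $\hat F^*W$ is an oriented rank-$k$ real vector bundle over the closed connected surface $\hat\Si$, it is trivial if and only if $w_2(\hat F^*W)\!=\!0$, equivalently if and only if this number vanishes. I would then apply Definition~\ref{subspinbnd_dfn}\ref{spindfn_it} to the configuration $(\Si_0,\vph_1\!\circ\!\fc_{S^1},\vph_2,F)$---for which $\vph_1\!\circ\!\fc_{S^1}$ is orientation-reversing onto $(\prt\Si_0)_1$, $\vph_2$ is orientation-preserving onto $(\prt\Si_0)_2$, $F\!\circ\!(\vph_1\!\circ\!\fc_{S^1})\!=\!\al_{\fc}$, and $F\!\circ\!\vph_2\!=\!\al$---to obtain a trivialization $\Phi$ of $F^*W\!\lra\!\Si_0$ whose pull-back by $\vph_1\!\circ\!\fc_{S^1}$ lies in the chosen homotopy class of trivializations for $\al_{\fc}$ and whose pull-back by $\vph_2$ lies in the chosen homotopy class for~$\al$.

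It then remains to compare trivializations. Since $F\!\circ\!\vph_1\!=\!F\!\circ\!\vph_2\!=\!\al$, both $\vph_1^*\Phi$ and $\vph_2^*\Phi$ are trivializations of $\al^*W$, and the bundle $\hat F^*W\!\lra\!\hat\Si$---obtained from the $\Phi$-trivialized bundle $F^*W\!\lra\!\Si_0$ by gluing its two boundary restrictions along $\vph_1(z)\!\leftrightarrow\!\vph_2(z)$ by the identity on $W$-fibers---is trivial if and only if the class of $(\vph_1^*\Phi)(\vph_2^*\Phi)^{-1}$ in $\pi_1(\SO(k))\!=\!\Z_2$ vanishes; this class is independent of the choice of $\Phi$, as the two boundary restrictions of any map $\Si_0\!\lra\!\SO(k)$ are homotopic through~$\Si_0$. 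Now $\vph_2^*\Phi$ lies in the chosen homotopy class for~$\al$, whereas the image of $\vph_1^*\Phi$ under~\eref{conjtriv_e} equals $(\vph_1\!\circ\!\fc_{S^1})^*\Phi$, which lies in the chosen homotopy class for~$\al_{\fc}$; since~\eref{conjtriv_e} is injective on homotopy classes, it sends the chosen class for~$\al$ to the chosen class for~$\al_{\fc}$ if and only if $\vph_1^*\Phi$ and $\vph_2^*\Phi$ lie in the same homotopy class, i.e.\ if and only if $\hat F^*W$ is trivial, i.e.\ if and only if $\flr{w_2(W),[\al]_{\Z_2}}\!=\!0$. This proves the first assertion; the second follows by taking a real line bundle $W'\!\lra\!M$ with $w_1(W')\!=\!\ka$, so that $w_2(W)\!=\!\ka^2$, and invoking~\eref{flrka_e}. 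The $\pi_1(\SO(k))$-monodromy computation is routine; I expect the subtleties to lie in the orientation conventions---which parametrizations are orientation-preserving, and why the construction yields an unorientable $\hat\Si$ on which the image of the boundary circles represents $b_{\hat\Si}$---and in spelling out the refinement producing the surface~$\Si_0$ with common boundary loop exactly~$\al$.
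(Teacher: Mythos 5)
Your argument is correct and is essentially the paper's own: both construct a compact oriented surface with two boundary circles mapping to $\al$ (in your normalization, both via orientation-preserving parametrizations; in the paper's, one orientation-reversing with $F\circ\vph_1=\al$, the other orientation-preserving with $F\circ\vph_2=\al_{\fc}$ — the same geometric configuration), glue along the boundaries to produce the unorientable closed surface $\hat\Si$ with $\hat F_*b_{\hat\Si}=[\al]$, invoke Definition~\ref{subspinbnd_dfn}\ref{spindfn_it} to produce a trivialization of $F^*W$ compatible with the chosen classes, and identify the relative sign of the two boundary restrictions with $\lr{w_2(\hat F^*W),[\hat\Si]_{\Z_2}}$ via the $\pi_1(\SO(k))$-clutching argument. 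The reduction to rank~$\ge3$ and the extra detail on constructing $\Si_0$ are fine but not different in substance, and the final appeal to~\eref{flrka_e} for the $w_2(W)=\ka^2$ case matches the paper (the detour through a real line bundle $W'$ is unnecessary — \eref{flrka_e} applies directly).
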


\begin{proof}
Since $\al$ represents a two-torsion element of $H_1(M;\Z)$, there exist
\begin{enumerate}[label=$\bullet$,leftmargin=*]
\item a connected oriented surface~$\Si$ with boundary components $(\prt\Si)_1$
and $(\prt\Si)_2$, 
\item an orientation-reversing diffeomorphism $\vph_1\!:S^1\!\lra\!(\prt\Si)_1$,
\item an orientation-preserving diffeomorphism $\vph_2\!:S^1\!\lra\!(\prt\Si)_2$, and
\item a continuous map $F\!:\Si\!\lra\!M$, 
\end{enumerate}
such that $F\!\circ\!\vph_1\!=\!\al$ and $F\!\circ\!\vph_2\!=\!\al_{\fc}$.
The map~$F$ descends to a continuous~map
$$\hat{F}\!:\hat\Si=\Si/\!\!\sim\lra M, \qquad 
z\sim\vph_2\big(\fc_{S^1}(\vph_1^{-1}(z))\big)~\forall\,z\!\in\!(\prt\Si)_1\,.$$
Since the diffeomorphism $\vph_2\!\circ\!\fc_{S^1}\!\circ\!\vph_1^{-1}\!:(\prt\Si)_1\!\lra\!(\prt\Si)_2$
is orientation-preserving, the closed compact surface~$\hat\Si$ is not orientable.
We note~that 
$$\hat{F}^*W=F^*W/\!\!\sim, \qquad v\sim
\ti\fc_{S^1}(v)~\forall\,v\!\in\!\al^*W.$$
Since $\pi_1(\SO(k))\!=\!\Z_2$ for $k\!\ge\!3$ and there is an oriented vector bundle
over~$\hat\Si$ with a nonzero~$w_2$, 
the oriented vector bundles over $\hat\Si$ of rank at least~3 are classified by 
their~$w_2$.
Since the commutative diagram~\eref{conjtriv_e} respects the homotopy classes
of the restrictions of a trivialization of $F^*W\!\lra\!\Si$ 
to~$(\prt\Si)_1$ and~$(\prt\Si)_2$ if and only~if 
$\hat{F}^*W\!\lra\!\hat\Si$ is a trivial vector bundle, 
it follows that this is the case if and only~if
$$0=\blr{w_2(\hat{F}^*W),[\Si]_{\Z_2}}=\blr{w_2(W),\hat{F}_*[\Si]_{\Z_2}}
\equiv\flr{w_2(W),[\al]_{\Z_2}}.$$
The last claim of the lemma now follows from~\eref{flrka_e}.
\end{proof}

\subsection{Moduli spaces of open maps}
\label{MSopen_subs}

\noindent
We now define a relative version of the spin sub-structure of Definition~\ref{subspinbnd_dfn}
and show that it canonically induces orientations of the determinant lines of 
real Cauchy-Riemann operators in open Gromov-Witten theory.

\begin{dfn}\label{quasispin_dfn}
Let $L$ be a submanifold of a manifold~$X$.
\begin{enumerate}[label=(\arabic*),ref=\arabic*,leftmargin=*]
\item\label{str_it} A \sf{relative spin sub-structure} on a real oriented vector bundle 
$F\!\lra\!L$ consists~of
\begin{enumerate}[label=(\ref{str_it}\alph*),leftmargin=*]
\item a spin sub-structure on a real oriented bundle $W\!\lra\!L$, 
\item a real oriented vector bundle $E\!\lra\!X_{[3]}$, where $X_{[3]}$ is the 3-skeleton
of~$X$ in a triangulation extending a triangulation of~$L$, and
\item a spin structure on $F\!\oplus\!W\!\oplus\!E \!\lra\!L\!\cap\!X_{[3]}$.
\end{enumerate}
\item A \sf{relative pin sub-structure} on a real vector bundle 
$F\!\lra\!L$ is a relative spin sub-structure on the oriented vector bundle 
$F\!\oplus\!3\La_{\R}^{\top}F$.
\item  A \sf{relative spin/pin sub-structure} on $(X,L)$ 
is a relative spin/pin sub-structure on the real vector bundle $TL\!\lra\!L$.
\item The pair $(X,L)$ is \sf{relatively subspin/subpin}
if $(X,L)$ admits a relative spin/pin structure.
\end{enumerate}
\end{dfn}

\noindent
A relative spin structure on $F\!\lra\!L$ in the sense of \cite[Definition~3.1.1]{FOOO}
is a relative spin sub-structure on $F\!\lra\!L$ with $W\!=\!L\!\times\!\{0\}$.
The next corollary gives a purely cohomological criterion for the existence of 
a spin/pin sub-structures.

\begin{crl}\label{spinsubstr_crl}
Let $L$ be a submanifold of a manifold~$X$ and $F\!\lra\!L$ be a real vector bundle.
\begin{enumerate}[label=(\arabic*),leftmargin=*]
\item If $F$ admits a  relative spin or pin sub-structure, then
$w_2(F)=w+\vp|_L$ for some spin class $w\!\in\!H^2(L;\Z_2)$ and for some
$\vp\!\in\!H^2(X;\Z_2)$.
\item If $w_2(F)=w+\vp|_L$ for some spin class $w\!\in\!H^2(L;\Z_2)$ and for some $\vp\!\in\!H^2(X;\Z_2)$, 
then $F$ admits a pin sub-structure.
If in addition $F$ is orientable, then $F$ admits a relative spin sub-structure.
\end{enumerate}
\end{crl}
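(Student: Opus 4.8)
The plan is to reduce both implications to the Whitney sum formula, the realization statements for spin classes (Lemmas~\ref{spinclass_lmm} and~\ref{spinsubstr_lmm}), and the elementary fact that the inclusion of the $3$-skeleton $X_{[3]}\!\hookrightarrow\!X$ — and of $L\cap X_{[3]}\!\hookrightarrow\!L$, which is the $3$-skeleton of~$L$ — induces isomorphisms on $H^i(\,\cdot\,;\Z_2)$ for $i\!\le\!2$.

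For~(1), I would first treat the relative spin case. Unwinding Definition~\ref{quasispin_dfn}, a relative spin sub-structure on the oriented bundle $F$ consists of an oriented $W\!\lra\!L$ with a spin sub-structure, an oriented $E\!\lra\!X_{[3]}$, and a spin structure on $F\!\oplus\!W\!\oplus\!E$ over $L\cap X_{[3]}$. Existence of the last forces $w_2(F\!\oplus\!W\!\oplus\!E)\!=\!0$ there, and since $F$, $W$, $E$ are oriented the Whitney formula collapses this to $w_2(F)\!=\!w_2(W)\!+\!w_2(E)|_L$ in $H^2(L;\Z_2)$. By Lemma~\ref{spinsubstr_lmm} the hypothesis that $W$ carries a spin sub-structure makes $w\!:=\!w_2(W)$ a spin class, while $w_2(E)|_L$ is the restriction of some $\vp\!\in\!H^2(X;\Z_2)$; this is the claim. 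The relative pin case follows by applying the above to the oriented bundle $F\!\oplus\!3\La_{\R}^{\top}F$, the only extra input being the Whitney identity $w_2(F\!\oplus\!3\La_{\R}^{\top}F)\!=\!w_2(F)$, which comes from $w(3\La_{\R}^{\top}F)\!=\!(1\!+\!w_1(F))^3\!=\!1\!+\!w_1(F)\!+\!w_1(F)^2\!+\!w_1(F)^3$ modulo~$2$.

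For~(2), assume $w_2(F)\!=\!w\!+\!\vp|_L$ with $w\!\in\!H^2(L;\Z_2)$ spin and $\vp\!\in\!H^2(X;\Z_2)$; here the plan is to construct the auxiliary data by hand. By Lemma~\ref{spinclass_lmm} I would take a complex line bundle $W\!\lra\!L$ with $w_2(W)\!=\!w$, which by Lemma~\ref{spinsubstr_lmm} carries a spin sub-structure (in the sense of Definition~\ref{subspinbnd_dfn}, after stabilizing with a trivial line where a rank bound of~$3$ is needed). For $E$ I would use that over the $3$-complex $X_{[3]}$ oriented rank-$k$ vector bundles with $k\!\ge\!3$ are classified by $w_2$ (since $\pi_1(BSO(k))\!=\!\pi_3(BSO(k))\!=\!0$ and $\pi_2(BSO(k))\!=\!\Z_2$), so there is an oriented $E\!\lra\!X_{[3]}$ with $w_2(E)\!=\!\vp|_{X_{[3]}}$. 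Then, writing $G\!=\!F\!\oplus\!3\La_{\R}^{\top}F$ (oriented, with $w_2(G)\!=\!w_2(F)$), all of $G$, $W$, $E$ are oriented and over $L\cap X_{[3]}$ one has $w_1(G\!\oplus\!W\!\oplus\!E)\!=\!0$ and $w_2(G\!\oplus\!W\!\oplus\!E)\!=\!(w\!+\!\vp|_L)\!+\!w\!+\!\vp|_L\!=\!0$; hence $G\!\oplus\!W\!\oplus\!E$ admits a spin structure over $L\cap X_{[3]}$, which is exactly a relative spin sub-structure on $G$, i.e.\ a relative pin sub-structure on~$F$, as claimed. If $F$ is moreover orientable, I would fix an orientation and repeat the construction with $G$ replaced by $F$ itself, obtaining a relative spin sub-structure on $F$.

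The computations involved — the two Whitney identities and the Stiefel--Whitney bookkeeping for direct sums of oriented bundles — are routine. The one step demanding genuine care, and the closest thing to an obstacle, is the realization of~$\vp$ as $w_2$ of an \emph{oriented} vector bundle over the $3$-skeleton of~$X$, for which I would invoke the standard identification $[\,X_{[3]},BSO(k)\,]\!\cong\!H^2(X_{[3]};\Z_2)$ valid for $k\!\ge\!3$; this has to be combined with a consistent identification of the cohomology groups $H^i(\,\cdot\,;\Z_2)$, $i\!\le\!2$, of~$X$, $X_{[3]}$, $L$ and $L\cap X_{[3]}$, and with keeping track of the rank hypotheses in Definition~\ref{subspinbnd_dfn} by stabilizing with trivial summands when necessary.
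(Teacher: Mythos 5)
Your proof is correct and follows essentially the same route as the paper's: Whitney sum and Lemma~\ref{spinsubstr_lmm} for part~(1), and Lemmas~\ref{spinclass_lmm}, \ref{spinsubstr_lmm} together with the obstruction-theoretic realization of $\vp$ by an oriented bundle over $X_{[3]}$ for part~(2), with the pin case reduced to the spin case via $F\oplus3\La_{\R}^{\top}F$. You merely spell out a few verifications the paper leaves implicit — the identity $w_2(F\oplus3\La_{\R}^{\top}F)=w_2(F)$, the rank-$2$ stabilization in Definition~\ref{subspinbnd_dfn}, and the identification $[X_{[3]},B\SO(k)]\cong H^2(X_{[3]};\Z_2)$ in place of the paper's $K(\Z_2,2)$ footnote.
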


\begin{proof}
(1) If $F$ admits a  relative spin sub-structure as in Definition~\ref{quasispin_dfn},
the vector bundle $F\!\oplus\!W\!\oplus\!E$ over $L\!\cap\!X_{[3]}$ 
admits a spin structure and~so
\begin{gather*}
0=w_2\big((F\!\oplus\!W\!\oplus\!E)|_{L\cap X_{[3]}}\big)
=w_2(F)|_{L\cap X_{[3]}}+w_2(W)|_{L\cap X_{[3]}}+w_2(E)|_{L\cap X_{[3]}}\\
\Lra\qquad w_2(F)=w_2(W)+\vp|_L
\end{gather*}
for some $\vp\!\in\!H^2(X;\Z_2)$.
Since $W$ admits a spin sub-structure, $w_2(W)$ is a spin class by 
Lemma~\ref{spinsubstr_lmm}.
The same reasoning, with $F$ replaced by $F\!\oplus\!3\La_{\R}^{\top}F$,
applies in the pin case.\\

\noindent
(2) It is sufficient to consider the orientable case.
By Lemma~\ref{spinclass_lmm}, $w\!=\!w_2(W)$ for some oriented vector bundle $W\!\lra\!L$.
By Lemma~\ref{spinsubstr_lmm}, $W$ admits a spin sub-structure.
By the usual obstruction theory reasoning, there exists an oriented rank~3
vector bundle $E\!\lra\!X_{[3]}$ such that 
$w_2(E)\!=\!\vp|_{X_{[3]}}$.\footnote{There is
a continuous map $f\!:X\!\lra\!K(\Z_2,2)$ such that $\vp\!=\!f^*\Om$,
where $K(\Z_2,2)$ is the Eilenberg-MacLane space with $\pi_2\!=\!\Z_2$ and
$\Om$ is the generator of $H^2(K(\Z_2,2);\Z_2)$.
It can be assumed that $f$ takes $X_{[3]}$ to the 3-skeleton $K(\Z_2,2)_{[3]}$
of~$K(\Z_2,2)$.
Since $K(\Z_2,2)$ and $\bB\SO(3)$ are simply-connected with $\pi_2\!=\!\Z_2$,
there is a continuous map $F\!: K(\Z_2,2)_{[3]}\!\lra\!\bB\SO(3)$ inducing
an isomorphism on~$\pi_2$ and thus on the second $\Z$-homology.
This implies that $\Om\!=\!F^*w_2(\gm_3)$, where $\gm_3\!\lra\!\bB\SO(3)$ is 
the tautological oriented rank~3 vector bundle and so 
$\vp|_{X_{[3]}}\!=\!w_2(f^*F^*\gm_3)$.\label{ObsRea_ftnt}}
Since 
\begin{equation*}\begin{split}
w_2\big((F\!\oplus\!W\!\oplus\!E)|_{L\cap X_{[3]}}\big)
&=w_2(F)|_{L\cap X_{[3]}}+w_2(W)|_{L\cap X_{[3]}}+w_2(E)|_{L\cap X_{[3]}}\\
&=w_2(F)|_{L\cap X_{[3]}}+w|_{L\cap X_{[3]}}+\vp|_{L\cap X_{[3]}} =0,
\end{split}\end{equation*}
the vector bundle $F\!\oplus\!W\!\oplus\!E$ over $L\!\cap\!X_{[3]}$ 
admits a spin structure.
\end{proof}

\noindent
Let $X$ be a smooth manifold, $L\!\subset\!X$ be a smooth submanifold,
and $\Si$ be a compact bordered surface,
with ordered boundary components $(\prt\Si)_1,\ldots,(\prt\Si)_m$.
Given
\BE{btuple_eq2}
\b=(\be,b_1,\ldots,b_m)
\in H_2(X,L;\Z)\oplus H_1(L;\Z)^m\,,\EE
we define
$$\fB_{\Si}(X,L,\b)=\big\{u\!\in\!C^{\i}(\Si,X)\!:\,u(\prt\Si)\!\subset\!L,\,
 u_*[\Si,\prt\Si]=\be,~u_*[(\prt\Si)_i]=b_i~\forall\,i\!=\!1,\ldots,m\big\}.$$
If in addition $\k\!=\!(k_1,\ldots,k_m)$ is a tuple of nonnegative integers,  let
$$\fB_{\Si;\k}(X,L,\b)=\fB_{\Si}(X,L,\b)\times
\prod_{i=1}^m\!\!\big((\prt\Si)_i^{k_i}-\De_{i,k_i}\big),$$
where 
$$\De_{i,k_i}=\big\{(x_{i,1},\ldots,x_{i,k_i})\!\in\!(\prt\Si)_i^{k_i}\!:~
x_{i,j'}\!=\!x_{i,j}~\tn{for some}~j,j'\!=\!1\ldots,k_i,~j\!\neq\!j'\big\}$$
is the big diagonal.
Let
\begin{equation*}\begin{split}
\cH_{\Si,\k}(X,L,\b)&=
\big(\fB_{\Si,\k}(X,L,\b)\!\times\!\cJ_{\Si}\big)\big/\cD_{\Si},\\
\mf_{\Si,\k}(X,L,\b;J)&=
\big\{[u,\x_1,\ldots,\x_m,\fJ]\!\in\!\cH_{\Si,\k}(X,L,\b)\!:~
\dbar_{J,\fJ}u\!=\!0\big\},
\end{split}\end{equation*}
if $J$ is an almost complex structure on~$X$.
For each $i\!=\!1,\ldots,m$ and $j\!=\!1,\ldots,k_i$, we define 
$$\ev_{i,j}\!:\,\cH_{\Si,\k}(X,L,\b)\lra L$$
to be the evaluation map at the $j$-th marked point of the $i$-th boundary component.

\begin{prp}\label{SpinOrient_prp}
Suppose $X$ is a smooth manifold, $L\!\subset\!X$ is a smooth submanifold,
$V\!\lra\!X$ is a complex vector bundle, $V_{\R}\!\subset\!V|_L$
is a totally real subbundle, $\Si$ is a compact oriented surface
 with ordered boundary components $(\prt\Si)_1,\ldots,(\prt\Si)_m$,
and $x_i\!\in\!(\prt\Si)_i$ for each $i\!=\!1,\ldots,m$.

\begin{enumerate}[label=(\arabic*),leftmargin=*]
\item For every complex structure $\fJ$ on~$\Si$ and 
every map $u\!:(\Si,\prt\Si)\!\lra\!(X,L)$,
a relatively pin sub-structure on $V_{\R}\!\lra\!L$
canonically induces an orientation on the twisted determinant line 
$$\wt\det(D_u)= \det(D_u)\otimes\hspace{-.3in}
\bigotimes_{\lr{u^*w_1(V_{\R}),(\prt\Si)_i}=0}
\hspace{-.3in}\big(\La_{\R}^{\top}V_{\R}|_{u(x_i)}\big)^*$$
of a real Cauchy-Riemann operator~$D_u$ on the bundle pair
$(u^*V,u|_{\prt\Si}^*V_{\R})\lra(\Si,\prt\Si)$. 

\item Let $\b$ be as in~\eref{btuple_eq2} and
$\k\!\equiv\!(k_1,\ldots,k_m)$ be a tuple of non-negative integers
such that $k_i\!>\!0$ for each~$i$ with $\lr{w_1(V_{\R}),b_i}\!=\!0$.
If $V_{\R}\!\lra\!L$ is relatively subpin, the twisted determinant line bundle 
$$\wt\det(D_{V,V_{\R}})\lra \cH_{\Si,\k}(X,L,\b)$$
induced by $(V,V_{\R})$ as in \cite[Remark~1.3]{Ge}, 
with $x_i\!=\!x_{i;1}$, is orientable.

\item  Let $\b$ be as in~\eref{btuple_eq2} and
$\k\!\equiv\!(k_1,\ldots,k_m)$ be a tuple of non-negative integers.
If $V_{\R}\!\lra\!L$ is relatively  subspin, the determinant line bundle 
$$\det(D_{V,V_{\R}})\lra \cH_{\Si,\k}(X,L,\b)$$
is orientable. 
\end{enumerate}
\end{prp}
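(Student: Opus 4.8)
The plan is to adapt the orientation procedure of \cite[Section~8.1.1]{FOOO} and \cite[Section~2]{Sol} to spin sub-structures, feeding in the auxiliary bundle $W$ via the constructions of Section~\ref{SpinSubStr_subs}. Throughout, set $V_\R'\!=\!V_\R\!\oplus\!3\La_\R^\top V_\R$; since $\La_\R^\top V_\R'\!=\!(\La_\R^\top V_\R)^{\otimes4}$ is a square, $V_\R'$ is canonically oriented, and $\rk_\R V_\R'\!\ge\!3$. By \cite[Proposition~5.4]{GZ}, specialized to the case that all boundary involutions are trivial, an orientation of $\wt\det(D_u)$ for a real Cauchy-Riemann operator $D_u$ on $(u^*V,u^*V_\R)\!\lra\!(\Si,\prt\Si)$ is induced by a choice, for each boundary component $(\prt\Si)_i$, of a homotopy class of trivialization of $u^*V_\R'|_{(\prt\Si)_i}$, and reversing one such choice reverses the orientation. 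So for~(1) it suffices to produce such a collection canonically from a relative pin sub-structure on $V_\R$, and for~(2) and~(3) to make the construction vary continuously over $\cH_{\Si,\k}(X,L,\b)$.

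To build the collection, I would use that a relative pin sub-structure on $V_\R$ consists of a spin sub-structure on an oriented bundle $W\!\lra\!L$, an oriented bundle $E\!\lra\!X_{[3]}$, and a spin structure on $(V_\R'\!\oplus\!W\!\oplus\!E)|_{L\cap X_{[3]}}$. Given $u\!:(\Si,\prt\Si)\!\lra\!(X,L)$, first homotope $u$ so that $u(\Si)\!\subset\!X_{[3]}$ and $u(\prt\Si)\!\subset\!L\!\cap\!X_{[3]}$; this is possible and, by standard obstruction theory applied to $\Si$ and $\Si\!\times\!\bI$ relative to the inclusion of the $3$-skeleton (using that the triangulation of $X$ extends one of $L$), unique up to homotopy through such maps. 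The spin structure then assigns to each loop $u|_{(\prt\Si)_i}$ a homotopy class $\tau_i$ of trivialization of $u^*(V_\R'\!\oplus\!W\!\oplus\!E)|_{(\prt\Si)_i}$; the spin sub-structure on $W$ assigns a homotopy class $\si_i$ of trivialization of $u^*W|_{(\prt\Si)_i}$; and $u^*E\!\lra\!\Si$, being a bundle over a surface with boundary, admits a trivialization whose restriction to $(\prt\Si)_i$ has a homotopy class $\rho_i$. Cancelling $\si_i$ and $\rho_i$ against $\tau_i$ yields a stable trivialization of the oriented bundle $u^*V_\R'|_{(\prt\Si)_i}$, hence, since $\rk_\R V_\R'\!\ge\!3$, a well-defined homotopy class of trivialization of $u^*V_\R'|_{(\prt\Si)_i}$; this collection is the required orienting collection.

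The crux of~(1) is that the resulting orientation of $\wt\det(D_u)$ is independent of the auxiliary choices. The choice of homotopy of $u$ into $X_{[3]}$ contributes nothing by the uniqueness noted above. Two trivializations of $u^*E\!\lra\!\Si$ differ by some $g\!:\Si\!\lra\!\SO(\rk E)$, and $\rho_i$ is reversed precisely on those components on which $g|_{(\prt\Si)_i}$ is not null-homotopic; reversing $\rho_i$ reverses the trivialization of $u^*V_\R'|_{(\prt\Si)_i}$ and hence, by \cite[Proposition~5.4]{GZ}, the induced orientation. The parity of the set of such components equals $\lr{g^*\om,[\prt\Si]_{\Z_2}}$, where $\om$ generates $H^1(\SO(\rk E);\Z_2)\!\cong\!\Z_2$; but $[\prt\Si]_{\Z_2}\!=\!0$ in $H_1(\Si;\Z_2)$, being the boundary of $[\Si,\prt\Si]_{\Z_2}$, so this parity is even and the orientation is unchanged. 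This proves~(1). For~(2), the same construction can be carried out over a contractible neighborhood of any point of $\cH_{\Si,\k}(X,L,\b)$ with all the choices depending continuously on $[u,\x,\fJ]$, producing a continuous orientation of $\wt\det(D_{V,V_\R})$ there; by~(1) these local orientations agree on overlaps, so $\wt\det(D_{V,V_\R})$ is orientable, the hypothesis $k_i\!>\!0$ for $\lr{w_1(V_\R),b_i}\!=\!0$ guaranteeing that the twisting factor $(\La_\R^\top V_\R|_{u(x_{i;1})})^*$ is present exactly on the components where $\wt\det$ requires it. For~(3), a relative spin sub-structure on $V_\R$ is in particular a relative pin sub-structure on $V_\R$, since $V_\R$ is oriented and hence $\La_\R^\top V_\R$ and $V_\R'\!\cong\!V_\R\!\oplus\!3(L\!\times\!\R)$ are trivial stabilizations; moreover $V_\R$ oriented makes each line $\La_\R^\top V_\R|_{u(x_i)}$ canonically oriented, so the orientation of $\wt\det(D_u)$ built above is, canonically and independently of the marked points, an orientation of $\det(D_u)$. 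Running this continuously as in~(2) shows $\det(D_{V,V_\R})$ is orientable over $\cH_{\Si,\k}(X,L,\b)$ for every~$\k$.

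The main obstacle is the independence-of-choices step in~(1): tracking how a change in the trivialization of $u^*E$, or in the homotopy of $u$ into $X_{[3]}$, propagates through the cancellation $\tau_i\!-\!\si_i\!-\!\rho_i$ and into the orientation of $\wt\det(D_u)$, and verifying that every contribution is controlled by a class paired with the null-homologous cycle $[\prt\Si]_{\Z_2}$. Once this is in place, the family statements~(2) and~(3) follow by the routine observation that the choices can be made continuously within the framework of \cite{Ge,GZ}.
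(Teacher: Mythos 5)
Your proof is correct and takes essentially the same route as the paper's. The paper first reduces to the orientable case via the proof of \cite[Theorem~1.1]{Ge} and then works with the \cite[Proposition~8.1.4]{FOOO}-style orientation on $\det(D_u)$ determined by boundary trivializations of $u_0|_{(\prt\Si)_i}^*V_\R$, whereas you work directly with $V_\R'\!=\!V_\R\!\oplus\!3\La_\R^\top V_\R$ and the \cite[Proposition~5.4]{GZ} formulation for $\wt\det(D_u)$; these are two packagings of the same mechanism, and both hinge on the same two facts: (i) the orienting datum at each boundary component is obtained by cancelling the $W$- and $E$-trivializations against the spin-structure trivialization of the sum, with the $E$-piece required to extend over $\Si$, and (ii) two global trivializations of $u^*E\!\lra\!\Si$ restrict differently on an even number of boundary components, which kills the ambiguity because a single flip reverses the orientation. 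The paper merely asserts (ii) and refers to \cite[Proposition~3.1]{Sol} for an alternative presentation, while you prove it explicitly via the pairing of $g^*\om$ with $[\prt\Si]_{\Z_2}\!=\!0\!\in\!H_1(\Si;\Z_2)$; this is exactly the content behind the paper's assertion, so the proofs match in substance.
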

 
\begin{proof}
The proof of \cite[Theorem~1.1]{Ge} reduces the non-orientable case to 
the orientable case, with the difference accounted for by twisting the
determinant.
Thus, we can assume that $V_{\R}\!\lra\!L$ is orientable.
By the proof of \cite[Proposition~8.1.4]{FOOO}, 
an orientation on $\det(D_u)$ is canonically determined by 
a choice of homotopy classes of trivializations of $u_0|_{(\prt\Si)_i}^*V_{\R}$
for each boundary component $(\prt\Si)_i$ of~$\Si$ for a map 
$$u_0\!:(\Si,\prt\Si)\lra (X_{[3]},X_{[3]}\cap L).$$
Given a relatively spin sub-structure on $F\!=\!V_{\R}$ as in Definition~\ref{quasispin_dfn},
a homotopy class of trivializations 
of $u_0|_{(\prt\Si)_i}^*V_{\R}$ is canonically determined~by
\begin{enumerate}[label=$\bullet$,leftmargin=*]
\item the homotopy class of trivializations of $u_0|_{(\prt\Si)_i}^*W$ provided by the spin sub-structure on~$W$ and
\item a homotopy class of trivializations of $u_0|_{(\prt\Si)_i}^*E$.
\end{enumerate}
We choose a collection of homotopy classes of trivializations of $u_0|_{(\prt\Si)_i}^*E$,
one for each boundary component, so that it is the restriction of a trivialization
of $u^*E\!\lra\!\Si$.
In the case $\Si\!=\!D^2$ considered in \cite[Proposition~8.1.4]{FOOO},
there is only one such collection.
In general, any two such collections differ by changing the chosen homotopy class
of trivializations on an even number of boundary components.
Since changing the homotopy class of trivializations along a single boundary component
changes the orientation of~$\det(D_u)$, 
any two collections of trivializations of $u_0|_{(\prt\Si)_i}^*E$ that come from
a trivialization of $u_0^*E\!\lra\!\Si$ induce the same orientation on~$\det(D_u)$;
see the proof of \cite[Proposition~3.1]{Sol} for a different presentation of
this point.
This establishes~(1), which immediately implies~(2) and~(3).
%Thus, a relatively spin sub-structure on $F\!\lra\!L$ canonically determines
%an orientation on~$\det(D_u)$.
\end{proof}

\begin{crl}\label{SpinOrient_crl}
Suppose $(X,\om)$ is a symplectic manifold, $L\!\subset\!X$ is a Lagrangian submanifold,
$J\!\in\!\cJ_{\om}$, $\Si$ is a compact oriented surface
with ordered boundary components $(\prt\Si)_1,\ldots,(\prt\Si)_m$,
$\b$ is as in~\eref{btuple_eq2}, and $\k\!\equiv\!(k_1,\ldots,k_m)$ 
is a tuple of non-negative integers.

\begin{enumerate}[label=(\arabic*),leftmargin=*]
\item If $L$ is orientable and admits a relative spin sub-structure,
the moduli space $\mf_{\Si,\k}(X,\b;J)$ is orientable.
Furthermore, a choice of such a structure canonically determines an orientation
on $\mf_{\Si,\k}(X,\b;J)$.

\item If $k_i\!>\!0$ for each boundary component $(\prt\Si)_i$ with
$\lr{u^*w_1(TL),(\prt\Si)_i}\!=\!0$ and 
$L$ admits a relative pin sub-structure,
the orientation line bundle of $\mf_{\Si,\k}(X,\b;J)$ is isomorphic~to 
$$\bigotimes_{\lr{u^*w_1(TL),(\prt\Si)_i}=0}
\ev_{i;1}^*(\La_{\R}^{\top}TL).$$
Furthermore, a choice of such a structure determines such an isomorphism
up to homotopy.
\end{enumerate}
\end{crl}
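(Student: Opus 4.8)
The plan is to deduce both statements directly from Proposition~\ref{SpinOrient_prp} applied to the complex vector bundle $V\!=\!TX$ with the totally real subbundle $V_\R\!=\!TL\!\subset\!TX|_L$, so that $D_{V,V_\R}$ becomes the family of linearized Cauchy--Riemann operators of the $\dbar_{J,\fJ}$-equation along the maps parametrized by $\cH_{\Si,\k}(X,L,\b)$. The only content beyond Proposition~\ref{SpinOrient_prp} is the (standard) identification of the orientation line bundle of $\mf_{\Si,\k}(X,\b;J)$ with the restriction of $\det(D_{TX,TL})$, twisted by a canonically oriented line bundle coming from the domain and the boundary marked points.

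First I would set up this identification. Over $\cH_{\Si,\k}(X,L,\b)$ there is the forgetful map to the Deligne--Mumford moduli space $\cM_{\Si,\k}\!=\!\cJ_\Si/\cD_\Si$ of bordered Riemann surfaces with ordered boundary components and $\k$ boundary marked points, whose fiber is the space of maps; at a $J$-holomorphic $[u,\x,\fJ]$ the tangent space of $\mf_{\Si,\k}(X,\b;J)$ is assembled from $\ker D_u$, $\cok D_u$, the tangent space of $\cM_{\Si,\k}$ at the class of the pointed domain, and the tangent spaces to the marked points on each $(\prt\Si)_i$ (when the domain is unstable the Deligne--Mumford factor is replaced by minus the Lie algebra of the connected automorphism group, $\PGL_2^0\R$ for $\Si\!=\!D^2$ or $S^1$ for the cylinder). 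By Lemma~\ref{DMorient_lmm}, together with the incorporation of marked points as in the proof of~\cite[Corollary~1.8]{Ge}, the ordering of the boundary components canonically orients $\cM_{\Si,\k}$; each $(\prt\Si)_i$ is oriented by the orientation of~$\Si$, so each factor $(\prt\Si)_i^{k_i}\!-\!\De_{i,k_i}$ is canonically oriented, as are the connected automorphism groups. Hence the orientation line bundle of $\mf_{\Si,\k}(X,\b;J)$ is canonically isomorphic to the restriction to the moduli space of $\det(D_{TX,TL})$, and equally to the restriction of $\wt\det(D_{TX,TL})$ tensored with $\bigotimes_{\lr{u^*w_1(TL),(\prt\Si)_i}=0}\ev_{i;1}^*\big(\La_\R^\top TL\big)$, since $\wt\det(D_u)\!=\!\det(D_u)\otimes\bigotimes_{\lr{u^*w_1(TL),(\prt\Si)_i}=0}\big(\La_\R^\top TL|_{u(x_i)}\big)^*$ in the sense of Proposition~\ref{SpinOrient_prp}(1) and $\La_\R^\top TL|_{u(x_i)}\!=\!\ev_{i;1}^*\big(\La_\R^\top TL\big)$ for $x_i\!=\!x_{i;1}$.

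With this in hand, for~(1) I would invoke Proposition~\ref{SpinOrient_prp}(3): since $TL$ is orientable and admits a relative spin sub-structure, $\det(D_{TX,TL})$ is orientable, hence so is $\mf_{\Si,\k}(X,\b;J)$; moreover Proposition~\ref{SpinOrient_prp}(1) canonically orients each $\det(D_u)$, and continuity in $[u,\x,\fJ]$ promotes this to a global orientation depending only on the chosen structure. For~(2) the hypothesis $k_i\!>\!0$ whenever $\lr{u^*w_1(TL),(\prt\Si)_i}\!=\!0$ is exactly the hypothesis of Proposition~\ref{SpinOrient_prp}(2), so a relative pin sub-structure makes $\wt\det(D_{TX,TL})$ orientable, and Proposition~\ref{SpinOrient_prp}(1) canonically orients it; combining this with the isomorphism of the previous paragraph yields the asserted isomorphism of the orientation line bundle with $\bigotimes_{\lr{u^*w_1(TL),(\prt\Si)_i}=0}\ev_{i;1}^*\big(\La_\R^\top TL\big)$, canonical up to homotopy once the structure is fixed.

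The main (and essentially the only) obstacle is making the identification in the second paragraph both precise and compatible with the orienting procedures used inside Proposition~\ref{SpinOrient_prp}: one must check that the splitting of the deformation theory of $\mf_{\Si,\k}$ genuinely separates the ``domain'' contribution as the canonically oriented Deligne--Mumford factor, and that this is consistent with the pinching and stabilization steps from the proofs of \cite[Theorem~1.1]{Ge} and \cite[Proposition~8.1.4]{FOOO} that orient $\det(D_u)$. Since this is the standard bridge between determinant lines of real Cauchy--Riemann operators and orientations of moduli spaces of open maps, and all the genuinely analytic input has already been packaged into Proposition~\ref{SpinOrient_prp}, what remains is bookkeeping with orientations, with the marked points and the unstable domains the only cases needing explicit attention.
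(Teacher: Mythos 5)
Your proposal is correct and follows the same route the paper intends: the paper's proof simply says both statements follow from Proposition~\ref{SpinOrient_prp} and points to the proof of \cite[Corollary~1.8]{Ge} for the standard identification of the orientation line of the moduli space with the (twisted) determinant line of $D_{TX,TL}$ via the forgetful map to the canonically oriented Deligne--Mumford space. What you write out in the second paragraph is precisely the content that reference is meant to supply.
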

 
\begin{proof}
Both statements follow from Proposition~\ref{SpinOrient_prp};
see the proof of \cite[Corollary~1.8]{Ge}. 
\end{proof}

\subsection{Moduli spaces of real maps}
\label{MSreal_subs}

\noindent
Let $(X,\om,\phi)$ be a symplectic manifold with an anti-symplectic involution
and $J$ be an $\om$-compatible almost complex structure on~$X$ such that 
$\phi^*J\!=\!-J$.
Every real map from a symmetric surface~$(\hat\Si,\si)$ to~$(X,\phi)$
can be represented by a map from an oriented sh-surface $(\Si,c)$,
but not uniquely in general.
This gives rise to coverings of moduli spaces of the former by 
moduli spaces of the latter.
These coverings are regular if the genus of~$\hat\Si$ is~0 or~1
or if $\hat\Si\!-\!\hat\Si^{\si}$ is disconnected;
there are 5 topological types of symmetric surfaces of genus~0 or~1.
Orientability of moduli spaces of maps from oriented sh-surfaces is addressed
in~\cite{GZ}.
Propositions~\ref{relsign_prp} and~\ref{transfsign_prp} determine
when the deck transformations of these coverings are orientation-preserving.
We apply them in this section to establish more general versions
of Theorems~\ref{g0orient_thm}, \ref{g1orient_thm}, and~\ref{seorient_thm}.\\

\noindent
If $(\Si,c)$ is an oriented sh-surface and~$\b$ is as in~\eref{btuple_eq}, 
denote by $\cP(\b)$ the set of~tuples obtained from the tuples
$$\b=\big(B,b_1,\ldots,b_{|c|_0},b_{|c|_0+1},\ldots,b_{|c|_0+|c|_1}\big)
\quad\hbox{and}\quad
\bar\b=\big(B,-b_1,\ldots,-b_{|c|_0},-b_{|c|_0+1},\ldots,-b_{|c|_0+|c|_1}\big)$$
by permuting the $b_1,\ldots,b_{|c|_0}$-entries and 
the $b_{|c|_0+1},\ldots,b_{|c|_0+|c|_1}$-entries (within each of the two sets).
If $|c|_1\!=\!0$, i.e.~$(\Si,c)$ is a bordered surface without crosscaps,
and~$\b$ is as in~\eref{btupldfn_e}, 
denote by $\cP(\b)$ the set of~tuples obtained from the tuples
$$\b=\big(\be,b_1,\ldots,b_{|c|_0}\big)
\qquad\hbox{and}\qquad
\bar\b=\big(-\phi_*\be,-b_1,\ldots,-b_{|c|_0}\big)$$
by permuting the $b_1,\ldots,b_{|c|_0}$-entries.
We define
$$\fM^{\cup}(X,\b;J)^{\phi,c}=\bigcup_{\b'\in\cP(\b)}\!\!\!\!\!\fM(X,\b';J)^{\phi,c},
\qquad
\fM_{\Si}^{\cup}(X,X^{\phi},\b;J)=\bigcup_{\b'\in\cP(\b)}\!\!\!\!\!
\fM_{\Si}(X,X^{\phi},\b';J)$$
in the two cases, respectively.
If $h\!:\Si\!\lra\!\Si$ is a diffeomorphism commuting with~$c$ on~$\prt\Si$, 
similarly to \eref{DMhdfn_e} we define
\BE{fMhdfn_e}\begin{aligned}
\fM_h\!: \fM^{\cup}(X,\b;J)^{\phi,c}&\lra \fM^{\cup}(X,\b;J)^{\phi,c},&\qquad 
[u,\fJ]&\lra \big[\phi^{|h|}\!\circ\!u\!\circ\!h,(-1)^{|h|}h^*\fJ\big],\\
\fM_h\!: \fM_{\Si}^{\cup}(X,X^{\phi},\b;J)&\lra \fM_{\Si}^{\cup}(X,X^{\phi},\b;J),&\qquad 
[u,\fJ]&\lra \big[\phi^{|h|}\!\circ\!u\!\circ\!h,(-1)^{|h|}h^*\fJ\big],
\end{aligned}\EE
with the notation as in~\eref{cHcMdfn_e}.
We will call these automorphisms \sf{the natural automorphisms~of} 
$\fM^{\cup}(X,\b;J)^{\phi,c}$ and $\fM_{\Si}^{\cup}(X,X^{\phi},\b;J)$, respectively.
 %If $\Si$ is a disk or a cylinder, we consider only the diffeomorphisms $h\!\in\!\cD_c^*$ 
%and their compositions with the standard conjugation~$\fc_{D^2}$ on~$D^2$,
%the standard conjugation~$\fc_{\Si}$ on the cylinder, and the standard 
%diffeomorphism~$\psi_{\Si}$ of the cylinder interchanging its boundary components;
%see Section~\ref{analys_subs}.
%If $\Si$ is not a disk or a cylinder, we consider only the diffeomorphisms 
%$h\!\in\!\cD_c^*$ 

\begin{crl}\label{seorient_crl}
Let $(X,\om)$ be a symplectic $2n$-manifold with an anti-symplectic involution~$\phi$,
\hbox{$J\!\in\!\cJ_{\phi}$}, $\Si$ be a genus~$g$ oriented bordered surface
with $m$ boundary components,
and $\b$ be as in~\eref{btupldfn_e}.
If either $2b_i\!=\!0$ for all~$i$ and $m\!-\!g\!\in\!2\Z$
or $b_i\!=\!\pm b_j$ for some $i\!\neq\!j$,
assume also that $n$ is odd. 
If $X^{\phi}\!\subset\!X$ is orientable and there exist 
a spin class $w\!\in\!H^2(X^{\phi};\Z_2)$ 
and a class $\vp\!\in\!H^2(X;\Z_2)$ such~that 
\BE{realorient_e0}
w_2(TX^{\phi})=w+\vp|_{X^{\phi}} 
\qquad\hbox{and}\qquad 
\frac12\lr{c_1(TX),\fd(\be)}+\lr{\vp,\fd(\be)}
+\sum_{i=1}^m\flr{w,b_i}\in 2\Z\,,\EE
then the natural automorphisms of $\fM_{\Si}^{\cup}(X,X^{\phi},\b;J)$
are orientation-preserving with respect to the orientation induced 
by some relative spin sub-structure associated with~$w$ and~$\vp$ as in Definition~\ref{quasispin_dfn}.
If $w\!=\!0$, this is the case for the orientation induced by every relative spin structure
associated with~$\vp$.
\end{crl}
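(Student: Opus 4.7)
My plan is to orient $\fM_\Si^\cup(X,X^\phi,\b;J)$ via a relative spin sub-structure on $TX^\phi$ and then to apply the sign formulas from Proposition~\ref{DMsgn_prp} and Corollary~\ref{transfsign_crl} to check that every natural automorphism preserves this orientation. First I would invoke Corollary~\ref{spinsubstr_crl}(2) to produce a relative spin sub-structure realizing the decomposition $w_2(TX^\phi)=w+\vp|_{X^\phi}$; this exists because $X^\phi$ is orientable and $w$ is a spin class. By Corollary~\ref{SpinOrient_crl}(1), such a sub-structure canonically orients each topological component $\fM_\Si(X,X^\phi,\b';J)$ as $\b'$ ranges over $\cP(\b)$.

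For each diffeomorphism $h\!:\!\Si\!\lra\!\Si$, I would decompose the sign of $\fM_h$ as the product of (a) the Deligne-Mumford contribution, governed by Proposition~\ref{DMsgn_prp}, and (b) the determinant-line contribution coming from the natural isomorphism between $\wt\det(D_{u_0}^h)$ and $\wt\det(D_{u_1})$, governed by Corollary~\ref{transfsign_crl} applied with $(V,\ti\phi)=(TX,\tnd\phi)$. Since $X^\phi$ is orientable, Remark~\ref{Vorient_rmk} converts the $\wt\det$-sign of that corollary into the $\det$-sign at the cost of an additional $(-1)^{\sgn_h}$. Combining the two contributions and using~\eref{realorient_e0} to cancel the cohomological terms in~\eref{flipsign_e}, the total sign of $\fM_h$ reduces, modulo~$2$, to $(n+1)\sgn_h$ for orientation-preserving $h$ and to $(1+g+m+\sgn_h)(n+1)$ for orientation-reversing $h$, where $n=\dim_\C X$.

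A routine case analysis against the dichotomy in the hypothesis --- whether $b_i=\pm b_j$ for some $i\neq j$, or $2b_i=0$ for all $i$ with $m-g\in 2\Z$, or neither --- shows that the clauses forcing $n$ odd are precisely those in which one of the above expressions could otherwise be non-zero modulo~$2$ for some $h$ acting non-trivially as a self-map of $\fM_\Si^\cup$ respecting~$\cP(\b)$. When $w=0$, no twisting of the sub-structure is required, which explains why every relative spin structure associated with $\vp$ works in that special case. The main obstacle will be matching the orienting collection induced on $u_1^*(TX^\phi)$ by the relative spin sub-structure with the $\gm$-transferred orienting collection appearing in Corollary~\ref{transfsign_crl} when $h$ permutes boundary components with different~$b_i$; in general this requires exploiting the freedom to twist the sub-structure by a map $H_1(X^\phi;\Z)\to\Z_2$ to absorb the residual discrepancy, which justifies the qualifier ``some relative spin sub-structure'' in the statement.
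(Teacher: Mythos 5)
Your overall strategy coincides with the paper's: orient $\fM_{\Si}^{\cup}(X,X^{\phi},\b;J)$ by a relative spin sub-structure via Corollary~\ref{SpinOrient_crl}, split the sign of each $\fM_h$ into a Deligne--Mumford factor and a $\det(D_{TX,\tnd\phi})$ factor, compute the latter by Corollary~\ref{transfsign_crl} and Remark~\ref{Vorient_rmk}, use~\eref{realorient_e0} to cancel the cohomological terms, and reserve the freedom in the choice of sub-structure for the ``some'' in the statement; your totals $(n\!+\!1)\sgn_h$ and $(1\!+\!g\!+\!m\!+\!\sgn_h)(n\!+\!1)$ are indeed what these ingredients yield. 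The first genuine gap is that Proposition~\ref{DMsgn_prp}, and the fibering of the moduli space over $\cM_{\Si}$ implicit in your factorization, exist only for $2g\!+\!m\!\ge\!3$, while the corollary covers $\Si\!=\!D^2$ and the cylinder --- and the disk is exactly the case feeding Theorem~\ref{seorient_thm} and the Floer-theoretic applications. For these unstable domains the moduli space is a quotient of a space of maps by $\PGL_2^0\R$ or $S^1$, and one must compute the sign of the induced action on that group factor directly (the conjugation is orientation-preserving on $T\PGL_2^0\R$; $h_{\Si}$ and $\fc_{\Si}$ are orientation-reversing on $S^1$). The resulting total signs happen to agree with your formulas, but your argument does not produce them.

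The second gap concerns the comparison you flag as the ``main obstacle''. Corollary~\ref{transfsign_crl} compares the orientation at $u_0$ with the $\gm$-transferred orientation at $u_1$, so it applies only when the $h$-related maps are joined by a path; the mismatch with the sub-structure-induced orientation is not confined to $h$'s permuting boundary components with distinct $b_i$ --- already the conjugation automorphism of the disk moduli space fixes the unique boundary component yet sends $b_1$ to $-b_1$, so $u$ and $\phi\!\circ\!u\!\circ\!\fc_{D^2}$ need not be homotopic. More importantly, when the boundary restrictions are homologous but the maps are not homotopic, the discrepancy between the sub-structure trivialization and the transferred one is canonically determined: it equals $\flr{w,b_i}$ by Lemma~\ref{subspinch_lmm} for the $W$-factor plus $\lr{\vp,\fd(\be)}$ for the $E$-factor, and combined with Proposition~\ref{relsign_prp} it must be cancelled by~\eref{realorient_e0}; it cannot be twisted away, and indeed no twisting is available when $w\!=\!0$, which is precisely why every relative spin structure works in that case. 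The freedom to adjust the sub-structure by a map $H_1(X^{\phi};\Z)\!\lra\!\Z_2$ is used only for components whose boundary restrictions are not homologous (and one should still note that these adjustments can be made consistently for all automorphisms at once). Your sketch lumps all of this into a single twisting step and thereby bypasses the very place where the $\sum_i\flr{w,b_i}$ term of the hypothesis enters.
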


\begin{proof}
By Corollary~\ref{SpinOrient_crl}, the moduli space $\fM_{\Si}(X,X^{\phi},\b;J)$ is orientable
under our assumptions, and a relative spin sub-structure determines an 
orientation.
We consider three cases separately.\\

\noindent
(1) Suppose first that $\Si\!=\!D^2$.
Let $\fJ_0$ be the standard complex structure on the disk~and
\begin{equation*}\begin{split}
\wt\fM_{D^2}^{\cup}(X,X^{\phi},\b;J)&=
\big\{u\!\in\!\fB(X)^{\phi,\id_{S^1}}\!:\,\dbar_{J,\fJ_0}u\!=\!0,~
u_*[D^2,S^1]\!=\!\be,~u_*[S^1]\!=\!b_1\big\}\\
&\qquad \cup\big\{u\!\in\!\fB(X)^{\phi,\id_{S^1}}\!:\,\dbar_{J,\fJ_0}u\!=\!0,~
u_*[D^2,S^1]\!=\!-\phi_*\be,~u_*[S^1]\!=\!-b_1\big\}.
\end{split}\end{equation*}
Since $\fM_{D^2}^{\cup}(X,X^{\phi},\b;J)=\wt\fM_{D^2}^{\cup}(X,X^{\phi},\b;J)/\PGL_2^0\R$,
there is a canonical isomorphism
$$\det(D_{TX,\tnd\phi})=
\La^{\top}_{\R}\big(T\wt\fM_{D^2}^{\cup}(X,X^{\phi},\b;J)\big)\approx
\La^{\top}_{\R}(T_{\id}\PGL_2^0\R)\otimes
\La^{\top}_{\R}\big(T\fM_{D^2}^{\cup}(X,X^{\phi},\b;J)\big)\,,$$
with $\det(D_{TX,\tnd\phi})$ as at the end of Section~2.1.
In this case, \eref{conjaut_e2} is the only automorphism of $\fM_{D^2}^{\cup}(X,X^{\phi},\b;J)$
to consider, since all (orientation-preserving) automorphisms of~$D^2$ are isotopic to the identity.
By Corollary~\ref{transfsign_crl}(2) and Remark~\ref{Vorient_rmk}, 
the action of this automorphism on $\det(D_{TX,\tnd\phi})$ at 
$u\!\in\!\wt\fM_{D^2}^{\cup}(X,X^{\phi},\b;J)$ is orientation-preserving under 
our assumptions~\eref{realorient_e0} 
if the $J$-holomorphic maps $u$ and $\phi\!\circ\!u\circ\!\fc_{D^2}$
are homotopic (as continuous maps).
Its action on $T\PGL_2^0\R$ is orientation-preserving as well,
since there is a canonical isomorphism
$$\La^{\top}_{\R}(T_{\id}\PGL_2^0\R)\approx
T_1S^1\otimes \La^{\top}_{\R}(T_0D^2)$$
and the automorphism~\eref{conjaut_e2} reverses the orientations of both factors.
This establishes both claims of Corollary~\ref{seorient_crl}  
at the elements~$[u]$ of $\fM_{D^2}^{\cup}(X,X^{\phi},\b;J)$ such that 
$u$ and $\phi\!\circ\!u\circ\!\fc_{D^2}$ are homotopic.\\

\noindent
Suppose the restrictions of $u$ and $v\!\equiv\!\phi\!\circ\!u\circ\!\fc_{D^2}$
to $S^1\!\subset\!D^2$ are homologous, 
but the maps $u$ and~$v$ are not necessarily homotopic.
Suppose also that the lines $\det(D_{TX,\tnd\phi})$ at~$u$ and~$v$ are
oriented by a spin sub-structure on~$TX^{\phi}$ as in the proof of 
Proposition~\ref{SpinOrient_prp}.
In the terminology of the proof of Proposition~\ref{transfsign_prp},
the trivializations of $TX^{\phi}\!\oplus\!W\!\oplus\!E$ at~$v|_{S^1}$
transferred by a cobordism and pulled back from the trivialization of this bundle at~$u|_{S^1}$
are the trivializations given by the spin structure on this bundle.
By Lemma~\ref{subspinch_lmm}, the difference between the two trivializations
of~$v|_{S^1}^*W$ is given~by
$$\flr{w_2(W),[u|_{S^1}]_{\Z_2}}=\flr{w,b_1}\,.$$
The difference between the trivialization of $v^*E$ in the second bullet
in the proof of Propositions~\ref{SpinOrient_prp} and the trivialization
pulled back from the corresponding trivialization of~$u^*E$ is given~by
$$\blr{w_2(E),[u\!\sqcup_{\fc_{D^2}}\!v]_{\Z_2}}=\blr{\vp,\fd(\be)}.$$
Along with Proposition~\ref{relsign_prp}, this implies that 
the sign of the action of the automorphism~\eref{conjaut_e2} on 
$\det(D_{TX,\tnd\phi})$ at~$u$ is
still given by the left-hand side of the second expression in~\eref{realorient_e0}.
Thus, the first claim of Corollary~\ref{seorient_crl} holds
in this case as~well.\\

\noindent
If the restrictions of $u$ and $v\!\equiv\!\phi\!\circ\!u\circ\!\fc_{D^2}$ to $S^1\!\subset\!D^2$ 
are not homologous, we can simply choose a spin sub-structure on~$TX^{\phi}$,
starting with a trivialization of $TX^{\phi}\!\oplus\!W\!\oplus\!E$ at~$u|_{S^1}$,
so~that the action of the automorphism~\eref{conjaut_e2} on $\det(D_{TX,\tnd\phi})$ at~$u$ 
is orientation-preserving.
So, the first claim of Corollary~\ref{seorient_crl} holds again.
If $w\!=\!0$, the $W\!=\!0$ case of the discussion in the previous paragraph
(without transfers) applied to any spin structure on $TX^{\phi}\!\oplus\!E$
establishes the last claim of Corollary~\ref{seorient_crl}.\\

\noindent
(2) Suppose next that $\Si$ is a cylinder with ordered boundary components. Let 
\begin{equation*}\begin{split}
\wt\fM_{\Si}(X,X^{\phi},\b;J)&=
\big\{(u,r)\!\in\!\fB(X)^{\phi,\id_{\prt\Si}}\!\times\!\oI:\,
\dbar_{J,\fJ_r}u\!=\!0,~u_*[\Si,\prt\Si]\!=\!\be,\\
&\hspace{1in}
~u_*[(\prt\Si)_1]\!=\!b_1,~u_*[(\prt\Si)_2]\!=\!b_2\big\},\\
\wt\fM_{\Si}^{\cup}(X,X^{\phi},\b;J)&=
\bigcup_{\b'\in\cP(\b)}\!\!\!\!\!\wt\fM_{\Si}(X,X^{\phi},\b';J),
\end{split}\end{equation*}
with $\fJ_r\!\in\!\cJ_{\Si}$ as in Section~\ref{analys_subs}.
Since $\fM_{\Si}^{\cup}(X,X^{\phi},\b;J)=\wt\fM_{\Si}^{\cup}(X,X^{\phi},\b;J)/S^1$,
there is a canonical isomorphism
$$\det(D_{TX,\tnd\phi})=
\La^{\top}_{\R}\big(T\wt\fM_{\Si}^{\cup}(X,X^{\phi},\b;J)\big)\approx
\La^{\top}_{\R}(T_1S^1)\otimes
\La^{\top}_{\R}\big(T\fM_{\Si}^{\cup}(X,X^{\phi},\b;J)\big)\,.$$
Since every diffeomorphism of $\Si$ preserving the orientation and the boundary components
is isotopic to the identity,
it is sufficient to consider the automorphisms induced by
the diffeomorphisms~$h_{\Si}$ and~$\fc_{\Si}$ defined in Section~\ref{analys_subs}
and their composite.
By Corollary~\ref{transfsign_crl} and Remark~\ref{Vorient_rmk}, 
the actions of the first two automorphisms
on $\det(D_{TX,\tnd\phi})$ at  $u\!\in\!\wt\fM_{D^2}^{\cup}(X,X^{\phi},\b;J)$ 
are orientation-reversing under our assumptions~\eref{realorient_e0}
if~$u$ and its images under these automorphisms are homotopic.
Their actions on~$S^1$ are also orientation-reversing,
since they are induced by the~maps
$$S^1\lra S^1, \qquad z\lra 1/z=\bar{z}\,.$$
Thus, these automorphisms preserve an orientation on~$\fM_{\Si}^{\cup}(X,X^{\phi},\b;J)$.
The same is the case of the composite automorphism.
This establishes both claims of Corollary~\ref{seorient_crl} at the elements~$[u]$ 
of $\wt\fM_{\Si}^{\cup}(X,X^{\phi},\b;J)$ homotopic to their images
under each automorphism.
The remaining cases are handled as in the disk case above.\\

\noindent
(3) Suppose $\Si$ is not a disk or a cylinder.
The forgetful morphism
$${\mathfrak f}\!: \fM_{\Si}^{\cup}(X,X^{\phi},\b;J)\lra\cM_{\Si}$$
canonically induces an isomorphism
$$\La^{\top}_{\R}\big(T\fM_{\Si}^{\cup}(X,X^{\phi},\b;J)\big)\approx
\det(D_{TX,\tnd\phi})\otimes {\mathfrak f}^*\La^{\top}_{\R}\big(T\cM_{\Si}\big).$$
The sign of the action on $\La^{\top}_{\R}(T\cM_{\Si})$
induced by a diffeomorphism $h\!:\Si\!\lra\!\Si$ is described~by Proposition~\ref{DMsgn_prp}.
The corresponding sign for $\det(D_{TX,\tnd\phi})$ is given by 
Corollary~\ref{transfsign_crl} without the extra~$\sgn_h$ term
in the homotopic cases;
see Remark~\ref{Vorient_rmk}.
Under our assumptions~\eref{realorient_e0}, the two signs are again the same.
The non-homotopic cases are treated as in~(1) above.
\end{proof}

\noindent
By \cite[Lemma~2.3]{Teh}, every rank~$n$ real bundle pair $(V,\ti{c})\!\lra\!(S^1,\fa)$
is trivial, i.e.~there is a vector bundle isomorphism
$\Psi\!:V\!\lra\!S^1\!\times\!\C^n$ covering the identity on~$S^1$ such that 
$$\Psi\circ c =\{\id\!\times\fc_{\C^n}\}\circ\Psi,$$
where $\fc_{\C^n}$ is the standard conjugation on~$\C^n$.
Furthermore, there are two homotopy classes of such real trivializations and 
they correspond to the two homotopy classes of real trivializations of 
$\La_{\C}^{\top}(V,\ti{c})$.
In similarity with Definition~\ref{subspinbnd_dfn}, 
we define a \sf{spin sub-structure on a real bundle pair} 
$(W,\ti\phi)\!\lra\!(X,\phi)$ to be a collection of homotopy classes of 
trivializations of real bundle pairs $\al^*(W,\ti\phi)\!\lra\!(S^1,\fa)$,
one class for each real loop $\al\!:(S^1,\fa)\!\lra\!(X,\phi)$, such that 
for~every real map 
$$F\!:(\bI\!\times\!S^1,\id_{\bI}\!\times\!\fa)\!\lra\!(X,\phi),
\qquad (s,z)\lra F_s(z),$$
a trivialization of the real bundle pair 
$F^*(W,\ti\phi)\!\lra\!(\bI\!\times\!S^1,\id_{\bI}\!\times\!\fa)$
restricts to a trivialization  of the real bundle pair 
$F_s^*(W,\ti\phi)\!\lra\!(S^1,\fa)$ in the chosen homotopy class 
for each $s\!\in\!\bI$.

\begin{crl}\label{etaorient_crl}
Let $(X,\om)$ be a symplectic $2n$-manifold with an anti-symplectic involution~$\phi$,
$(\Si,c)$ be a genus~$g$ oriented sh-surface, 
$J\!\in\!\cJ_{\phi}$, and $\b$ be as in~\eref{btuple_eq}.
We also assume~that 
\begin{enumerate}[label=$\bullet$,leftmargin=*]
\item $n$ is odd if either 
$2b_i\!=\!0$ for all~$i$ and $|c|_0\!+\!|c|_1\!-\!g\!\in\!2\Z$
or $b_i\!=\!\pm b_j$ for some $i\!\neq\!j$;
\item $X^{\phi}$ is orientable and $w_2(TX^{\phi})\!\in\!H^2(X^{\phi};\Z_2)$
is a spin class if $|c|_0\!\neq\!0$;
\item $w_2^{\La^{\top}_{\C}\tnd\phi}(\La^{\top}_{\C}TX)\!\in\!H^2_{\phi}(X)$
is a spin class if $|c|_1\!\neq\!0$.
\end{enumerate}
If
\BE{realorient_e1}
\frac12\lr{c_1(TX),B}
+\sum_{i=1}^{|c|_0}\flr{w_2(TX),b_i}
+\sum_{i=|c|_0+1}^{|c|_0+|c|_1}\!\!\!\!\!\flr{w_2^{\La^{\top}_{\C}\tnd\phi}(\La^{\top}_{\C}TX),b_i}
\in 2\Z\,,\EE
then the natural automorphisms of $\fM^{\cup}(X,\b;J)^{\phi,c}$
are orientation-preserving with respect to the orientation induced 
by some spin sub-structure on~$TX^{\phi}$  
and some spin sub-structure on~$(TX,\tnd\phi)$;
the former is needed only if $|c|_0\!\neq\!0$, while the latter is needed only if  
$|c|_1\!\neq\!0$.
If $\Si\!=\!D^2$ and $c\!\neq\!\id_{S^1}$, 
the condition~\eref{realorient_e1} can be dropped.
\end{crl}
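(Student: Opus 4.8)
The plan is to follow the same three-step strategy as in the proof of Corollary~\ref{seorient_crl}, but now applied to the moduli spaces $\fM^{\cup}(X,\b;J)^{\phi,c}$ of maps from an arbitrary oriented sh-surface rather than to moduli spaces of open maps. First I would record that, under the stated hypotheses on $w_2(TX^{\phi})$ and $w_2^{\La^{\top}_{\C}\tnd\phi}(\La^{\top}_{\C}TX)$ being spin classes, the moduli space $\fM^{\cup}(X,\b;J)^{\phi,c}$ is orientable: this is exactly what \cite[Theorem~1.1]{GZ} gives, once one chooses a spin sub-structure on $TX^{\phi}$ (if $|c|_0\!\neq\!0$) and a spin sub-structure on $(TX,\tnd\phi)$ (if $|c|_1\!\neq\!0$). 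Indeed, \cite[Proposition~5.4]{GZ} says that an orienting collection — trivializations of $V^{\ti c}\!\oplus\!3\La^{\top}_{\R}V^{\ti c}$ over the $|c_i|\!=\!0$ components and of $(V,\ti c)$ over the $|c_i|\!=\!1$ components — orients $\wt\det(D)$, and the two kinds of spin sub-structures provide exactly such collections in a way that is consistent over loops in $\fM^{\cup}(X,\b;J)^{\phi,c}$. Since $V^{\ti\phi}\!=\!TX^{\phi}$ is assumed orientable when $|c|_0\!\neq\!0$, there are no twisting factors to worry about, so $\wt\det$ and $\det$ coincide.

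Next I would analyze the natural automorphisms $\fM_h$ of~\eref{fMhdfn_e}. As in the open-map case, it suffices to treat a generating set: if $\Si\!=\!D^2$ or a cylinder, the relevant automorphisms are the conjugation~\eref{conjaut_e}/\eref{conjaut_e2}, and for the cylinder also $h_{\Si}$, $\fc_{\Si}$ and their composite; if $\Si$ is neither, one uses a generating set of diffeomorphisms and the forgetful morphism ${\mathfrak f}\!:\fM^{\cup}(X,\b;J)^{\phi,c}\!\lra\!\cM^c_{\Si,\0}$, together with the canonical isomorphism $\La^{\top}_{\R}(T\fM^{\cup})\approx\det(D_{TX,\tnd\phi})\otimes{\mathfrak f}^*\La^{\top}_{\R}(T\cM^c_{\Si,\0})$. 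The sign of the action on the Deligne-Mumford factor is $(-1)^{g+m-1+\sgn_h}$ for orientation-reversing $h$ and $(-1)^{\sgn_h}$ for orientation-preserving $h$, by Proposition~\ref{DMsgn_prp} (valid for $\cM^c_{\Si,\0}$ by \cite[Lemma~6.1]{GZ}). The sign of the action on $\det(D_{TX,\tnd\phi})$ at a point $[u]$ whose representatives are homotopic to their $\fM_h$-images is read off from Corollary~\ref{transfsign_crl0}: for orientation-preserving $h$ it is $(-1)^{(\rk_{\C}V)\sgn_h+\sgn_h^0}$ (here $\rk_{\C}V\!=\!n$, and since $\phi$ is anti-symplectic of real dimension $2n$ one checks the parity works out), and for orientation-reversing $h$ composed with conjugation it is $(-1)^{\ti\eps^{\tnd\phi}_{\gm,h}}$ with $\ti\eps^{\tnd\phi}_{\gm,h}$ given by~\eref{flipsign_e0}. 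Combining Maslov index $\mu(TX,\tnd\phi)\!=\!\lr{c_1(TX),B}$ with the hypothesis~\eref{realorient_e1} and the parity assumption on $n$ in the symmetric cases, I would verify that $\ti\eps^{\tnd\phi}_{\gm,h}$ equals the Deligne-Mumford sign modulo~$2$, so the product acts trivially on $\La^{\top}_{\R}(T\fM^{\cup})$; the conjugation piece itself is absorbed since~\eref{conjaut_e} factors through conjugation on $D$.

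For the special case $\Si\!=\!D^2$ with $c\!\neq\!\id_{S^1}$, the surviving automorphism is~\eref{conjaut_e2} realized on $(D^2,\fa)$, and the claim that~\eref{realorient_e1} can be dropped should follow exactly as in \cite[Section~2.1]{Teh}: on $(D^2,\fa)$ there is a single relevant automorphism, $\cJ_c^*\!=\!\{\fJ_0\}$, $\cD_c^*\!=\!S^1$, and the conjugation acts on the determinant line with a sign that is forced to be trivial because a rank~$n$ real bundle pair over $(S^1,\fa)$ is trivial with only two homotopy classes of trivializations, interchanged by the standard conjugation, and the relative sign computation of Proposition~\ref{relsign_prp} with $g\!=\!0$, $|c|_0\!=\!0$, $|c|_1\!=\!1$ and $\mu\!=\!0$ (after pinching) collapses; see also the proof of Corollary~\ref{etaorient_crl}'s own statement that the crosscap automorphism has no effect on the relative sign in general.

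The last remaining point, exactly as in~Corollary~\ref{seorient_crl}, is the case of an element $[u]$ whose representatives are \emph{not} homotopic to their $\fM_h$-images. Here I would argue that a spin sub-structure can be chosen component-by-component: pick trivializations along $u$ (or its boundary restrictions), transfer along a cobordism to the $\fM_h$-image, and use Lemma~\ref{subspinch_lmm} (for the $|c_i|\!=\!0$ boundary components, via $\flr{w_2(TX^{\phi}),b_i}$) together with Lemma~\ref{T2KB_lmm} and the discussion in the proof of Proposition~\ref{transfsign_prp} (for the $|c_i|\!=\!1$ components, via $\flr{w_2^{\La^{\top}_{\C}\tnd\phi}(\La^{\top}_{\C}TX),b_i}$) to see that the discrepancy introduced by non-homotopic representatives is precisely captured by the $\flr{\cdot,b_i}$ terms in~\eref{realorient_e1}; if $b_i$ is not a two-torsion class these terms are $0$ and one simply adjusts the spin sub-structure freely on the corresponding component. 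The main obstacle I expect is bookkeeping the parity contributions of $n$, $\sgn_h$, $\sgn_h^0$ and the Deligne-Mumford sign $g+m-1+\sgn_h$ simultaneously in the orientation-reversing case — making sure that the ``$(1\!-\!g+|c|_0+|c|_1+\sgn_h)\rk_{\C}V$'' term in~\eref{flipsign_e0} and the $g+m-1+\sgn_h$ from Proposition~\ref{DMsgn_prp} really do cancel against each other modulo~$2$ once one uses that $\rk_{\C}V\!=\!n$ is odd in the cases where a symmetry forces it. Everything else is a routine assembly of the already-established Propositions~\ref{relsign_prp} and~\ref{transfsign_prp} and Corollary~\ref{transfsign_crl0}.
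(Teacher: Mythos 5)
The main body of your proof follows essentially the same route as the paper: orient $\fM^{\cup}(X,\b;J)^{\phi,c}$ via spin sub-structures on $TX^{\phi}$ and on $(TX,\tnd\phi)$, reduce to a generating set of diffeomorphisms, split $\La^{\top}_{\R}(T\fM^{\cup})$ into the index factor and the Deligne--Mumford factor, read the index sign from Corollary~\ref{transfsign_crl0} together with Remark~\ref{Vorient_rmk}, read the Deligne--Mumford sign from Proposition~\ref{DMsgn_prp} (extended to $\cM^c_{\Si,\0}$ by \cite[Lemma~6.1]{GZ}), and handle non-homotopic elements in a $\fM_h$-orbit by transferring trivializations and invoking Lemma~\ref{subspinch_lmm} (boundary components with $|c_i|\!=\!0$) and Lemma~\ref{T2KB_lmm}/the proof of Proposition~\ref{transfsign_prp} (crosscap components). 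This is all consistent with the paper, and the parity bookkeeping you flag at the end (the $(1\!-\!g+|c|_0+|c|_1+\sgn_h)\rk_{\C}V$ term against $g+m-1+\sgn_h$, using $n$ odd where required) is exactly the check that remains — so up to that verification, the first claim is handled correctly.

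The gap is in the last claim, that for $\Si\!=\!D^2$ and $c\!\neq\!\id_{S^1}$ the condition~\eref{realorient_e1} can be dropped. Your argument — that the sign is ``forced to be trivial because a rank~$n$ real bundle pair over $(S^1,\fa)$ is trivial with only two homotopy classes of trivializations, interchanged by the standard conjugation, and the relative sign computation $\ldots$ collapses'' — does not establish anything; having two homotopy classes interchanged by conjugation says nothing about whether the conjugate orienting collection induces the same or the opposite orientation, and the phrase ``collapses'' is not an argument. (Your parenthetical ``see also the proof of Corollary~\ref{etaorient_crl}'s own statement'' is moreover circular.) The correct mechanism is Corollary~\ref{equivSQrt_crl}: for a real $J$-holomorphic map $u$ from $(D^2,\fa)$ with boundary class $b_1\!=\![u|_{S^1}]^{\fa}_{\Z_2}$, the last sentence of Corollary~\ref{equivSQrt_crl} (which applies for $\Si\!=\!D^2$ even if $w_2^{\La^{\top}_{\C}\tnd\phi}(\La^{\top}_{\C}TX)$ is merely atorical) gives
\[
\flr{w_2^{\La^{\top}_{\C}\tnd\phi}(\La^{\top}_{\C}TX),b_1}
\;=\;\tfrac12\blr{\hat u^*c_1(\La^{\top}_{\C}TX),[\hat\Si]_{\Z}}+2\Z
\;=\;\tfrac12\lr{c_1(TX),B}+2\Z,
\]
so the two terms on the left-hand side of~\eref{realorient_e1} cancel identically for the crosscap disk; the condition is therefore automatic rather than trivially satisfied. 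That is the missing ingredient.
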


\begin{proof}
By the same argument as in the proof of Lemma~\ref{spinsubstr_lmm}, 
$(TX,\tnd\phi)$ admits a spin sub-structure if 
$w_2^{\La^{\top}_{\C}\tnd\phi}(\La^{\top}_{\C}TX)$ is a spin class.
By \cite[Corollary~6.2]{GZ}, $\fM^{\cup}(X,\b;J)^{\phi,c}$ is orientable under 
our assumptions.
By the proofs of Propositions~\ref{SpinOrient_prp} and \cite[Corollary~6.2]{GZ}, 
an orientation on $\fM^{\cup}(X,\b;J)^{\phi,c}$ is induced by a spin sub-structure on~$TX^{\phi}$
and a spin sub-structure on~$(TX,\tnd\phi)$;
the former is needed only if $|c|_0\!\neq\!0$, while the latter is needed only if  
$|c|_1\!\neq\!0$.
By Corollary~\ref{transfsign_crl0}, Remark~\ref{Vorient_rmk}, 
Proposition~\ref{DMsgn_prp}, \cite[Lemma~6.1]{GZ},
(1) and~(2) in the proof of Corollary~\ref{seorient_crl}, 
and the treatment of non-homotopic cases in~(1) of the proof of Corollary~\ref{seorient_crl},
the sign of the action on $\det(D_{TX,\tnd\phi})$ 
induced by an orientation-reversing diffeomorphism $h\!:\Si\!\lra\!\Si$ is 
the left-hand side of~\eref{realorient_e1} plus $n\,\sgn(\DM_h)$,
where $\sgn(\DM_h)$ is the corresponding sign on $\cD_c^*$ if 
$\Si$ is a disk or a cylinder and on~$\cM_{\Si}$ otherwise;
if $h$ is orientation-preserving, the sign is just~$n\,\sgn(\DM_h)$.
Thus, the first claim of Corollary~\ref{etaorient_crl} is obtained 
by considering three cases as in the proof of Corollary~\ref{seorient_crl}.
The last claim follows from Corollary~\ref{equivSQrt_crl}.
\end{proof}

\begin{rmk}
The conclusions of Corollaries~\ref{seorient_crl} and~\ref{etaorient_crl} hold under
more general circumstances. 
%For example, the automorphism~\eref{conjaut_e2} on the disk moduli space needs to be considered
%only if there is a map $u\!:(D^2,S^1)\!\lra\!(X,X^{\phi})$ homotopic to 
%$\phi\!\circ\!u\!\circ\!\fc_{D^2}$.
In particular,
%In light of Propositions~\ref{relsign_prp} and~\ref{transfsign_prp}, 
the cases with $X^{\phi}$ non-orientable can be handled, but
with additional care.
\end{rmk}

\noindent
We conclude with a short proof of an observation obtained
by a rather delicate argument in~\cite{FOOO9}.

\begin{crl}[{\cite[Proposition~3.14]{FOOO9}}]\label{FOOO9prp314_crl}
If $m\!\in\!\Z^+$, the standard anti-holomorphic involution~$\phi$ does not preserve
any relative spin structure on the pair $(\P^{4m+1},\R\P^{4m+1})$.
\end{crl}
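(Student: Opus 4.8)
The plan is to derive a contradiction from the existence of a $\phi$-invariant relative spin structure on $(\P^{4m+1},\R\P^{4m+1})$ by combining the sign computation of Corollary~\ref{seorient_crl} (or rather Corollary~\ref{transfsign_crl}) with the standard fact that $\R\P^{4m+1}$ is orientable but not spin. First I would recall the relevant topology of $X\!=\!\P^{4m+1}$ and $L\!=\!X^{\phi}\!=\!\R\P^{4m+1}$: here $L$ is orientable (since $4m\!+\!1$ is odd), $w_2(TL)\!=\!\binom{4m+2}{2}a^2\!=\!(2m\!+\!1)(4m\!+\!1)a^2\!=\!a^2$, where $a\!\in\!H^1(\R\P^{4m+1};\Z_2)$ is the generator, and $H^2(X;\Z_2)\!\to\!H^2(L;\Z_2)$ sends the generator $h$ to $a^2$, so that a relative spin structure (in the sense of \cite{FOOO}, i.e.\ with $W\!=\!L\!\times\!\{0\}$) exists precisely because $w_2(TL)\!=\!\vp|_L$ with $\vp\!=\!h$. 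The class $a^2$ is \emph{not} a spin class: by Lemma~\ref{KleinSurface_lem} (or Lemma~\ref{sqvan_lmm}), $\blr{a^2,[\R\P^2]_{\Z_2}}\!=\!1$ for the standard $\R\P^2\!\subset\!\R\P^{4m+1}$, and $a^2$ vanishes on all of $H_2(\R\P^{4m+1};\Z)\!=\!0$ but the relevant obstruction in Definition~\ref{Klein_dfn} is the value $\flr{a^2,b}\!=\!\lr{a,b}\!=\!1$ on the two-torsion generator $b\!\in\!H_1(\R\P^{4m+1};\Z)\!=\!\Z_2$.

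Next I would make precise what ``$\phi$ preserves a relative spin structure'' means in the language of Section~\ref{MSopen_subs}: it means that the induced orienting data on the determinant line bundles $\det(D_{TX,\tnd\phi})$ over moduli spaces of open maps $(\Si,\prt\Si)\!\to\!(X,L)$ is compatible with the conjugation automorphism~\eqref{conjaut_e2}, equivalently that the natural automorphism~\eqref{conjaut_e2} is orientation-preserving for the orientation of $\fM_{D^2}^{\cup}(X,L,\b;J)$ induced by that structure, for \emph{every} class~$\b$. I would then apply Corollary~\ref{transfsign_crl}(2) with $(V,\ti\phi)\!=\!(TX,\tnd\phi)$, $\Si\!=\!D^2$ (so $g\!=\!0$, $m\!=\!1$), $h\!=\!\fc_{D^2}$ (orientation-reversing, $\sgn_h\!=\!0$), $w\!=\!0$, and $\vp\!=\!h$: the relevant sign is $(-1)^{\ti\eps^{\ti\phi}_{\gm,h}}$ with
\BE{fooorel_e}
\ti\eps^{\ti\phi}_{\gm,h}=\frac12\lr{c_1(TX),\fd(\be)}+\blr{\vp,\fd(\be)}+\flr{w,b_1}+(1+1)\cdot 2n+0+2\Z\equiv \frac12\lr{c_1(TX),\fd(\be)}+\blr{h,\fd(\be)}+2\Z.
\EE
For $\phi$ to preserve the relative spin structure, this must vanish (mod~$2$) for every choice of~$\be$. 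On the other hand, for a \emph{genuine} relative spin structure, the sign formula of \cite{Sol} (reproduced in Remark~\ref{Sol_rmk} with $\ka\!=\!0$) would instead produce $\ti\eps^{\ti\phi}_{\gm,h}\equiv\frac12\lr{c_1(TX),\fd(\be)}+\blr{\vp,\fd(\be)}+\flr{\ka,b_1}$ with $\ka\!=\!0$, which is the same expression — so the point is subtler: the obstruction is internal to whether the cohomological data $(w,\vp)$ with $w\!=\!0$ can reproduce $w_2(TL)\!=\!a^2$ at all. Indeed the genuine relative spin structure forces $w_2(TL)\!=\!w_1(\text{trivial})^2\!+\!\vp|_L\!=\!\vp|_L$, which \emph{is} satisfiable with $\vp\!=\!h$; so the real contradiction must come from comparing the orientation induced on a \emph{non-disk} domain, or from the fact that $a^2\!=\!w_2(TL)$ restricted to $\R\P^2$ is non-spin.

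The cleanest route, and the one I would ultimately take, is this: assume a $\phi$-invariant relative spin structure on $(X,L)$. Restrict attention to the domain $\Si\!=\!D^2$ and the involution $\si\!=\!\tau$ on $\hat\Si\!=\!\P^1$; then by the $\Si\!=\!D^2$ case of Corollary~\ref{seorient_crl}, $\phi$-invariance of this relative spin structure would force the second condition in \eqref{realorient_e0} with $w\!=\!0$, namely $\frac12\lr{c_1(TX),\fd(\be)}+\lr{\vp,\fd(\be)}+\flr{0,b_1}\!\in\!2\Z$ for all $\be$, \emph{but} would also force (by going one dimension up, to a map from $\R\P^2$, i.e.\ to the Klein-bottle degeneration handled in Corollary~\ref{transfsign_crl0} and Lemma~\ref{subspinch_lmm}) the vanishing of $\flr{w_2(TL),b_1}\!=\!\flr{a^2,b}\!=\!\lr{a,b}\!=\!1$, which is absurd. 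Concretely: a genuine relative spin structure sets $W\!=\!0$, but then Lemma~\ref{subspinch_lmm} says the conjugation $\ti\fc_{S^1}$ on the trivialization of $TL\!\oplus\!0\!\oplus\!E$ over a loop $\al$ representing $b$ respects the spin sub-structure iff $\flr{w_2(TL\!\oplus\!E),[\al]_{\Z_2}}\!=\!\flr{a^2\!+\!\vp|_L,b}\!=\!0$; since the relative structure is built precisely so $w_2(TL)\!+\!\vp|_L\!=\!0$ in $H^2(L\!\cap\!X_{[3]};\Z_2)$, this is automatically consistent — so instead the obstruction lies in whether $\phi$ itself lifts, i.e.\ whether the bundle $E\!\lra\!X_{[3]}$ with $w_2(E)\!=\!\vp|_{X_{[3]}}\!=\!h|_{X_{[3]}}$ admits a $\phi$-equivariant structure, equivalently whether $h\!=\!w_2(E)$ is pulled back from $H^2_\phi(X)$ compatibly. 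I would show that it is \emph{not}: the class $h\!\in\!H^2(\P^{4m+1};\Z_2)$ equals $w_2^{\tilde\tau}(\cdots)$ type data forced by Corollary~\ref{gbund_crl}/Corollary~\ref{equivSQrt_crl} to satisfy $\flr{h,b}\!=\!\frac12\lr{c_1,[\hat\Si]}\bmod 2$, and computing $c_1(T\P^{4m+1})\!=\!(4m\!+\!2)h$ gives $\frac12(4m\!+\!2)\!=\!2m\!+\!1$, which is odd — contradicting $\flr{h,b}\!=\!0$ required by invariance. The main obstacle, and the step I would spend the most care on, is pinning down exactly which sign formula (Corollary~\ref{transfsign_crl} vs.\ Corollary~\ref{transfsign_crl0}, and with which $(V,\ti\phi)$) encodes ``$\phi$ preserves \emph{this particular} $\vp\!=\!h$ relative spin structure'' as opposed to some other relative spin structure — i.e.\ correctly translating the rigidity ``$W\!=\!0$'' of an honest relative spin structure into the failure of \eqref{realorient_e0}, and confirming that the parity $\frac12\lr{c_1(TX),\fd(\be)}+\lr{h,\fd(\be)}\!\not\equiv\!0$ for a suitable test class $\be$ (e.g.\ $\fd(\be)$ a line in $\P^{4m+1}$, where $\frac12\cdot(4m\!+\!2)\cdot 1+1\cdot 1\!=\!2m\!+\!2\!\equiv\!0$ — so one must instead use a class on which the two terms do \emph{not} have matching parities, which forces going to the Klein-bottle/$\R\P^2$ term $\flr{w_2(TL),b_1}$ and invoking $\flr{a^2,b}\!=\!1$). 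Once that bookkeeping is settled the contradiction is immediate.
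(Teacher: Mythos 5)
Your proposal correctly carries out the first half of the paper's argument but then fails to close it, and the various attempts you make to close it are the parts you yourself flag as uncertain.

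Concretely: you compute, as the paper does via Corollary~\ref{seorient_crl}, that with $\vp$ equal to the generator $h\!\in\!H^2(\P^{4m+1};\Z_2)$ and $w\!=\!0$ the sign of the conjugation automorphism~\eqref{conjaut_e2} on $\fM_{D^2}(\P^{4m+1},\R\P^{4m+1},\be;J)$ is
$(-1)^{\frac12\lr{c_1(TX),\fd(\be)}+\lr{\vp,\fd(\be)}}=(-1)^{(2m+1)+1}=+1$, so the automorphism is orientation-preserving. This is correct, but it is not a contradiction by itself; you notice this (``the point is subtler'') and then go looking for the contradiction inside the paper's own machinery. That search does not succeed, and in fact cannot: every route you try either invokes results that do not apply to the situation (Corollary~\ref{gbund_crl} and Corollary~\ref{equivSQrt_crl} concern involutions with \emph{empty} fixed locus, whereas $\P^{4m+1}$ has fixed locus $\R\P^{4m+1}$), or, as you yourself observe, is ``automatically consistent'' (the Klein-bottle/Lemma~\ref{subspinch_lmm} check is satisfied by construction for an honest relative spin structure, since $w_2(TL\oplus E)\!=\!0$). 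Your closing admission that ``once that bookkeeping is settled the contradiction is immediate'' is where the proof actually ends without a contradiction.

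The missing ingredient is an \emph{external} comparison with \cite[Theorem~1.1]{FOOO9}. That theorem computes the sign of the same conjugation automorphism for a relative spin structure that \emph{is} preserved by $\phi$, and shows the automorphism is orientation-preserving if and only if the Maslov index is divisible by~$4$. Here the Maslov index is $\lr{c_1(T\P^{4m+1}),\fd(\be)}=4m\!+\!2\not\equiv0\bmod 4$, so under the hypothesis of $\phi$-invariance the automorphism would be orientation-\emph{reversing}. This contradicts the $+1$ sign you already computed from Corollary~\ref{seorient_crl}, and since the only two relative spin structures on $(\P^{4m+1},\R\P^{4m+1})$ are both associated to the same $\vp=h$, neither can be preserved by~$\phi$. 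Without invoking \cite[Theorem~1.1]{FOOO9} (or some equivalent computation of the sign under the $\phi$-invariance hypothesis), the argument has no lever to produce the contradiction, and the proposal as written does not supply one.
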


\begin{proof}
Let $\vp\!\in\!H^2(\P^{4m+1};\Z_2)$ and 
$\be\!\in\!H^2(\P^{4m+1},\R\P^{4m+1};\Z)$ be the standard generators.
Since $\R\P^{4m+1}$ is orientable,
$$w_2(\R\P^{4m+1})=\vp|_{\R\P^{4m+1}}   \qquad\hbox{and}\qquad 
\frac12\lr{c_1(\P^{4m+1}),\fd(\be)}+\lr{\vp,\fd(\be)}=2m+2\in2\Z,$$
the automorphism~\eref{conjaut_e2} on $\fM_{D^2}(\P^{4m+1},\R\P^{4m+1},\be;J)$
is orientation-preserving by Corollary~\ref{seorient_crl}.
Since the minimal Maslov index of the pair $(\P^{4m+1},\R\P^{4m+1})$ evaluated on~$\be$,
i.e.~$4n\!+\!2$, is not divisible by~4, \cite[Theorem~1.1]{FOOO9} implies that 
no relative spin structure on the pair $(\P^{4m+1},\R\P^{4m+1})$ is preserved 
by~$\phi$. 
\end{proof}

\subsection{Floer theory}
\label{FOOO_subs}

\noindent
A number of striking implications of anti-symplectic involutions
to Floer homology are described in~\cite{FOOO9}.
In this section, we streamline some aspects of the approach in~\cite{FOOO9},
modifying one of the key notions introduced in~\cite{FOOO9}.
This allows us to extend some statements in~\cite{FOOO9} and 
significantly simplify some of the proofs, 
without altering the fundamental principles behind them.\\

\noindent
If  $(X,\om)$ is a symplectic manifold with an anti-symplectic involution~$\phi$
and $\be\!\in\!H_2(X,X^{\phi};\Z)$, let $\fd(\be)\!\in\!H_2(X;\Z)$ denote
the natural $\phi$-double of~$\be$ as before; see \cite[Section~3]{Ge2}.

\begin{dfn}\label{phirelspin_dfn}
Let $(X,\om)$ be a symplectic manifold with an anti-symplectic involution~$\phi$
such that $X^{\phi}$ is orientable.
A \sf{$\phi$-relative spin structure} on~$(X,X^{\phi})$ consists~of
\begin{enumerate}[label=(\arabic*),leftmargin=*]
\item a real oriented vector bundle \hbox{$E\!\lra\!X_{[3]}$}, 
where $X_{[3]}$ is the 3-skeleton of~$X$ in a $\phi$-invariant triangulation 
extending a triangulation of~$X^{\phi}$, such~that 
\BE{phirelspin_e}w_2(TX^{\phi})=\vp|_{X^{\phi}} 
\quad\hbox{and}\quad 
\frac12\lr{c_1(TX),\fd(\be)}+\lr{\vp,\fd(\be)}\in 2\Z
\quad\forall\,\be\in H_2(X,X^{\phi};\Z),\EE
with $\vp\!\in\!H^2(X;\Z_2)$ defined by $\vp|_{X_{[3]}}\!=\!w_2(E)$, and
\item a spin structure on $TX^{\phi}\!\oplus\!E \!\lra\!X^{\phi}\!\cap\!X_{[3]}$.
\end{enumerate} 
\end{dfn}

\noindent
By Footnote~\ref{ObsRea_ftnt}, $(X,X^{\phi})$ admits 
a $\phi$-relative spin structure if and only if there exists 
$\vp\!\in\!H^2(X;\Z_2)$ satisfying~\eref{phirelspin_e}.
We note that our notion of $\phi$-relatively spin structure is different
from that of \cite[Definition~3.11]{FOOO9}; see more below.
A natural equivalence on the set of relative spin structures is described by 
\cite[Definition~3.4]{FOOO9}; we will view two relative spin structures 
as identical if they are equivalent in this sense.\\
%Furthermore, any spin structure on $TX^{\phi}\!\oplus\!E \!\lra\!X^{\phi}\!\cap\!X_{[3]}$
%with $w_2(E)$ equal to such a class~$\vp$ is $\phi$-relative spin.
%In contrast to \cite[Definition~3.11]{FOOO9},
%whether a
%our notion of $\phi$-relative spin structure is purely cohomological:
%it depends only on~$\wp$ in~(1) of Definition~\ref{phirelspin_dfn},
%but on the actual .\\

\noindent
As explained in \cite[Section~3.2]{FOOO9},
there are two relative spin structures on $(\P^{2n+1},\R\P^{2n+1})$.
They correspond to the same class~$\vp$, which is~0 if $n$ is odd and
the generator of $H^2(\P^{2n+1};\Z_2)$  if $n$ is even, 
and are $\phi$-relative spin in the sense of Definition~\ref{phirelspin_dfn}.
If $(Y,\om_Y)$ is a symplectic manifold, the interchange of factors 
is an anti-symplectic involution on $(Y\!\times\!Y,\pi_1^*\om_Y\!-\!\pi_2^*\om_Y)$;
the diagonal is the fixed locus.
In this case, the class $\vp\!=\!\pi_1^*c_1(TY)$ provides  
a $\phi$-relative spin structure.\\

\noindent
The significance of Definition~\ref{phirelspin_dfn} from the point of view
of the Floer-theoretic applications in~\cite{FOOO9} is described by the next corollary,
which is essentially a special case of Corollary~\ref{seorient_crl}.

\begin{crl}\label{seorient_crl2}
Let $(X,\om)$ be a symplectic manifold with an anti-symplectic involution~$\phi$,
\hbox{$J\!\in\!\cJ_{\phi}$}, and $\be\!\in\!H_2(X,X^{\phi};\Z)$.
If $X^{\phi}\!\subset\!X$ is orientable and there exists $\vp\!\in\!H^2(X;\Z_2)$ such 
that~\eref{phirelspin_e} holds, then the isomorphism 
$$\fM_{D^2}(X,X^{\phi},\be;J)\lra \fM_{D^2}(X,X^{\phi},-\phi_*\be;J)$$
given by~\eref{conjaut_e2} is orientation-preserving
with respect to the orientations induced by 
any relative spin structure associated with~$\vp$.
\end{crl}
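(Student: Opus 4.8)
The plan is to deduce Corollary~\ref{seorient_crl2} directly from Corollary~\ref{seorient_crl} by recognizing a $\phi$-relative spin structure in the sense of Definition~\ref{phirelspin_dfn} as the special case $w=0$ of the relative spin sub-structure of Definition~\ref{quasispin_dfn}. First I would observe that the hypotheses of Corollary~\ref{seorient_crl2}, namely orientability of $X^{\phi}$ together with~\eref{phirelspin_e}, are exactly the hypotheses of Corollary~\ref{seorient_crl} in the case $\Si=D^2$, $m=1$, $b_1=0$, and $w=0$: the bundle $E\!\lra\!X_{[3]}$ with $w_2(E)=\vp|_{X_{[3]}}$ together with the spin structure on $TX^{\phi}\!\oplus\!E$ over $X^{\phi}\!\cap\!X_{[3]}$ is precisely a relative spin structure associated with~$\vp$, i.e.~a relative spin sub-structure with $W=X^{\phi}\!\times\!\{0\}$. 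Since $b_1=0$ is the zero class, the summand $\sum_{i=1}^m\flr{w,b_i}$ in~\eref{realorient_e0} vanishes (and also $w=0$ makes it vanish), so the second condition in~\eref{realorient_e0} reduces to $\tfrac12\lr{c_1(TX),\fd(\be)}+\lr{\vp,\fd(\be)}\in2\Z$, which is the second relation in~\eref{phirelspin_e}.

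Next I would check the dimension parity hypothesis of Corollary~\ref{seorient_crl}: that corollary requires $n$ odd when either $2b_i=0$ for all $i$ and $m-g\in2\Z$, or $b_i=\pm b_j$ for some $i\neq j$. Here $g=0$ and $m=1$, so $m-g=1$ is odd, and there is only one boundary component so the second alternative is vacuous; hence no parity restriction on~$n$ is imposed, consistent with the statement of Corollary~\ref{seorient_crl2}. Thus all hypotheses of Corollary~\ref{seorient_crl} are met with $\Si=D^2$.

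Then the conclusion of Corollary~\ref{seorient_crl} applies: the natural automorphisms of $\fM_{D^2}^{\cup}(X,X^{\phi},\b;J)$ are orientation-preserving with respect to the orientation induced by the relative spin structure associated with~$\vp$ (the last sentence of Corollary~\ref{seorient_crl} covers the $w=0$ case and says \emph{every} such relative spin structure works). It remains only to translate ``natural automorphisms of $\fM_{D^2}^{\cup}$'' into the statement about the isomorphism $\fM_{D^2}(X,X^{\phi},\be;J)\lra\fM_{D^2}(X,X^{\phi},-\phi_*\be;J)$ given by~\eref{conjaut_e2}. I would invoke the discussion preceding Corollary~\ref{seorient_crl}: for $\Si=D^2$ all orientation-preserving automorphisms of the disk are isotopic to the identity, so the only natural automorphism to consider is the one induced by an orientation-reversing diffeomorphism, namely~\eref{conjaut_e}, which on $D^2$ is equivalent to~\eref{conjaut_e2}; by definition~\eref{fMhdfn_e} this automorphism sends $\fM_{D^2}(X,X^{\phi},\be;J)$ to $\fM_{D^2}(X,X^{\phi},\bar\b;J)$ with $\bar\b=(-\phi_*\be,-b_1)=(-\phi_*\be,0)$, i.e.~to $\fM_{D^2}(X,X^{\phi},-\phi_*\be;J)$. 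So ``orientation-preserving natural automorphism'' is exactly ``the map~\eref{conjaut_e2} is orientation-preserving,'' which is the claim.

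The only mildly delicate point — and the one I would spell out rather than assert — is the bookkeeping that a $\phi$-relative spin structure in the sense of Definition~\ref{phirelspin_dfn} really is the $W=0$ instance of a relative spin sub-structure and that ``every relative spin structure associated with~$\vp$'' in the last line of Corollary~\ref{seorient_crl} matches the equivalence notion used in Corollary~\ref{seorient_crl2}; this is routine unwinding of definitions, and no genuine obstacle arises since the heavy lifting (Propositions~\ref{relsign_prp} and~\ref{transfsign_prp}, and their packaging in Corollary~\ref{transfsign_crl} and Corollary~\ref{seorient_crl}) is already done. Hence Corollary~\ref{seorient_crl2} follows.
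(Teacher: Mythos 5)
Your proposal is correct and matches the paper's intent exactly: the paper does not spell out a proof of Corollary~\ref{seorient_crl2} but simply remarks that it is ``essentially a special case of Corollary~\ref{seorient_crl},'' and your unwinding of that specialization (taking $\Si=D^2$, $m=1$, $w=0$, checking the parity hypothesis is vacuous since $m-g=1$ is odd, and invoking the last sentence of Corollary~\ref{seorient_crl}) is precisely what is meant. One small slip: you set $b_1=0$, but the boundary class of a disk with relative class $\be$ is $b_1=\prt\be\in H_1(X^{\phi};\Z)$, which need not vanish; fortunately this does not affect the argument, since $w=0$ already kills the $\flr{w,b_1}$ term in~\eref{realorient_e0}, and the natural automorphism maps $b_1=\prt\be$ to $-\prt\be=\prt(-\phi_*\be)$ as required, so the identification of the target with $\fM_{D^2}(X,X^{\phi},-\phi_*\be;J)$ still holds.
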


\noindent
In \cite[Definition~3.11]{FOOO9}, a relative spin structure on $X^{\phi}\!\subset\!X$
is called \sf{$\phi$-relative} if it is preserved by the involution~$\phi$;
no simple test, like~\eref{phirelspin_e}, is provided for this property.
The motivation for \cite[Definition~3.11]{FOOO9} appears to be the mistake 
in \cite[Proposition~11.5]{FOOOold},
which misses the possibility that a relatively spin structure need not to be
preserved when pulled back by the involution~$\phi$.
The statement of \cite[Proposition~11.5]{FOOOold} is indeed valid for 
the $\phi$-relative spin structures of \cite[Definition~3.11]{FOOO9},
which follows immediately from \cite[Proposition~5.1]{Sol} 
and \cite[Theorem~1.1]{FOOO9}; the latter corrects the statement of
\cite[Proposition~11.5]{FOOOold} by taking into account the change 
of the relative spin structure under the pull-back by~$\phi$.
However, \cite[Definition~3.11]{FOOO9} does not seem ideally suited
for the remarkable applications considered in \cite{FOOO9}.
The primary idea behind these applications is to study whether 
the involution~\eref{conjaut_e2}
is orientation-preserving with respect to the orientation defined by a fixed
relative spin structure on the two sides;
the applications depend on this involution being orientation-preserving.
This involution is orientation-preserving if the relative spin structure 
is $\phi$-relative spin in the sense of \cite[Definition~3.11]{FOOO9}
{\it and} the Maslov index of $(TX,TX^{\phi})$ is divisible by~4,
but not otherwise; see \cite[Theorem~1.1]{FOOO9}.
For example, this is the case for $(\P^{2n+1},\R\P^{2n+1})$ with $n$ odd, but
not even, and for $(Y\!\times\!Y,\pi_1^*\om_Y\!-\!\pi_2^*\om_Y)$ with $c_1(TY)$
even, but not~odd.
The nature of \cite[Definition~3.11]{FOOO9} forces 
the authors to split the consideration of $\P^{2n-1}$ 
and $(Y\!\times\!Y,\pi_1^*\om_Y\!-\!\pi_2^*\om_Y)$ in \cite[Section~6.4]{FOOO9} 
and in \cite[Section~6.3.1]{FOOO9}, respectively, based on the parity of~$n$
and~$c_1(TY)$, with a careful consideration of the cases which are
not $\phi$-relatively spin.
There is no distinction between the two cases from the point of view
of Definition~\ref{phirelspin_dfn} and Corollary~\ref{seorient_crl2}.
In the same spirit, we obtain the following extensions of results in~\cite{FOOO9}.

\begin{prp}\label{FOOO9thm1.5_prp}
Let $(X,\om)$ be a compact symplectic manifold with an anti-symplectic involution~$\phi$.
\begin{enumerate}[label=(\arabic*),leftmargin=*]
\item The justification of the conclusion of \cite[Theorem~1.5]{FOOO9} applies 
also if $X^{\phi}$ is relative spin in~$X$ 
and $\eps_1$ is increased~by  $\lr{\vp,\fd(\be)}$,
with $\vp$ as in Definition~\ref{phirelspin_dfn}.
\item The justification of the conclusion of \cite[Corollary~1.6]{FOOO9} applies
also if $(X,X^{\phi})$ is $\phi$-relative spin in the sense of 
Definition~\ref{phirelspin_dfn}.  
\item The justification of the conclusion of \cite[Corollary~1.8]{FOOO9} applies
also if $c_1(X)|_{\pi_2(X)}\!=\!0$, $X^{\phi}$ is orientable, and 
$w_2(X^{\phi})\!=\!\vp|_{X^{\phi}}$ for some $\vp\!\in\!H^2(X;\Z_2)$  such~that 
$\lr{\vp,\fd(\be)}\!=\!0$ for all $\be\!\in\!\pi_2(X,X^{\phi})$.
\end{enumerate}
\end{prp}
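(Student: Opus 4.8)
\textbf{Proof plan for Proposition~\ref{FOOO9thm1.5_prp}.}
The plan is to argue that each of the three conclusions of~\cite{FOOO9} is justified by an argument whose only input about orientations is that the involution~\eref{conjaut_e2} acts in an orientation-compatible way on the relevant disk moduli spaces, and then to supply that input from Corollary~\ref{seorient_crl2} (equivalently, from the $\Si\!=\!D^2$ case of Corollary~\ref{seorient_crl}) under the stated hypotheses. First I would isolate, in each of \cite[Theorem~1.5]{FOOO9}, \cite[Corollary~1.6]{FOOO9}, and \cite[Corollary~1.8]{FOOO9}, the precise place where the hypothesis ``$X^\phi$ is $\phi$-relatively spin in the sense of \cite[Definition~3.11]{FOOO9}'' is used: in each case it enters only through the sign with which~$\phi$ acts on the orientation of $\fM_{D^2}(X,X^\phi,\be;J)$ determined by the chosen relative spin structure, and through the bookkeeping of that sign when one doubles, glues, and takes boundary operators of Floer complexes. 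Everything else in those proofs — the construction of the $A_\infty$-structures, the filtered Floer complexes, the spectral sequences — is insensitive to which relative spin structure is used, as long as a fixed one is used consistently on both sides of~\eref{conjaut_e2}.

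For part~(1), the point is that \cite[Theorem~1.5]{FOOO9} already tracks an explicit sign exponent; by Corollary~\ref{seorient_crl}(1), when $X^\phi$ is merely relative spin in~$X$ with associated class $\vp$ (so $w_2(TX^\phi)=\vp|_{X^\phi}$, with no divisibility hypothesis imposed), the sign of~\eref{conjaut_e2} on $\fM_{D^2}(X,X^\phi,\be;J)$ picks up precisely the extra term $\tfrac12\lr{c_1(TX),\fd(\be)}+\lr{\vp,\fd(\be)}$ relative to the spin case (take $w\!=\!0$, $g\!=\!0$, $m\!=\!1$, $\Si\!=\!D^2$ in~\eref{flipsign_e}). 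Since $\tfrac12\lr{c_1(TX),\fd(\be)}$ is exactly the contribution already present in~\cite{FOOO9} via the Maslov index divisibility, the net change to the sign exponent $\eps_1$ of \cite[Theorem~1.5]{FOOO9} is the single additional summand $\lr{\vp,\fd(\be)}$; I would then observe that carrying this modified exponent through the unchanged remainder of the \cite{FOOO9} argument yields the stated conclusion. For part~(2), \cite[Corollary~1.6]{FOOO9} is the special case where the relevant sign vanishes identically; by Corollary~\ref{seorient_crl2}, the hypothesis~\eref{phirelspin_e} — which is exactly our Definition~\ref{phirelspin_dfn} of $\phi$-relative spin — forces~\eref{conjaut_e2} to be orientation-preserving for every $\be$, so the \cite{FOOO9} proof goes through verbatim with our weaker-looking (but in fact equivalent-in-spirit and easier-to-check) notion in place of \cite[Definition~3.11]{FOOO9}. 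For part~(3), \cite[Corollary~1.8]{FOOO9} additionally uses $c_1(X)|_{\pi_2(X)}=0$, under which $\tfrac12\lr{c_1(TX),\fd(\be)}=0$ for all $\be\in\pi_2(X,X^\phi)$ (since $\fd(\be)$ then lies in the kernel of $c_1$); combined with $w_2(X^\phi)=\vp|_{X^\phi}$ and $\lr{\vp,\fd(\be)}=0$, Corollary~\ref{seorient_crl2} again gives orientation-invariance of~\eref{conjaut_e2}, and the \cite{FOOO9} argument applies.

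The main obstacle I anticipate is not any hard computation but rather the delicate task of verifying that the arguments in~\cite{FOOO9} genuinely depend on the relative spin structure only through the sign of~\eref{conjaut_e2} on disk moduli spaces, and nowhere else — in particular, that the equivalence relation on relative spin structures of \cite[Definition~3.4]{FOOO9} (which we adopt) interacts correctly with gluing and with the doubling construction $\be\mapsto\fd(\be)$, and that the ``change of relative spin structure under pullback by $\phi$'' phenomenon — the source of the error in \cite[Proposition~11.5]{FOOOold} — is already fully accounted for by the sign term in~\eref{flipsign_e}. Concretely, I would spell out for part~(1) exactly which lemmas of \cite[Section~3 and later]{FOOO9} need their sign exponents shifted by $\lr{\vp,\fd(\be)}$, checking that these shifts are consistent across the $A_\infty$-relations and do not accumulate in an unintended way; once this is done, parts~(2) and~(3) follow as the degenerate cases where the shifted exponent vanishes. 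Throughout, I would emphasize — as in the discussion preceding Definition~\ref{phirelspin_dfn} — that the whole point is to replace the case-by-case parity analyses of \cite[Sections~6.3.1,~6.4]{FOOO9} by a single uniform invocation of Corollary~\ref{seorient_crl2}.
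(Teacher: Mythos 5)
Your plan is correct and follows essentially the same route as the paper: isolate the dependence of the FOOO9 arguments on the sign of the conjugation automorphism~\eref{conjaut_e2}, supply that sign from Corollary~\ref{seorient_crl2} (equivalently the disk case of Corollary~\ref{seorient_crl} with $w=0$), observe that the net correction to $\eps_1$ is exactly $\lr{\vp,\fd(\be)}$, and then let parts (2) and (3) fall out as the cases where the resulting exponent is even. The only difference is one of economy: you anticipate having to re-audit the sign bookkeeping throughout FOOO9, whereas the paper points out (via either Proposition~3.10 of FOOO9 or the direct sign computation) that once $\eps_1$ is shifted, the proofs in FOOO9 Section~6.2 go through verbatim, so no further re-verification of the $A_\infty$-relations is needed.
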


\begin{proof}
(1) The sign $(-1)^{\eps_1}$ in \cite[Theorem~1.5]{FOOO9} is determined by 
\cite[Theorem~4.12]{FOOO9} applied with a 
$\phi$-relative spin structure in the sense of \cite[Definition~3.11]{FOOO9}; 
see the beginning of \cite[Section~6.2]{FOOO9}.
The restriction on the relative spin structure in  \cite[Theorem~1.5]{FOOO9}
ensures that the pull-back relative spin structure used to orient the moduli space
on the left-hand side of \cite[(4.10)]{FOOO9} is the same as the 
relative spin structure used to orient the moduli space
on the right-hand side of \cite[(4.10)]{FOOO9}.
For an arbitrary relative spin structure, the difference between the orientations
induced by the two relative spin structures is given by \cite[Proposition~3.10]{FOOO9}
and equals $(-1)^{\lr{\vp,\fd(\be)}}$, which needs to be combined with 
the sign of $(-1)^{\eps_1}$ in~\cite{FOOO9}.
Alternatively and more directly, the sign of the action on the unmarked moduli space of 
disks in the proof of \cite[Theorem~1.5]{FOOO9} is $(-1)^{\eps}$, where
$$\eps=\frac12\lr{c_1(TX),\fd(\be)}+\lr{\vp,\fd(\be)}\,;$$ 
see the first part of the proof of Corollary~\ref{seorient_crl},
restricting to the $w\!=\!0$ case.
The proof of \cite[Theorem~4.12]{FOOO9} then shows that the statement of 
\cite[Theorem~4.12]{FOOO9} applies to the map in \cite[(4.10)]{FOOO9}
with the moduli space on the left-hand side oriented by the same relative spin structure
as on the right-hand side, provided $\eps$ in \cite[(4.10)]{FOOO9} is increased by 
$\lr{\vp,\fd(\be)}$.
The proof of  \cite[Theorem~1.5]{FOOO9} in \cite[Section~6.2]{FOOO9}
then applies without any changes under our weaker assumptions.\\

\noindent
(2) By the first part of this proposition, the crucial identity 
$${\mathfrak m}_{0,\tau_*\be}(1)=-{\mathfrak m}_{0,\be}(1)$$
in the proof of \cite[Corollary~1.6]{FOOO9} in \cite[Section~6.2]{FOOO9}
remains valid under our weaker assumptions.
The rest of the proof in~\cite{FOOO9} applies without any changes.\\

\noindent
(3) By the second part of this proposition, 
the proof of \cite[Corollary~1.8]{FOOO9} in \cite[Section~6.2]{FOOO9}
applies without any changes under our weaker assumptions.
\end{proof}

\noindent
For example, \cite[Theorem~1.5]{FOOO9} does not apply to complete intersections
$X_{n;\a}\!\subset\!\P^{n-1}$ of dimension at least~2 
such~that 
$$n-|\a|\equiv 2\mod 4 \qquad\hbox{and}\qquad
a_1^2+\ldots+a_l^2\equiv|\a| \mod4,$$
since  
these complete intersections do not admit a $\phi$-relative spin structure
in the sense of \cite[Definition~3.11]{FOOO9}.
However, Proposition~\ref{FOOO9thm1.5_prp}(1) applies to these symplectic manifolds,
since they admit a $\phi$-relative spin structure 
in the sense of Definition~\ref{phirelspin_dfn}.

\vspace{.2in}

\noindent
{\it Department of Mathematics, Princeton University, Princeton, NJ 08544\\
pgeorgie@math.princeton.edu}\\

\noindent
{\it Department of Mathematics, SUNY Stony Brook, Stony Brook, NY 11790\\
azinger@math.sunysb.edu}\\

\end{document}